\documentclass[10pt]{article}%
\usepackage{amsfonts}
\usepackage{amssymb,amsmath,amsthm, amsfonts}

\setlength{\topmargin}{0in} \setlength{\headsep}{0in}
\setlength{\oddsidemargin}{0in} \setlength{\evensidemargin}{0in}
\setlength{\textheight}{9.truein} \setlength{\textwidth}{6.5truein}
\setlength{\parskip}{3pt plus2pt minus2pt}

\setcounter{MaxMatrixCols}{30}
\providecommand{\U}[1]{\protect \rule{.1in}{.1in}}
\newtheorem{theorem}{Theorem}[section]

\newtheorem{definition}{Definition}[section]

\newtheorem{lemma}{Lemma}[section]

\newtheorem{proposition}{Proposition}[section]
\newtheorem{remark}{Remark}[section]

\def\esssup{\hbox{\rm ess$\,$\rm sup$\,$}}
\def\esssup{\mathop{\rm esssup}}
\def\essinf{\mathop{\rm essinf}}
\def\sup{\mathop{\rm sup}}
\makeatletter
   
   \@addtoreset{equation}{section}
\makeatother

\begin{document}

\title{Stochastic differential games for fully coupled FBSDEs with jumps {\footnote {The work has been supported by the NSF of
P.R.China (No. 11071144, 11171187, 11222110), Shandong Province
(No. BS2011SF010, JQ201202), SRF for ROCS (SEM), supported by Program for New Century Excellent Talents in University (NCET, 2012), 111 Project (No.
B12023).}}}
\author{Juan Li\\{\small School of Mathematics and Statistics, Shandong University, Weihai,
Weihai 264209, P. R. China.}\\{\small \textit{E-mail: juanli@sdu.edu.cn}}\\Qingmeng Wei\\{\small School of Mathematics, Shandong University, Jinan 250100, P. R.
China.}\\{\small \textit{E-mail: qingmengwei@gmail.com}}}

\date{January 29, 2013}
\maketitle

\bigskip

\noindent \textbf{Abstract.} This paper is concerned with
stochastic differential games (SDGs) defined through fully coupled
forward-backward stochastic differential equations (FBSDEs) which
are governed by Brownian motion and Poisson random measure. For SDGs, the upper and the lower
value functions are defined by the controlled fully coupled FBSDEs
with jumps. Using a new transformation introduced in \cite{BLH}, we
prove that the upper and the lower value functions are
deterministic. Then, after establishing the dynamic programming
principle for the upper and the lower value functions of this SDGs,
we prove that the upper and the lower value functions are the
viscosity solutions to the associated upper and the lower
Hamilton-Jacobi-Bellman-Isaacs (HJBI) equations, respectively.
Furthermore, for a special case (when $\sigma,\ h$ do not depend on
$y,\ z,\ k$), under the Isaacs' condition, we get the existence of
the value of the game.

\bigskip

\noindent \textbf{Keyword.}  Fully coupled FBSDEs with
jumps, stochastic differential game, Hamilton-Jacobi-Bellman-Isaacs
equation, value function,  stochastic backward semigroup, dynamic
programming principle,  viscosity solution

\section{{\protect \large {Introduction}}}

General nonlinear backward stochastic differential
equations (BSDEs, for short) in the framework of Brownian motion
were first introduced by Pardoux, Peng in \cite{PaPe1}. They got the
uniqueness and the existence theorem for nonlinear BSDEs under
Lipschitz condition. Since then, the theory of BSDEs has been
studied widely, namely in stochastic control (see Peng \cite{Pe2}),
finance (see El Karoui, Peng and Quenez \cite{ELPeQu}), and the
theory of partial differential equations (PDEs, for short) (see
Pardoux, Peng \cite{PaPe2}, Peng \cite{Pe3}, etc). Related tightly
with the BSDE theory, the theory of fully coupled forward-backward
stochastic differential equations (FBSDEs, for short) has shown a
dynamic development. Fully coupled FBSDEs driven by Brownian motion
are encountered in the optimization problem when applying stochastic
maximum principle. Also, in finance, fully coupled FBSDEs are often
used when considering problems with the large investors, see
\cite{CM, MY}. On one hand, for the existence and uniqueness of
solutions of fully coupled FBSDEs driven by Brownian motion, the
reader can refer to Antonelli \cite{An}, Delarue \cite{D}, Hu, Peng
\cite{HP}, Ma, Protter and Yong \cite{MPY}, Ma, Wu, Zhang and Zhang \cite{Ma-Wu-Zhang-Zhang}, Ma, Yong \cite{MY},
Pardoux, Tang \cite{PaT}, Peng, Wu \cite{PW}, Yong \cite{Y, Y1},
Zhang \cite{Z}, etc. Pardoux, Tang \cite{PaT} associated fully
coupled FBSDEs driven by Brownian motion (without controls and
$\sigma$\ doesn't depend on $z$)\ with quasilinear parabolic PDEs,
and gave an existence result for viscosity solution. Wu, Yu
\cite{WY, WY1} proved the existence of a quasilinear PDEs with the
help of fully coupled FBSDEs driven by Brownian motion when $\sigma$
depends on $z$, but their stochastic systems are without controls.
Recently, Li, Wei \cite{LW} studied the stochastic optimal control
problems of fully coupled FBSDEs driven by Brownian motion in two
cases: (i) the diffusion coefficient $\sigma$ depends on $z$, i.e.,
depends on the second component of the solution $(Y,Z)$ of the BSDE
and does not depend on the control $u$; (ii) $\sigma$ depends on the
control $u$, but does not depend on $z$. They also proved some new
estimates for fully coupled FBSDEs on small time interval which were
used for the proof of the viscosity solution. Li \cite{L} studied
the general case, that is,  $\sigma$ depends on $z$ and the control
$u$ at the same time.

BSDEs with Poisson random measure were first discussed by Tang, Li
\cite{TL}. Later, Barles, Buckdahn and Pardoux \cite{BBP} proved
that the solutions of the BSDEs driven by a Brownian motion and a
Poisson random measure provide the viscosity solutions for the
associated system of parabolic integral-partial differential
equations. In \cite{LP}, using Peng's BSDE approach, Li, Peng
studied the stochastic control theory for BSDE with jumps.

On the other hand, as concerns stochastic differential games (SDGs,
for short), two-player zero-sum SDGs of the type of strategy against control, they were first studied by Fleming,
Souganidis~\cite{FS} in 1989. In their paper under the Isaacs' condition the lower and the upper
value functions of the game coincide, satisfy the dynamic programming
principle (DPP, for short), and they are
the unique viscosity solution of the associated Bellman-Isaacs
equation. Since then there are a lot of works about
this topic, such as, Buckdahn, Li \cite{BL}, they gave a more general but also a more direct approach than that in \cite{FS}, and also  Buckdahn, Cardaliaguet and Rainer \cite{BCR},
 Buckdahn, Li and Hu \cite{BLH}, Hou, Tang
\cite{HT} and so on.

BSDEs methods,  were introduced originally by Peng~\cite{Pe1,Pe3,
Pe4} for the stochastic control theory. Since then BSDE methods have been
extended to the theory of SDGs. Hamadene, Lepeltier~\cite{HL} and
Hamadene, Lepeltier and Peng~\cite{HLP}
 studied games with a dynamics whose diffusion coefficient is
strictly elliptic and does not depend on controls. Buckdahn,
Li~\cite{BL} studied two-player zero-sum SDGs with the help of
decoupled FBSDEs driven by Brownian motion. They introduced the
method of Girsanov transformation which turned out to be a
straightforward way to prove that the upper and the lower value
functions of the game are deterministic. However, this method can't
be applied to SDGs with jumps. Buckdahn, Li and Hu
\cite{BLH} introduced a new type of measure-preserving and
invertible transformation on the Wiener-Poisson space to prove that
the upper and the lower value functions for two-player zero-sum SDGs
with jumps are deterministic. And the proof that they are
deterministic does not depend on the BSDE methods so that the new method can be used for the standard stochastic control problems with
jumps. In \cite{LiWei-Lp}, Li, Wei  studied some useful estimates for fully coupled FBSDEs with jumps under
the monotonic condition. Moreover,  under Lipschitz condition and
linear growth condition, they established the existence and the
uniqueness of the solution and prove $L^p$-estimates on a small time interval, which play
an important role in the study of the existence of the viscosity
solution for the corresponding second order integral-partial
differential equation of Isaacs' type over an arbitrary time
interval.

Inspired by the control problems in Li \cite{L}, Li, Wei \cite{LW}, as well as
Buckdahn, Li and Hu \cite{BLH}, we will investigate SDGs defined
through fully coupled FBSDEs driven by Brownian motion and Poisson
random measure, where $\sigma,\ h$ depend on $z$ and the controls
$u,\ v$ at the same time. For the fully coupled FBSDEs with jumps,
under the monotonicity assumptions Wu \cite{W1999} obtained the
existence and the uniqueness of the solution. Later, Wu \cite{W2003}
proved the existence and the uniqueness of the solution as well as a
comparison theorem for fully coupled FBSDEs with jumps over a
stochastic interval. Similarly to \cite{L, LW}, the second order
integral-partial differential equations of Isaacs' type are also
combined with the algebraic equations. Therefore, we still need the
representation theorem for the related algebraic equation which is
got in \cite{LW}.

Precisely, in this paper, the cost functional (interpreted as a payoff for
Player I and as a cost for Player II) of our SDGs is
introduced by the following fully coupled FBSDE driven by Brownian motion and Poisson random measure:
\begin{equation}
\label{equ0.1}\left \{
\begin{array}
[c]{llll}%
dX_{s}^{t,x;u,v} & = & b(s,\Pi_{s}^{t,x;u,v},u_{s},v_{s})ds +
\sigma(s,\Pi_{s}^{t,x;u,v},u_{s},v_{s}) dB_{s}+\int
_{E}h(s,\Pi_{s-}^{t,x;u,v},u_{s},v_{s})\tilde{\mu}(ds,de), & \\
dY_{s}^{t,x;u,v} & = & -f(s,\Pi_{s}^{t,x;u,v},\int
_{E}K_{s}^{t,x;u,v}(e)l(e)\lambda(de),u_{s},v_{s})ds+
Z_{s}^{t,x;u,v}dB_{s} +\int_{E}K_{s}^{t,x;u,v}(e)\tilde{\mu}(ds,de), & \\
X_{t}^{t,x;u,v} & = & \zeta, & \\
Y_{T}^{t,x;u,v} & = & \Phi(X_{T}^{t,x;u,v}), &
\end{array}
\right.
\end{equation}
where $s\in[t,T],\  \Pi_{s}^{t,x;u,v}=(X_{s}^{t,x;u,v},
Y_{s}^{t,x;u,v}, Z_{s}^{t,x;u,v}),\
\Pi_{s-}^{t,x;u,v}=(X_{s-}^{t,x;u,v}, Y_{s-}^{t,x;u,v},
Z_{s}^{t,x;u,v}),$ $T>0$ is an arbitrarily fixed finite time
horizon, and the admissible controls $u=(u_{s})_{s\in[t,T]}\in \mathcal {U}_{t,T}$,
$v=(v_{s})_{s\in[t,T]}\in \mathcal {V}_{t,T}$ are predictable and take their values in a
compact metric space $U$ and $V$, respectively. Under our
assumptions (see Section 2), the equation (\ref{equ0.1}) has a
unique solution $(X_{s}^{t,x;u,v}, Y_{s}^{t,x;u,v}, Z_{s}^{t,x;u,v},
K_{s}^{t,x;u,v})_{s\in[t,T]}$ and the cost functional is defined by
\[
J(t,x;u,v):=Y_{t}^{t,x;u,v}.
\]

We define the lower value function and the upper value function of our SDG, respectively, as follows
\[
W(t,x):=\mathop{\rm essinf}_{\beta \in \mathcal{B}_{t,T}}%
\mathop{\rm esssup}_{u\in \mathcal{U}_{t,T}}J(t,x;u,\beta(u)),
\]
\[
U(t,x):=\mathop{\rm esssup}_{\alpha \in \mathcal{A}_{t,T}}%
\mathop{\rm essinf}_{v\in \mathcal{V}_{t,T}}J(t,x;\alpha(v),v),
\]
where $\mathcal{A}_{t,T},\ \mathcal{B}_{t,T}$\ are the sets of
nonanticipative strategies of Player I and Player II, respectively
(see Definition 3.2 in Section 3). The objective of our paper is to
investigate the lower and the upper value functions. The main
results of the paper state that $W$ and $U$ are deterministic
(Proposition \ref{pro1}), satisfy the DPP (Theorem 3.1), and are
continuous viscosity solutions of the associated Bellman-Isaacs
equations (Theorem 4.1).  In our approach, we will use in a crucial manner  the results for fully coupled FBSDEs with jumps on the small time interval obtained by Li, Wei \cite{LiWei-Lp}.

Our paper is organized as follows. In Section 2, we present some
preliminaries for BSDEs with jumps and fully coupled FBSDEs with
jumps, which will be used later. The setting of our SDGs is
introduced in Section 3. We also show that the lower and upper value
functions (\ref{ee2}), (\ref{ee3}) are deterministic functions,
Lipschitz in $x$ (Lemma \ref{l7}) and ${\frac{1}{2}}$-H\"{o}lder
continuous in $t$ (Theorem \ref{th3.2}). Moreover, they satisfy the
DPP (Theorem \ref{th3.1}). In Section 4, by using the DPP, we prove
that $W$ and $U$ are the viscosity solutions of the associated
integral-differential Bellman-Isaacs equation. Furthermore, Section
5 presents the uniqueness of viscosity solution for the case when
$\sigma,\ h$ does not depend on $y, \ z,\ k$. This shows that, under
Isaacs' condition this game has a value. Finally, in Appendix we
give the proof of the DPP.

\section{ {\protect \large Preliminaries}}

 Let $(\Omega, {\mathcal{F}}, P)$ be a probability space
which is the completed product of the Wiener space $(\Omega_{1},
{\mathcal{F}}_{1}, P_{1})$
and the Poisson space $(\Omega_{2}, {\mathcal{F}}_{2}, P_{2}).$ \newline%
$\bullet \  \ (\Omega_{1}, {\mathcal{F}}_{1}, P_{1})$ is a classical Wiener space, where
$\Omega_{1}=C_{0}(\mathbb{R};\mathbb{R}^{d})$ is the set of continuous functions
from $\mathbb{R}$ to $\mathbb{R}^{d}$ with value $0$ in time $0$, $\mathcal{F}_{1}$ is the
completed Borel $\sigma$-algebra over $\Omega_{1}$, and $P_{1}$ is the Wiener
measure such that $B_{s}(\omega)=\omega_{s},\ s\in \mathbb{R}_{+},\  \omega \in
\Omega_{1}$, and $B_{-s}(\omega)=\omega(-s),\ s\in \mathbb{R}_{+},\  \omega
\in \Omega_{1}$, are two independent $d$-dimensional Brownian motions. The
natural filtration $\{ \mathcal{F}_{s}^{B},s\geq0\}$ is generated by
$\{B_{s}\}_{s\geq0}$ and augmented by all $P_{1}$-null sets, i.e.,
$$
\mathcal{F}_{s}^{B}=\sigma \{B_{r},r\in(-\infty,s]\} \vee \mathcal{N}_{P_{1}%
},\ s\geq0.
$$
$\bullet \  \ (\Omega_{2}, {\mathcal{F}}_{2}, P_{2})$ is a Poisson
space.  $p:D_{p}\subset
\mathbb{R}\rightarrow E$ is a point function, where $D_{p}$ is a
countable subset of the real line $\mathbb{R}$,  $E=\mathbb{R}^{l}\backslash \{0\}$ is equipped
with its Borel $\sigma$-field $\mathcal{B}(E)$. We introduce the counting
measure $\mu(p,dtde)$ on $\mathbb{R}\times E$ as follows:
\[
\mu(p,(s,t]\times \Delta)=\sharp \{r\in D_{p}\cap(s,t]:p(r)\in
\Delta \},\  \Delta \in \mathcal{B}(E),\ s,t\in \mathbb{R},\ s<t,
\]
where $\sharp$ denotes the cardinal number of the set. We
identify the point function $p$ with $\mu(p,\cdot).$ Let
$\Omega_{2}$ be the set of all point functions $p$ on $E$, and
$\mathcal{F}_{2}$ be the smallest $\sigma$-field on $\Omega_{2}$.   The coordinate mappings $p\rightarrow \mu (p,(s,t]\times \Delta),\ s,t\in
\mathbb{R},\ s<t,\  \Delta \in \mathcal{B}(E)$ are measurable with
respect to $\mathcal {F}_2$. On the measurable space
$(\Omega_{2},\mathcal{F}_{2})$ we consider the probability measure
$P_{2}$ such that the canonical coordinate measure $\mu(p,dtde)$
becomes a Poisson random measure with the compensator
$\hat{\mu}(dtde)=dt\lambda(de)$ and the process $\{
\tilde{\mu}((s,t]\times A)=(\mu-\hat{\mu})((s,t]\times A)\}_{s\leq
t}$ is a martingale, for any $A\in \mathcal{B}(E)$ satisfying
$\lambda(A)<\infty.$ Here $\lambda$ is supposed to be a
$\sigma$-finite measure on $(E,\mathcal{B}(E))$ with $\int_{E}(1\wedge|e|^{2})\lambda(de)<\infty.$ The
filtration $\{\mathcal{F}_{t}^{\mu}\}_{t\geq0}$ generated by
the coordinate measure $\mu$ is introduced by setting:
\[
\dot{\mathcal{F}}_{t}^{\mu}=\sigma \{ \mu((s,r]\times \Delta):-\infty<s\leq
r\leq t,\Delta \in \mathcal{B}(E)\},\ t\geq0,
\]
and taking the right-limits $\mathcal{F}_{t}^{\mu}=(\bigcap\limits_{s>t}%
\dot{\mathcal{F}}_{s}^{\mu})\vee \mathcal{N}_{P_{2}},\ t\geq0$, augmented by
all the $P_{2}$-null sets. At last, we set $(\Omega,\mathcal{F},
P)=(\Omega_{1}\times \Omega_{2}, \mathcal{F}_{1}\otimes \mathcal{F}_{2}%
,P_{1}\otimes P_{2})$, where $\mathcal{F}$ is completed with respect to $P,$
and the filtration $\mathbb{F}=\{ \mathcal{F}_{t}\}_{t\geq0}$ is generated by
\[
\mathcal{F}_{t}:=\mathcal{F}_{t}^{B}\otimes \mathcal{F}_{t}^{\mu}%
,\ t\geq0,\  \mbox{augmented by all }P\mbox{-null sets}.
\]
For any $n\geq1,\ |z|$ denotes the Euclidean norm of $z\in \mathbb{R}^{n}.$ Fix
$T>0,$ we introduce the following spaces of processes which will be used later.
\begin{itemize}
\item $\mathcal{M}^{2}(t,T;\mathbb{R}^{d}):=\Big \{ \varphi \mid \varphi
:\Omega \times[t,T]\rightarrow \mathbb{R}^{d} \mbox{ is an }\mathbb{F}%
\mbox{-predictable process}:\ \parallel \varphi \parallel^{2}=E[\int^{T}_{t} |\varphi_{s}%
|^{2}ds]<+\infty \Big \}; $

\item ${\mathcal{S}}^{2}(t,T;\mathbb{R}):=\Big \{ \psi \mid \psi:\Omega
\times[t,T]\rightarrow \mathbb{R} \mbox{ is an } \mathbb{F}%
\mbox{-adapted c\`{a}dl\`{a}g process}:\ E[\mathop{\rm
sup}\limits_{t\leq s\leq T}| \psi_{s} |^{2}]< +\infty \Big \}; $

\item $\mathcal{K}_{\lambda}^{2}(t,T;\mathbb{R}^{n}):=\Big \{K\mid
K:\Omega \times[t,T]\times E\rightarrow \mathbb{R}^{n} \mbox{ is
}\mathcal{P}\otimes \mathcal{B}(E)-\mbox{measurable}:\newline \mbox{ }\hskip3cm
\parallel K\parallel^{2}=E[\int_{t}^{T}\int
_{E}|K_{s}(e)|^{2}\lambda(de)ds]<+\infty \Big
\},$\footnote{$\mathcal {P}$ denotes the $\sigma$-field of
$\mathbb{F}$-predictable subsets of $\Omega\times [0,T].$}
\end{itemize}
where $t\in[0,T].$

\subsection{ {\protect \large BSDEs with jumps}}

Let us consider the following BSDE with jumps:
\begin{equation}
\label{equ2.1}Y_{t}=\xi+\int_{t}^{T}g(s,Y_{s},Z_{s},K_{s})ds-\int_{t}^{T}%
Z_{s}dB_{s}-\int_{t}^{T}\int_{E}K_{s}(e)\tilde{\mu}(ds,de),\ t\in[0,T].
\end{equation}
where $T>0$\ is an arbitrary time horizon, and the coefficient $g:\Omega \times[0,T]\times \mathbb{R}\times
\mathbb{R}^{d}\times
L^{2}(E,\mathcal{B}(E),\lambda;\mathbb{R})\rightarrow \mathbb{R}$ is
$\mathcal {P}$-measurable for each $(y,z,k)\in \mathbb{R}\times
\mathbb{R}^{d}\times L^{2}(E,\mathcal{B}(E),\lambda;\mathbb{R})$ and
satisfies:

\begin{description}
\item[$( \mathbf{H2.1})$] (i) There exists a constant $C\geq0$ such that,
$P$-a.s., for all $t\in[0,T],\ y_{1},y_{2}\in \mathbb{R},\ z_{1},z_{2}%
\in \mathbb{R}^{d},\ k_{1},k_{2}\in L^{2}(E,\mathcal{B}(E),\lambda
;\mathbb{R}),$
\[
|g(t,y_{1},z_{1},k_{1})-g(t,y_{2},z_{2},k_{2})|\leq C(|y_{1}-y_{2}%
|+|z_{1}-z_{2}|+\parallel k_{1}-k_{2}\parallel);
\]
(ii) $E[(\int_0^T|g(s,0,0,0)|ds)^2]<+\infty.$
\end{description}

Let us recall some well-known results.

\begin{lemma}
\label{l1} Under the assumption $( \mathbf{H2.1})$, for any random
variable $\xi \in L^{2}(\Omega,\mathcal{F}_{T},P;\mathbb{R})$, the
BSDE with jumps (\ref{equ2.1}) has a unique adapted solution
\[
(Y_{t},Z_{t},K_{t})_{t\in[0,T]}\in{\mathcal{S}}^{2}(0,T;\mathbb{R}%
)\times{\mathcal{M}}^{2}(0,T;\mathbb{R}^{d})\times \mathcal{K}_{\lambda}%
^{2}(0,T;\mathbb{R}).
\]

\end{lemma}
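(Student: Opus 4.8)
The plan is to establish existence and uniqueness by a fixed-point argument, following the classical Pardoux--Peng strategy adapted to the jump setting initiated by Tang--Li \cite{TL}. I would work in the product Banach space
\[
\mathcal{H}^{2}:=\mathcal{M}^{2}(0,T;\mathbb{R})\times\mathcal{M}^{2}(0,T;\mathbb{R}^{d})\times\mathcal{K}_{\lambda}^{2}(0,T;\mathbb{R}),
\]
equipped with the weighted norm $\|(Y,Z,K)\|_{\beta}^{2}=E\int_{0}^{T}e^{\beta s}\big(|Y_{s}|^{2}+|Z_{s}|^{2}+\int_{E}|K_{s}(e)|^{2}\lambda(de)\big)ds$ for a parameter $\beta>0$ to be fixed large. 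The $Y$-component will automatically inherit $\mathcal{S}^{2}$-regularity at the end.

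The first step is to reduce the nonlinear equation to a family of problems with a frozen driver and solve each by martingale representation. Given $(y,z,k)\in\mathcal{H}^{2}$, set the known process $g_{s}:=g(s,y_{s},z_{s},k_{s})$, which lies in $\mathcal{M}^{2}$ by $(\mathbf{H2.1})$. Consider the square-integrable martingale
\[
M_{t}:=E\Big[\xi+\int_{0}^{T}g_{s}\,ds\,\Big|\,\mathcal{F}_{t}\Big].
\]
Since the filtration $\mathbb{F}$ is the augmentation of the one generated by $B$ and $\mu$, the martingale representation theorem on the Wiener--Poisson space yields unique $(Z,K)\in\mathcal{M}^{2}\times\mathcal{K}_{\lambda}^{2}$ with $M_{t}=M_{0}+\int_{0}^{t}Z_{s}\,dB_{s}+\int_{0}^{t}\int_{E}K_{s}(e)\,\tilde{\mu}(ds,de)$. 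Putting $Y_{t}:=E[\xi+\int_{t}^{T}g_{s}\,ds\mid\mathcal{F}_{t}]$ and unwinding the conditional expectation shows $(Y,Z,K)$ solves the BSDE with driver $g_{s}$; this defines a map $\Phi:(y,z,k)\mapsto(Y,Z,K)$ on $\mathcal{H}^{2}$.

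The core of the argument is to show $\Phi$ is a strict contraction for $\beta$ large. For two inputs with images $(Y^{i},Z^{i},K^{i})$, write $\Delta Y=Y^{1}-Y^{2}$, etc., and apply It\^{o}'s formula to $e^{\beta s}|\Delta Y_{s}|^{2}$. Using $\Delta Y_{T}=0$ and taking expectations, the stochastic integrals drop out and the jump quadratic-variation term contributes its compensator, giving
\[
\beta\,E\!\int_{0}^{T}\!e^{\beta s}|\Delta Y_{s}|^{2}ds+E\!\int_{0}^{T}\!e^{\beta s}\Big(|\Delta Z_{s}|^{2}+\int_{E}|\Delta K_{s}(e)|^{2}\lambda(de)\Big)ds\le 2\,E\!\int_{0}^{T}\!e^{\beta s}|\Delta Y_{s}|\,|\Delta g_{s}|\,ds,
\]
where $\Delta g_{s}=g(s,y^{1}_{s},z^{1}_{s},k^{1}_{s})-g(s,y^{2}_{s},z^{2}_{s},k^{2}_{s})$. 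Bounding $|\Delta g_{s}|\le C(|\Delta y_{s}|+|\Delta z_{s}|+(\int_{E}|\Delta k_{s}(e)|^{2}\lambda(de))^{1/2})$ by $(\mathbf{H2.1})$(i) and applying Young's inequality, the right-hand side is absorbed so that, for $\beta$ chosen large enough, $\Phi$ contracts on $\mathcal{H}^{2}$. The Banach fixed-point theorem then delivers the unique solution $(Y,Z,K)$. Finally, running the same It\^{o} computation with the full driver and invoking the Burkholder--Davis--Gundy inequality on the Brownian and compensated-jump martingale terms yields $E[\sup_{0\le s\le T}|Y_{s}|^{2}]<\infty$, placing $Y$ in $\mathcal{S}^{2}$.

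The main obstacle I anticipate is the bookkeeping in the jump It\^{o} formula. Unlike the purely Brownian case, applying It\^{o} to $e^{\beta s}|\Delta Y_{s}|^{2}$ produces a sum-over-jumps term $\int_{E}|\Delta K_{s}(e)|^{2}\mu(ds,de)$ that must be split into its compensated-martingale part, which vanishes in expectation, and its compensator $\int_{E}|\Delta K_{s}(e)|^{2}\lambda(de)\,ds$, and one must verify that $\int_{0}^{\cdot}e^{\beta s}\Delta Y_{s-}\,\Delta Z_{s}\,dB_{s}$ and the compensated jump integral are genuine martingales so that their expectations drop out. The dependence of $g$ on $k$ through the infinite-dimensional $L^{2}(E,\lambda)$-argument is what forces the $\mathcal{K}^{2}_{\lambda}$-norm to appear on both sides of the estimate, but $(\mathbf{H2.1})$(i) is calibrated precisely to let Young's inequality absorb it.
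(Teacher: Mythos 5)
The paper gives no proof of this lemma at all: it is recalled as a well-known result, the relevant sources being Tang--Li \cite{TL} and Barles--Buckdahn--Pardoux \cite{BBP}, and your argument (martingale representation on the Wiener--Poisson space to solve the frozen-driver equation, then a contraction in the $\beta$-weighted norm on $\mathcal{M}^{2}\times\mathcal{M}^{2}\times\mathcal{K}^{2}_{\lambda}$, with $\mathcal{S}^{2}$-regularity of $Y$ recovered at the end) is exactly the classical proof from that literature, so your proposal is correct and follows essentially the same route the paper points to. One small imprecision: $(\mathbf{H2.1})$(ii) only assumes $E[(\int_{0}^{T}|g(s,0,0,0)|\,ds)^{2}]<+\infty$, which does not imply that your frozen driver $g_{s}=g(s,y_{s},z_{s},k_{s})$ lies in $\mathcal{M}^{2}(0,T;\mathbb{R})$; however, everything you actually use only requires $\int_{0}^{T}|g_{s}|\,ds\in L^{2}(\Omega)$ (so that $M_{t}$ is a square-integrable martingale and $Y\in\mathcal{S}^{2}$ via Doob's inequality), which does follow from $(\mathbf{H2.1})$ and the Lipschitz bound, while the contraction estimate itself involves only differences of the driver, so this is a misstatement rather than a gap.
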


\begin{lemma}
\label{l2} (Comparison Theorem) Let $a:\Omega \times[0,T]\times \mathbb{R}%
\times \mathbb{R}^{d}\times \mathbb{R}$ be $\mathcal{P}\otimes \mathcal{B}%
(\mathbb{R})\otimes \mathcal{B}(\mathbb{R}^{d})\otimes \mathcal{B}(\mathbb{R})$
measurable and satisfy

{\rm(i)} there exists a constant $C\geq0$ such that, \mbox{P-a.s.}, for all
$t\in[0,T],\ y_{1},y_{2}\in \mathbb{R},\ z_{1},z_{2}\in \mathbb{R}^{d},$
$k_{1},k_{2}\in \mathbb{R},$
\[
|a(t,y_{1},z_{1},k_{1})-a(t,y_{2},z_{2},k_{2})|\leq C(|y_{1}-y_{2}%
|+|z_{1}-z_{2}|+| k_{1}-k_{2}|).
\]

{\rm(ii)} $a(\cdot,0,0,0)\in \mathcal{M}^{2}(0,T;\mathbb{R}).$

{\rm(iii)} $k\rightarrow a(t,y,z,k)$ is non-decreasing, for all $(t,y,z)\in
[0,T]\times \mathbb{R}\times \mathbb{R}^{d}.$

Furthermore, let $l:\Omega \times[0,T]\times E\rightarrow \mathbb{R}$ be
$\mathcal{P}\otimes \mathcal{B}(E)$ measurable and satisfy
\[
0\leq l_{t}(e)\leq C(1\wedge|e|),\  \ e\in E.
\]
We set
\[
g(\omega,t,y,z,\varphi)=a(\omega,t,y,z,\int_{E}\varphi
(e)l_{t}(\omega,e)\lambda(de)),
\]
for $(\omega,t,y,z,\varphi)\in \Omega \times[0,T]\times \mathbb{R}\times
\mathbb{R}^{d}\times L^{2}(E,\mathcal{B}(E),\lambda;\mathbb{R}).$

Let $\xi,\ \xi^{\prime}\in L^2(\Omega,\mathcal{F}_{T},P;\mathbb{R})$
and $g^{\prime}$ satisfies $( \mathbf{H2.1})$.

We denote by $(Y,Z,K)$ (resp., $(Y^{\prime},Z^{\prime},K^{\prime})$) the
unique solution of equation (\ref{equ2.1}) with the data $(\xi,g)$ (resp.,
$(\xi^{\prime},g^{\prime})$). If

{\rm(iv)} $\xi \geq \xi^{\prime},\ a.s.;$

{\rm(v)} $g(t,y,z,k)\geq g^{\prime}(t,y,z,k),\ a.s.,\ a.e.,$ for any $(y,z,k)\in
\mathbb{R}\times \mathbb{R}^{d}\times L^{2}(E,\mathcal{B}(E),\lambda
;\mathbb{R}),$ \newline then, we have: $Y_{t}\geq Y^{\prime}_{t},\ a.s.,$ for
all $t\in[0,T]$. And if, in addition, we also assume that $P(\xi>\xi^{\prime
})>0,$ then $P(Y_{t}>Y^{\prime}_{t})>0,\ 0\leq t\leq T$, and in particular,
$Y_{0}>Y^{\prime}_{0}.$
\end{lemma}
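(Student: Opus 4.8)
The plan is to reduce the comparison to the sign of the solution of a linear BSDE with jumps and to read off that sign from a strictly positive adjoint (Dol\'eans--Dade) weight. First I set $\hat{Y}=Y-Y'$, $\hat{Z}=Z-Z'$, $\hat{K}=K-K'$ and $\hat{\xi}=\xi-\xi'$, and subtract the two copies of (\ref{equ2.1}) to obtain a BSDE for $(\hat{Y},\hat{Z},\hat{K})$ with terminal value $\hat{\xi}$ and driver $g(s,Y_s,Z_s,K_s)-g'(s,Y'_s,Z'_s,K'_s)$. I split this driver as $[g(s,Y_s,Z_s,K_s)-g(s,Y'_s,Z'_s,K'_s)]+[g(s,Y'_s,Z'_s,K'_s)-g'(s,Y'_s,Z'_s,K'_s)]$; the second bracket, call it $\phi_s$, is nonnegative by (v). For the first bracket I write $g=a(s,\cdot,\cdot,\int_E\cdot\,l_s\,\lambda(de))$ and telescope in the three arguments of $a$, introducing predictable difference-quotient processes $\alpha_s,\beta_s,\gamma_s$ (set to $0$ where the denominators vanish) so that
\[
g(s,Y_s,Z_s,K_s)-g(s,Y'_s,Z'_s,K'_s)=\alpha_s\hat{Y}_s+\beta_s\hat{Z}_s+\gamma_s\int_E\hat{K}_s(e)l_s(e)\lambda(de).
\]
By the Lipschitz assumption (i) the processes $\alpha,\beta,\gamma$ are bounded by $C$, and---this is the decisive point---by the monotonicity assumption (iii) together with $l\geq0$ one has $\gamma_s\geq0$, hence $\gamma_s l_s(e)\geq0$.

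Next I introduce the adjoint weight $\Gamma_{t,\cdot}$ solving, for $s\in[t,T]$, the linear SDE $d\Gamma_{t,s}=\Gamma_{t,s-}\bigl[\alpha_s\,ds+\beta_s\,dB_s+\int_E\gamma_s l_s(e)\tilde{\mu}(ds,de)\bigr]$ with $\Gamma_{t,t}=1$. Because the jump coefficient satisfies $\gamma_s l_s(e)\geq0$ (so $1+\gamma_s l_s(e)\geq1$) and is bounded by $C^2(1\wedge|e|)$, the Dol\'eans--Dade exponential $\Gamma_{t,s}$ is strictly positive, with the integrability required below; here $l_s(e)\leq C(1\wedge|e|)$ and the standing bound $\int_E(1\wedge|e|^2)\lambda(de)<\infty$ give $\int_E(\gamma_s l_s(e))^2\lambda(de)<\infty$. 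Applying It\^o's formula to $\Gamma_{t,s}\hat{Y}_s$ and using the explicit quadratic covariations of the Brownian and compensated-jump martingale parts, every term carrying $\alpha$, $\beta$ or $\gamma$ cancels, leaving $d(\Gamma_{t,s}\hat{Y}_s)=-\Gamma_{t,s-}\phi_s\,ds+dM_s$ for a (local) martingale $M$. Integrating from $t$ to $T$, using $\hat{Y}_T=\hat{\xi}$, and taking $E[\,\cdot\mid\mathcal{F}_t]$ to kill the martingale part yields the representation
\[
\hat{Y}_t=E\Big[\Gamma_{t,T}\,\hat{\xi}+\int_t^T\Gamma_{t,s-}\,\phi_s\,ds\,\Big|\,\mathcal{F}_t\Big].
\]

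From this formula the conclusions are immediate. Since $\Gamma_{t,s}>0$, $\hat{\xi}\geq0$ by (iv), and $\phi_s\geq0$ by (v), the right-hand side is nonnegative, so $Y_t\geq Y'_t$ a.s. for every $t\in[0,T]$. If in addition $P(\xi>\xi')>0$, then $\Gamma_{t,T}\hat{\xi}\geq0$ with $E[\Gamma_{t,T}\hat{\xi}]>0$; taking expectations in the representation gives $E[\hat{Y}_t]>0$, and since $\hat{Y}_t\geq0$ this forces $P(Y_t>Y'_t)>0$. At $t=0$ the $\sigma$-field $\mathcal{F}_0$ is trivial, so $\hat{Y}_0$ is deterministic and equals the strictly positive expectation, giving $Y_0>Y'_0$.

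The main obstacle I anticipate is not the algebra of the cancellation but the analytic justification surrounding $\Gamma_{t,\cdot}$: one must verify that the exponential driven by the jump term is well defined, strictly positive, and integrable enough that the stochastic integrals appearing after It\^o's formula are genuine martingales (so that conditioning really removes them) and that the conditional expectation defining $\hat{Y}_t$ is finite. Strict positivity is precisely where hypotheses (iii) and $l\geq0$ enter---they force $\gamma_s l_s(e)\geq0$, so the jump factors $1+\gamma_s l_s(e)$ never vanish or change sign, and it is exactly this monotonicity of $a$ in $k$ that makes the comparison hold. The remaining integrability is routine given the boundedness of $\beta$ and $\gamma$ and the assumption $\int_E(1\wedge|e|^2)\lambda(de)<\infty$, and can be secured by a standard localization before passing to the limit.
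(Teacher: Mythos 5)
Your proof is correct, and it is essentially \emph{the} proof of this lemma: the paper itself does not prove Lemma \ref{l2} at all but recalls it as a known result from Barles--Buckdahn--Pardoux \cite{BBP}, and the argument there is exactly your linearization of the generator difference plus the Dol\'eans--Dade adjoint $\Gamma_{t,\cdot}$, with the monotonicity hypothesis (iii) and $l\geq 0$ entering precisely where you put them, namely to force the jump factors $1+\gamma_s l_s(e)\geq 1$ and hence $\Gamma_{t,\cdot}>0$. The cancellation in It\^o's product formula, the representation $\hat Y_t=E[\Gamma_{t,T}\hat\xi+\int_t^T\Gamma_{t,s-}\phi_s\,ds\mid\mathcal{F}_t]$, and the integrability bookkeeping (using $|\gamma_s l_s(e)|\leq C^2(1\wedge|e|)$ and $\int_E(1\wedge|e|^2)\lambda(de)<\infty$) are all sound.

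One caveat, concerning only the final clause. You justify $Y_0>Y_0'$ by asserting that $\mathcal{F}_0$ is trivial. In this paper's setting that is false: the filtration is built from a two-sided Brownian motion and a two-sided Poisson random measure, so $\mathcal{F}_0$ contains $\sigma\{B_r:\,r\leq 0\}$ and the negative-time jump history (this enlargement is exactly what makes the transformation argument of Section 3 work). With a non-trivial $\mathcal{F}_0$ your representation yields only $\hat Y_0\geq 0$ and $P(\hat Y_0>0)>0$, and the deterministic strict inequality can genuinely fail: take $A\in\mathcal{F}_0$ with $0<P(A)<1$, $\xi=\mathbf{1}_A$, $\xi'=0$, $g=g'=0$; then $Y_0=\mathbf{1}_A$, which is not strictly positive a.s. So the assertion ``in particular $Y_0>Y_0'$'' is inherited from the setting of \cite{BBP}, where the filtration starts fresh at time $0$ and $\mathcal{F}_0$ is trivial; in the present paper it should be read as $P(Y_0>Y_0')>0$. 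This is a defect of the statement as transplanted into this paper rather than of your argument, but your appeal to the triviality of $\mathcal{F}_0$ is the one step that does not transfer verbatim.
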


Using the notation introduced in Lemma \ref{l2}, we suppose that,
for some
$g:\Omega \times[0,T]\times \mathbb{R}\times \mathbb{R}^{d}\times L^{2}%
(E,\mathcal{B}(E),\lambda;\mathbb{R})\rightarrow \mathbb{R}$ satisfying $(
\mathbf{H2.1})$, and for  $i\in \{1,2\}$, the drivers $g_{i}$ are of the
form
\[
g_{i}(s,y_{s}^{i},z_{s}^{i},k_{s}^{i})=g(s,y_{s}^{i},z_{s}^{i},k_{s}%
^{i})+\varphi_{i}(s),\ dsdP\mbox{-}a.e.
\]

\begin{lemma}
\label{l1.1} The difference of the solutions $(Y^{1},Z^{1},K^{1})$ and
$(Y^{2},Z^{2},K^{2})$ of BSDE (\ref{equ2.1}) with the data $(\xi_{1},g_{1})$
and $(\xi_{2},g_{2})$, respectively, satisfies the following estimate:
\[%
\begin{array}
[c]{llll}
&  & |Y_{t}^{1}-Y_{t}^{2}|^{2}+{\frac{1}{2}}E[\int_{t}%
^{T}e^{\beta(s-t)}(|Y_{s}^{1}-Y_{s}^{2}|^{2}+|Z_{s}^{1}-Z_{s}^{2}%
|^{2})ds|\mathcal{F}_{t}]\\
&&+{\frac{1}{2}}E[\int_{t}^{T}%
\int_{E}e^{\beta(s-t)}|K_{s}^{1}(e)-K_{s}^{2}(e)|^{2}\lambda(de)ds|\mathcal{F}%
_{t}] & \\
&  & \leq E[e^{\beta(T-t)}|\xi_{1}-\xi_{2}|^{2}|\mathcal{F}_{t}%
]+E[\int_{t}^{T}e^{\beta(s-t)}|\varphi_{1}(s)-\varphi
_{2}(s)|^{2}ds|\mathcal{F}_{t}],\  \mbox{P-a.s.},\  \mbox{for all }
0\leq t\leq T, &
\end{array}
\]
where $\beta \geq2+2C+4C^{2}.$
\end{lemma}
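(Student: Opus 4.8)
The plan is to derive the estimate by applying It\^o's formula to the process $e^{\beta(s-t)}|Y^1_s-Y^2_s|^2$ on $[t,T]$ and then taking the conditional expectation $E[\,\cdot\mid\mathcal{F}_t]$. Write $\hat Y=Y^1-Y^2$, $\hat Z=Z^1-Z^2$, $\hat K=K^1-K^2$, $\hat\xi=\xi_1-\xi_2$, $\hat\varphi=\varphi_1-\varphi_2$, and set $\hat g_s=g_1(s,Y^1_s,Z^1_s,K^1_s)-g_2(s,Y^2_s,Z^2_s,K^2_s)$. Since the two drivers share the same $g$ and differ only by $\varphi_i$, we have $\hat g_s=\big(g(s,Y^1_s,Z^1_s,K^1_s)-g(s,Y^2_s,Z^2_s,K^2_s)\big)+\hat\varphi(s)$, and the difference of the two copies of (\ref{equ2.1}) reads $d\hat Y_s=-\hat g_s\,ds+\hat Z_s\,dB_s+\int_E\hat K_s(e)\tilde\mu(ds,de)$, with terminal value $\hat Y_T=\hat\xi$.

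First I would apply the It\^o formula for c\`adl\`ag semimartingales. The continuous martingale part $\hat Z\,dB$ contributes $\int_t^T e^{\beta(s-t)}|\hat Z_s|^2\,ds$, while the jump (second-order) correction produces exactly $\int_t^T\int_E e^{\beta(s-t)}|\hat K_s(e)|^2\,\mu(ds,de)$, because at a jump with mark $e$ one has $|\hat Y_{s-}+\hat K_s(e)|^2-|\hat Y_{s-}|^2-2\hat Y_{s-}\hat K_s(e)=|\hat K_s(e)|^2$. Taking $E[\,\cdot\mid\mathcal{F}_t]$ kills the $dB$- and $\tilde\mu$-stochastic integrals, replaces $\mu(ds,de)$ by its compensator $\lambda(de)\,ds$, uses $\hat Y_{s-}=\hat Y_s$ for Lebesgue-a.e.\ $s$, and yields the identity
\begin{align*}
&|\hat Y_t|^2 + E\Big[\int_t^T e^{\beta(s-t)}\big(\beta|\hat Y_s|^2+|\hat Z_s|^2\big)\,ds + \int_t^T\!\!\int_E e^{\beta(s-t)}|\hat K_s(e)|^2\,\lambda(de)\,ds\,\Big|\,\mathcal{F}_t\Big]\\
&\qquad = E\big[e^{\beta(T-t)}|\hat\xi|^2\,\big|\,\mathcal{F}_t\big] + E\Big[\int_t^T 2e^{\beta(s-t)}\hat Y_s\hat g_s\,ds\,\Big|\,\mathcal{F}_t\Big].
\end{align*}

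The remaining step is to control the cross term $2\hat Y_s\hat g_s$. Using $(\mathbf{H2.1})$ to bound $|g(s,Y^1_s,Z^1_s,K^1_s)-g(s,Y^2_s,Z^2_s,K^2_s)|\le C(|\hat Y_s|+|\hat Z_s|+\|\hat K_s\|)$, together with the elementary inequalities $2C|\hat Y_s||\hat Z_s|\le\tfrac12|\hat Z_s|^2+2C^2|\hat Y_s|^2$, $2C|\hat Y_s|\,\|\hat K_s\|\le\tfrac12\|\hat K_s\|^2+2C^2|\hat Y_s|^2$, and $2|\hat Y_s||\hat\varphi(s)|\le|\hat Y_s|^2+|\hat\varphi(s)|^2$, I obtain
\[
2\hat Y_s\hat g_s \le (1+2C+4C^2)|\hat Y_s|^2 + \tfrac12|\hat Z_s|^2 + \tfrac12\|\hat K_s\|^2 + |\hat\varphi(s)|^2,
\]
where $\|\hat K_s\|^2=\int_E|\hat K_s(e)|^2\lambda(de)$. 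Substituting this into the identity and transferring the $\tfrac12|\hat Z_s|^2$ and $\tfrac12\|\hat K_s\|^2$ terms to the left leaves the coefficient $\beta-(1+2C+4C^2)$ in front of $\int_t^T e^{\beta(s-t)}|\hat Y_s|^2\,ds$; the choice $\beta\ge2+2C+4C^2$ makes this coefficient $\ge1\ge\tfrac12$, so retaining only $\tfrac12$ gives precisely the asserted inequality, the $|\hat\xi|^2$ term coming from $\hat Y_T=\hat\xi$.

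The main technical point requiring care is the rigorous justification that the $dB$- and $\tilde\mu$-integrals have vanishing conditional expectation. A priori the integrands $e^{\beta(s-t)}\hat Y_{s-}\hat Z_s$ and $e^{\beta(s-t)}\big(2\hat Y_{s-}\hat K_s(e)+|\hat K_s(e)|^2\big)$ need not be directly square-integrable, so I would reduce these local martingales by stopping times such as $\tau_n=\inf\{s\ge t:\int_t^s(|\hat Y_r|^2+|\hat Z_r|^2)\,dr+\int_t^s\!\int_E|\hat K_r(e)|^2\mu(dr,de)\ge n\}\wedge T$, take conditional expectations on $[t,\tau_n]$, and pass to the limit $n\to\infty$ using the memberships $\hat Y\in\mathcal{S}^2$, $\hat Z\in\mathcal{M}^2$, $\hat K\in\mathcal{K}^2_\lambda$ (Lemma \ref{l1}) together with dominated and monotone convergence. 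Everything else is the bookkeeping of the It\^o formula with jumps and the elementary inequalities above.
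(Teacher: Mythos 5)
Your proof is correct and is essentially the paper's own argument: the paper gives no proof of this lemma itself, deferring to Barles, Buckdahn and Pardoux \cite{BBP}, where the estimate is obtained by exactly the computation you perform (It\^o's formula with jumps applied to $e^{\beta(s-t)}|\hat Y_s|^2$, Young's inequality on the cross term with the Lipschitz bound from $(\mathbf{H2.1})$, and localization to justify the vanishing of the martingale terms). Your constant bookkeeping is also right: $\beta\geq 2+2C+4C^{2}$ leaves the coefficient $\beta-(1+2C+4C^{2})\geq 1\geq\tfrac12$ in front of the $|\hat Y_s|^{2}$ term, yielding the stated inequality.
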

The reader may refer to Barles, Buckdahn and Pardoux \cite{BBP} for
the proof.

\subsection{{\protect \large Fully coupled FBSDEs with jumps}}

 Now we consider the following fully coupled FBSDE with
jumps associated with $(b, \sigma, h, f, \zeta, \Phi)$\ on the time interval $[t, T]$, where $t\in [0, T]$:
\begin{equation}
\left \{
\begin{array}
[c]{llll}%
dX_{s} & = & b(s,X_{s},Y_{s},Z_{s},K_{s})ds+\sigma(s,X_{s},Y_{s},Z_{s}%
,K_{s})dB_{s} +\int_{E}h(s,X_{s-},Y_{s-},Z_{s},K_{s}(e),e)\tilde{\mu
}(ds,de), & \\
dY_{s} & = & -f(s,X_{s},Y_{s},Z_{s},\int_{E}K_{s}(e)l(e)\lambda(de))ds+Z_{s}dB_{s}+\int_{E}K_{s}(e)\tilde{\mu
}(ds,de),\  \  \ s\in \lbrack t,T], & \\
X_{t} & = & \zeta, & \\
Y_{T} & = & \Phi(X_{T}), &
\end{array}
\right.  \label{equ2.2}%
\end{equation}
where $(X, Y, Z, K)$\ takes its values in $ \mathbb{R}^{n}\times \mathbb{R}^{m}\times
\mathbb{R}^{m\times d}\times \mathbb{R}^{m}$, and
 $$\begin{array}{llll}
&&b:\Omega \times \lbrack0,T]\times \mathbb{R}^{n}\times
\mathbb{R}^{m}\times \mathbb{R}^{m\times d}\times
L^{2}(E,\mathcal{B}(E),\lambda;\mathbb{R}^m)\longrightarrow \mathbb{R}^{n}, \\
&& \sigma
:\Omega \times \lbrack0,T]\times \mathbb{R}^{n}\times \mathbb{R}^{m}%
\times \mathbb{R}^{m\times d}\times L^{2}(E,\mathcal{B}(E),\lambda;\mathbb{R}^m)\longrightarrow \mathbb{R}%
^{n\times d},\\
&& h:\Omega \times \lbrack0,T]\times \mathbb{R}^{n}%
\times \mathbb{R}^{m}\times \mathbb{R}^{m\times d}\times
\mathbb{R}^{m}\times E\longrightarrow \mathbb{R}^{n},\\
&&f:\Omega
\times \lbrack0,T]\times \mathbb{R}^{n}\times \mathbb{R}^{m}\times
\mathbb{R}^{m\times d}\times \mathbb{R}^{m}\longrightarrow
\mathbb{R}^{m}, \end{array}$$
 $l:E\longrightarrow \mathbb{R}$ and $\Phi:\Omega \times \mathbb{R}^{n}%
\longrightarrow \mathbb{R}^{m}$ satisfy

\begin{description}
\item[$( \mathbf{H2.2})$] (i) $b,\  \sigma,\ f$ are uniformly Lipschitz with respect to $(x,y,z,k),$ and there exists $\rho:E\rightarrow
\mathbb{R}^{+}$ with $\int_{E}\rho^{2}(e)\lambda(de)<+\infty$
such that, for any $t\in[0,T],\ x,\bar{x}\in \mathbb{R}^{n},\ y,\bar{y}%
\in \mathbb{R}^{m},\ z,\bar{z}\in \mathbb{R}^{m\times d},\ k,\bar{k}%
\in \mathbb{R}^{m}$ and $e\in E$,
\[
|h(t,x,y,z,k,e)-h(t,\bar{x},\bar{y},\bar{z},\bar{k},e)|\leq \rho(e)(|x-\bar
{x}|+|y-\bar{y}|+|z-\bar{z}|+|k-\bar{k}|);
\]
(ii) $k\rightarrow f(t,x,y,z,k)$ is non-decreasing, for all $(t,x,y,z)\in
[0,T]\times \mathbb{R}^{n}\times \mathbb{R}^{m}\times \mathbb{R}^{m\times d}; $

(iii) there exists a constant $C>0$ such that
\[
0\leq l(e)\leq C(1\wedge|e|),\ x\in \mathbb{R}^{n},\ e\in E;
\]
(iv) $\Phi(x)$ is uniformly Lipschitz with respect to $x\in \mathbb{R}^{n};$

(v) for every $(x,y,z,k)\in \mathbb{R}^{n}\times
\mathbb{R}^{m}\times \mathbb{R}^{m\times d}\times \mathbb{R}^{m}, \
\Phi(x)\in L^{2}(\Omega ,\mathcal{F}_{T},P;\mathbb{R}^m)$, $b,\
\sigma,\ h,\ f$ are $\mathbb{F}$-progressively measurable and
\[%
\begin{array}
[c]{ll}%
E\int_{0}^{T}|b(s,0,0,0,0)|^{2}ds+E\int_{0}%
^{T}|f(s,0,0,0,0)|^{2}ds+E\int_{0}^{T}|\sigma(s,0,0,0,0)|^{2}%
ds & \\
+E\int_{0}^{T}\int_{E}|h(s,0,0,0,0,e)|^{2}%
\lambda(de)ds<\infty. &
\end{array}
\]
\end{description}

Let
\[
g(s,x,y,z,k):=f(s,x,y,z,\int_{E}k(e)l(e)\lambda(de)),
\]
$(s,x,y,z,k)\in[0,T]\times \mathbb{R}^{n}\times \mathbb{R}^{m}\times
\mathbb{R}^{m\times d}\times L^{2}(E,\mathcal{B}(E),\lambda;\mathbb{R}).$

In this paper we use the usual inner product and the Euclidean norm in
$\mathbb{R}^{n},\  \mathbb{R}^{m}$ and $\mathbb{R}^{m\times d},$ respectively.
Given an $m \times n$ full-rank matrix $G$, we define:
\[
\pi= \  \left(
\begin{array}
[c]{c}%
x\\
y\\
z
\end{array}
\right)  \ , \  \  \  \  \  \  \  \  \  \ A(t,\pi,k)= \  \left(
\begin{array}
[c]{c}%
-G^{T}g\\
Gb\\
G\sigma
\end{array}
\right)  (t,\pi,k),
\]
where $G^{T}$ is the transposed matrix of $G$.

We assume the following monotonicity conditions:
\begin{description}
\item[$( \mathbf{H2.3})$] {\rm (i)}
 $\begin{array}[c]{llll}&&\langle
A(t,\pi,k)-A(t,\bar{\pi},\bar{k}),\pi-\bar{\pi} \rangle
+\int_{E}\langle G\widehat{h}(e), \widehat{k}(e)\rangle
\lambda(de) \\
&& \leq-\beta_{1}|G\widehat{x}|^{2}-\beta_{2}(  |G^{T} \widehat{y}%
|^{2}+|G^{T} \widehat{z}|^{2})-\beta_{3}\int_{E}|G^{T} \widehat{k}%
(e)|^{2}\lambda(de),
\end{array}
$

(ii) $\langle \Phi(x)-\Phi(\bar{x}),G(x-\bar{x}) \rangle \geq \mu_{1}%
|G\widehat{x}|^{2},\  \forall \pi=(x,y,z),\  \bar{\pi}=(\bar{x},\bar{y},\bar
{z}),\  \widehat{x}=x-\bar{x},\  \widehat{y}=y-\bar{y},\  \widehat{z}=z-\bar
{z},\  \widehat{k}=k-\bar{k},\  \widehat{h}(e)=h(t,\pi,k,e)-h(t,\bar{\pi}%
,\bar{k},e)$, \newline where $\beta_{1},\ \beta_{2},\ \beta_{3},\
\mu_{1}$ are nonnegative constants with $\beta_{1} + \beta_{2}>0,\
\beta_{1} + \beta_{3}>0,\ \beta_{2} + \mu_{1}>0,\ \beta_{3} +
\mu_{1}>0$. Moreover, we have $\beta_{1}>0,\ \mu_{1}>0 \
(\mbox{resp., }\beta_{2}>0,\ \beta_{3}>0)$, when $m>n$ (resp.,
$m<n$).
\end{description}

\begin{remark}
\begin{description}\item[$( \mathbf{H2.3})$-(ii)']  A consequence of  $( \mathbf{H2.3})$ {\rm(ii)} is the weaker condition:
$\langle \Phi(x)-\Phi(\bar{x}),G(x-\bar{x})
\rangle \geq0$, for all $x,\ \bar{x}\in\mathbb{R}^n.$ \end{description}
\end{remark}

When $\Phi(x)=\xi\in L^{2}(\Omega,\mathcal{F}_{T},P;\mathbb{R}^{m})$, $( \mathbf{H2.3})$-(i) can be weaken as follows:

\begin{description}\item[$( \mathbf{H2.4})$] $\langle
A(t,\pi,k)-A(t,\bar{\pi},\bar{k}),\pi-\bar{\pi} \rangle
+\int_{E}\langle G\widehat{h}(e), \widehat{k}(e)\rangle
\lambda(de)  \leq-\beta_{1}|G\widehat{x}|^{2}-\beta_{2}|G^{T} \widehat{y}
|^{2},$

where $\beta_1,\ \beta_2$ are nonnegative constants with $\beta_1 +
\beta_2>0$. Moreover, we have $\beta_1>0$ (resp., $\beta_2>0$), when
$m>n$ (resp., $m<n$).
\end{description}

\begin{lemma}
\label{l3} Under the assumptions $( \mathbf{H2.2})$ and $(\mathbf{H2.3})$, for
any $\zeta \in L^{2}(\Omega,\mathcal{F}_{t},P;\mathbb{R}^{n})$, FBSDE
(\ref{equ2.2}) has a unique adapted solution $(X_{s},Y_{s},Z_{s},K_{s})_{s
\in[t,T]}\in{\mathcal{S}}^{2}(t, T; {\mathbb{R}^{n}})\times{\mathcal{S}}%
^{2}(t, T; {\mathbb{R}^{m}})\times{\mathcal{M}}^{2}(t, T; {\mathbb{R}^{m\times
d}})\times \mathcal{K}_{\lambda}^{2}(t, T; \mathbb{R}^{m}).$
\end{lemma}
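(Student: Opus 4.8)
The plan is to establish uniqueness first and then existence by the method of continuation in the coupling, following the scheme for monotone FBSDEs of Peng--Wu type adapted to the jump setting (as in Wu \cite{W1999,W2003}). The single engine behind every estimate is It\^o's formula applied to the scalar process $\langle G\widehat{X}_s,\widehat{Y}_s\rangle$, where $(\widehat X,\widehat Y,\widehat Z,\widehat K)$ is the difference of two candidate solutions, combined with the monotonicity assumption $(\mathbf{H2.3})$.

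For uniqueness, let $(X,Y,Z,K)$ and $(\bar X,\bar Y,\bar Z,\bar K)$ solve (\ref{equ2.2}) with identical data and set $\widehat X=X-\bar X$, and so on. Applying It\^o's formula to $\langle G\widehat X_s,\widehat Y_s\rangle$ on $[t,T]$, the Brownian and compensated-Poisson integrals are genuine martingales thanks to the $\mathcal S^2\times\mathcal S^2\times\mathcal M^2\times\mathcal K^2_\lambda$ integrability asserted in the statement; taking expectations and using $\widehat X_t=0$ gives
\[
E\langle \Phi(X_T)-\Phi(\bar X_T),G\widehat X_T\rangle=E\int_t^T\Big(\langle A(s,\pi_s,K_s)-A(s,\bar\pi_s,\bar K_s),\widehat\pi_s\rangle+\int_E\langle G\widehat h_s(e),\widehat K_s(e)\rangle\lambda(de)\Big)ds,
\]
where the identification $g(s,\pi,k)=f(s,\pi,\int_E k(e)l(e)\lambda(de))$ makes the drift, diffusion and jump contributions assemble exactly into the left-hand side of $(\mathbf{H2.3})$-(i). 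Bounding the right-hand side above by $-\beta_1|G\widehat X|^2-\beta_2(|G^T\widehat Y|^2+|G^T\widehat Z|^2)-\beta_3\int_E|G^T\widehat K|^2\lambda(de)$ and the left-hand side below by $\mu_1|G\widehat X_T|^2$ through $(\mathbf{H2.3})$-(ii), I find a nonnegative quantity dominated by a nonpositive one, so every term vanishes. Reading off which terms carry strictly positive constants in the three regimes $m>n$, $m<n$, $m=n$, and using that $G$ (resp. $G^T$) is injective when it has full column (resp. row) rank, I conclude $\widehat X\equiv 0$ (resp. $\widehat Y=\widehat Z=\widehat K\equiv 0$); the remaining components then vanish by uniqueness for the decoupled SDE, resp. by uniqueness for BSDEs with jumps (Lemma \ref{l1}).

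For existence I would embed (\ref{equ2.2}) into the one-parameter family of FBSDEs whose generator is $A^\alpha=\alpha A+(1-\alpha)A^0$, whose jump coefficient is $\alpha h+(1-\alpha)h^0$, and whose terminal map is $\alpha\Phi+(1-\alpha)\mu_1 Gx$, where $(A^0,h^0)$ is a reference pair that still satisfies $(\mathbf{H2.3})$ and for which the $\alpha=0$ system decouples into a forward SDE and an independent BSDE with jumps, hence is uniquely solvable by classical SDE theory and Lemma \ref{l1}; monotonicity of $A^\alpha$ is then inherited from the convex combination, uniformly in $\alpha$. Let $P(\alpha)$ be the assertion that, for arbitrary forcing terms in the relevant $L^2$-spaces and arbitrary $\xi\in L^2(\Omega,\mathcal F_T,P;\mathbb R^m)$, the $\alpha$-system is uniquely solvable. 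One knows $P(0)$. The crux is the a priori bridging estimate: there exists $\delta_0>0$, depending only on the Lipschitz and monotonicity constants, on $G$ and on $T$ but not on $\alpha_0$, such that $P(\alpha_0)\Rightarrow P(\alpha_0+\delta)$ for every $\delta\in(0,\delta_0]$. This is proved by a Picard iteration that solves, at each step, the solvable $\alpha_0$-system with forcing augmented by $\delta$ times $(A-A^0,h-h^0,\Phi-\mu_1 Gx)$ evaluated at the previous iterate; the same It\^o computation on $\langle G\widehat X,\widehat Y\rangle$, now carrying the forcing and assisted by the BSDE stability estimate of Lemma \ref{l1.1}, shows the iteration map is a contraction on $\mathcal S^2\times\mathcal S^2\times\mathcal M^2\times\mathcal K^2_\lambda$ once $\delta\le\delta_0$. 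Iterating from $\alpha=0$ reaches $\alpha=1$ in finitely many steps, which is precisely (\ref{equ2.2}).

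The step I expect to be the main obstacle is making the bridging/contraction constant $\delta_0$ uniform in $\alpha_0$ in the presence of jumps: the terms $\int_E\langle G\widehat h(e),\widehat K(e)\rangle\lambda(de)$ and $\int_E|G^T\widehat K(e)|^2\lambda(de)$ must be absorbed using the $\rho(e)$-Lipschitz bound in $(\mathbf{H2.2})$-(i) together with $\int_E\rho^2(e)\lambda(de)<\infty$, and the chosen $\delta_0$ must stay valid simultaneously across the three dimensional regimes $m>n$, $m<n$, $m=n$ isolated in $(\mathbf{H2.3})$. Producing a single $\delta_0$ that does not degenerate as one of the constants $\beta_1,\beta_2,\beta_3,\mu_1$ tends to zero is the delicate point, and is exactly where the mixed positivity conditions $\beta_1+\beta_2>0,\ \beta_1+\beta_3>0,\ \beta_2+\mu_1>0,\ \beta_3+\mu_1>0$ must be exploited.
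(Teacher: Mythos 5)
The paper gives no proof of this lemma at all --- it simply defers to Wu \cite{W1999, W2003} --- and those references establish the result by exactly the route you describe: uniqueness via It\^o's formula applied to $\langle G\widehat{X}_s,\widehat{Y}_s\rangle$ combined with the monotonicity condition $(\mathbf{H2.3})$, and existence by the Peng--Wu continuation method with a bridging step of length $\delta_0$ uniform in $\alpha_0$. Your proposal is therefore correct and essentially coincides with the paper's (cited) proof; the details you defer --- the localization argument making the stochastic integrals true martingales, and the uniform-in-$\alpha_0$ contraction estimate exploiting the mixed positivity of $\beta_1,\beta_2,\beta_3,\mu_1$ --- are precisely what is carried out in Wu's papers.
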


\begin{lemma}
\label{le2.2} Under the assumptions $( \mathbf{H2.3})$-{\rm(ii)'} and $(\mathbf{H2.4})$, for
any $\zeta \in L^{2}(\Omega,\mathcal{F}_{t},P;\mathbb{R}^{n})$ and the terminal condition $\Phi(x)=\xi\in L^{2}(\Omega,\mathcal{F}_{T},P;\mathbb{R}^{m})$, FBSDE
(\ref{equ2.2}) has a unique adapted solution $(X_{s},Y_{s},Z_{s},K_{s})_{s
\in[t,T]}\in{\mathcal{S}}^{2}(t, T; {\mathbb{R}^{n}})\times{\mathcal{S}}%
^{2}(t, T; {\mathbb{R}^{m}})\times{\mathcal{M}}^{2}(t, T; {\mathbb{R}^{m\times
d}})\times \mathcal{K}_{\lambda}^{2}(t, T; \mathbb{R}^{m}).$
\end{lemma}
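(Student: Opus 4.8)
The plan is to prove existence and uniqueness by the method of continuation in a parameter, as developed for monotone forward--backward systems by Peng--Wu and Yong and adapted to the jump setting by Wu \cite{W1999, W2003}. The decisive simplification here is that the terminal condition is the fixed random variable $\xi$, independent of $X_{T}$, so that $(\mathbf{H2.3})$-{\rm(ii)}' holds with equality; this is exactly what permits weakening the generator monotonicity from $(\mathbf{H2.3})$-(i) to $(\mathbf{H2.4})$, since the penalties on $\widehat{z}$ and $\widehat{k}$ present in $(\mathbf{H2.3})$-(i) are only there to absorb a nonzero terminal contribution.

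The core is an a priori estimate. Given two data sets with solutions $(X^{i},Y^{i},Z^{i},K^{i})$, $i=1,2$, and writing $\widehat{X}=X^{1}-X^{2}$ and so on, I would apply It\^{o}'s formula to $\langle G\widehat{X}_{s},\widehat{Y}_{s}\rangle$ on $[t,T]$ and take expectations, so that the integrals against $dB$ and $\tilde{\mu}$ drop out as martingales. The compensator of the covariation of the two Poisson integrals contributes precisely $\int_{E}\langle G\widehat{h}(e),\widehat{K}(e)\rangle\lambda(de)$, so the drift integrand is $\langle A(s,\pi^{1},k^{1})-A(s,\pi^{2},k^{2}),\widehat{\pi}\rangle+\int_{E}\langle G\widehat{h}(e),\widehat{K}(e)\rangle\lambda(de)$ along the solutions, which by $(\mathbf{H2.4})$ is $\le-\beta_{1}|G\widehat{X}|^{2}-\beta_{2}|G^{T}\widehat{Y}|^{2}$. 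This gives
\[
E[\langle G\widehat{X}_{T},\widehat{Y}_{T}\rangle]-E[\langle G\widehat{X}_{t},\widehat{Y}_{t}\rangle]\le-E\int_{t}^{T}\big(\beta_{1}|G\widehat{X}_{s}|^{2}+\beta_{2}|G^{T}\widehat{Y}_{s}|^{2}\big)\,ds.
\]
For uniqueness, two solutions of the same equation have $\widehat{X}_{t}=0$ and $\widehat{Y}_{T}=0$ (constant $\Phi$), so the left side vanishes and $\beta_{1}|G\widehat{X}|^{2}+\beta_{2}|G^{T}\widehat{Y}|^{2}=0$ a.e., with no need for $\widehat{z},\widehat{k}$ penalties on the right. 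Using the full rank of $G$: when $m>n$ we have $\beta_{1}>0$, so $\widehat{X}=0$, and then $(\widehat{Y},\widehat{Z},\widehat{K})$ solves a BSDE with zero terminal and Lipschitz generator, whence the uniqueness part of Lemma \ref{l1} forces it to vanish; when $m<n$ we have $\beta_{2}>0$, so $\widehat{Y}=0$, the backward martingale part forces $\widehat{Z}=\widehat{K}=0$, and the forward SDE gives $\widehat{X}=0$ by Gr\"{o}nwall; the case $m=n$ is covered by either subcase.

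For existence, I would interpolate with $\alpha\in[0,1]$, replacing $(b,\sigma,h,f)$ by $(\alpha b,\alpha\sigma,\alpha h,\alpha f+(1-\alpha)\ell)$, where the linear map $\ell$ is chosen so that the associated $A^{\alpha}=\alpha A+(1-\alpha)A_{0}$ still satisfies a monotonicity of the form $(\mathbf{H2.4})$ uniformly in $\alpha$ and so that the system at $\alpha=0$ decouples: for $m>n$ take $A_{0}(\pi)=(-\beta_{1}G^{T}Gx,0,0)^{T}$, so that $X^{0}\equiv\zeta$ and $Y^{0}$ solves a plain BSDE (Lemma \ref{l1}); for $m<n$ take $A_{0}(\pi)=(0,-\beta_{2}GG^{T}y,0)^{T}$, so that $Y^{0}_{s}=E[\xi\,|\,\mathcal{F}_{s}]$ and the forward equation is then explicit. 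One then shows a fixed $\delta_{0}>0$, depending only on the Lipschitz and monotonicity constants, such that solvability at $\alpha_{0}$ implies solvability on $[\alpha_{0},\alpha_{0}+\delta_{0}]$: one freezes the known iterate in the $\alpha$-increment, solves the resulting system, and uses the a priori estimate (now with forcing coming from the increment) to show the solution map is a contraction on $\mathcal{S}^{2}\times\mathcal{S}^{2}\times\mathcal{M}^{2}\times\mathcal{K}^{2}_{\lambda}$. Since $\delta_{0}$ is uniform, finitely many steps reach $\alpha=1$, the target FBSDE.

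The main obstacle is the jump part of these estimates: one must check that It\^{o}'s formula for the c\`{a}dl\`{a}g semimartingales contributes the covariation term $\int_{E}\langle G\widehat{h}(e),\widehat{K}(e)\rangle\lambda(de)$ with the sign that lets $(\mathbf{H2.4})$ absorb it, and that the compensated Poisson integrals entering the contraction estimate are genuine martingales, which is guaranteed by $\mathcal{K}^{2}_{\lambda}$-integrability together with the bound $\rho(e)$ in $(\mathbf{H2.2})$-(i). Everything else is the standard monotone-FBSDE machinery.
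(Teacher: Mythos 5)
Your proposal is correct and follows essentially the same route as the paper: the paper gives no argument of its own for Lemma \ref{le2.2}, deferring to Wu \cite{W1999, W2003}, and what you reconstruct (the It\^{o} estimate on $\langle G\widehat{X}_{s},\widehat{Y}_{s}\rangle$ absorbing the jump covariation, the case split on $m>n$ versus $m<n$ for uniqueness, and the Peng--Wu continuation with a decoupled $\alpha=0$ system and a uniform step size) is precisely that cited argument adapted to the jump setting. The only slip is cosmetic: your displayed interpolation puts the auxiliary linear term $(1-\alpha)\ell$ in $f$, which matches your $A_{0}$ only in the case $m>n$; for $m<n$ it must be inserted in $b$, exactly as your second choice of $A_{0}$ already indicates.
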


For the proof, the reader can refer to Wu \cite{W1999, W2003}.

Now we recall the comparison theorem for fully coupled FBSDEs with jumps.

\begin{lemma}
\label{l4} (Comparison Theorem) Let $m=1$ and assume that $(b, \sigma
, h, f^{i}, a^{i}, \Phi^{i})$, for $i=1,2,$ satisfy $(\mathbf{H2.2}), \ (
\mathbf{H2.3})$, where $b,\  \sigma,\ h$ do not depend on $k$, $a_i$ is the initial state. Let $(x_{s}%
^{i},y_{s}^{i},z_{s}^{i}, k_{s}^{i})_{t\leq s \leq T}$ be the solution of
FBSDE (\ref{equ2.2}) associated with $(b,\sigma,h,f^{i},a^{i},\Phi^{i})$\ on the time interval $[t, T]$. We
assume $a^{1}\geq a^{2},\  \Phi^{1}(x)\geq \Phi^{2}(x),\ f^{1}(t,x,y,z,k)\geq
f^{2}(t,x,y,z,k),\  \mbox{P-a.s.}$, for all $(x,y,z,k)\in \mathbb{R}^{n}%
\times \mathbb{R}\times \mathbb{R}^{d}\times \mathbb{R},$ then, $y_{t}^{1}\geq
y_{t}^{2},\  \mbox{P-a.s.}$
\end{lemma}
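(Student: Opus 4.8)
The plan is to pass to the difference processes $\hat{x}_s=x^1_s-x^2_s$, $\hat{y}_s=y^1_s-y^2_s$, $\hat{z}_s=z^1_s-z^2_s$, $\hat{k}_s=k^1_s-k^2_s$ and to exploit the monotonicity condition $(\mathbf{H2.3})$. Since $b,\sigma,h$ are common to both systems and Lipschitz and $f^1$ is Lipschitz, I would first linearize: writing $\Theta^i_s=(x^i_s,y^i_s,z^i_s,k^i_s)$, each increment $b(s,\Theta^1_s)-b(s,\Theta^2_s)$, and likewise for $\sigma,h,f^1$, is represented as a bounded-coefficient linear combination of $\hat{x}_s,\hat{y}_s,\hat{z}_s,\hat{k}_s$ via the usual measurable-coefficient device valid under Lipschitz continuity. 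This turns $(\hat{x},\hat{y},\hat{z},\hat{k})$ into the solution of a \emph{linear} fully coupled FBSDE whose driver carries the nonnegative source $f^1(s,\Theta^2_s)-f^2(s,\Theta^2_s)\geq0$, whose terminal value is $\hat{y}_T=D\,\hat{x}_T+(\Phi^1-\Phi^2)(x^2_T)$ with $(\Phi^1-\Phi^2)(x^2_T)\geq0$, and whose initial value is $\hat{x}_t=a^1-a^2$.

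The natural first computation is Itô's product formula for $\langle G\hat{x}_s,\hat{y}_s\rangle$. Using $(\mathbf{H2.3})$-(i) to bound the drift contribution of $A$ (built from the data $(b,\sigma,h,f^1)$) by $-\beta_1|G\hat{x}_s|^2-\beta_2(|G^{T}\hat{y}_s|^2+|G^{T}\hat{z}_s|^2)-\beta_3\int_E|G^{T}\hat{k}_s(e)|^2\lambda(de)$, and $(\mathbf{H2.3})$-(ii) to bound the terminal term below by $\mu_1 E|G\hat{x}_T|^2$, one obtains an inequality that controls $E\int_t^T|G\hat{x}_s|^2 ds$ and $E|G\hat{x}_T|^2$ but relates $\hat{y}_t$ to the data only through $\langle G\hat{x}_t,\hat{y}_t\rangle=\langle G(a^1-a^2),\hat{y}_t\rangle$. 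This is the \emph{main obstacle}: the quantity $\langle G\hat{x}_s,\hat{y}_s\rangle$ does not see $\hat{y}_t$ when $a^1=a^2$ (it degenerates to $0\geq0$), because the full coupling deprives $G\hat{x}_s$ of any a priori sign, so the nonnegativity of the source and of the terminal perturbation cannot be converted directly into $\hat{y}_t\geq0$.

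To resolve this I would decouple the linear difference system. Invoking the small-time existence and $L^p$-estimates for fully coupled FBSDEs with jumps of Li, Wei \cite{LiWei-Lp}, the system is a contraction on a short interval $[T-\delta,T]$, which permits a decoupling representation $\hat{y}_s=p_s\hat{x}_s+\varphi_s$ there, with $p$ a bounded process and $\varphi$ the solution of a linear scalar BSDE with jumps driven by the nonnegative source and with nonnegative terminal value. Here the scalar nature $m=1$ is essential: it makes the ordering of $Y$ meaningful and places $\varphi$ within the scope of the comparison theorem for scalar BSDEs with jumps (Lemma \ref{l2}), whose $k$-monotonicity hypothesis is supplied by $(\mathbf{H2.2})$-(ii) together with the structure $0\leq l(e)\leq C(1\wedge|e|)$; this yields $\varphi_t\geq0$. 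The monotonicity $(\mathbf{H2.3})$ is precisely what makes the decoupling field globally well defined — the nondegeneracy relations among $\beta_1,\beta_2,\beta_3,\mu_1$ prevent blow-up — and $G$-monotone in the forward variable, so that the order $a^1\geq a^2$ propagates through $p_t$ with the correct sign. Gluing the short-interval decoupling fields backward from $T$ to $t$ and adding the two nonnegative contributions gives $\hat{y}_t\geq0$, i.e. $y^1_t\geq y^2_t$. The delicate step is establishing the global solvability and the sign of the decoupling field from $(\mathbf{H2.3})$; for this I would follow the monotonicity method of Wu \cite{W1999,W2003}.
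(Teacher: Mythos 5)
The paper itself offers no proof to compare against: Lemma \ref{l4} is imported verbatim from Wu \cite{W2003} (``The above lemma can be found in \cite{W2003}''), so your argument has to stand on its own. Your diagnosis of the obstacle is correct and is indeed the heart of the matter: the It\^{o}/monotonicity computation on $\langle \hat{y}_s,G\hat{x}_s\rangle$ only yields information in the direction of $G$. But the step on which your whole proof turns --- that $(\mathbf{H2.3})$ makes the decoupling field ``$G$-monotone in the forward variable, so that the order $a^1\geq a^2$ propagates through $p_t$ with the correct sign'' --- is a genuine gap. $G$-monotonicity is the statement $\langle p_t\xi,G\xi\rangle\geq 0$ (this is exactly what the computation behind the paper's Lemma \ref{l8} gives); to convert it into $p_t(a^1-a^2)\geq 0$ you need $G(a^1-a^2)\geq 0$, i.e.\ positivity of the entries of $G$, which is neither assumed in the lemma nor a consequence of full rank plus $(\mathbf{H2.3})$. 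This is not a removable technicality: take $n=m=1$, $G=-1$, $b=\sigma=h=0$, $f^1=f^2=f(x)=-x$, $\Phi^1=\Phi^2=\Phi(x)=-x$. Then $(\mathbf{H2.2})$, $(\mathbf{H2.3})$ hold with $\beta_1=\mu_1=1$, $\beta_2=\beta_3=0$ (admissible since $m=n$), yet the unique solution is $x_s\equiv a$, $y_s=-a(1+T-s)$, so $a^1>a^2$ forces $y^1_t<y^2_t$: the order is reversed. Hence no argument can close your sign-propagation step from the stated hypotheses alone; the positivity of $G$ (in this paper the normalization $G=1$ of Remark \ref{re3.5}, and an explicit hypothesis in Wu's original theorem) must be invoked exactly where you invoke ``the correct sign'' --- and that is precisely the step you defer to ``the monotonicity method of Wu'' rather than prove. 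The decisive point of the comparison theorem is thus asserted, not established.

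Two further problems remain even if you grant $G=1$. First, the linearized difference system inherits $(\mathbf{H2.3})$ only along the particular difference $(\hat{x},\hat{y},\hat{z},\hat{k})$ (the difference quotients are built from the two given trajectories), not as a quadratic-form condition valid for arbitrary vectors; but the representation $\hat{y}_s=p_s\hat{x}_s+\varphi_s$ requires solving the linear FBSDE for all initial data, so its well-posedness and the boundedness of $p$ do not simply follow from Lemma \ref{l3} or from \cite{LiWei-Lp}. Second, the backward gluing needs quantitative propagation of the hypotheses: on $[T-2\delta,T-\delta]$ the terminal condition becomes $p_{T-\delta}\hat{x}_{T-\delta}+\varphi_{T-\delta}$, and to restart the small-time construction with a uniform $\delta$ you must show its linear part satisfies the analogue of $(\mathbf{H2.3})$-(ii) with a constant $\mu_1'>0$ uniform over the iteration, not merely $\geq 0$. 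A cleaner organization localizes both difficulties: insert the intermediate solution with data $(b,\sigma,h,f^1,a^2,\Phi^1)$. Comparing it with solution $1$ (same coefficients, ordered initial states) needs only the Lemma \ref{l8} computation, and this is the only place where $G\geq 0$ enters; comparing it with solution $2$ (same initial state, ordered $f$ and $\Phi$) is where your decoupling/BSDE-comparison idea belongs, and there $\hat{x}_t=0$ kills the troublesome homogeneous term, so only $\varphi_t\geq 0$ is required.
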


The above lemma can be found in \cite{W2003}.

\section{{\protect \large {A DPP for stochastic differential games for fully
coupled FBSDEs with jumps}}}

 In this section, we consider  stochastic differential
games for fully coupled FBSDEs with jumps. First we introduce the
background of stochastic differential games. Suppose that the
control state spaces $U,\ V$ are compact metric spaces. By
${\mathcal{U}}$ (resp., ${\mathcal{V}}$) we denote the admissible
control set of all $U$ (resp., $V$)-valued
$\mathcal{F}_{t}$-predictable processes for the first (resp.,
second) player. If $u\in \mathcal{U}$ (resp., $v\in \mathcal{V}$),
we call $u$ (resp., $v$) an admissible control.

Let us give the following deterministic measurable functions
$$
\begin{array}
[c]{l}%
b: [0,T] \times \mathbb{R} \times \mathbb{R} \times \mathbb{R}^{d}
\times
U\times V \longrightarrow \mathbb{R},\ \ \ \ \ \ \ \ \ \
\sigma: [0,T] \times \mathbb{R} \times \mathbb{R} \times
\mathbb{R}^{d}
\times U\times V \longrightarrow \mathbb{R}^{d},\\
h: [0,T] \times \mathbb{R} \times \mathbb{R} \times
\mathbb{R}^{d}\times
U\times V \times E \longrightarrow \mathbb{R},\  \  \
f: [0,T] \times \mathbb{R} \times \mathbb{R} \times \mathbb{R}^{d}
\times \mathbb{R}\times U\times V \longrightarrow \mathbb{R},\end{array}$$
and $l: E \longrightarrow \mathbb{R},\
\Phi: \mathbb{R} \longrightarrow \mathbb{R},$
which are continuous in $(t, u,v)$, and satisfy the assumptions $(
\mathbf{H2.2}), \ ( \mathbf{H2.3})$, uniformly in $u\in U,\ v\in V.$

For given admissible controls $u(\cdot)\in{\mathcal{U}}$, $v(\cdot
)\in{\mathcal{V}}$ and the initial data $(t,\zeta)\in[0,T]\times
L^{2} (\Omega,{\mathcal{F}}_{t}, P;{\mathbb{R}})$, we consider the
following fully coupled forward-backward stochastic system
\begin{equation}
\label{equ3.1}\left \{
\begin{array}
[c]{llll}%
dX_{s}^{t,\zeta;u,v} & = & b(s,\Pi_{s}^{t,\zeta;u,v},u_{s},v_{s})ds
+ \sigma(s,\Pi_{s}^{t,\zeta;u,v},u_{s},v_{s}) dB_{s}+ \int
_{E}h(s,\Pi_{s-}^{t,\zeta;u,v},u_{s},v_{s},e)\tilde{\mu}(ds,de),\
 & \\
dY_{s}^{t,\zeta;u,v} & = & -f(s,\Pi_{s}^{t,\zeta;u,v}, \int
_{E}K_{s}^{t,\zeta;u,v}(e)l(e)\lambda(de),u_{s},v_{s})ds+
Z_{s}^{t,\zeta;u,v}dB_{s}  +\int_{E}K_{s}^{t,\zeta;u,v}(e)\tilde{\mu}(ds,de), & \\
X_{t}^{t,\zeta;u,v} & = & \zeta, & \\
Y_{T}^{t,\zeta;u,v} & = & \Phi(X_{T}^{t,\zeta;u,v}), &
\end{array}
\right.
\end{equation}
where $s\in[t,T],\
\Pi_{s}^{t,\zeta;u,v}=(X_{s}^{t,\zeta;u,v},Y_{s}^{t,\zeta;u,v},Z_{s}^{t,\zeta;u,v})$
and $
\Pi_{s-}^{t,\zeta;u,v}=(X_{s-}^{t,\zeta;u,v},Y_{s-}^{t,\zeta;u,v},Z_{s}^{t,\zeta;u,v}).$

Therefore, for any $u(\cdot) \in \mathcal{U},\ v(\cdot) \in
\mathcal{V},$ from Lemma \ref{l3}, we have that FBSDE (\ref{equ3.1})
has a unique solution.

\begin{remark}
Due to the restrictions coming from  the comparison theorem (Lemma
\ref{l4}, Theorem 3.3 in \cite{LiWei-Lp}) which will be used in Section 4, we
emphasize that the coefficients $b,\ \sigma,\ h$ do not depend on
the variable $k$.
\end{remark}

\begin{remark}
Under our assumptions, it is obvious that $b,\sigma,h,f,\Phi$ have
linear growth in $(\pi,k)=(x,y,z,k)$, i.e.,
\[%
\begin{array}
[c]{llll}
&  & |b(t,\pi,u,v)|+|\sigma(t,\pi,u,v)|+|f(t,\pi,k,u,v)|+|\Phi(x)|  \leq C(1+|x|+|y|+|z|+|k|), & \\
&  & |h(t,\pi,u,v,e)|\leq \rho(e)(1+|x|+|y|+|z|), &
\end{array}
\]
for $(t,x,y,z,k,u,v)\in[0,T]\times \mathbb{R}\times \mathbb{R}%
\times \mathbb{R}^{d}\times \mathbb{R}\times U\times V.$
\end{remark}

From Proposition 3.1 in \cite{LiWei-Lp}, for our FBSDE with jumps
(\ref{equ3.1}), it is easy to check that, there exists $C \in
\mathbb{R}^{+}$ such that, for any $t \in[0,T]$, $\zeta,\
\zeta^{\prime}\in L^2(\Omega,\mathcal{F}_{t},P;\mathbb{R}),$
$u(\cdot) \in \mathcal{U},\ v(\cdot) \in \mathcal{V},$ we have,
\mbox{P-a.s.}:
\begin{equation}
\label{ee1}%
\begin{array}
[c]{llll}
&  & \mathrm{(i)}\  \  \ |Y_{t}^{t,\zeta;u,v}| \leq C(1+|\zeta|); & \\
&  & \mathrm{(ii)}\  \
|Y_{t}^{t,\zeta;u,v}-Y_{t}^{t,\zeta^{\prime};u,v}| \leq C|\zeta-
\zeta^{\prime}|. &
\end{array}
\end{equation}

Now, we introduce the subspaces of admissible controls and the
definition of admissible strategies, which are similar to \cite{BL,
BLH, FS}.

\begin{definition}
An admissible control process $u=(u_{r})_{r\in[t,s]}$ (resp., $v=(v_{r}%
)_{r\in[t,s]}$) for Player I (resp., II) on $[t,s]$ is an $\mathcal{F}_{r}%
$-predictable, $U$ (resp., $V$)-valued process. The set of all
admissible controls for Player I (resp., II) on $[t,s]$ is denoted by
$\mathcal{U}_{t,s}$ (resp., $\mathcal{V}_{t,s}).$ If $P\{u\equiv
\bar{u},\ a.e.,\  \mbox{in } [t,s]
\}=1$, we will identify both processes $u$ and $\bar{u}$ in $\mathcal{U}%
_{t,s}.$ Similarly we interpret $v\equiv \bar{v}$ on $[t,s]$ in $\mathcal{V}%
_{t,s}.$
\end{definition}

\begin{definition}
\ A nonanticipative strategy for Player I on $[t,s] \ (t<s\leq T)$
is a mapping $\alpha:\  \mathcal{V}_{t,s}\rightarrow
\mathcal{U}_{t,s}$ such that, for any $\mathcal{F}_{r}$-stopping
time $S:\Omega \rightarrow[t,s]$ and any $v_{1},v_{2}\in
\mathcal{V}_{t,s},$ with $v_{1}\equiv v_{2}$ on $[[t,S]]$, it holds
that $\alpha(v_{1})\equiv \alpha(v_{2})$ on $[[t,S]]$.
Nonanticipative
strategies for Player II on $[t,s]$, $\beta: \  \mathcal{U}_{t,s}%
\rightarrow \mathcal{V}_{t,s},$ are defined similarly. The set of
all nonanticipative strategies $\alpha:\
\mathcal{V}_{t,s}\rightarrow \mathcal{U}_{t,s}$ for Player I on
$[t,s]$ is denoted by $\mathcal{A}_{t,s}.$
The set of all nonanticipative strategies $\beta:\  \mathcal{U}_{t,s}%
\rightarrow \mathcal{V}_{t,s}$ for Player II on $[t,s]$ is denoted
by $\mathcal{B}_{t,s}.$ (Recall that $[[t,S]]
=\{(r,\omega)\in[0,T]\times \Omega,t\leq r \leq S(\omega)$.)
\end{definition}

For given processes $u(\cdot) \in \mathcal{U}_{t,T},\ v(\cdot) \in
\mathcal{V}_{t,T}$, the cost functional is defined as follows:
\begin{equation}
J(t,x;u,v):=Y_{t}^{t,x;u,v}, \  \ (t,x)\in[0,T] \times \mathbb{R},
\end{equation}
where the process $Y^{t,x;u,v}$ is defined by FBSDE (\ref{equ3.1}).

From Theorem 3.1 in \cite{LiWei-Lp} we have, for any $t \in[0,T]$ and $\zeta
\in L^{2}(\Omega,\mathcal{F}_{t},P;\mathbb{R}),$
\begin{equation}
J(t,\zeta;u,v):=Y_{t}^{t,\zeta;u,v}, \mbox{ \mbox{P-a.s.}}
\end{equation}
For $\zeta= x \in \mathbb{R},$ we define the lower value function of
our stochastic differential games
\begin{equation}
\label{ee2}W(t,x) :=\mathop{\rm essinf}_{\beta \in \mathcal{B}_{t,T}%
}\mathop{\rm esssup}_{u\in \mathcal{U}_{t,T}}J(t,x;u,\beta(u)),
\end{equation}
and its upper value function
\begin{equation}
\label{ee3}U(t,x) := \mathop{\rm esssup}_{\alpha \in \mathcal{A}_{t,T}%
}\mathop{\rm essinf}_{v\in \mathcal{V}_{t,T}}J(t,x;\alpha(v),v).
\end{equation}

\begin{remark}
Thanks to the assumptions $( \mathbf{H2.2}),\ ( \mathbf{H2.3})$, the
lower value function $W(t,x)$ and the upper value function $U(t,x)$
are well defined, and they are bounded
${\mathcal{F}}_{t}$-measurable random variables. But they even turn
out to be deterministic.
\end{remark}

Next we will prove that $W,\ U$ are deterministic. The method of
Girsanov transformation for fully coupled FBSDEs without jumps (see
\cite{BL, LW}) does not apply to the case with jumps now. So we use
a new transformation method introduced by Buckdahn, Li and Hu
\cite{BLH} to complete the proof that $W,\ U$ are deterministic.
Next we only give the proof for $W$, that for $U$ is  similar.

\begin{proposition}
\label{pro1} For any $(t,x) \in[0,T] \times \mathbb{R},$ $W(t,x)$ is
a deterministic function in the sense that $W(t,x) = E[W(t,x)],\
\mbox{P-a.s.}$
\end{proposition}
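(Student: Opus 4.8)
The plan is to show that $W(t,x)$, which a priori is only a bounded $\mathcal{F}_t$-measurable random variable, is invariant under a suitable family of transformations of $\Omega$, and then to deduce from that invariance that it coincides with its own expectation. Since the Girsanov-type shift used in \cite{BL, LW} is unavailable in the presence of the Poisson random measure, I would work with the measure-preserving, invertible transformation $\tau$ on the Wiener--Poisson space $(\Omega,\mathcal{F},P)$ constructed in \cite{BLH}. The features of $\tau$ I would exploit are: (a) $\tau$ preserves $P$ and is bijective; (b) $\tau$ acts only on the ``past'' generating $\mathcal{F}_t$, leaving the increments of $B$ and of $\tilde{\mu}$ on $[t,T]$ pointwise unchanged --- this is where the two-sided Brownian motion $B_{-\cdot}$ and the negative-time structure of the Poisson filtration set up in Section 2 enter, since they provide room to scramble $\mathcal{F}_t$ while fixing the forward noise; and (c) $\tau$ maps $\mathcal{F}_r$ into $\mathcal{F}_r$ for every $r\ge t$, so that it carries predictable processes to predictable processes and $\mathcal{F}_r$-stopping times to $\mathcal{F}_r$-stopping times.

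Next I would transfer $\tau$ to the cost functional. By property (c), $u\mapsto u\circ\tau$ is a bijection of $\mathcal{U}_{t,T}$, and likewise of $\mathcal{V}_{t,T}$. For fixed admissible $u,v$ I would then establish $J(t,x;u,v)\circ\tau = J(t,x;u\circ\tau,v\circ\tau)$, $P$-a.s. The key point is that the system (\ref{equ3.1}) starts from the deterministic datum $X_t=x$, is driven on $[t,T]$ only by the increments of $B$ and $\tilde{\mu}$ that $\tau$ fixes, and has deterministic coefficients $b,\sigma,h,f,\Phi$; hence the sole $\mathcal{F}_t$-dependence enters through the controls. Applying $\tau$ to each equation of (\ref{equ3.1}) and invoking (b), the quadruple $(\Pi^{t,x;u,v}\circ\tau,\,K^{t,x;u,v}\circ\tau)$ solves the same FBSDE with controls $u\circ\tau,v\circ\tau$; by the uniqueness in Lemma \ref{l3} it must coincide with $(\Pi^{t,x;u\circ\tau,v\circ\tau},\,K^{t,x;u\circ\tau,v\circ\tau})$, and evaluating the $Y$-component at time $t$ gives the identity.

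It then remains to push $\tau$ through the essinf/esssup and conclude. Because $\tau$ preserves $P$ and is invertible, composition with $\tau$ commutes with essential suprema and infima of families of random variables. For $\beta\in\mathcal{B}_{t,T}$ I would introduce the conjugate strategy $\beta^{\tau}(w):=\big(\beta(w\circ\tau^{-1})\big)\circ\tau$; property (c) shows $\beta^{\tau}\in\mathcal{B}_{t,T}$, that $\beta\mapsto\beta^{\tau}$ is a bijection, and that $\beta^{\tau}(u\circ\tau)=(\beta(u))\circ\tau$. Combining this with the identity above and the bijectivity of $u\mapsto u\circ\tau$ yields $\big(\mathop{\rm esssup}_{u}J(t,x;u,\beta(u))\big)\circ\tau=\mathop{\rm esssup}_{u'}J(t,x;u',\beta^{\tau}(u'))$, and taking essinf over the bijectively transformed strategies gives $W(t,x)\circ\tau=W(t,x)$, $P$-a.s. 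Finally I would invoke that the family of transformations from \cite{BLH} is rich enough that its $\sigma$-algebra of invariant sets in $\mathcal{F}_t$ is $P$-trivial; an $\mathcal{F}_t$-measurable random variable invariant under all of them is therefore a.s.\ constant, whence $W(t,x)=E[W(t,x)]$, $P$-a.s.

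The main obstacle lies entirely in the transformation $\tau$ and its interaction with the jump part: one must produce a single measure-preserving, invertible map that simultaneously scrambles $\mathcal{F}_t$ enough to annihilate all nontrivial invariant sets, yet fixes the driving noise on $[t,T]$ --- and for the Poisson component this cannot be a translation, which is precisely why the construction of \cite{BLH} is needed. A secondary, more routine, point requiring care is checking that conjugation by $\tau$ preserves nonanticipativity (this rests on $\tau$ commuting with the filtration and with $\mathcal{F}_r$-stopping times on $[t,T]$) and that the essinf/esssup, defined only up to $P$-null sets over uncountable families, genuinely commute with $\tau$. The fully coupled structure with jumps adds no essential difficulty here beyond supplying, through Lemma \ref{l3}, the unique solvability used to identify the transformed solution.
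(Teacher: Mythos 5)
Your proposal is correct and follows essentially the same route as the paper: the identity $J(t,x;u,v)\circ\tau=J(t,x;u\circ\tau,v\circ\tau)$ via uniqueness of the FBSDE solution, the conjugate strategy $\hat{\beta}(u)=\beta(u(\tau^{-1}))(\tau)$, the commutation of $\tau$ with $\mathop{\rm esssup}$/$\mathop{\rm essinf}$, and finally the invocation of the family of measure-preserving swap transformations $\tau_l$ from \cite{BLH} together with their zero-one law (Lemma 3.2 there, quoted as Lemma \ref{l6} here) to conclude that the bounded $\mathcal{F}_t$-measurable invariant random variable $W(t,x)$ is a.s.\ constant. The only cosmetic difference is that you assume $\tau$ measure-preserving where the paper's invariance lemma only needs $P\circ[\tau]^{-1}\sim P$; since the concrete transformations of \cite{BLH} do preserve $P$, this changes nothing.
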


Combining the following both lemmas, we can complete the proof of
this proposition.

\begin{lemma}
\label{l5} Let $(t,x)\in[0,T]\times \mathbb{R}$ and $\tau:\Omega
\rightarrow \Omega$ be an invertible $\mathcal{F}-\mathcal{F}$
measurable transformation such that

{\rm{(i)}} $\tau^{-1}(\mathcal {F}_t)\subset \mathcal {F}_t$ and
$\tau(\mathcal {F}_t)\subset \mathcal {F}_t$;

{\rm{(ii)}} $(B_{s}-B_{t})\circ \tau=B_{s}-B_{t},\ \mu((t,s]\times A)\circ \tau=\mu((t,s]\times A),\ s\in
[t,T],\ A\in \mathcal{B}(E)$;

{\rm{(iii)}} the law $P\circ[\tau]^{-1}$ of $\tau$ is equivalent to
the underlying probability measure $P$. \newline Then, $W(t,x)\circ
\tau=W(t,x),\  \mbox{P-a.s.}$
\end{lemma}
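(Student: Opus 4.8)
The plan is to exploit the invariance of the control problem's data under the transformation $\tau$, transferring this invariance through the FBSDE to the value function. The core idea is that $\tau$ preserves the Brownian increments and the Poisson random measure on $[t,T]$ (assumption (ii)) while acting measurably on the relevant $\sigma$-fields (assumption (i)), so that applying $\tau$ to the controlled system (\ref{equ3.1}) yields an FBSDE with the same structure, driven by the same noise on $[t,T]$. First I would fix $(t,x)\in[0,T]\times\mathbb{R}$, an admissible control $u\in\mathcal{U}_{t,T}$ and a nonanticipative strategy $\beta\in\mathcal{B}_{t,T}$, and examine how the solution $(X^{t,x;u,\beta(u)},Y,Z,K)$ behaves under composition with $\tau$. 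Since the coefficients $b,\sigma,h,f,\Phi$ are deterministic and the initial datum $x$ is a constant (hence $\tau$-invariant), composing the forward equation with $\tau$ and using that $(B_s-B_t)\circ\tau = B_s-B_t$ and $\mu((t,s]\times A)\circ\tau=\mu((t,s]\times A)$ shows that $(X\circ\tau, Y\circ\tau, Z\circ\tau, K\circ\tau)$ solves the very same FBSDE but with the control $u\circ\tau$ in place of $u$.

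The key identification is therefore the pathwise relation
\[
J(t,x;u,v)\circ\tau = Y_t^{t,x;u,v}\circ\tau = Y_t^{t,x;u\circ\tau,\,v\circ\tau} = J(t,x;u\circ\tau,\,v\circ\tau),\quad \mbox{P-a.s.},
\]
which follows from the uniqueness of the solution in Lemma \ref{l3} applied to the $\tau$-transformed system. For this to make sense one must check that $u\mapsto u\circ\tau$ maps $\mathcal{U}_{t,T}$ into itself and that $\beta\mapsto \tau^{-1}\circ\beta\circ\tau$ (suitably interpreted) maps $\mathcal{B}_{t,T}$ into itself, preserving the nonanticipativity property; this uses the measurability conditions $\tau^{-1}(\mathcal{F}_t)\subset\mathcal{F}_t$ and $\tau(\mathcal{F}_t)\subset\mathcal{F}_t$ together with the fact that $\tau$ fixes the noise increments, so that predictability and the nonanticipativity constraint on $[[t,S]]$ are preserved. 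Once these maps are seen to be bijections of the admissible control and strategy sets, the $\mathop{\rm esssup}$ over $u$ and the $\mathop{\rm essinf}$ over $\beta$ in the definition (\ref{ee2}) of $W(t,x)$ are invariant under $\tau$, yielding $W(t,x)\circ\tau = W(t,x)$ P-a.s. Here I would invoke assumption (iii), the equivalence of $P\circ[\tau]^{-1}$ and $P$, to guarantee that $\mathop{\rm esssup}$/$\mathop{\rm essinf}$ are unchanged when the indexing family is reparametrized by $\tau$, since essential suprema are defined only up to $P$-null sets and equivalence of measures preserves these null sets.

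The main obstacle I expect is the careful handling of the $\mathop{\rm esssup}$ and $\mathop{\rm essinf}$ under the action of $\tau$, rather than the pathwise transformation of the FBSDE itself. Concretely, one must argue that
\[
\Big(\mathop{\rm essinf}_{\beta\in\mathcal{B}_{t,T}}\mathop{\rm esssup}_{u\in\mathcal{U}_{t,T}} J(t,x;u,\beta(u))\Big)\circ\tau = \mathop{\rm essinf}_{\beta}\mathop{\rm esssup}_{u} \big(J(t,x;u,\beta(u))\circ\tau\big),
\]
i.e.\ that composition with $\tau$ commutes with the essential extrema. This requires knowing that $\tau$ is $\mathcal{F}$-$\mathcal{F}$ measurable and invertible with $P\circ[\tau]^{-1}\sim P$, so that a countable family attaining the essential supremum is mapped to a countable family attaining it for the transformed functional, and no null-set subtleties are introduced. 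The bookkeeping of matching the strategy $\beta$ against its conjugate $\tau^{-1}\beta\tau$ so that $\beta(u)\circ\tau = (\tau^{-1}\beta\tau)(u\circ\tau)$ is the delicate combinatorial point; once it is in place, the invariance of $W$ follows directly, which is exactly what Lemma \ref{l5} asserts.
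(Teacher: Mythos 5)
Your proposal is correct and follows essentially the same route as the paper's proof: pathwise transformation of the FBSDE via uniqueness to get $J(t,x;u,v)\circ\tau=J(t,x;u\circ\tau,v\circ\tau)$, conjugation of strategies (your $\tau^{-1}\beta\tau$ is exactly the paper's $\hat{\beta}(u):=\beta(u(\tau^{-1}))(\tau)$), commutation of $\esssup$/$\essinf$ with $\tau$ via the equivalence $P\circ[\tau]^{-1}\sim P$, and the bijectivity of the reparametrization maps $\{u(\tau)\,|\,u\in\mathcal{U}_{t,T}\}=\mathcal{U}_{t,T}$, $\{\hat{\beta}\,|\,\beta\in\mathcal{B}_{t,T}\}=\mathcal{B}_{t,T}$. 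You also correctly identified the two delicate points (the essential-extrema commutation and the strategy-conjugation bookkeeping), which are precisely steps (2)--(4) of the paper's argument.
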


\begin{remark} \label{4.4} The assumptions {\rm(i)} and {\rm(ii)} of Lemma \ref{l5} imply
that $$\tau^{-1}(\mathcal {F}_s)=\tau(\mathcal {F}_s)=\mathcal
{F}_s,\ s\in[t,T].
$$
\end{remark}

\begin{proof} We split the proof in the following steps:

(1). For any $u\in \mathcal {U}_{t,T},\ v\in \mathcal {V}_{t,T},\
J(t,x,u,v)\circ \tau=J(t,x,u(\tau),v(\tau)),\  \mbox{P-a.s.}$ \\
In fact, applying the transformation $\tau$ to FBSDE (\ref{equ3.1})
(with $\zeta=x$) and comparing the obtained equation with the FBSDE
obtained from (\ref{equ3.1}) by substituting the controlled
processes $u(\tau),\ v(\tau)$ for $u$ and $v$, we get from the
uniqueness of the solution of (\ref{equ3.1}) and the properties of
the transformation  $\tau$ that
$$ \begin{array}{llll}
X_s^{t,x;u,v}(\tau)=X_s^{t,x,u(\tau),v(\tau)},\  \mbox{for all }
s\in [t,T],\  \mbox{P-a.s.}\\
Y_s^{t,x;u,v}(\tau) =Y_s^{t,x,u(\tau),v(\tau)},\  \mbox{for all }
s\in [t,T],\  \mbox{P-a.s.}\\
Z_s^{t,x;u,v}(\tau) =Z_s^{t,x,u(\tau),v(\tau)},\ \mbox{dsdP-a.e. on } [t,T]\times \Omega,\\
K_s^{t,x;u,v}(\tau) =K_s^{t,x,u(\tau),v(\tau)},\
\mbox{ds}\lambda\mbox{(de)dP-a.e.  on } [t,T]\times E \times \Omega.\\
\end{array}$$
\\
Consequently, in particular, we have
$$J(t,x,u,v)\circ \tau=J(t,x,u(\tau),v(\tau)),\  \mbox{P-a.s.}$$
(2). For $\beta \in \mathcal {B}_{t,T},$ let
$\hat{\beta}(u):=\beta(u(\tau^{-1}))(\tau),\ u\in \mathcal
{U}_{t,T}$. Then, $\hat{\beta}\in \mathcal {B}_{t,T}.$

Obviously, $\hat{\beta}$ maps $\mathcal {U}_{t,T}$ into $\mathcal
{V}_{t,T}$. Moreover, $\hat{\beta}$ is nonanticipative. Indeed, let
$S:\Omega \rightarrow [t,T]$ be an $\mathbb{F}$-stopping time and
$u_1,u_2\in \mathcal {U}_{t,T}$ such that $u_1\equiv u_2$ on
$[[t,S]]$. Then, obviously, $u_1(\tau^{-1})\equiv u_2(\tau^{-1})$ on
$[[t,S(\tau^{-1})]]$ (notice that $S(\tau^{-1})$ is still an
$\mathbb{F}$-stopping time. For this we use that the assumptions (i) and
(ii) imply that $\tau(\mathcal {F}_s):=\{ \tau(A),A\in \mathcal
{F}_s\}=\mathcal {F}_s,\ s\in[t,T]$). Since $\beta \in \mathcal
{B}_{t,T}$, we have $\beta(u_1(\tau^{-1}))\equiv
\beta(u_2(\tau^{-1}))$ on $[[t,S(\tau^{-1})]]$. Therefore,
$$\hat{\beta}(u_1)=\beta(u_1(\tau^{-1}))(\tau)\equiv \beta(u_2(\tau^{-1}))(\tau)=\hat{\beta}(u_2),\  \mbox{on } [[t,S]].$$
(3). For all $\beta \in \mathcal {B}_{t,T}$ we have
$$(\esssup \limits_{u\in \mathcal {U}_{t,T}}J(t,x;u,\beta(u)))(\tau)=\esssup \limits_{u\in \mathcal {U}_{t,T}}(J(t,x;u, \beta(u))(\tau)),\ \mbox{P-a.s.}$$
Indeed, let us use the notation $I(t,x,\beta):=\esssup \limits_{u\in
\mathcal {U}_{t,T}}J(t,x;u,\beta(u)),\  \beta \in \mathcal
{B}_{t,T},\ \mbox{P-a.s.}$ Then, $I(t,x,\beta)(\tau)\geq J(t,x;u, \beta(u))(\tau),$ P-a.s., for all $u\in\mathcal {U}_{t,T}.$ From the definition of essential supremum
over a family of random variables, for any random variable $\zeta$
satisfying $\zeta \geq J(t,x;u, \beta(u))(\tau)$ and, hence,
$\zeta(\tau^{-1})\geq J(t,x;u, \beta(u)),$ $\mbox{P-a.s.},$ for all
$u\in \mathcal {U}_{t,T}$, we have $\zeta(\tau^{-1})\geq
I(t,x,\beta), $ P-a.s., i.e., $\zeta \geq I(t,x,\beta)(\tau)$.
Consequently,
$$I(t,x,\beta)(\tau)=\esssup \limits_{u\in \mathcal
{U}_{t,T}}(J(t,x;u, \beta(u))(\tau)),\  \mbox{P-a.s.}$$ (4).
Similarly to the above proof, we can prove
$$(\essinf_{\beta \in \mathcal {B}_{t,T}}I(t,x,\beta))(\tau)=\essinf_{\beta \in \mathcal {B}_{t,T}}(I(t,x,\beta)(\tau)),\  \mbox{P-a.s.}$$
(5). $W(t,x)$ is invariant with respect to the transformation
$\tau,$ i.e., $$W(t,x)(\tau)=W(t,x),\  \mbox{P-a.s.}$$ Indeed,
combing those steps above we have
$$\begin{array}{llll} W(t,x)(\tau)&=&\essinf \limits_{\beta \in
\mathcal {B}_{t,T}}\esssup \limits_{u\in \mathcal
{U}_{t,T}}(J(t,x;u,\beta(u))(\tau))=\essinf \limits_{\beta \in \mathcal {B}_{t,T}}\esssup
\limits_{u\in \mathcal
{U}_{t,T}}J(t,x;u(\tau),\hat{\beta}(u(\tau)))\\
&=&\essinf \limits_{\beta \in \mathcal {B}_{t,T}}\esssup
\limits_{u\in \mathcal
{U}_{t,T}}J(t,x;u,\hat{\beta}(u))=\essinf \limits_{\beta \in \mathcal {B}_{t,T}}\esssup
\limits_{u\in \mathcal
{U}_{t,T}}J(t,x;u,\beta(u))\\
&=& W(t,x),\  \mbox{P-a.s.}
\end{array}$$
where in the both latter equalities we have used
$\{u(\tau)|u(\cdot)\in \mathcal {U}_{t,T}\}=\mathcal {U}_{t,T},\ \{
\hat{\beta}|\beta \in \mathcal {B}_{t,T}\}=\mathcal {B}_{t,T}$.

\end{proof}

Now let $l\geq1.$ We define the transformation
$\tau^{\prime}_{l}:\Omega
_{1}\rightarrow \Omega_{1}$ such that, for all $\omega_{1}\in \Omega_{1}%
=C_{0}(\mathbb{R};\mathbb{R}^{d})$,
\begin{equation}%
\begin{array}
[c]{llll} &  &
(\tau^{\prime}_{l}\omega_{1})((t-l,r])=\omega_{1}((t-2l,r-l])(:=\omega
_{1}(r-l)-\omega_{1}(t-2l)); & \\
&  & (\tau^{\prime}_{l}\omega_{1})((t-2l,r-l])=\omega_{1}((t-l,r]),
\  \mbox{for
}r\in[t-l,t]; & \\
&  & (\tau^{\prime}_{l}\omega_{1})((s,r])=\omega_{1}((s,r]), \
(s,r]\cap
(t-2l,t]=\emptyset; & \\
&  & (\tau^{\prime}_{l}\omega_{1})(0)=0. &
\end{array}
\end{equation}
Moreover, for $p\in \Omega_{2},\ p=\sum \limits_{x\in
D_{p}}p(x)\delta_{x}$, we set
\[
\tau^{\prime \prime}_{l}:=\sum_{x\in D_{p}\cap(t-2l,t]^{c}}p(x)\delta_{x}%
+\sum_{x\in D_{p}\cap(t-l,t]}p(x)\delta_{x-l}+\sum_{x\in D_{p}\cap
(t-2l,t-l]}p(x)\delta_{x+l}.
\]
It is easy to check that, $\tau^{\prime \prime}_{l}:\Omega_{2}\rightarrow
\Omega_{2}$ is a bijection, $\tau^{\prime \prime-1}_{l}=\tau^{\prime
\prime}_{l}$, which preserves the measure $P_{2}\circ[\tau^{\prime
\prime}]^{-1}=P_{2}.$ Moreover,
\begin{equation}%
\begin{array}
[c]{llll} &  & \mu(\tau^{\prime \prime}_{l}p;(t-l,r]\times
\Delta)=\mu(p;(t-2l,r-l]\times
\Delta),\ r\in(t-l,t],\  \Delta \in \mathcal{B}(E); & \\
&  & \mu(\tau^{\prime \prime}_{l}p;(t-2l,r-l]\times
\Delta)=\mu(p;(t-l,r]\times
\Delta),\ r\in(t-l,t],\  \Delta \in \mathcal{B}(E); & \\
&  & \mu(\tau^{\prime \prime}_{l}p;(s,r]\times
\Delta)=\mu(p;(s,r]\times \Delta),
\ (s,r]\cap(t-2l,t]=\emptyset,\  \Delta \in \mathcal{B}(E). & \\
&  &  &
\end{array}
\end{equation}
Consequently, the transformation $\tau_{l}:\Omega \rightarrow
\Omega,\ \tau
_{l}\omega:=(\tau^{\prime}_{l}\omega_{1},\tau^{\prime \prime}_{l}%
p),\  \omega=(\omega_{1},p)\in \Omega=\Omega_{1}\times \Omega_{2},$
satisfies the assumptions (i), (ii), (iii) of Lemma \ref{l5}.
Therefore, $W(t,x)(\tau _{l})=W(t,x),\  \mbox{P-a.s.},\ l\geq1.$
Combined with the following auxiliary Lemma \ref{l6}, we can
complete the proof of Proposition \ref{pro1}.

\begin{lemma}
\label{l6} Let $\zeta \in L^{\infty}(\Omega,\mathcal{F}_{t},P)$ be
such that, for all $l\geq1$ natural number, $\zeta(\tau_{l})=\zeta,$
P-a.e. Then, there exists some real $C$ such that $\zeta=C,\
\mbox{P-a.s.}$
\end{lemma}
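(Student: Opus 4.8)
The plan is to show that $\zeta$ has zero variance, which forces it to equal its mean. The key is that the invariance $\zeta(\tau_l)=\zeta$ for large $l$ transports the dependence of (a truncation of) $\zeta$ from a window just before time $t$ into a disjoint, hence independent, window far in the past. First I would localize $\zeta$ in time. For $a>0$ let $\mathcal{G}_a$ denote the $\sigma$-field generated by the increments $\{B_r-B_{r'},\ \mu((r',r]\times\Delta):\ t-a\le r'\le r\le t,\ \Delta\in\mathcal{B}(E)\}$, and set $\zeta^{a}:=E[\zeta\mid\mathcal{G}_a]$. Since $\mathcal{G}_a\uparrow\mathcal{F}_t$ (modulo $P$-null sets) as $a\to\infty$, martingale convergence gives $\zeta^{a}\to\zeta$ in $L^2(\Omega,\mathcal{F}_t,P)$; note $\zeta\in L^\infty\subset L^2$ and $E[\zeta^{a}]=E[\zeta]$. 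Fix $\varepsilon>0$ and choose $a$ with $\|\zeta-\zeta^{a}\|_{2}<\varepsilon$.

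Next I would analyze the effect of $\tau_l$ on $\zeta^{a}$ for an integer $l\ge a$. From the defining relations of $\tau_l'$ and $\tau_l''$ one checks that, for a sub-window $(r',r]\subseteq(t-l,t]$, the increment of $\tau_l\omega$ over $(r',r]$ equals the increment of $\omega$ over $(r'-l,r-l]$ (and similarly for the Poisson part). Hence $\zeta^{a}\circ\tau_l$ is measurable with respect to the $\sigma$-field generated by the increments on the shifted window $(t-a-l,\,t-l]$. Because $l\ge a$, this window lies in $(t-2l,t-l]$ and is disjoint from $(t-a,t]$, so by the independence of Brownian and Poisson increments over disjoint time intervals, $\zeta^{a}\circ\tau_l$ is \emph{independent} of $\zeta^{a}$. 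Moreover $P\circ[\tau_l]^{-1}=P$ implies $\tau_l$ is an $L^2$-isometry and $\zeta^{a}\circ\tau_l\stackrel{d}{=}\zeta^{a}$; thus $\zeta^{a}$ and $\zeta^{a}\circ\tau_l$ are i.i.d.

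Now I would combine these facts. Using $\zeta\circ\tau_l=\zeta$ together with the isometry property,
\[
\|\zeta-\zeta^{a}\circ\tau_l\|_{2}=\|\zeta\circ\tau_l-\zeta^{a}\circ\tau_l\|_{2}=\|\zeta-\zeta^{a}\|_{2}<\varepsilon,
\]
so by the triangle inequality $\|\zeta^{a}-\zeta^{a}\circ\tau_l\|_{2}\le 2\varepsilon$. Since $\zeta^{a}$ and $\zeta^{a}\circ\tau_l$ are i.i.d., $E\big[(\zeta^{a}-\zeta^{a}\circ\tau_l)^2\big]=2\,\mathrm{Var}(\zeta^{a})$, whence $\mathrm{Var}(\zeta^{a})\le 2\varepsilon^2$. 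Combining with $\|\zeta-\zeta^{a}\|_{2}<\varepsilon$ and $E[\zeta^{a}]=E[\zeta]$ yields
\[
\|\zeta-E[\zeta]\|_{2}\le\|\zeta-\zeta^{a}\|_{2}+\|\zeta^{a}-E[\zeta^{a}]\|_{2}\le\varepsilon+\sqrt{2}\,\varepsilon.
\]
Letting $\varepsilon\to0$ gives $\mathrm{Var}(\zeta)=0$, so $\zeta=E[\zeta]=:C$, $P$-a.s. The main obstacle is the bookkeeping in the middle paragraph: one must verify carefully that $\tau_l$ sends the near-past window $(t-a,t]$ onto the disjoint far-past window $(t-a-l,t-l]$ and preserves $P$, so that the truncation $\zeta^{a}$ and its image $\zeta^{a}\circ\tau_l$ are genuinely i.i.d.; the rest is a routine variance estimate.
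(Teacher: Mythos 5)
Your proof is correct. The paper does not spell out the argument itself---it refers to Lemma 3.2 of Buckdahn--Li--Hu \cite{BLH}---but the proof there takes essentially the same route as yours: approximate $\zeta$ in $L^2$ by a random variable depending only on the Brownian and Poisson increments over a finite window $(t-a,t]$, note that for $l\geq a$ the transformation $\tau_l$ carries this window onto the disjoint window $(t-a-l,t-l]$ so that the approximation and its composition with $\tau_l$ are independent with the same law, and then combine the invariance $\zeta\circ\tau_l=\zeta$ with measure preservation to conclude $\mathrm{Var}(\zeta)=0$.
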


For the proof the reader is referred to  Lemma 3.2 in Buckdahn, Li
and Hu \cite{BLH}.

From (\ref{ee1}) and (\ref{ee2})-(the definition of the value
function $W(t,x)$), we get the following property:

\begin{lemma}
\label{l7} There exists a constant $C>0$ such that, for all $0 \leq
t \leq T,\ x,x^{\prime}\in \mathbb{R},$
\begin{equation}%
\begin{array}
[c]{llll}
&  & \mathrm{(i)} \  \ |W(t,x)-W(t,x^{\prime})| \leq C|x-x^{\prime}|; & \\
&  & \mathrm{(ii)}\  \ |W(t,x)| \leq C(1+|x|). &
\end{array}
\end{equation}
\end{lemma}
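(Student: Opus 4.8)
The plan is to transfer the pointwise estimates recorded in (\ref{ee1}) through the two layers of essential extrema that define $W$. The starting observation is that $J(t,x;u,v)=Y_{t}^{t,x;u,v}$, so (\ref{ee1})(i) reads $|J(t,x;u,v)|\leq C(1+|x|)$ and (\ref{ee1})(ii) reads $|J(t,x;u,v)-J(t,x';u,v)|\leq C|x-x'|$, both holding P-a.s.\ and uniformly over all admissible $u\in\mathcal{U}_{t,T}$, $v\in\mathcal{V}_{t,T}$ (in particular for $v=\beta(u)$ with any $\beta\in\mathcal{B}_{t,T}$). The whole argument then rests on one elementary principle: if two families of random variables $\{A_{i}\}_{i}$ and $\{B_{i}\}_{i}$ satisfy $|A_{i}-B_{i}|\leq c$ P-a.s.\ for a common constant $c$ and every index $i$, then $|\esssup_{i}A_{i}-\esssup_{i}B_{i}|\leq c$ and $|\essinf_{i}A_{i}-\essinf_{i}B_{i}|\leq c$ P-a.s.; and likewise a two-sided constant bound on each $A_{i}$ is inherited by $\esssup_{i}A_{i}$ and $\essinf_{i}A_{i}$.

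For (i), I would fix $\beta\in\mathcal{B}_{t,T}$ and apply (\ref{ee1})(ii) with $v=\beta(u)$ to get $|J(t,x;u,\beta(u))-J(t,x';u,\beta(u))|\leq C|x-x'|$ for every $u\in\mathcal{U}_{t,T}$. The non-expansivity principle applied to the $\esssup$ over $u$ yields $|\esssup_{u}J(t,x;u,\beta(u))-\esssup_{u}J(t,x';u,\beta(u))|\leq C|x-x'|$, with the same constant for every $\beta$; applying the principle once more, now to the $\essinf$ over $\beta$, gives $|W(t,x)-W(t,x')|\leq C|x-x'|$, which is (i). For (ii), I would bound $|J(t,x;u,\beta(u))|\leq C(1+|x|)$ from (\ref{ee1})(i), so that $-C(1+|x|)\leq J(t,x;u,\beta(u))\leq C(1+|x|)$ for all $u$ and $\beta$; since these constant bounds survive the $\esssup$ over $u$ and then the $\essinf$ over $\beta$, one obtains $-C(1+|x|)\leq W(t,x)\leq C(1+|x|)$, i.e.\ $|W(t,x)|\leq C(1+|x|)$.

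The argument is a routine propagation of estimates, so there is no serious obstacle; the only point that needs care is the non-expansivity principle for essential extrema. Because $W$ is a priori an $\mathcal{F}_{t}$-measurable random variable and the extrema are over uncountable families, the principle must be justified through the defining property of the essential supremum (any P-a.s.\ upper bound for the family dominates the $\esssup$, and the $\esssup$ itself is an upper bound up to a P-null set) rather than by arguing pointwise in $\omega$. Once this is established, (i) and (ii) follow at the level of P-a.s.\ inequalities, and since $W(t,x)$ is deterministic by Proposition \ref{pro1}, the bounds pass to the deterministic function $W(t,x)$, completing the proof.
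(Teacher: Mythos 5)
Your proposal is correct and takes essentially the same approach as the paper: the paper offers no detailed proof of Lemma \ref{l7}, asserting only that it follows from the estimates (\ref{ee1}) and the definition (\ref{ee2}) of $W$. Your argument—propagating those uniform P-a.s.\ bounds through the essential supremum over $u$ and the essential infimum over $\beta$ via the defining (least upper bound) property of the essential extrema, and then invoking Proposition \ref{pro1} to convert the a.s.\ inequalities into deterministic ones—is exactly the intended filling-in of that remark.
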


Moreover, for $W(t,x)$, we have the following monotonic property.

\begin{lemma}
\label{l8} Under the assumptions $( \mathbf{H2.2})$, $(
\mathbf{H2.3})$, the cost functional $J(t, x;u,v)$, for any $u\in
\mathcal{U}_{t,T},\ v\in \mathcal{V}_{t,T}$ , and the value function
$W(t, x)$ are monotonic in the following sense: for each $x, \bar{x}
\in \mathbb{R}, \ t \in[0, T],$
\[%
\begin{array}
[c]{llll} &  & \mathrm{(i)}\  \langle J(t, x; u,v)- J(t,\bar{x};
u,v), G(x -\bar
{x})\rangle \geq0,\ \mbox{P-a.s.}; & \\
&  & \mathrm{(ii)}\  \langle W(t, x)-W(t,\bar{x}),
G(x-\bar{x})\rangle \geq0. &
\end{array}
\]
\end{lemma}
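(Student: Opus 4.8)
The plan is to prove the cost-functional monotonicity (i) first, and then lift it to the value function (ii) by the essinf/esssup structure. For part (i), I would fix $x,\bar{x}\in\mathbb{R}$ and admissible controls $u\in\mathcal{U}_{t,T},\ v\in\mathcal{V}_{t,T}$, and consider the two solutions of the controlled FBSDE (\ref{equ3.1}) with initial data $x$ and $\bar{x}$ respectively, writing $\widehat{X}_s=X_s^{t,x;u,v}-X_s^{t,\bar{x};u,v}$ and analogously $\widehat{Y},\widehat{Z},\widehat{K}$. The key object to track is $\langle G\widehat{X}_s,\widehat{Y}_s\rangle$, since $J(t,x;u,v)-J(t,\bar{x};u,v)=\widehat{Y}_t$ and I want to compare it against $G\widehat{X}_t=G(x-\bar{x})$. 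The natural tool is the It\^o formula for jump processes applied to the inner product $\langle G\widehat{X}_s,\widehat{Y}_s\rangle$ on $[t,T]$, using the forward dynamics of $\widehat{X}$ and the backward dynamics of $\widehat{Y}$.

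First I would expand $d\langle G\widehat{X}_s,\widehat{Y}_s\rangle$, which produces the bracket term $\langle A(s,\pi,k)-A(s,\bar{\pi},\bar{k}),\pi-\bar{\pi}\rangle$ together with the jump contribution $\int_E\langle G\widehat{h}(e),\widehat{K}_s(e)\rangle\lambda(de)$ after taking expectations (the martingale parts from $dB_s$ and $\tilde{\mu}(ds,de)$ vanish in expectation, using the integrability from Lemma \ref{l3}). Integrating from $t$ to $T$ and taking conditional expectation $E[\,\cdot\,|\mathcal{F}_t]$ gives
\[
\langle G\widehat{X}_t,\widehat{Y}_t\rangle
= E\big[\langle G\widehat{X}_T,\Phi(X_T^{t,x})-\Phi(X_T^{t,\bar x})\rangle\,\big|\,\mathcal{F}_t\big]
- E\Big[\int_t^T \big(\langle A-\bar A,\pi-\bar\pi\rangle+\textstyle\int_E\langle G\widehat{h},\widehat{K}\rangle\lambda(de)\big)\,ds\,\Big|\,\mathcal{F}_t\Big].
\]
Here I would use the terminal monotonicity (\textbf{H2.3})-(ii), namely $\langle\Phi(x)-\Phi(\bar x),G(x-\bar x)\rangle\ge\mu_1|G\widehat{x}|^2\ge0$, to show the terminal term is nonnegative, and the monotonicity assumption (\textbf{H2.3})-(i) to show that the integrand satisfies $\langle A-\bar A,\pi-\bar\pi\rangle+\int_E\langle G\widehat{h},\widehat{K}\rangle\lambda(de)\le 0$, so that $-$ that integral is nonnegative. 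Combining, $\langle G\widehat{X}_t,\widehat{Y}_t\rangle\ge0$; and since $\widehat{X}_t=x-\bar x$ is deterministic and $\widehat{Y}_t=J(t,x;u,v)-J(t,\bar x;u,v)$, this is exactly $\langle J(t,x;u,v)-J(t,\bar x;u,v),G(x-\bar x)\rangle\ge0$, P-a.s., proving (i).

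For part (ii), I would pass the pointwise-in-$(u,v)$ inequality through the game value. Fix $x,\bar x$ and rewrite (i) as $\langle J(t,x;u,\beta(u)),G(x-\bar x)\rangle\ge\langle J(t,\bar x;u,\beta(u)),G(x-\bar x)\rangle$ for every $u\in\mathcal{U}_{t,T}$ and every strategy $\beta\in\mathcal{B}_{t,T}$; since $G(x-\bar x)$ is a fixed deterministic vector and (in the present one-dimensional setting $m=1$, on which the comparison-theorem applications rest) the inner product reduces to multiplication by a scalar of fixed sign, the inequality is preserved under taking esssup over $u$ and then essinf over $\beta$. Applying the definition (\ref{ee2}) of $W$ to both initial conditions yields $\langle W(t,x)-W(t,\bar x),G(x-\bar x)\rangle\ge0$, which is (ii). The main obstacle I anticipate is the bookkeeping in the It\^o expansion of $\langle G\widehat{X}_s,\widehat{Y}_s\rangle$ for jump FBSDEs — in particular correctly assembling the compensated-Poisson cross terms so that exactly the combination $\int_E\langle G\widehat{h}(e),\widehat{K}_s(e)\rangle\lambda(de)$ appears to match the left-hand side of (\textbf{H2.3})-(i), and justifying that all local-martingale terms are true martingales (hence drop out under expectation) using the $\mathcal{S}^2\times\mathcal{M}^2\times\mathcal{K}^2_\lambda$ integrability guaranteed by Lemma \ref{l3}.
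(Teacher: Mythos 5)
Your treatment of (i) coincides with the paper's proof: both apply It\^o's formula to $\langle G\widehat X_s,\widehat Y_s\rangle$, identify the drift with $\langle \Delta A(s),(\widehat X_s,\widehat Y_s,\widehat Z_s)\rangle+\int_E\langle G\Delta h(s,e),\widehat K_s(e)\rangle\lambda(de)$, and then use $(\mathbf{H2.3})$-(i) to make the time-integral contribution nonpositive and $(\mathbf{H2.3})$-(ii) to bound the terminal term below by $\mu_1|G\widehat X_T|^2\ge 0$; the martingale/integrability justification you flag at the end is exactly what the paper's ``we get immediately'' glosses over, and it goes through with the $\mathcal{S}^2\times\mathcal{M}^2\times\mathcal{K}^2_\lambda$ regularity of Lemma \ref{l3}.

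For (ii) you take a genuinely different, and in fact shorter, route. The paper does not argue on the essinf/esssup definition directly: it first invokes Remark \ref{re7.1} (a by-product of the DPP proof in the Appendix, which in turn rests on Proposition \ref{pro1}) to rewrite $W(t,x)=\inf_{\beta}\sup_u E[J(t,x;u,\beta(u))]$, introduces $V(t,x,\beta)=\sup_u E[J(t,x;u,\beta(u))]$, chooses $\varepsilon$-optimal controls $u^\varepsilon$ and $\varepsilon$-optimal strategies $\beta^\varepsilon$, and runs the sign dichotomy on $G(x-\bar x)$ twice to obtain $\langle W(t,x)-W(t,\bar x),G(x-\bar x)\rangle\ge-2\varepsilon|G(x-\bar x)|$, finally letting $\varepsilon\downarrow 0$. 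Your argument replaces all of this by order-preservation of essential extrema: since $G(x-\bar x)$ is a fixed nonzero scalar (here $m=n=1$), (i) says that for every pair $(u,\beta)$, $J(t,x;u,\beta(u))\ge J(t,\bar x;u,\beta(u))$ P-a.s. when $G(x-\bar x)\ge0$ (and the reverse inequality when $G(x-\bar x)\le0$); because $\esssup_{u\in\mathcal{U}_{t,T}}$ and $\essinf_{\beta\in\mathcal{B}_{t,T}}$ over the \emph{same} index families preserve a.s. pointwise inequalities (the essential supremum being the least a.s. upper bound), the definition (\ref{ee2}) immediately gives $W(t,x)\ge W(t,\bar x)$ (resp. $\le$), and multiplying back by $G(x-\bar x)$ yields (ii). This is rigorous, and it requires neither the determinism of $W$, nor the DPP machinery behind Remark \ref{re7.1}, nor any $\varepsilon$-bookkeeping; the only thing the paper's detour buys is that it manipulates deterministic numbers throughout, which is a convenience rather than a necessity. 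Both arguments ultimately hinge on the same dichotomy on the sign of $G(x-\bar x)$.
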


\begin{proof} We define $(\hat{X}_s,\hat{Y}_s,\hat{Z}_s,\hat{K}_s): = (X^{t,x;u,v}_s-X^{t,\bar{x};u,v}_s,
Y^{t,x;u,v}_s-Y^{t,\bar{x};u,v}_s,
Z^{t,x;u,v}_s-Z^{t,\bar{x};u,v}_s,
K^{t,x;u,v}_s-K^{t,\bar{x};u,v}_s),$ and $\Delta l(s) =
l(s,\Pi^{t,x;u,v}_s,K^{t,x;u,v}_s,u_s,v_s)-l(s,\Pi^{t,\bar{x};u,v}_s,K^{t,\bar{x};u,v}_s,u_s,v_s),$
for $l = b,\sigma, f, A,$ respectively, and $\Delta h(s,e) =
h(s,\Pi^{t,x;u,v}_{s-},u_s,v_s,e)-h(s,\Pi^{t,\bar{x};u,v}_{s-},u_s,v_s,e)$.
Applying It\^{o}'s formula to $\langle \hat{Y}_s,
G\hat{X_s}\rangle$, we get immediately from $( \mathbf{H2.3})$,
$$\begin{array}{llll} &&\langle J(t, x; u,v) -
J(t,\bar{x}; u,v),G(x - \bar{x})\rangle =E[\langle Y^{t,x;u,v}_t - Y^{t,\bar{x};u,v}_t ,G(x -
\bar{x})\rangle| \mathcal {F}_t] \\
&=& E[\langle \Phi(X^{t,x;u,v}_T ) - \Phi(X^{t,\bar{x};u,v}_T
),G\hat{X}_T\rangle- \int_t^T\langle \Delta A(r),
(\hat{X}_r, \hat{Y}_r,\hat{Z}_r)\rangle dr- \int_t^T\int_E\langle G\Delta h(r,e),
\hat{K}_r(e)\rangle \lambda(de) dr |\mathcal {F}_t] \\
&\geq& E[\mu_1|G\hat{X}_T |^2 + \int_t^T(\beta_1|G\hat{X}_r|^2 +
\beta_2(|G^T\hat{Y}_r|^2 + |G^T\hat{Z}_r|^2)+\beta_3\int_E|G^T\hat{K}_r(e)|^2\lambda(de))dr |\mathcal {F}_t] \\
&\geq& 0, \  \mbox{for any }u\in \mathcal {U}_{t,T},\ v\in \mathcal
{V}_{t,T}.
\end{array}$$
From Remark \ref{re7.1}, $W(t,x)=\inf \limits_{\beta \in \mathcal
{B}_{t,T}}\sup \limits_{u\in \mathcal
{U}_{t,T}}E[J(t,x;u,\beta(u))].$ Setting $V(t,x,\beta)=\sup
\limits_{u\in \mathcal {U}_{t,T}}E[J(t,x;u,\beta(u))]$, we always
have $V(t,x,\beta)\geq E[J(t,x;u,\beta(u))],$ for any $u\in \mathcal
{U}_{t,T}$. On the other hand, for any $\varepsilon
>0,$ there exists $u^\varepsilon \in \mathcal {U}_{t,T}$, such that
$V(t,\bar{x},\beta) \leq
E[J(t,\bar{x};u^\varepsilon,\beta(u^\varepsilon))]+\varepsilon.$\\
If $G(x-\bar{x})\geq 0,$ then
$$\begin{array}{llll} \langle
V(t,x,\beta)-V(t,\bar{x},\beta),G(x-\bar{x})\rangle &\geq& \langle
E[J(t,x;u^\varepsilon,\beta(u^\varepsilon))-J(t,\bar{x};u^\varepsilon,\beta(u^\varepsilon))]-\varepsilon,G(x-\bar{x})\rangle \\
&\geq& -\varepsilon G(x-\overline{x}).
\end{array}$$
If $G(x-\bar{x})\leq 0,$ then similarly, there exists $u^\varepsilon\in\mathcal {U}_{t,T}$  such that $V(t,x,\beta)\leq E[J(t,x;u^\varepsilon,\beta(u^\varepsilon))]+\varepsilon,$ $$\begin{array}{llll} \langle
V(t,x,\beta)-V(t,\bar{x},\beta),G(x-\bar{x})\rangle
&=&  \langle V(t,\bar{x},\beta)-V(t,x,\beta),G(\bar{x}-x)\rangle  \\
&\geq& \langle
E[J(t,\bar{x};u^\varepsilon,\beta(u^\varepsilon))-J(t,x;u^\varepsilon,\beta(u^\varepsilon))]-\varepsilon,G(\bar{x}-{x})\rangle \\
&\geq&-\varepsilon G(\bar{x}-{x}).
\end{array}$$
Therefore, we always have $\langle
V(t,x,\beta)-V(t,\bar{x},\beta),G(x-\bar{x})\rangle \geq
-\varepsilon |G(x-\overline{x})|,$ for any $\beta \in \mathcal
{B}_{t,T},\ x,\bar{x}\in \mathbb{R}, \ t\in[0,T].$ Since
$W(t,x)=\inf \limits_{\beta \in \mathcal {B}_{t,T}}\sup
\limits_{u\in \mathcal {U}_{t,T}}E[J(t,x;u,\beta(u))]=\inf
\limits_{\beta \in \mathcal {B}_{t,T}}V(t,x,\beta),$ we have
$W(t,x)\leq V(t,x,\beta),$ for any $\beta \in \mathcal {B}_{t,T}.$
Moreover, for any $\varepsilon>0,$ there exists $\beta^\varepsilon
\in \mathcal {B}_{t,T}$ such that
$W(t,x)+\varepsilon \geq V(t,x,\beta^\varepsilon)$.\\
If $G(x-\bar{x})\geq 0,$ then
$$\begin{array}{llll} \langle W(t,x)-W(t,\bar{x}),G(x-\bar{x})\rangle
&\geq& \langle
V(t,x,\beta^\varepsilon)-V(t,\bar{x},\beta^\varepsilon)-\varepsilon,G(x-\bar{x})\rangle \geq-2\varepsilon G(x-\overline{x}).
\end{array}$$
If $G(x-\bar{x})\leq 0,$ then with $\beta^\varepsilon
\in \mathcal {B}_{t,T}$ such that $W(t,\bar{x})+\varepsilon\geq V(t,\bar{x},\beta^\varepsilon),$
$$\begin{array}{llll} \langle W(t,x)-W(t,\bar{x}),G(x-\bar{x})\rangle
&=&  \langle W(t,\bar{x})-W(t,{x}),G(\bar{x}-{x})\rangle \\
&\geq& \langle
V(t,\bar{x},\beta^\varepsilon)-V(t,{x},\beta^\varepsilon)-\varepsilon,G(\bar{x}-{x})\rangle \geq-2\varepsilon G(\bar{x}-{x}).
\end{array}$$
Therefore, we have $\langle W(t,x)-W(t,\bar{x}),G(x-\bar{x})\rangle
\geq-2\varepsilon |G(x-\overline{x})|,$ for any $x,\bar{x}\in
\mathbb{R}, \ t\in[0,T].$ Letting $\varepsilon \downarrow 0$,
$\langle W(t,x)-W(t,\bar{x}),G(x-\bar{x})\rangle \geq 0,$ for any
$x,\ \bar{x}\in \mathbb{R}, \ t\in[0,T].$\\
\end{proof}

\begin{remark} (1) From $(\mathbf{H2.3})$-{\rm(i)} we know that, if
$\sigma$ doesn't depend on $z$, then $\beta_2=0$; if $h$ doesn't
depend on $k$, then $\beta_3=0$. Furthermore, we assume:
\begin{description}
\item[$(\mathbf{H3.1})$] {\rm(i)} The Lipschitz constant $L_{\sigma}\geq0$ of $\sigma$
with respect to $z$ is sufficiently small, i.e., there exists some
$L_{\sigma }\geq0$ small enough such that, for all $t\in[0,T],\ u\in
U,\ v\in
V,\ x_{1},x_{2}\in \mathbb{R},\ y_{1},y_{2}\in \mathbb{R},\ z_{1},z_{2}%
\in \mathbb{R}^{d}$,
\[
|\sigma(t,x_{1},y_{1},z_{1},u,v)-\sigma(t,x_{2},y_{2},z_{2},u,v)|\leq
K(|x_{1}-x_{2}|+|y_{1}-y_{2}|)+L_{\sigma}|z_{1}-z_{2}|.
\]
And the Lipschitz coefficient $L_{h}(\cdot)$ of $h$ with respect to
$z$ is sufficiently small, i.e., there exists a
function
$L_{h}:E\rightarrow \mathbb{R}^{+}$ with $\tilde{C}_h:=\max(\sup\limits_{e\in E}L^2_h(e),\int_{E}L^{2}
_{h}(e)\lambda(de))<+\infty$ sufficiently small, and for all $t\in[0,T],\ u\in U,\ v\in
V,\ x_{1},x_{2}\in \mathbb{R},\ y_{1},y_{2}\in \mathbb{R},\ z_{1},z_{2}%
\in \mathbb{R}^{d}, \ e\in E$,
\[
|h(t,x_{1},y_{1},z_{1},u,v,e)-h(t,x_{2},y_{2},z_{2},u,v,e)|\leq \rho
(e)(|x_{1}-x_{2}|+|y_{1}-y_{2}|)+L_{h}(e)|z_{1}-z_{2}|.
\]
{\rm(ii)} For all
$t\in[0,T],\ u\in U,\ v\in
V$,
for any $(x,y,z)\in \mathbb{R}\times \mathbb{R}\times \mathbb{R}^{d},\  \mbox{P-a.s.},$ $|h(t,x,y,z,u,v,e)|\leq \rho(e)(1+|x|+|y|),$ where $\rho(e)=C(1\wedge|e|).$
\end{description}
(2) Notice that when $\sigma,\ h$ don't depend on $z$, it's clearly
that $(\mathbf{H3.1})$ always holds true.
\end{remark}

Now we adopt Peng's notion of stochastic backward semigroup to
discuss a generalized DPP for our stochastic differential game
(\ref{equ3.1}), (\ref{ee2}). The notation of stochastic backward
semigroup was first introduced by Peng~\cite{Pe4} to prove the DPP
for stochastic control problems. Similar to \cite{LW}, first we
define the family of (backward) semigroups associated with FBSDE
with jumps (\ref{equ3.1}).

For given initial data $(t,x)$, a number $0<\delta \leq T-t,$
admissible control processes $u(\cdot)\  \in \
\mathcal{U}_{t,t+\delta},\ v(\cdot )\  \in \
\mathcal{V}_{t,t+\delta}$ and a real-valued random function
$\Psi:\Omega \times \mathbb{R}\rightarrow \mathbb{R},\
\mathcal{F}_{t+\delta
}\otimes \mathcal{B}(\mathbb{R}) $-measurable such that $(\mathbf{H2.3}%
)$-(ii) holds, we put
\[
G_{s,t+\delta}^{t,x;u,v}[\Psi(t+\delta,\overline{X}_{t+\delta}^{t,x;u,v}%
)]:=\overline{Y}_{s}^{t,x;u,v}, \ s \in[t,t+\delta],
\]
where $(\overline{\Pi}_{s}^{t,x;u,v},\overline{K}_{s}^{t,x;u,v}%
):=(\overline{X}_{s}^{t,x;u,v},\overline{Y}_{s}^{t,x;u,v},\overline{Z}
_{s}^{t,x;u,v},\overline{K}_{s}^{t,x;u,v})_{t \leq s \leq
t+\delta}$ is the solution of the following FBSDE with the time
horizon $t+\delta$:
\begin{equation}
\label{equ3.2}\left \{
\begin{array}
[c]{llll}%
d\overline{X}_{s}^{t,x;u,v} & = & b(s,\overline{\Pi}_{s}^{t,x;u,v}%
,u_{s},v_{s})ds + \sigma(s,\overline{\Pi}_{s}^{t,x;u,v},u_{s},v_{s}%
)dB_{s}+\int_{E}h(s,\overline{\Pi}_{s-}^{t,x;u,v},u_{s}%
,v_{s},e)\tilde{\mu}(ds,de),& \\
d\overline{Y}_{s}^{t,x;u,v} & = & -f(s,\overline{\Pi}_{s}^{t,x;u,v}%
,\int_{E}\overline{K}_{s}^{t,x;u,v}(e)l(e)\lambda(de),u_{s},v_{s})ds + \overline{Z}_{s}^{t,x;u,v}%
dB_{s} +\int_{E}\overline{K}_{s}^{t,x;u,v}(e)\tilde{\mu}(ds,de),
  & \\
\overline{X}_{t}^{t,x;u,v} & = & x,  \qquad\qquad\qquad\qquad\qquad\qquad\qquad\qquad\qquad\qquad\qquad\qquad\qquad\qquad s\in[t,t+\delta], & \\
\overline{Y}_{t+\delta}^{t,x;u,v} & = & \Psi(t+\delta,\overline{X}
_{t+\delta}^{t,x;u,v}). &
\end{array}
\right.
\end{equation}
Here we write again
$\overline{\Pi}_{s-}^{t,x;u,v}=(\overline{X}_{s-}^{t,x;u,v},\overline{Y}_{s-}^{t,x;u,v},\overline{Z}_s^{t,x;u,v}).$

\begin{remark}
From Theorem 3.2 in \cite{LiWei-Lp}, we know FBSDE (\ref{equ3.2}) has a
unique solution $(\overline{X}^{t,x;u,v},\overline{Y}^{t,x;u,v},$
$\overline{Z}^{t,x;u,v},\overline{K}^{t,x;u,v})$ on the small
interval $[t,t+\delta]$, for any $0\leq \delta \leq \delta_{0}$,
where $\delta_{0}>0$ is independent of $(t,x)$ and the controls $u,\
v$.
\end{remark}

Then, for the solution
$(X^{t,x;u,v},Y^{t,x;u,v},Z^{t,x;u,v},K^{t,x;u,v})$ of FBSDE
(\ref{equ3.1}) we get
\[
G_{t,T}^{t,x;u,v}[\Phi(X_{T}^{t,x;u,v})] = G_{t,t+\delta}^{t,x;u,v}%
[Y_{t+\delta}^{t,x;u,v}].
\]
We also have
$$J(t,x;u,v)=Y_{t}^{t,x;u,v}=G_{t,T}^{t,x;u,v}[\Phi(X_{T}^{t,x;u,v}%
)]=G_{t,t+\delta}^{t,x;u,v}[Y_{t+\delta}^{t,x;u,v}]=G_{t,{t+\delta}}%
^{t,x;u,v}[J(t+\delta,X_{t+\delta}^{t,x;u,v};u,v)].$$

\begin{theorem}
\label{th3.1} Under the assumptions $(\mathbf{H2.2}),\
(\mathbf{H2.3} )$ and  $(\mathbf{H3.1})$, the lower value function
$W(t,x)$ satisfies the following DPP: there exists sufficiently
small $\delta_{0}>0$ such that, for any $0 \leq \delta\leq\delta_{0},\
t\in[0,T-\delta],\ x \in \mathbb{R},$
\[
W(t,x)=\mathop{\rm essinf}_{\beta \in \mathcal{B}_{t,t+\delta}}%
\mathop{\rm esssup}_{u\in \mathcal{U}_{t,t+\delta}}G_{t,{t+\delta}%
}^{t,x;u,\beta(u)}[W(t+\delta,\widetilde{X}_{t+\delta}^{t,x;u,\beta(u)})].
\]
\end{theorem}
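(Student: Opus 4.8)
Denote the right-hand side by $W_{\delta}(t,x)$. As is standard, I would prove the two inequalities $W\le W_{\delta}$ and $W\ge W_{\delta}$ separately, the whole argument resting on the backward-semigroup identity $J(t,x;u,v)=G_{t,t+\delta}^{t,x;u,v}[J(t+\delta,X_{t+\delta}^{t,x;u,v};u,v)]$ established above, together with two stability properties of the fully coupled semigroup $G$ on the small interval $[t,t+\delta]$: (a) monotonicity in the terminal datum, i.e.\ $\Psi_{1}\ge\Psi_{2}$ implies $G_{t,t+\delta}^{t,x;u,v}[\Psi_{1}]\ge G_{t,t+\delta}^{t,x;u,v}[\Psi_{2}]$, which follows from the comparison theorem (Lemma \ref{l4}); and (b) Lipschitz stability, $|G_{t,t+\delta}^{t,x;u,v}[\Psi_{1}]-G_{t,t+\delta}^{t,x;u,v}[\Psi_{2}]|\le C\,\|\Psi_{1}-\Psi_{2}\|_{\infty}$, which I would read off from the small-time $L^{p}$-estimates of \cite{LiWei-Lp}. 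Both (a) and (b) are where assumption $(\mathbf{H3.1})$ and the smallness of $\delta_{0}$ enter: for a fully coupled system the terminal value feeds back into the forward equation, so these estimates only close up on an interval short enough (and with $L_{\sigma},\ \tilde{C}_{h}$ small enough) that the contraction underlying Lemma \ref{l3} is available.

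For the inequality $W(t,x)\le W_{\delta}(t,x)$ I would fix $\beta_{1}\in\mathcal{B}_{t,t+\delta}$ and $\varepsilon>0$ and build a strategy $\beta\in\mathcal{B}_{t,T}$ by pasting. Using that $W(t+\delta,\cdot)$ is deterministic (Proposition \ref{pro1}) and Lipschitz (Lemma \ref{l7}), partition $\mathbb{R}$ into Borel sets $\{O_{i}\}$ of diameter $\le\varepsilon$ with representatives $x_{i}\in O_{i}$, and for each $i$ choose $\beta_{2}^{i}\in\mathcal{B}_{t+\delta,T}$ that is $\varepsilon$-optimal for $W(t+\delta,x_{i})$. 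Define $\beta$ to equal $\beta_{1}$ on $[t,t+\delta]$ and to equal $\beta_{2}^{i}$ on $[t+\delta,T]$ on the event $\{X_{t+\delta}^{t,x;u,\beta_{1}(u)}\in O_{i}\}$; nonanticipativity is inherited from $\beta_{1}$ and the $\beta_{2}^{i}$. On each such event the Lipschitz bound (\ref{ee1})(ii) for $J$ and Lemma \ref{l7} give $J(t+\delta,X_{t+\delta};u|_{[t+\delta,T]},\beta(u)|_{[t+\delta,T]})\le W(t+\delta,X_{t+\delta})+C\varepsilon$; applying the semigroup identity and properties (a),(b) yields $J(t,x;u,\beta(u))\le G_{t,t+\delta}^{t,x;u,\beta_{1}(u)}[W(t+\delta,X_{t+\delta})]+C\varepsilon$. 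Taking $\mathop{\rm esssup}$ over $u$, then $\mathop{\rm essinf}$ over $\beta_{1}$, and letting $\varepsilon\downarrow0$ gives the claim.

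For the reverse inequality $W(t,x)\ge W_{\delta}(t,x)$ I would take an arbitrary $\beta\in\mathcal{B}_{t,T}$, restrict it to $\beta_{1}\in\mathcal{B}_{t,t+\delta}$, and note that for each $u_{1}\in\mathcal{U}_{t,t+\delta}$ the strategy $\beta$ induces a nonanticipative continuation $\beta^{u_{1}}\in\mathcal{B}_{t+\delta,T}$. Since $W(t+\delta,\cdot)$ is the lower value, $W(t+\delta,X_{t+\delta})\le\mathop{\rm esssup}_{u_{2}}J(t+\delta,X_{t+\delta};u_{2},\beta^{u_{1}}(u_{2}))$, so a measurable selection (via the same partition device) produces $u_{2}^{*}\in\mathcal{U}_{t+\delta,T}$ with $J(t+\delta,X_{t+\delta};u_{2}^{*},\beta^{u_{1}}(u_{2}^{*}))\ge W(t+\delta,X_{t+\delta})-\varepsilon$. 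Then, writing $u=u_{1}\oplus u_{2}^{*}$ and using the semigroup identity with (a),(b), $\mathop{\rm esssup}_{u}J(t,x;u,\beta(u))\ge G_{t,t+\delta}^{t,x;u_{1},\beta_{1}(u_{1})}[W(t+\delta,X_{t+\delta})]-C\varepsilon$ for every $u_{1}$; taking $\mathop{\rm esssup}$ over $u_{1}$ and then $\mathop{\rm essinf}$ over $\beta$ gives $W(t,x)\ge W_{\delta}(t,x)-C\varepsilon$, and $\varepsilon\downarrow0$ finishes.

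The main obstacle is properties (a) and (b) for the fully coupled semigroup: unlike the decoupled case, the terminal datum influences the forward state, so the comparison and Lipschitz stability of $\Psi\mapsto G_{t,t+\delta}^{t,x;u,v}[\Psi]$ are not free. Securing them is precisely why one works on a small interval $[t,t+\delta]$ with $\delta\le\delta_{0}$ and under $(\mathbf{H3.1})$, invoking the small-time well-posedness and $L^{p}$-estimates of \cite{LiWei-Lp}; the remaining bookkeeping---the measurable selection of $u_{2}^{*}$ and the verification that the pasted $\beta$ is genuinely nonanticipative---is routine but must be carried out carefully to keep the construction inside $\mathcal{B}_{t,T}$.
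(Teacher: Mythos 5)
Your proposal is correct and follows essentially the same route as the paper's appendix proof: both inequalities are obtained by pasting controls/strategies across $t+\delta$, using partitions of $\Omega$ and of $\mathbb{R}$ (sets $O_i$ of diameter $\le\varepsilon$ with representatives) for $\varepsilon$-optimal measurable selections, and closing the estimates with the small-interval monotonicity and Lipschitz stability of the backward semigroup from \cite{LiWei-Lp}, which is exactly where $(\mathbf{H3.1})$ and the smallness of $\delta_0$ enter. The only harmless differences are organizational: you keep $\beta_1$ (resp.\ $\beta$) arbitrary and take the $\mathop{\rm essinf}$ at the end instead of first selecting an $\varepsilon$-optimal strategy for the outer $\mathop{\rm essinf}$, you omit the paper's (inessential) preliminary lemma that $W_\delta$ is deterministic, and for the monotonicity of $G$ the paper invokes the small-time comparison theorem of \cite{LiWei-Lp} rather than Lemma \ref{l4}, whose hypotheses the merely Lipschitz, weakly monotone terminal datum $W(t+\delta,\cdot)$ would not satisfy.
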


The proof is given in the Appendix.

From the definition of our stochastic backward semigroup we know
here:
\[
G_{s,t+\delta}^{t,x;u,v}[W(t+\delta,\widetilde{X}_{t+\delta}^{t,x;u,v}%
)]:=\widetilde{Y}_{s}^{t,x;u,v},\ s\in[t,t+\delta],\ u(\cdot)\in
\mathcal{U}_{t,t+\delta},\ v(\cdot)\in \mathcal{V}_{t,t+\delta},
\]
where
$(\widetilde{\Pi}_{s}^{t,x;u,v},\widetilde{K}_{s}^{t,x;u,v})_{t \leq
s
\leq t+\delta}:=(\widetilde{X}_{s}^{t,x;u,v},\widetilde{Y}_{s}^{t,x;u,v}%
,\widetilde{Z}_{s}^{t,x;u,v},\widetilde{K}_{s}^{t,x;u,v})_{t \leq s
\leq t+\delta}$ is the solution of the following FBSDE with the time
horizon $t+\delta$:
\begin{equation}
\label{equ3.3}\left \{
\begin{array}
[c]{llll}%
d\widetilde{X}_{s}^{t,x;u,v} & = & b(s,\widetilde{\Pi}_{s}^{t,x;u,v}%
,u_{s},v_{s})ds + \sigma(s,\widetilde{\Pi}_{s}^{t,x;u,v},u_{s},v_{s}%
)dB_{s}+\int_{E}h(s,\widetilde{\Pi}_{s-}^{t,x;u,v},u_{s}%
,v_{s},e)\tilde{\mu}(ds,de), \  & \\
d\widetilde{Y}_{s}^{t,x;u,v} & = & -f(s,\widetilde{\Pi}_{s}^{t,x;u,v}%
,\int_{E}\widetilde{K}_{s}^{t,x;u,v}(e)l(
e)\lambda(de),u_{s},v_{s})ds + \widetilde{Z}_{s}^{t,x;u,v}%
dB_{s} +\int_{E}\widetilde{K}_{s}^{t,x;u,v}(e)\tilde{\mu}(ds,de),
& \\
\widetilde{X}_{t}^{t,x;u,v} & = & x, \qquad\qquad\qquad\qquad\qquad\qquad\qquad\qquad\qquad\qquad\qquad\qquad\qquad\qquad\ s\in[t,t+\delta], \\
\widetilde{Y}_{t+\delta}^{t,x;u,v} & = &
W(t+\delta,\widetilde{X}_{t+\delta }^{t,x;u,v}). &
\end{array}
\right.
\end{equation}
From Lemma \ref{l7} we get that the value function $W(t,x)$ is
Lipschitz
continuous in $x$, uniformly in $t$. Now with the help of DPP we can derive the ${\frac{1}{2}}%
$-H\"{o}lder continuity property of $W(t,x)$ in $t$.

\begin{theorem}
\label{th3.2} Under the assumptions $(\mathbf{H2.2}),\ (\mathbf{H2.3}%
),\ (\mathbf{H3.1})$, the lower value function $W(t,x)$ is ${\frac{1}{2}}%
$-H\"{o}lder continuous in $t$: there exists a constant $C$ such
that, for all $x\in \mathbb{R},\ t, t^{\prime}\in[0,T],$
\[
|W(t,x)-W(t^{\prime},x)| \leq C(1+|x|)|t-t^{\prime}|^{\frac{1}{2}}.
\]
\end{theorem}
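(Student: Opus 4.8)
The plan is to combine the dynamic programming principle (Theorem \ref{th3.1}) with the Lipschitz continuity of $W$ in $x$ (Lemma \ref{l7}) and the small-time a priori estimates for the fully coupled FBSDE with jumps from \cite{LiWei-Lp}. Fix $x\in\mathbb{R}$ and $t<t'$ in $[0,T]$, and first treat the case $\delta:=t'-t\le\delta_0$, where $\delta_0>0$ is the small horizon of Theorem \ref{th3.1}. Applying the DPP with step $\delta$ gives $W(t,x)=\essinf_{\beta\in\mathcal{B}_{t,t+\delta}}\esssup_{u\in\mathcal{U}_{t,t+\delta}}G_{t,t+\delta}^{t,x;u,\beta(u)}[W(t+\delta,\widetilde{X}_{t+\delta}^{t,x;u,\beta(u)})]$, while $W(t+\delta,x)$, being deterministic (Proposition \ref{pro1}), may be written trivially as the same $\essinf_\beta\esssup_u$ of the constant $W(t+\delta,x)$. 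Using the elementary stability inequality $|\essinf_\beta\esssup_u a_{\beta,u}-\essinf_\beta\esssup_u b_{\beta,u}|\le\esssup_{\beta,u}|a_{\beta,u}-b_{\beta,u}|$, the problem reduces to the single estimate, uniform in $u\in\mathcal{U}_{t,t+\delta}$ and $v\in\mathcal{V}_{t,t+\delta}$,
$$|\widetilde{Y}_t^{t,x;u,v}-W(t+\delta,x)|\le C(1+|x|)\delta^{1/2},\quad\mbox{P-a.s.},$$
where $(\widetilde{X},\widetilde{Y},\widetilde{Z},\widetilde{K})$ solves the small-horizon FBSDE (\ref{equ3.3}).

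For this estimate I would use that $\widetilde{Y}_t^{t,x;u,v}$ is $\mathcal{F}_t$-measurable and that the Brownian and compensated-Poisson integrals in (\ref{equ3.3}) are $(\mathcal{F}_s)$-martingales, so that conditioning on $\mathcal{F}_t$ and using $\widetilde{Y}_{t+\delta}=W(t+\delta,\widetilde{X}_{t+\delta})$ yields
$$\widetilde{Y}_t^{t,x;u,v}-W(t+\delta,x)=E\big[W(t+\delta,\widetilde{X}_{t+\delta}^{t,x;u,v})-W(t+\delta,x)\,\big|\,\mathcal{F}_t\big]+E\Big[\int_t^{t+\delta}f(s,\widetilde{\Pi}_s^{t,x;u,v},{\textstyle\int_E}\widetilde{K}_s(e)l(e)\lambda(de),u_s,v_s)\,ds\,\Big|\,\mathcal{F}_t\Big].$$
The first conditional expectation is controlled by the Lipschitz property of $W$ in $x$ (Lemma \ref{l7}) together with the forward increment bound $E[|\widetilde{X}_{t+\delta}^{t,x;u,v}-x|^2\,|\,\mathcal{F}_t]\le C(1+|x|^2)\delta$, giving a term of order $(1+|x|)\delta^{1/2}$. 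The second is bounded, via the linear growth of $f$ (Remark following $(\mathbf{H2.3})$) and the Cauchy--Schwarz inequality in time, by $C\delta^{1/2}\big(E[\int_t^{t+\delta}(1+|\widetilde{X}_s|^2+|\widetilde{Y}_s|^2+|\widetilde{Z}_s|^2+\|\widetilde{K}_s\|^2)\,ds\,|\,\mathcal{F}_t]\big)^{1/2}$, and the conditional $L^2$ a priori estimate for (\ref{equ3.3}) on $[t,t+\delta]$ from \cite{LiWei-Lp} bounds the bracket by $C(1+|x|^2)$. Crucially, the constants $C$ and $\delta_0$ furnished by \cite{LiWei-Lp} do not depend on $(u,v)$, so the bound is uniform in the controls and the reduction of the previous paragraph closes.

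Finally, for $|t-t'|>\delta_0$ I would invoke the linear growth bound $|W(t,x)|\le C(1+|x|)$ (Lemma \ref{l7}(ii)): then $|W(t,x)-W(t',x)|\le C(1+|x|)\le C\delta_0^{-1/2}(1+|x|)|t-t'|^{1/2}$, so the asserted $\tfrac12$-H\"older estimate holds with an enlarged constant. The main obstacle is the full coupling: because $\sigma,b,h$ depend on $(y,z)$, one cannot freeze the forward process or apply a decoupled-BSDE comparison when changing the terminal data, so the increment estimates for $\widetilde{X}$, $\widetilde{Y}$, $\widetilde{Z}$, $\widetilde{K}$ must be taken from the coupled small-time theory of \cite{LiWei-Lp}; this is precisely what forces the restriction $\delta\le\delta_0$ and the smallness hypothesis $(\mathbf{H3.1})$ on the $z$-Lipschitz constants of $\sigma$ and $h$. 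Securing the conditional (rather than merely unconditional) form of these a priori estimates, so that the final bound holds P-a.s. and uniformly in $(u,v)$, is the technically delicate point.
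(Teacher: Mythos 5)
Your proposal is correct: every ingredient you invoke (the DPP of Theorem \ref{th3.1}, the Lipschitz and linear-growth bounds of Lemma \ref{l7}, the fact that $W$ is deterministic from Proposition \ref{pro1}, and the conditional small-time estimates of \cite{LiWei-Lp}, which are indeed uniform in the controls) is available, each step checks out, and your patch for $|t-t'|>\delta_0$ via linear growth is valid. However, your core computation takes a genuinely different route from the paper's. The paper argues one-sidedly: it selects an $\varepsilon$-optimal control via Remark \ref{re7.1} and splits the increment as $I^1_\delta+I^2_\delta$, where $I^1_\delta$ compares the backward semigroup under two different terminal data, $W(t+\delta,\widetilde{X}_{t+\delta})$ versus the constant $W(t+\delta,x)$; because the system is fully coupled, this forces the introduction of the auxiliary FBSDE (\ref{equ000}) (whose forward component $\hat{X}$ differs from $\widetilde{X}$) and an It\^{o}-formula stability argument for the pair of coupled solutions, on top of the estimates from \cite{LiWei-Lp}. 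You bypass that entirely: the $\essinf/\esssup$ stability inequality reduces both directions at once to a single estimate, uniform over arbitrary pairs $(u,v)$, and since the reference value $W(t+\delta,x)$ is a constant you obtain the increment by directly conditioning the backward equation of (\ref{equ3.3}) on $\mathcal{F}_t$ --- the martingale terms drop out, the terminal difference is handled by Lemma \ref{l7} plus the forward increment bound, and the $f$-integral by linear growth plus the conditional a priori bounds. So you only ever need one-solution a priori estimates where the paper needs a two-solution comparison; the trade-off is that your shortcut hinges on the comparison datum being constant, whereas the paper's semigroup-stability step is the more robust pattern that recurs in Section 4 (where the terminal data are test functions $\varphi(t+\delta,\cdot)$ and no such constancy is available). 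Two further minor differences, both to your credit: your symmetric reduction dispenses with the $\varepsilon$-optimal-control selection and the final $\varepsilon\downarrow0$, and you make explicit the extension to $|t-t'|>\delta_0$, which the paper leaves implicit.
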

\begin{proof} Let $(t,x)\in[0,T]\times \mathbb{R}$, and $0<\delta \leq (T-t)\wedge \delta_0$. Obviously, for the desired result, it is sufficient to prove the following inequality: for
some constant $C$,
$$-C(1+|x|)\delta^{{\frac{1}{2}}} \leq W(t,x)-W(t+\delta,x) \leq
C(1+|x|)\delta^{{\frac{1}{2}}}.$$ Next we only prove the second
inequality. From Remark \ref{re7.1}, we know that for every $\beta
\in \mathcal {B}_{t,t+\delta}$ there exists $u^\varepsilon \in
\mathcal {U}_{t,t+\delta}$, such that
$$  W(t,x)\leq G_{t,{t+\delta}}^{t,x;u^\varepsilon,\beta(u^\varepsilon)}[W(t+\delta,\widetilde{X}_{t+\delta}^{t,x;u^\varepsilon,\beta(u^\varepsilon)})]+\varepsilon.$$
Therefore, $W(t,x)-W(t+\delta,x) \leq
G_{t,{t+\delta}}^{t,x;u^\varepsilon,\beta(u^\varepsilon)}[W(t+\delta,\widetilde{X}_{t+\delta}^{t,x;u^\varepsilon,\beta(u^\varepsilon)})]+\varepsilon-W(t+\delta,x)
=I_\delta^1+I_\delta^2+\varepsilon,$\\
where$$\begin{array}{llll}I_\delta^1& =&
G_{t,{t+\delta}}^{t,x;u^\varepsilon,\beta(u^\varepsilon)}[W(t+\delta,\widetilde{X}_{t+\delta}^{t,x;u^\varepsilon,\beta(u^\varepsilon)})]-
G_{t,{t+\delta}}^{t,x;u^\varepsilon,\beta(u^\varepsilon)}[W(t+\delta,x)],\\
I_\delta^2 &=&
G_{t,{t+\delta}}^{t,x;u^\varepsilon,\beta(u^\varepsilon)}[W(t+\delta,x)]-
W(t+\delta,x).\end{array}$$ We know
$G_{s,t+\delta}^{t,x;u^\varepsilon,\beta(u^\varepsilon)}[W(t+\delta,x)]:=\hat{Y}_s^{t,x;u^\varepsilon,\beta(u^\varepsilon)},\
s\in[t,t+\delta],\  \beta \in \mathcal {B}_{t,t+\delta}$, where
$(\hat{\Pi}^{t,x;u^\varepsilon,\beta(u^\varepsilon)},
\hat{K}^{t,x;u^\varepsilon,\beta(u^\varepsilon)}):=(\hat{X}^{t,x;u^\varepsilon,\beta(u^\varepsilon)},$ $\hat{Y}^{t,x;u^\varepsilon,\beta(u^\varepsilon)},\hat{Z}^{t,x;u^\varepsilon,\beta(u^\varepsilon)},\hat{K}^{t,x;u^\varepsilon,\beta(u^\varepsilon)})$
is the solution of the following FBSDE with the time horizon
$t+\delta:$
\begin{equation}\label{equ000}
\left \{
\begin{array}{lll}
d\hat{X}_s^{t,x;u^\varepsilon,\beta(u^\varepsilon)}  &=&  b(s,\hat{\Pi}_s^{t,x;u^\varepsilon,\beta(u^\varepsilon)},u^\varepsilon_s,\beta_s(u^\varepsilon))ds + \sigma(s,\hat{\Pi}_s^{t,x;u^\varepsilon,\beta(u^\varepsilon)},u^\varepsilon_s,\beta_s(u^\varepsilon))dB_s\\
&&+\int_Eh(s,\hat{\Pi}_{s-}^{t,x;u^\varepsilon,\beta(u^\varepsilon)},u^\varepsilon_s,\beta_s(u^\varepsilon),e)\tilde{\mu}(ds,de), \\
d\hat{Y}_s^{t,x;u^\varepsilon,\beta(u^\varepsilon)} & =  & -f(s,\hat{\Pi}_s^{t,x;u^\varepsilon,\beta(u^\varepsilon)},\int_E\hat{K}_s^{t,x;u^\varepsilon,\beta(u^\varepsilon)}(e)l(e)\lambda(de),u^\varepsilon_s,\beta_s(u^\varepsilon))ds \\
&&+ \hat{Z}_s^{t,x;u^\varepsilon,\beta(u^\varepsilon)}dB_s+\int_E\hat{K}_s^{t,x;u^\varepsilon,\beta(u^\varepsilon)}(e)\tilde{\mu}(ds,de), \  \  \  \  \ s\in [t,t+\delta],\\
\hat{X}_t^{t,x;u^\varepsilon,\beta(u^\varepsilon)} & =&  x,\\
\hat{Y}_{t+\delta}^{t,x;u^\varepsilon,\beta(u^\varepsilon)}&  =&
W(t+\delta,x),
\end{array}
\right.
\end{equation}
where
$\hat{\Pi}_{s-}^{t,x;u^\varepsilon,\beta(u^\varepsilon)}=(\hat{X}_{s-}^{t,x;u^\varepsilon,\beta(u^\varepsilon)},\hat{Y}_{s-}^{t,x;u^\varepsilon,\beta(u^\varepsilon)},\hat{Z}_{s}^{t,x;u^\varepsilon,\beta(u^\varepsilon)}).$\\
By applying  It\^{o}'s formula to $e^{\gamma
s}|\widetilde{Y}_s^{t,x;u^\varepsilon,\beta(u^\varepsilon)}-\hat{Y}_s^{t,x;u^\varepsilon,\beta(u^\varepsilon)}|^2$,
 taking $\gamma$ large enough and using standard methods for BSDEs,
 we deduce
from Lemma \ref{l7}, Theorem 3.4-(ii) and Remark 3.7 in \cite{LiWei-Lp}
\begin{equation}\begin{array}{llll}
&&|\widetilde{Y}_t^{t,x;u^\varepsilon,\beta(u^\varepsilon)}-\hat{Y}_t^{t,x;u^\varepsilon,\beta(u^\varepsilon)}|^2\\
&\leq&CE[|W(t+\delta,\widetilde{X}_{t+\delta}^{t,x;u^\varepsilon,\beta(u^\varepsilon)})-W(t+\delta,x)|^2|\mathcal
{F}_t]
+CE[\int_t^{t+\delta}|\widetilde{X}_r^{t,x;u^\varepsilon,\beta(u^\varepsilon)}-\hat{X}_r^{t,x;u^\varepsilon,\beta(u^\varepsilon)}|^2dr|\mathcal
{F}_t]\\
&\leq&
CE[|\widetilde{X}_{t+\delta}^{t,x;u^\varepsilon,\beta(u^\varepsilon)}-x|^2|\mathcal
{F}_t]\\
&&+C\delta(E[\sup
\limits_{r\in[t,t+\delta]}|\widetilde{X}_r^{t,x;u^\varepsilon,\beta(u^\varepsilon)}-x|^2|\mathcal
{F}_t]+E[\sup
\limits_{r\in[t,t+\delta]}|\hat{X}_r^{t,x;u^\varepsilon,\beta(u^\varepsilon)}-x|^2|\mathcal
{F}_t] )\\
&\leq& C\delta(1+|x|^2).
\end{array}\end{equation}
Therefore, there exists some constant $C$ independent of the
controls such that
$$|I_\delta^1|=|\widetilde{Y}_s^{t,x;u^\varepsilon,\beta(u^\varepsilon)}-\hat{Y}_s^{t,x;u^\varepsilon,\beta(u^\varepsilon)}|\leq
C\delta^{\frac{1}{2}}(1+|x|).$$ Due to the definition of
$G_{t,t+\delta}^{t,x;u^\varepsilon,\beta(u^\varepsilon)}[\cdot]$, we
can rewrite the second term $I_\delta^2$ as follows
$$\begin{array}{llll}
|I_\delta^2|
&=&|E[\int_t^{t+\delta}f(s,\hat{\Pi}_s^{t,x;u^\varepsilon,\beta(u^\varepsilon)},
\int_E\hat{K}_s^{t,x;u,v}(e)l(e)\lambda(de),u_s^\varepsilon,\beta_s(u^\varepsilon))ds\mid
\mathcal
{F}_t]|\\
&\leq&
\delta^{\frac{1}{2}}E[\int_t^{t+\delta}|f(s,\hat{\Pi}_s^{t,x;u^\varepsilon,\beta(u^\varepsilon)},
\int_E\hat{K}_s^{t,x;u,v,\beta(u^\varepsilon)}(e)l(e)\lambda(de),u_s^\varepsilon,\beta_s(u^\varepsilon))|^2ds\mid
\mathcal
{F}_t]^{\frac{1}{2}}\\
&\leq&
C\delta^{\frac{1}{2}}E[\int_t^{t+\delta}(1+|\hat{X}_s^{t,x;u^\varepsilon,\beta(u^\varepsilon)}|+|\hat{Y}_s^{t,x;u^\varepsilon,\beta(u^\varepsilon)}|+|\hat{Z}_s^{t,x;u^\varepsilon,\beta(u^\varepsilon)}|\\
&&+\int_E|\hat{K}_s^{t,x;u,\beta(u^\varepsilon)}(e)|l(e)\lambda(de))^2ds\mid
\mathcal
{F}_t]^{\frac{1}{2}}\\
&\leq &C(1+|x|)\delta^{\frac{1}{2}}.\end{array}$$ For the latter
inequality, we have used  estimates (refer to Remark 3.7 in \cite{LiWei-Lp}) for
FBSDEs (\ref{equ000}) with jumps.\\
Therefore, $W(t,x)-W(t+\delta,x) \leq
C(1+|x|)\delta^{{\frac{1}{2}}}+\varepsilon. $ Letting $\varepsilon
\downarrow0$, we complete the proof. \\ \end{proof}

\section{{\protect \large Viscosity Solutions of Isaacs' equations with
integral-differential operators}}

 Now we consider the following fully coupled FBSDE with
jumps:
\begin{equation}
\label{equ4.1}\left \{
\begin{array}
[c]{llll}%
dX_{s}^{t,x;u,v} & = & b(s,\Pi_{s}^{t,x;u,v},u_{s},v_{s})ds
+\sigma(s,\Pi _{s}^{t,x;u,v},u_{s},v_{s}) dB_{s}+{\int_{E}}h(s,\Pi
_{s-}^{t,x;u,v},u_{s},v_{s},e)\tilde{\mu}(ds,de),   & \\
dY_{s}^{t,x;u,v} & = & -f(s,\Pi_{s}^{t,x;u,v},{\int_{E}}%
K_{s}^{t,x;u,v}(e)l(e)\lambda(de),u_{s},v_{s})ds +
Z_{s}^{t,x;u,v}dB_{s}+{\int_{E}}K_{s}^{t,x;u,v}(e)\tilde{\mu}(ds,de),
 & \\
X_{t}^{t,x;u,v} & = & x, \qquad\qquad\qquad\qquad\qquad\qquad\qquad\qquad\qquad\qquad\qquad\qquad\qquad\qquad\qquad s\in[t,T],& \\
Y_{T}^{t,x;u,v} & = & \Phi(X_{T}^{t,x;u,v}),&
\end{array}
\right.
\end{equation}
where
$\Pi_{s-}^{t,x;u,v}=(X_{s-}^{t,x;u,v},Y_{s-}^{t,x;u,v},Z_{s}^{t,x;u,v}),$
and the related second order integral-partial differential equations
of Isaacs' type which are the following PDEs combined with an
algebraic equation:
\begin{equation}
\label{equ4.2}\left \{
\begin{array}
[c]{ll} & \! \! \! \! \! \frac{\partial}{\partial t} W(t,x) +
H_{V}^{-}(t, x,
W(t,x))=0, \hskip 0.5cm (t,x)\in[0,T)\times\mathbb{R} ,\\
& \! \! \! \! \!V(t,x,u,v)=DW(t,x).\sigma(t,x,W(t,x),V(t,x,u,v),u,v),\\
& \! \! \! \! \! W(T,x) =\Phi(x),\  \  \  \ x\in \mathbb{R},
\end{array}
\right.
\end{equation}
and
\begin{equation}
\label{equ4.2.1}\left \{
\begin{array}
[c]{ll} & \! \! \! \! \! \frac{\partial}{\partial t} U(t,x) +
H_{V_{1}}^{+}(t, x,
U(t,x))=0, \hskip 0.5cm (t,x)\in[0,T)\times\mathbb{R} ,\\
& \! \! \! \! \!V_{1}(t,x,u,v)=DU(t,x).\sigma(t,x,U(t,x),V_{1}%
(t,x,u,v),u,v),\\
& \! \! \! \! \! U(T,x) =\Phi(x),\  \  \  \ x\in \mathbb{R},
\end{array}
\right.
\end{equation}
where
\[%
\begin{array}
[c]{lll}%
H_{V}^{-}(t, x, W(t,x)) & = & \mathop{\rm sup}\limits_{u \in U}\inf
\limits_{v
\in V} H(t, x, W(t,x), DW(t,x),D^2W(t,x), V(t,x,u,v),u,v),\\
H_{V_{1}}^{+}(t, x, U(t,x)) & = & \inf \limits_{v \in V}%
\mathop{\rm sup}\limits_{u \in U} H(t, x, U(t,x), DU(t,x),D^2U(t,x),
V_{1}(t,x,u,v),u,v),
\end{array}
\]
and $$
\begin{array}
[c]{lll}
&  & H(t, x, \phi,p,A,r,u,v)\\
&  & ={\frac{1}{2}}tr(\sigma \sigma^{T}(t, x, \phi(t,x), r,u,v)A)+p.b(t, x, \phi(t,x),r,u,v)\\
&  &\ \ \   +{\int_{E}}[\phi(t,x+h(t,x,\phi(t,x),r,u,v,e))-\phi(t,x)-p.h(t,x,\phi(t,x),r,u,v,e)]\lambda(de)\\
&  & \ \ \ +f(t, x, \phi(t,x), r, {\int_{E}}%
[\phi(t,x+h(t,x,\phi(t,x),r,u,v,e))-\phi(t,x)]l(e)\lambda
(de),u,v),
\end{array}
$$ where $t\in[0, T],\ x\in\mathbb{R},\ \phi\in C([0,T]\times\mathbb{R},{\mathbb{R}}),\ p\in\mathbb{R}^d,\ A\in \mathcal{S}^d,\ r\in\mathbb{R}^d,\ u\in U,\ v\in V,$ where $\mathcal{S}^d$ is the set of $d\times d$ symmetric matrices.

We will show that the value function $W(t, x)$ (resp., $U(t,x)$)
defined in (\ref{ee2}) (resp., (\ref{ee3})) is a viscosity solution
of the corresponding equation (\ref{equ4.2}) (resp.,
(\ref{equ4.2.1})). For this we use Peng's BSDE approach~\cite{Pe4}
developed originally for stochastic control problems of decoupled
FBSDEs. We first  give the definition of viscosity solution for this
kind of PDEs. For more information on viscosity solution, the reader
is referred to Crandall, Ishii  and Lions
\cite{CRANDALL-ISHII-LIONS}.

\begin{definition}
\label{def1-viscosity}\mbox{ } A real-valued continuous function
$W\in C([0,T]\times\mathbb{R} )$ is called \newline {\rm(i)} a
viscosity subsolution of equation (\ref{equ4.2}) if $W(T,x) \leq
\Phi (x),\  \mbox{for all}\ x \in\mathbb{R}$, and if for all
functions $\varphi \in C^{3}_{l, b}([0,T]\times\mathbb{R})$ and for
all $(t,x) \in[0,T) \times\mathbb{R}$ such that $W-\varphi$\ attains
a local maximum at $(t, x)$,
\begin{equation}
\label{viscosity1}%
\begin{array}
[c]{ll} & \! \! \! \! \! \frac{\partial \varphi}{\partial t} (t,x)
+\mathop{\rm sup}\limits_{u\in U}\inf \limits_{v\in
V}\{A^{u,v}\varphi
(t,x)+B^{\delta,u,v}(W,\varphi)(t,x)\\
& \! \! \! \! \!
+f(t,x,W(t,x),\psi(t,x,u,v),C^{\delta,u,v}(W,\varphi
)(t,x),u,v)\} \geq0,\  \mbox{for any }\delta>0,\ \mbox{and } \\
& \! \! \! \! \! \psi(t,x,u,v)=D\varphi(t,x).\sigma(t,x,W(t,x),\psi
(t,x,u,v),u,v),
\end{array}
\end{equation}
where
\[%
\begin{array}
[c]{llll} & \! \! \! \! \!
A^{u,v}\varphi(t,x)={\frac{1}{2}}tr(\sigma \sigma
^{T}(t,x,W(t,x),\psi(t,x,u,v),u,v)D^{2}\varphi(t,x))+D\varphi
(t,x).b(t,x,W(t,x),\psi(t,x,u,v),u,v), &  & \\
& \! \! \! \! \! B^{\delta,u,v}(W,\varphi)(t,x)=\int_{E_{\delta}}%
(\varphi(t,x+h(t,x,W(t,x),\psi(t,x,u,v),u,v,e))-\varphi(t,x) &  & \\
& \! \! \! \! \! \qquad \qquad \qquad \qquad \qquad \qquad \qquad
\qquad \qquad \qquad \qquad
-D\varphi(t,x).h(t,x,W(t,x),\psi(t,x,u,v),u,v,e))\lambda(de) &  &
\\
& \! \! \! \! \! \qquad \qquad \qquad \qquad\ \ +\int_{E^{c}_{\delta}}%
(W(t,x+h(t,x,W(t,x),\psi(t,x,u,v),u,v,e))-W(t,x) &  & \\
& \! \! \! \! \! \qquad \qquad \qquad \qquad \qquad \qquad \qquad
\qquad \qquad \qquad
\qquad-D\varphi(t,x).h(t,x,W(t,x),\psi(t,x,u,v),u,v,e))\lambda(de) &
&
\\
& \! \! \! \! \! C^{\delta,u,v}(W,\varphi)(t,x)=\int_{E_{\delta}}%
(\varphi(t,x+h(t,x,W(t,x),\psi(t,x,u,v),u,v,e))-\varphi(t,x))l(e)\lambda
(de) &  & \\
& \! \! \! \! \! \qquad \qquad \qquad \qquad\ \ +\int_{E^{c}_{\delta}}%
(W(t,x+h(t,x,W(t,x),\psi(t,x,u,v),u,v,e))-W(t,x))l(e)\lambda(de) &
&
\end{array}
\]
with $E_{\delta}=\{e\in E| |e|<\delta \}.$

\noindent {\rm(ii)} a viscosity supersolution of equation
(\ref{equ4.2}) if $W(T,x) \geq \Phi(x),\ \mbox{for all}\ x
\in\mathbb{R}$, and if for all functions $\varphi \in C^{3}_{l,
b}([0,T]\times\mathbb{R})$  and for all $(t,x) \in[0,T)
\times\mathbb{R}$\ such that $W-\varphi$\ attains a local minimum at
$(t, x)$,
\[%
\begin{array}
[c]{ll} & \! \! \! \! \! \frac{\partial \varphi}{\partial t} (t,x)
+\mathop{\rm sup}\limits_{u\in U}\inf \limits_{v\in
V}\{A^{u,v}\varphi
(t,x)+B^{\delta,u,v}(W,\varphi)(t,x)\\
& \! \! \! \! \!
+f(t,x,W(t,x),\psi(t,x,u,v),C^{\delta,u,v}(W,\varphi
)(t,x),u,v)\} \leq0,\  \mbox{for any }\delta>0,\ \mbox{and } \\
& \! \! \! \! \! \psi(t,x,u,v)=D\varphi(t,x).\sigma(t,x,W(t,x),\psi
(t,x,u,v),u,v),
\end{array}
\]
\noindent {\rm(iii)} a viscosity solution of equation (\ref{equ4.2})
if it is both a viscosity sub- and supersolution of equation
(\ref{equ4.2}).
\end{definition}

Similar to the results in \cite{BBP,BLH,LP}, we claim the following
result.

\begin{lemma}
In Definition \ref{def1-viscosity}, we can replace $B^{\delta,u,v}%
(W,\varphi)(t,x)$ and $C^{\delta,u,v}(W,\varphi)(t,x)$ by $B^{u,v}%
\varphi(t,x)$ and $C^{u,v}\varphi(t,x)$, respectively, where
\[%
\begin{array}
[c]{llll} &  &
B^{u,v}\varphi(t,x)=\int_{E}(\varphi(t,x+h(t,x,W(t,x),\psi
(t,x,u,v),u,v,e))-\varphi(t,x) & \\
&  &\ \qquad \qquad
\quad\ -D\varphi(t,x).h(t,x,W(t,x),\psi
(t,x,u,v),u,v,e))\lambda(de), & \\
&  & C^{u,v}\varphi(t,x)=\int_{E}(\varphi(t,x+h(t,x,W(t,x),\psi
(t,x,u,v),u,v,e))-\varphi(t,x))l(e)\lambda(de). &
\end{array}
\]
\end{lemma}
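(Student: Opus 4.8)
The plan is to read the statement as an equivalence of two definitions of viscosity solution for the integral-differential Isaacs equation (\ref{equ4.2}): the one in Definition \ref{def1-viscosity}, built on the ``split'' operators $B^{\delta,u,v}(W,\varphi)$, $C^{\delta,u,v}(W,\varphi)$, and the one built on the ``full'' operators $B^{u,v}\varphi$, $C^{u,v}\varphi$. I would prove the subsolution case in detail (the supersolution case being symmetric) and argue both implications. A first, standard reduction is to check the inequalities only at points $(t,x)$ where $W-\varphi$ attains a \emph{global} maximum with $W(t,x)=\varphi(t,x)$: given a local maximum one modifies $\varphi$ far from $x$ so that it dominates $W$ everywhere, which is possible because $W$ has at most linear growth by Lemma \ref{l7} and $\varphi\in C^{3}_{l,b}$, while leaving $\varphi$ and its derivatives at $(t,x)$ unchanged. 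Crucially, since the split operators use $W$ (not $\varphi$) on $E_{\delta}^{c}$ and $\varphi$ only on $E_{\delta}$ and through $D\varphi(t,x)$, this modification does not alter $B^{\delta,u,v}(W,\varphi)$ or $C^{\delta,u,v}(W,\varphi)$.

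For the direction ``split $\Rightarrow$ full'' I would use that at such a global maximum $W(t,x+h)-W(t,x)\le\varphi(t,x+h)-\varphi(t,x)$ for every jump $h=h(t,x,W(t,x),\psi(t,x,u,v),u,v,e)$. Comparing the two operators only on $E_{\delta}^{c}$, where they differ, gives
\[
B^{\delta,u,v}(W,\varphi)(t,x)\le B^{u,v}\varphi(t,x),\qquad C^{\delta,u,v}(W,\varphi)(t,x)\le C^{u,v}\varphi(t,x),
\]
the second inequality also using $l(e)\ge0$ from $(\mathbf{H2.2})$-(iii). Since $k\mapsto f(t,x,y,z,k)$ is non-decreasing by $(\mathbf{H2.2})$-(ii), the Hamiltonian formed with the full operators dominates the one formed with the split operators, so the split subsolution inequality (required for every $\delta>0$) forces the full subsolution inequality. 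The supersolution case is the mirror image: at a global minimum $W\ge\varphi$, the inequalities reverse and monotonicity of $f$ again transfers them.

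For the converse ``full $\Rightarrow$ split'' I would pass to the limit. As $\delta\downarrow0$ the $E_{\delta}$-part of $B^{\delta,u,v}(W,\varphi)$ and $C^{\delta,u,v}(W,\varphi)$ vanishes and the $E_{\delta}^{c}$-part converges to the nonlocal term evaluated with $W$ on all of $E$; this uses dominated convergence, the bound $\int_{E}\rho^{2}(e)\lambda(de)<+\infty$ from $(\mathbf{H2.2})$-(i), and the Lipschitz/linear growth of $W$ (Lemma \ref{l7}). To recover this limiting form from the full definition I would approximate $W$ from above on the far-jump region by a sequence $\varphi_{n}\in C^{3}_{l,b}$ that coincides with $\varphi$ on a neighbourhood of $x$ (hence shares the derivatives at $(t,x)$ and keeps the global-maximum property $W-\varphi_{n}\le0$), and with $\varphi_{n}\downarrow W$ away from $x$; such smooth majorants exist by continuity of $W$. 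Applying the full subsolution inequality to each $\varphi_{n}$ and letting $n\to\infty$, using continuity of $f$, yields the split inequality for the given $\varphi$ and $\delta$.

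The hard part is precisely the nonlocality of $B^{u,v}\varphi$ and $C^{u,v}\varphi$: unlike the differential part $A^{u,v}\varphi$, these depend on the test function globally, so the two operators genuinely differ for a fixed $\varphi$ and the passage between them cannot be purely pointwise. The delicate technical issues are therefore (i) performing the reduction to a global extremum without disturbing the nonlocal term, (ii) constructing admissible approximating functions that agree with $\varphi$ near $x$ yet collapse onto $W$ far away while staying in $C^{3}_{l,b}$, and handling the coupling between the jump variable $e$ and the spatial argument $x+h(\cdot,e)$, and (iii) verifying that the dominated-convergence limits are uniform enough in $(u,v)$ that the $\mathop{\rm sup}_{u}\inf_{v}$ commutes with the limit. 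These are exactly the points worked out in \cite{BBP,BLH,LP}, on which I would model the argument.
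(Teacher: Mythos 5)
The first thing to note is that the paper itself offers no proof of this lemma: it is ``claimed'' by analogy with the corresponding results in \cite{BBP,BLH,LP}, so your attempt has to be measured against the argument in those references. Your overall architecture reproduces it correctly. The direction ``split $\Rightarrow$ full'' is exactly the standard one: at a global maximum with $W(t,x)=\varphi(t,x)$ one has $W(t,x+h)-W(t,x)\le\varphi(t,x+h)-\varphi(t,x)$, hence $B^{\delta,u,v}(W,\varphi)\le B^{u,v}\varphi$ and, using $l\ge 0$, $C^{\delta,u,v}(W,\varphi)\le C^{u,v}\varphi$, and the monotonicity of $f$ in its $k$-argument ($(\mathbf{H2.2})$-(ii)) transfers the inequality from the split to the full Hamiltonian. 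The direction ``full $\Rightarrow$ split'' by squeezing smooth majorants $\varphi_n$ between $W$ and $\varphi$, equal to $\varphi$ where the small jumps land and decreasing to $W$ elsewhere, followed by a Dini-type argument to pass the limit through $\mathop{\rm sup}_{u}\inf_{v}$ over the compact set $U\times V$, is precisely the route of \cite{BBP}.

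Two of your justifications, however, are wrong as written. First, the claim you label ``crucial'' in the reduction from local to global maxima --- that modifying $\varphi$ far from $x$ leaves $B^{\delta,u,v}(W,\varphi)$ and $C^{\delta,u,v}(W,\varphi)$ unchanged --- is false: the $E_\delta$-parts of these operators evaluate $\varphi$ at the points $x+h(t,x,W(t,x),\psi(t,x,u,v),u,v,e)$, $e\in E_\delta$, and by the growth of $h$ ($|h|\le\rho(e)(1+|x|+|y|+|z|)$ with $\rho(e)\le C(1\wedge|e|)$) together with $|W(t,x)|\le C(1+|x|)$ (Lemma \ref{l7}) and $|\psi(t,x,u,v)|\le C(1+|x|)$ (Theorem \ref{pro4.1}), these points fill a ball around $x$ of radius of order $(1\wedge\delta)(1+|x|)$ --- a macroscopic radius, not an infinitesimal one. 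A modification that turns a small local maximum into a global one must begin just outside the (possibly tiny) local neighbourhood, i.e.\ inside this ball, and then both split operators do change. The clean repair is the one used in \cite{BBP} and implicitly by the authors at the start of the proof of Theorem \ref{th2}: work with global extrema from the outset, which makes your step (1) unnecessary. Second, in the ``full $\Rightarrow$ split'' direction you invoke dominated convergence for the limit $\delta\downarrow 0$; but the natural dominating function for the integrand $W(t,x+h)-W(t,x)-D\varphi(t,x).h$ is of order $\rho(e)\le C(1\wedge|e|)$, which need not be $\lambda$-integrable under the standing assumption $\int_E(1\wedge|e|^2)\lambda(de)<\infty$, so dominated convergence does not apply. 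Either avoid the limit in $\delta$ altogether and prove the split inequality for each fixed $\delta$ by letting $\varphi_n\downarrow W$ only on the range of the $E_\delta^{c}$-jumps (the route of \cite{BBP}), or replace dominated convergence by monotone convergence from above, observing that at a global maximum the integrand is bounded above by $\varphi(t,x+h)-\varphi(t,x)-D\varphi(t,x).h=O(\rho(e)^2)$, which is integrable; the split inequality for every $\delta>0$ then follows because, at a global maximum, the split Hamiltonian is nondecreasing in $\delta$. With these two corrections your argument becomes a complete proof, matching the one the paper delegates to its references.
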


In what follows, we always assume $W(t,x)=\varphi(t,x)$, otherwise,
we can replace $\varphi$ by $\varphi-(W(t,x)-\varphi(t,x))$. From
now on, we shall use the following equivalent definition of
viscosity solution.

\begin{definition}
\label{def2-viscosity}\mbox{ } A real-valued continuous function
$W\in C([0,T]\times\mathbb{R} )$ is called \newline {\rm(i)} a
viscosity subsolution of equation (\ref{equ4.2}) if $W(T,x) \leq
\Phi (x),\  \mbox{for all}\ x \in\mathbb{R}$, and if for all
functions $\varphi \in C^{3}_{l, b}([0,T]\times\mathbb{R})$ and for
all $(t,x) \in[0,T) \times\mathbb{R}$ such that $W-\varphi$\ attains
a local maximum at $(t, x)$,
\[
\left \{
\begin{array}
[c]{ll} & \! \! \! \! \! \frac{\partial \varphi}{\partial t} (t,x)
+H_{\psi}^{-}(t, x,
\varphi(t,x)) \geq0,\\
& \! \! \! \! \! \mbox{where }\psi \mbox{ is the unique solution of
the following algebraic
equation:}\\
& \! \! \! \! \!
\psi(t,x,u,v)=D\varphi(t,x).\sigma(t,x,\varphi(t,x),\psi
(t,x,u,v),u,v).
\end{array}
\right.
\]
\noindent {\rm(ii)} a viscosity supersolution of equation
(\ref{equ4.2}) if $W(T,x) \geq \Phi(x),\ \mbox{for all}\ x
\in\mathbb{R}$, and if for all functions $\varphi \in C^{3}_{l,
b}([0,T]\times\mathbb{R})$ and for all $(t,x) \in[0,T)
\times\mathbb{R}$\ such that $W-\varphi$\ attains a local minimum at
$(t, x)$,
\[
\left \{
\begin{array}
[c]{ll} & \! \! \! \! \! \frac{\partial \varphi}{\partial t}
(t,x)+H_{\psi}^{-}(t, x,
\varphi(t,x)) \leq0,\\
& \! \! \! \! \! \mbox{where }\psi \mbox{ is the unique solution of
the following algebraic
equation:}\\
& \! \! \! \! \!
\psi(t,x,u,v)=D\varphi(t,x).\sigma(t,x,\varphi(t,x),\psi
(t,x,u,v),u,v).
\end{array}
\right.
\]
\noindent {\rm(iii)} a viscosity solution of equation (\ref{equ4.2})
if it is both a viscosity sub- and supersolution of equation
(\ref{equ4.2}).
\end{definition}

\begin{remark} When $\sigma$ depends on $z$, we need the test function $\varphi$ in Definitions \ref{def1-viscosity} and \ref{def2-viscosity}  satisfies the monotonicity condition  $(\mathbf{H2.3})$-{\rm(ii)}'
and   the following technical assumptions:
\begin{description}
\item[$(\mathbf{H4.1})$]
{\rm(i)} $\beta_{2}>0;$

{\rm(ii)} $G\sigma(s, x, y, z,u,v)$ is continuous in
$(s,u,v)$, uniformly with respect to $(x, y, z)\in \mathbb{R}%
\times{\mathbb{R}}\times{\mathbb{R}}^{d}$.
\end{description}
\end{remark}

\begin{theorem}
\label{th2} Under the assumptions $(\mathbf{H2.2}),\
(\mathbf{H2.3}),\ (\mathbf{H3.1}),\ (\mathbf{H4.1}) $, the lower
value function $W$ is a viscosity solution of (\ref{equ4.2}), the
upper value function $U$ is a viscosity solution of
(\ref{equ4.2.1}).
\end{theorem}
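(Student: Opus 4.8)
The plan is to combine the dynamic programming principle (Theorem \ref{th3.1}) with Peng's BSDE approach and the small-time estimates for fully coupled FBSDEs with jumps from \cite{LiWei-Lp}. Since $W$ has already been shown to be continuous---Lipschitz in $x$ by Lemma \ref{l7} and $\frac12$-H\"older in $t$ by Theorem \ref{th3.2}---and similarly for $U$, it only remains to verify the viscosity sub- and supersolution inequalities of Definition \ref{def2-viscosity}. I would carry out the subsolution property for $W$ in full; the supersolution property is symmetric (local minimum, reverse inequality, and a near-optimal $\beta$ in place of a near-optimal $u$), and the statement for $U$ and (\ref{equ4.2.1}) follows verbatim after interchanging $\sup_{u}$ and $\inf_{v}$ and replacing $W,\psi$ by $U,\psi_1$. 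The terminal conditions $W(T,x)=\Phi(x)=U(T,x)$ hold by definition.

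For the subsolution property, fix a test function $\varphi\in C^{3}_{l,b}([0,T]\times\mathbb{R})$ and a point $(t,x)\in[0,T)\times\mathbb{R}$ at which $W-\varphi$ attains a local maximum, normalized (as announced in the excerpt) so that $W(t,x)=\varphi(t,x)$ and $W\le\varphi$ on a neighborhood of $(t,x)$. For small $\delta>0$ I would invoke the DPP on $[t,t+\delta]$ and replace the terminal datum $W(t+\delta,\cdot)$ by $\varphi(t+\delta,\cdot)$, using the monotonicity of the backward semigroup $G$ (the comparison theorem, Lemma \ref{l4} and Theorem 3.3 in \cite{LiWei-Lp}) together with $W\le\varphi$; the smallness of $\delta$ keeps $\widetilde{X}^{t,x;u,\beta(u)}_{t+\delta}$ close to $x$, as quantified by the small-time moment estimates of \cite{LiWei-Lp}, so that the localization costs only a negligible term. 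This yields
\[
0=W(t,x)-\varphi(t,x)\le\essinf_{\beta\in\mathcal{B}_{t,t+\delta}}\esssup_{u\in\mathcal{U}_{t,t+\delta}}\big(G^{t,x;u,\beta(u)}_{t,t+\delta}[\varphi(t+\delta,\widetilde{X}^{t,x;u,\beta(u)}_{t+\delta})]-\varphi(t,x)\big).
\]

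The heart of the matter is a first-order expansion of the semigroup. Applying It\^o's formula to $\varphi(s,\widetilde{X}^{t,x;u,v}_{s})$ on $[t,t+\delta]$ and comparing the resulting backward equation with the one defining $G^{t,x;u,v}_{t,t+\delta}[\varphi(t+\delta,\widetilde{X}^{t,x;u,v}_{t+\delta})]$, I would show, uniformly in the admissible controls, that $G^{t,x;u,v}_{t,t+\delta}[\varphi(t+\delta,\widetilde{X}^{t,x;u,v}_{t+\delta})]-\varphi(t,x)$ equals $E\big[\int_t^{t+\delta}(\varphi_s+A^{u,v}\varphi+B^{u,v}\varphi+f(\,\cdot\,,\psi,C^{u,v}\varphi,\,\cdot\,))(s,\widetilde{X}^{t,x;u,v}_s)\,ds\mid\mathcal{F}_t\big]+o(\delta)$, the $o(\delta)$ being controlled uniformly by the Lipschitz property of $f$ and the small-interval $L^p$-estimates of \cite{LiWei-Lp}. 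The crucial and delicate point here is the treatment of the $z$-dependence of $\sigma$ (and $h$): the martingale integrand of $\varphi(s,\widetilde{X}_s)$ is $D\varphi\cdot\sigma(\cdot,\widetilde{Z}_s,\cdot)$, which must coincide with $\widetilde{Z}_s$, forcing the implicit relation $\psi=D\varphi(t,x)\cdot\sigma(t,x,\varphi(t,x),\psi,u,v)$. Its unique solvability---and the identification of $\widetilde{Z}_t$ with $\psi(t,x,u,v)$---is exactly the representation theorem for the algebraic equation established in \cite{LW}, which requires the smallness of $L_\sigma,\,L_h$ in $(\mathbf{H3.1})$ and $\beta_2>0$ in $(\mathbf{H4.1})$; this is where I expect the main obstacle to lie.

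Finally I would substitute the expansion into the displayed inequality, divide by $\delta$, and let $\delta\downarrow0$. Using the continuity of $b,\sigma,h,f,\varphi$ in $(s,u,v)$, uniform in $(x,y,z)$ (assumption $(\mathbf{H4.1})$(ii)), together with the deterministic representation $W=\essinf_\beta\esssup_u E[\,\cdot\,]$ coming from Proposition \ref{pro1}, the $\essinf_\beta\esssup_u$ collapses in the limit to the pointwise $\sup_{u\in U}\inf_{v\in V}$ of the integrand evaluated at $(t,x)$. Since $H^{-}_{\psi}(t,x,\varphi(t,x))=\sup_{u}\inf_{v}H(t,x,\varphi(t,x),D\varphi,D^2\varphi,\psi,u,v)$, this gives precisely $\frac{\partial\varphi}{\partial t}(t,x)+H^{-}_{\psi}(t,x,\varphi(t,x))\ge0$, i.e.\ the subsolution inequality of Definition \ref{def2-viscosity}. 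The supersolution inequality and the corresponding statements for $U$ and (\ref{equ4.2.1}) follow by the symmetric argument, completing the proof that $W$ and $U$ are viscosity solutions of (\ref{equ4.2}) and (\ref{equ4.2.1}), respectively.
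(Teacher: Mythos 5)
Your skeleton is the same as the paper's up to the decisive step: DPP on $[t,t+\delta]$, replacement of $W(t+\delta,\cdot)$ by $\varphi(t+\delta,\cdot)$ via the monotonicity of the backward semigroup, an It\^o expansion of $\varphi(s,\widetilde X_s)$ producing a BSDE for $Y^{u,v}-\varphi(\cdot,X^{u,v})$ coupled with the algebraic equation, the representation theorem for that algebraic equation under $(\mathbf{H3.1})$, $(\mathbf{H4.1})$, and small-interval $L^p$-estimates giving errors of order $o(\delta)$ (the paper's chain $Y^{1,u,v}\to Y^{2,u,v}\to Y^{3,u,v}$ with $O(\delta^{5/4})$ bounds, Lemmas \ref{l4.1}--\ref{l4.3}). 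All of that is consistent with the paper. The genuine gap is your final step: the assertion that, after dividing by $\delta$, ``the $\essinf_{\beta}\esssup_{u}$ collapses in the limit to the pointwise $\sup_{u\in U}\inf_{v\in V}$'' by continuity of the coefficients. That collapse \emph{is} the theorem; continuity in $(s,u,v)$ does not produce it, because the controls $u$ remain adapted processes and the strategies $\beta$ remain nonanticipative maps even after the coefficients are frozen at $(t,x)$, and moreover the order of operations must be reversed (from $\essinf_\beta\esssup_u$ to $\sup_u\inf_v$).

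The paper needs two distinct, non-symmetric mechanisms here, and your sketch contains neither. For the supersolution direction one uses the elementary inequality $\esssup_u\essinf_v Y^{3,u,v}_t\le\essinf_\beta\esssup_u Y^{3,u,\beta(u)}_t$ to pass from strategies to controls, and then Lemma \ref{l4.4}, which identifies $\esssup_u\essinf_v Y^{3,u,v}_t$ with the solution $Y_0(t)$ of the deterministic ODE (\ref{equ4.8}) whose driver is $L_0=\sup_{u\in U}\inf_{v\in V}L$; the proof of that lemma is itself a measurable-selection argument combined with the comparison theorem for BSDEs with jumps (Lemma \ref{l2}), not a limiting/continuity fact. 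For the subsolution direction your claim that the argument is ``symmetric'' fails: the inequality above goes the wrong way, so the DPP bound $\essinf_\beta\esssup_u Y^{1,u,\beta(u)}_t\ge 0$ gives only a lower bound on the \emph{larger} quantity and says nothing about $\sup_u\inf_v$. The paper instead argues by contradiction: assuming $L_0(t,x,0)\le-\theta<0$, it selects a measurable $\gamma:U\to V$ with $L(t,x,0,\psi(t,x,u,\gamma(u)),0,u,\gamma(u))\le-\frac{3}{4}\theta$ for all $u$, interprets $u\mapsto\gamma(u_\cdot)$ as a nonanticipative strategy in $\mathcal{B}_{t,t+\delta}$, obtains from (\ref{equ4.7}) the uniform bound $Y^{3,u,\gamma(u)}_t\le-\frac{1}{2}\theta\delta$, and contradicts the DPP-derived bound $Y^{3,u^\varepsilon,\gamma(u^\varepsilon)}_t\ge-C\delta^{5/4}-\varepsilon\delta$ as $\delta\downarrow0$, $\varepsilon\downarrow0$. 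Without Lemma \ref{l4.4} and this selector-as-strategy contradiction, dividing by $\delta$ and letting $\delta\downarrow0$ does not yield the Isaacs Hamiltonian inequality; you would need to supply both arguments to close the proof.
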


We only give the proof for $W$, similar to $U$. Before proving the
theorem,
 we first consider the following equation:
\begin{equation}
\label{equ4.3}\left \{
\begin{array}
[c]{llll}%
dX_{s}^{u,v} & = & b(s,\Pi_{s}^{u,v},u_{s},v_{s})ds + \sigma(s,\Pi_{s}%
^{u,v},u_{s},v_{s})dB_{s}+{\int_{E}}h(s,\Pi_{s-}^{u,v}%
,u_{s},v_{s},e)\tilde{\mu}(ds,de), \  \ s\in[t,t+\delta], & \\
dY_{s}^{u,v} & = & -f(s,\Pi_{s}^{u,v},\int_{E}K_{s}%
^{u,v}(e)l(e)\lambda(de),u_{s},v_{s})ds + Z_{s}^{u,v}%
dB_{s}+{\int_{E}}K_{s}^{u,v}(e)\tilde{\mu}(ds,de), & \\
X_{t}^{u,v} & = & x, & \\
Y_{t+\delta}^{u,v} & = & \varphi(t+\delta,X_{t+\delta}^{u,v}),\  \
0\leq \delta \leq T-t, &
\end{array}
\right.
\end{equation}
where $\Pi_{s-}^{u,v}=(X_{s-}^{u,v},Y_{s-}^{u,v},Z_{s}^{u,v}).$ From
Theorems 3.2 and 3.4 in \cite{LiWei-Lp}, we know there exists
$0<\bar{\delta}_{0}\leq T-t$ such that for any $0\leq \delta \leq
\bar{\delta}_{0},$  (\ref{equ4.3}) has a unique solution
$(\Pi_{s}^{u,v},K_{s}^{u,v})_{s\in[t,t+\delta]}:=(X_{s}^{u,v}%
,Y_{s}^{u,v},Z_{s}^{u,v},K_{s}^{u,v})_{s\in[t,t+\delta]}%
\in \mathcal{S}^{2}(t,t+\delta;\mathbb{R})\times \mathcal{S}%
^{2}(t,t+\delta;\mathbb{R})\times
\mathcal{H}^{2}(t,t+\delta;\mathbb{R}^{d})\times \mathcal{K}_{\lambda}^{2}(t,t+\delta;\mathbb{R})$, and for $p\geq2,$
\begin{equation}
\label{ee4.1}%
\begin{array}
[c]{llll}%
\mbox{(i)} &  & E[\mathop{\rm sup}\limits_{t\leq s\leq t+\delta
}|X_{s}^{u,v}|^{p}+\mathop{\rm sup}\limits_{t\leq s\leq t+\delta
}|Y_{s}^{u,v}|^{p}+(\int_{t}^{t+\delta}|Z_{s}%
^{u,v}|^{2}ds)^{\frac{p}{2}}\\
&& +(\int_{t}^{t+\delta}\int_{E}%
|K_{s}^{u,v}(e)|^{2}\lambda(de)ds)^{\frac{p}{2}}\mid
\mathcal{F}_{t}]\leq
C(1+|x|^{p}),\   \mbox{P-a.s.}; & \\
\mbox{(ii)} &  & E[\mathop{\rm sup}\limits_{t\leq s\leq t+\delta
}|X_{s}^{u,v}-x|^{p}\mid \mathcal{F}_{t}]\leq C\delta
(1+|x|^{p}),\  \  \mbox{P-a.s.}, & \\
\mbox{(iii)} &  & E[(\int_{t}^{t+\delta}|Z_{s}%
^{u,v}|^{2}ds)^{\frac{p}{2}}+(\int_{t}^{t+\delta
}\int_{E}|K_{s}^{u,v}(e)|^{2}\lambda(de)ds)^{\frac{p}{2}}%
\mid \mathcal{F}_{t}]\leq C\delta^{\frac{p}{2}}(1+|x|^{p}%
),\  \  \mbox{P-a.s.} &
\end{array}
\end{equation}
Define$$
\begin{array}
[c]{llll}
&  & L(s,x,y,z,k,u,v) & \\
& = & \frac{\partial}{\partial s}\varphi(s,
x)+{\frac{1}{2}}tr(\sigma
\sigma^{T}(s,x,y+\varphi(s,x),z,u,v)D^{2}\varphi(s, x)) +D\varphi
(s,x).b(s,x,y+\varphi(s,x),z,u,v) & \\
&  & +\int_{E}[\varphi(s,x+h(s,x,y+\varphi
(s,x),z,u,v,e))-\varphi(s,x)-D\varphi(s,x).h(s,x,y+\varphi
(s,x),z,u,v,e)]\lambda(de) & \\
&  & +f(s,x,y+\varphi(s,x),z,\int_{E}[k(e)+\varphi
(s,x+h(s,x,y+\varphi(s,x),z,u,v,e))-\varphi(s,x)]l(e)\lambda(de),u,v).
&
\end{array}
$$
 Note that, for all
$(s,x,y,z,k,u,v)\in[0,T]\times \mathbb{R}\times \mathbb{R}\times
\mathbb{R}^{d}\times \mathbb{R}\times U\times V,$
$$
L(s,x,y,z,k,u,v)\leq C(1+|x|^{2}+|y|^{2}+|z|^{2}+|k|),
$$
and  for all $(s,x,y,z,k,u,v), \
(s,\bar{x},\bar{y},\bar{z},\bar{k},u,v)\in[0,T]\times
\mathbb{R}\times \mathbb{R}\times \mathbb{R}^{d}\times
\mathbb{R}\times U\times V$,
\[
|L(s,x,y,z,k,u,v)-L(s,x,\bar{y},\bar{z},\bar{k},u,v)|\leq
C(1+|x|+|y|+|\bar
{y}|+|z|+|\bar{z}|)(|y-\bar{y}|+|z-\bar{z}|+|k-\bar{k}|).
\]
Now we set $Y^{1,u,v}_{s} = Y^{u,v}_{s}-\varphi(s,X^{u,v}_{s})$. By
applying It\^{o}'s formula to $\varphi(s,X_{s}^{u,v})$ and setting
\[%
\begin{array}
[c]{ll}%
Z_{s}^{1,u,v}=Z_{s}^{u,v}-D\varphi(s,X_{s}^{u,v}).\sigma(s,X_{s}^{u,v}%
,Y_{s}^{u,v},Z_{s}^{u,v},u_{s},v_{s}), & \\
K_{s}^{1,u,v}(e)=K_{s}^{u,v}(e)-\varphi(s,X_{s}^{u,v}+h(s,X_{s-}^{u,v},Y_{s-}%
^{u,v},Z_{s}^{u,v},u_{s},v_{s},e))+\varphi(s,X_{s}^{u,v}), &
\end{array}
\]
we obtain
\begin{equation}
\label{equ4.6}\left \{
\begin{array}
[c]{llll}%
Y_{s}^{1,u,v} & = & \int_{s}^{t+\delta}\Big[\frac{\partial
}{\partial r}\varphi(r, X_{r}^{u,v}) +{\frac{1}{2}}tr(\sigma \sigma^{T}%
(r,\Pi_{r}^{u,v},u_{r},v_{r})D^{2}\varphi(r, X_{r}^{u,v}))+D\varphi
(r,X_{r}^{u,v}).b(r,\Pi_{r}^{u,v},u_{r},v_{r}) & \\
&  & +f(r,\Pi_{r}^{u,v},\int_{E}K_{r}^{u,v}(e)l(e)\lambda(de),u_{r},v_{r})+\int_{E}[\varphi(r,X_{r}%
^{u,v}+h(r,\Pi_{r-}^{u,v},u_{r},v_{r},e))-\varphi(r,X_{r}^{u,v}) & \\
&  & -D\varphi(r,X_{r}^{u,v}).h(r,\Pi_{r-}^{u,v}%
,u_{r},v_{r},e)]l(e)\lambda(de)\Big]dr -\int
_{s}^{t+\delta}Z_{r}^{1,u,v}dB_{r}-\int_{s}^{t+\delta}\int_{E}K_{r}%
^{1,u,v}(e)\tilde{\mu}(dr,de) & \\
& = & \int_{s}^{t+\delta}L(r,X_{r}^{u,v},Y_{r}^{1,u,v}%
,Z_{r}^{u,v},K_{r}^{1,u,v},u_{r},v_{r})dr-\int_{s}^{t+\delta
}Z_{r}^{1,u,v}dB_{r}  -\int_{s}^{t+\delta}\int_{E}K_{r}%
^{1,u,v}(e)\tilde{\mu}(dr,de), & \\
Z_{s}^{1,u,v} & = & Z_{s}^{u,v}-D\varphi(s,X_{s}^{u,v}).\sigma(s,X_{s}%
^{u,v},Y_{s}^{1,u,v}+\varphi(s,X_{s}^{u,v}),Z_{s}^{u,v},u_{s},v_{s}%
),\  \ s\in[t,t+\delta]. &
\end{array}
\right.
\end{equation}
Obviously, (\ref{equ4.6}) has a solution $(Y_{s}^{1,u,v},Z_{s}^{1,u,v}%
,K_{s}^{1,u,v})\in \mathcal{S}^{2}(t,t+\delta;\mathbb{R}%
)\times \mathcal{H}^{2}(t,t+\delta;\mathbb{R}^{d})\times
\mathcal{K}_{\lambda}^{2}(t,t+\delta;\mathbb{R})$, because
(\ref{equ4.3}) has a unique solution $(X_{s}^{u,v},Y_{s}^{u,v},Z_{s}%
^{u,v},K_{s}^{u,v})_{s\in[ t, t+\delta]}$, and $Y_{s}^{1,u,v}=Y_{s}%
^{u,v}-\varphi(s,X_{s}^{u,v}),$ $Z_{s}^{1,u,v}=Z_{s}^{u,v}-D\varphi
(s,X_{s}^{u,v}).\sigma(s,X_{s}^{u,v},Y_{s}^{1,u,v}+\varphi(s,X_{s}^{u,v}%
),Z_{s}^{u,v},u_{s},v_{s}),$ $K_{s}^{1,u,v}(e)=K_{s}^{u,v}(e)-\varphi(s,X_{s}%
^{u,v}+h(s,X_{s-}^{u,v},$
$Y_{s-}^{u,v},Z_{s}^{u,v},u_{s},v_{s},e))+\varphi (s,X_{s}^{u,v}).$

Next we consider the following BSDE combined with an algebraic
equation:
\begin{equation}
\label{equ4.61}\left \{
\begin{array}
[c]{llll}%
dY_{s}^{2,u,v} & = &- L(s,x,0,\hat{Z}_{s}^{u,v},0,u_{s},v_{s})ds+
Z_{s}^{2,u,v}dB_{s}+ \int_{E}K_{s}^{2,u,v}(e)\tilde{\mu}(ds,de),
\  \  \ s\in[t,t+\delta], & \\
\hat{Z}_{s}^{u,v} & = & Z_{s}^{1,u,v} + D\varphi(s,x).\sigma(s,x,Y_{s}%
^{1,u,v}+\varphi(s,x),\hat{Z}_{s}^{u,v},u_{s},v_{s}), & \\
Y_{t+\delta}^{2,u,v} & = & 0, &
\end{array}
\right.
\end{equation}
where $u(\cdot) \in \mathcal{U}_{t,t+\delta},\ v(\cdot) \in \mathcal{V}%
_{t,t+\delta}.$

 We first recall the following condition (Remark \ref{re3.5}) and the
Representation Theorem for the algebraic equation  obtained in
\cite{L, LW}.

\begin{remark}
\label{re3.5} Without loss of generality, we may assume
$G=1\in \mathbb{R}$. When $\sigma$ depends on $z$, we
get the following results directly from the monotonicity condition
$( \mathbf{H2.3})$:
\begin{equation}
\label{ee4.3}%
\begin{array}
[c]{llll}%
\mathrm{(i)} &  & \langle
\sigma(t,x,y,z,u,v)-\sigma(t,x,y,\bar
{z},u,v),z-\bar{z}\rangle \leq-\beta_{2}|z-\bar{z}|^{2}; & \\
\mathrm{(ii)} &  & D\varphi(s,x) \geq0. &
\end{array}
\end{equation}
Indeed, {\rm(i)} follows from $( \mathbf{H2.3})$-{\rm(i)}, {\rm(ii)}
follows from $( \mathbf{H2.3})$-{\rm(ii)}' satisfied by $\varphi:
\langle \varphi(s,x)-\varphi (s,\bar{x}),G(x-\bar{x})\rangle \geq
0,\ \varphi \in C_{l,b}^{3}([0,T]\times \mathbb{R})$.
\end{remark}

We recall the following Representation Theorem given in \cite{L,
LW}.

\begin{theorem}
\label{pro4.1} Under the assumptions $(\mathbf{H2.3}),\ (\mathbf{H3.1}),\ (\mathbf{H4.1}),$  for any $s\in[0, T],\  \xi \in \mathbb{R}^{d},\ x\in
\mathbb{R},\ y\in \mathbb{R},\ u\in U,\ v\in V$, there exists a
unique $z$ such that
$z=\xi+D\varphi(s,x).\sigma(s,x,y+\varphi(s,x),z,u,v). $ That means,
the solution $z$ can be written as $z=q(s, x,y, \xi,u,v)$, where the
function $q$\ is Lipschitz with respect to $\ y, \  \xi,$ and $|q(s,
x,y, \xi,u,v)|\leq C(1+|x|+|y|+|\xi|).$ The constant $C$\ is
independent of $s,\ x,\ y, \  \xi,\ u,\ v$. Moreover, $z=q(s,
x,y,\xi,u,v)$ is continuous with respect to $(s,u,v)$.
\end{theorem}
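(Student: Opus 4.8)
The plan is to treat the algebraic identity as a fixed-point problem in $z\in\mathbb{R}^d$ for each frozen $(s,x,y,\xi,u,v)$, exploiting the dissipativity of $\sigma$ in $z$ recorded in Remark \ref{re3.5}. Write $T(z):=\xi+D\varphi(s,x)\,\sigma(s,x,y+\varphi(s,x),z,u,v)$, so the claim is that $T$ has a unique fixed point. Since $\varphi\in C^3_{l,b}([0,T]\times\mathbb{R})$, the derivative $D\varphi(s,x)$ is bounded, say $|D\varphi(s,x)|\le M$, and by Remark \ref{re3.5} we have $D\varphi(s,x)\ge 0$; moreover $\langle\sigma(s,x,\cdot,z,u,v)-\sigma(s,x,\cdot,\bar z,u,v),\,z-\bar z\rangle\le-\beta_2|z-\bar z|^2$ with $\beta_2\ge 0$, and $\sigma$ is Lipschitz in $z$. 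Setting $F(z):=z-T(z)$, these facts give $\langle F(z)-F(\bar z),z-\bar z\rangle\ge|z-\bar z|^2$ for all $z,\bar z$ (the cross term $-D\varphi\langle\sigma(z)-\sigma(\bar z),z-\bar z\rangle\ge D\varphi\,\beta_2|z-\bar z|^2\ge 0$ has the good sign), while $F$ is globally Lipschitz with some constant $L_F$. Hence $z\mapsto z-\varepsilon F(z)$ is a contraction on $\mathbb{R}^d$ for $\varepsilon>0$ small enough, and its unique fixed point is the unique solution of $F(z)=0$. This produces the function $z=q(s,x,y,\xi,u,v)$.

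For the regularity in $(y,\xi)$, I would take two data $(y,\xi)$, $(\bar y,\bar\xi)$ with solutions $z,\bar z$, subtract the two identities and pair with $z-\bar z$. The $z$-difference of $\sigma$ again contributes a term $\le-D\varphi\,\beta_2|z-\bar z|^2\le 0$, which is discarded; the $y$-difference is controlled by the Lipschitz constant of $\sigma$ times $|y-\bar y|$, and the $\xi$-difference by $|\xi-\bar\xi|$. Dividing by $|z-\bar z|$ yields $|z-\bar z|\le|\xi-\bar\xi|+C|y-\bar y|$, the asserted Lipschitz dependence. The linear-growth bound follows the same scheme with $\bar z=0$: pairing the identity with $z$ and using dissipativity together with the linear growth of $\sigma$ and of $\varphi$ gives $|z|\le|\xi|+M\,C(1+|x|+|y|+|\varphi(s,x)|)\le C(1+|x|+|y|+|\xi|)$, where $C$ depends only on the structural constants and on $M$, hence is independent of $(s,x,y,\xi,u,v)$.

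For continuity in $(s,u,v)$ I would fix $(x,y,\xi)$, take $(s_n,u_n,v_n)\to(s,u,v)$, and set $z_n:=q(s_n,x,y,\xi,u_n,v_n)$, $z:=q(s,x,y,\xi,u,v)$; by the growth bound $\{z_n\}$ is bounded. Decompose $z_n-z=A_n+B_n$, where $B_n:=D\varphi(s,x)\bigl[\sigma(s,x,y+\varphi(s,x),z_n,u,v)-\sigma(s,x,y+\varphi(s,x),z,u,v)\bigr]$ collects the change in the unknown, and $A_n$ collects all the changes in the coefficients (in $D\varphi(\cdot,x)$, in the $\varphi(\cdot,x)$ sitting inside $\sigma$, and in the explicit arguments $(s,u,v)$ of $\sigma$). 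Pairing with $z_n-z$, the contribution of $B_n$ has the good sign and is dropped, leaving $|z_n-z|\le|A_n|$. One then checks $A_n\to 0$ using continuity of $D\varphi(\cdot,x)$ and $\varphi(\cdot,x)$ in $s$, the boundedness of $\{z_n\}$, and, crucially, the continuity of $\sigma$ in $(s,u,v)$ \emph{uniformly} in $(x,y,z)$ provided by $(\mathbf{H4.1})$-(ii).

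I expect the continuity in $(s,u,v)$ to be the main obstacle: since the fixed point $z_n$ moves with $n$, mere pointwise continuity of $\sigma$ would not force $A_n\to 0$; it is precisely the uniform continuity in $(\mathbf{H4.1})$-(ii), combined with the a priori bound on $z_n$ and the sign conditions $D\varphi\ge 0$, $\beta_2\ge 0$, that closes the estimate. Everything else is routine monotone/Lipschitz bookkeeping.
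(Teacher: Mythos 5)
Your proof is correct, but it cannot be matched against an in-paper argument: the paper does not prove Theorem \ref{pro4.1} at all, it only recalls it from \cite{L,LW} (``We recall the following Representation Theorem given in...''), so what you have produced is a self-contained substitute for a proof the paper delegates to its references. Your route is the natural Minty-type one: writing $F(z)=z-\xi-D\varphi(s,x).\sigma(s,x,y+\varphi(s,x),z,u,v)$, the two facts of Remark \ref{re3.5} ($D\varphi\geq 0$ and the $z$-dissipativity of $\sigma$ coming from $(\mathbf{H2.3})$) make $F$ strongly monotone with constant $1$ and globally Lipschitz, so the contraction $z\mapsto z-\varepsilon F(z)$ for small $\varepsilon>0$ yields the unique root; and all three quantitative claims (Lipschitz in $(y,\xi)$, linear growth, continuity in $(s,u,v)$) follow correctly by your device of pairing differences of the equation with $z-\bar z$ and discarding the dissipative term, with constants controlled by $\sup|D\varphi|$ and the structural constants, hence independent of $(s,x,y,\xi,u,v)$. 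One refinement to your closing comment: the uniform-in-$(x,y,z)$ continuity of $(\mathbf{H4.1})$-(ii) is what makes your direct estimate $|z_n-z|\leq|A_n|$ close, but it is not indispensable for the conclusion itself. Since your a priori bound makes $(z_n)$ bounded, you could instead extract a convergent subsequence, pass to the limit in the equation using that $\sigma$ is continuous in $(t,u,v)$ and Lipschitz in $(x,y,z)$ (hence jointly continuous), and identify every subsequential limit with $z$ by uniqueness; so pointwise continuity would already suffice. Your argument is the cleaner of the two, and it is precisely the place where $(\mathbf{H4.1})$-(ii) earns its keep, but ``crucial'' overstates its role.
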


\begin{remark}
 From the above Representation Theorem  we have the existence and the
uniqueness of the solutions of the algebraic equations in
(\ref{equ4.6}) and (\ref{equ4.61}).
\end{remark}

In order to complete the proof of Theorem \ref{th2}, we need the
following lemmas.

\begin{lemma}
\label{l4.1} For every $u\in \mathcal{U}_{t,t+\delta},\ v\in \mathcal{V}%
_{t,t+\delta},$ $0\leq \delta\leq\bar{\delta}_{0}$,
\[
E[\int_{t}^{t+\delta}(|Y_{s}^{1,u,v}|+|Z_{s}^{1,u,v}%
|+\int_{E}|K_{s}^{1,u,v}(e)|l(e)\lambda(e))ds\mid \mathcal{F}_{t}] \leq
C\delta^{\frac{5}{4}},\  \  \mbox{P-}a.s.,
\]
where the constant $C$ is independent of the controls $u,\ v$ and of
$\delta>0$.
\end{lemma}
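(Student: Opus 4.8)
The plan is to read (\ref{equ4.6}) as a backward SDE with jumps for the triple $(Y^{1,u,v},Z^{1,u,v},K^{1,u,v})$, whose terminal value $Y^{1,u,v}_{t+\delta}=Y^{u,v}_{t+\delta}-\varphi(t+\delta,X^{u,v}_{t+\delta})=0$ vanishes, and to estimate its three components on the short interval $[t,t+\delta]$. First I would isolate the free term of the driver, $\ell_s:=L(s,X^{u,v}_s,0,Z^{u,v}_s,0,u_s,v_s)$. From the explicit form of $L$ and the growth/Lipschitz bounds recorded above one has $|\ell_s|\le C(1+|X^{u,v}_s|^2+|Z^{u,v}_s|^2)$ and, with $K^1:=K^{1,u,v}$,
\[
\big|L(s,X^{u,v}_s,Y^{1,u,v}_s,Z^{u,v}_s,K^{1}_s,u_s,v_s)-\ell_s\big|\le C\big(1+|X^{u,v}_s|+|Y^{1,u,v}_s|+|Z^{u,v}_s|\big)\Big(|Y^{1,u,v}_s|+\int_E|K^{1}_s(e)|l(e)\lambda(de)\Big).
\]
Thus (\ref{equ4.6}) is driven by $\ell$ plus a part that is Lipschitz in the unknowns but with a state-dependent coefficient; this non-uniform coefficient is why Lemma \ref{l1.1} cannot be invoked verbatim and is the source of the difficulty.

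Next I would run the energy estimate: applying It\^{o} to $|Y^{1,u,v}_s|^2$ on $[s,t+\delta]$ and taking $E[\,\cdot\mid\mathcal F_t]$ gives, since the terminal value is $0$,
\[
|Y^{1,u,v}_t|^2+E\Big[\int_t^{t+\delta}\Big(|Z^{1,u,v}_r|^2+\int_E|K^{1}_r(e)|^2\lambda(de)\Big)dr\,\Big|\,\mathcal F_t\Big]=2E\Big[\int_t^{t+\delta}Y^{1,u,v}_r L_r\,dr\,\Big|\,\mathcal F_t\Big],
\]
with $L_r$ the driver. I split $L_r=\ell_r+(L_r-\ell_r)$, treat the Lipschitz remainder by Young's inequality so that a fraction of $\int_E|K^1|^2\lambda$ is absorbed into the left-hand side, and control the free part through the moment estimates (\ref{ee4.1}). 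The decisive technical point is to keep the time integral inside before squaring: writing $\int_t^{t+\delta}|\ell_r|dr\le C\int_t^{t+\delta}(1+|X^{u,v}_r|^2)dr+C\int_t^{t+\delta}|Z^{u,v}_r|^2dr$ and bounding the first square by $\delta^2\sup_r(1+|X^{u,v}_r|^2)^2$ and the second by $(\int_t^{t+\delta}|Z^{u,v}_r|^2dr)^2$, only the quantities $E[(\int_t^{t+\delta}|Z^{u,v}_r|^2dr)^2\mid\mathcal F_t]\le C\delta^2$ and $E[\int_t^{t+\delta}(1+|X^{u,v}_r|^4)dr\mid\mathcal F_t]\le C\delta$ from (\ref{ee4.1}) (with $p=4$) enter, so that the unavailable $E[\int|Z^{u,v}|^4]$ is never needed. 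Combined with the representation $Y^{1,u,v}_s=E[\int_s^{t+\delta}L_r\,dr\mid\mathcal F_s]$ and Doob's inequality (and an analogous $L^4$ bound on $\sup_s|Y^{1,u,v}_s|$ for the cross terms), the state-dependent remainder is seen to contribute only factors absorbable for $\delta$ small, and a Gronwall/bootstrap argument yields
\[
E\Big[\sup_{t\le s\le t+\delta}|Y^{1,u,v}_s|^2+\int_t^{t+\delta}\Big(|Z^{1,u,v}_r|^2+\int_E|K^{1}_r(e)|^2\lambda(de)\Big)dr\,\Big|\,\mathcal F_t\Big]\le C\delta^{2}.
\]

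Finally I would pass from these $L^2$ bounds to the claimed $L^1$ bound by Cauchy--Schwarz, gaining a factor $\delta^{1/2}$ from the interval of length $\delta$. For the gradient term, $E[\int_t^{t+\delta}|Z^{1,u,v}_r|dr\mid\mathcal F_t]\le\delta^{1/2}\big(E[\int_t^{t+\delta}|Z^{1,u,v}_r|^2dr\mid\mathcal F_t]\big)^{1/2}\le C\delta^{3/2}$; for the jump term, using $\int_E|K^1_r|l\,\lambda\le(\int_E|K^1_r|^2\lambda)^{1/2}(\int_E l^2\lambda)^{1/2}$ with $\int_E l^2\lambda<\infty$, likewise $E[\int_t^{t+\delta}\int_E|K^1_r(e)|l(e)\lambda(de)\,dr\mid\mathcal F_t]\le C\delta^{3/2}$; and $E[\int_t^{t+\delta}|Y^{1,u,v}_r|dr\mid\mathcal F_t]\le\delta\,E[\sup_s|Y^{1,u,v}_s|\mid\mathcal F_t]\le C\delta^{2}$. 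Since $\delta\le\bar\delta_0\le1$, each is $\le C\delta^{5/4}$, with $C$ depending on the fixed $x$ but not on $u,v,\delta$, which is the assertion. The main obstacle throughout is precisely the bookkeeping forced by the non-uniform Lipschitz coefficient of $L$ and the resulting self-referential cross terms, which the small-$\delta$ absorption described above is designed to tame.
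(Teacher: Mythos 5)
Your proposal is correct in substance and would in fact prove a stronger bound (final rate $\delta^{3/2}$ rather than $\delta^{5/4}$), and its skeleton coincides with the paper's: It\^{o}'s formula applied to $|Y^{1,u,v}|^2$ with zero terminal value, the conditional moment estimates (\ref{ee4.1}) with $p=4$ (in particular $E[(\int_t^{t+\delta}|Z_s^{u,v}|^2ds)^2\mid\mathcal F_t]\le C\delta^2$, which is exactly how both you and the paper avoid ever needing $E[\int|Z^{u,v}|^4]$), and a closing Cauchy--Schwarz in time that gains $\delta^{1/2}$. Where you genuinely diverge is the mechanism that makes $Y^{1,u,v}$ small. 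The paper never splits the driver and never invokes Doob: it returns to the untransformed equation, writes $Y^{1,u,v}_s=E[\int_s^{t+\delta}f(\cdots)dr+\varphi(t+\delta,X^{u,v}_{t+\delta})-\varphi(s,X^{u,v}_s)\mid\mathcal F_s]$, and reads off the \emph{pointwise} bound $|Y^{1,u,v}_s|\le C\delta^{1/2}(1+|X^{u,v}_s|)$, which inserted into the energy identity gives the energy bound $C\delta^{3/2}$; the exponent $5/4$ of the statement is precisely $\delta^{1/2}\cdot(\delta^{3/2})^{1/2}$. You instead work entirely with the transformed driver $L$, use the representation $Y^{1,u,v}_s=E[\int_s^{t+\delta}L_rdr\mid\mathcal F_s]$ together with Doob's maximal inequality and a bootstrap, and claim the energy bound $C\delta^2$. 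That stronger claim is attainable: setting $A^2:=E[\sup_s|Y^{1,u,v}_s|^2\mid\mathcal F_t]$, $B^2:=E[(\int_t^{t+\delta}|L_r|dr)^2\mid\mathcal F_t]$ and $D$ the energy, conditional Doob gives $A\le 2B$, every $|Y^{1,u,v}|$-contribution inside $B^2$ carries a prefactor $\delta^2$ (so the crude bound $E[\sup_s|Y^{1,u,v}_s|^4\mid\mathcal F_t]\le C$ suffices there), the $K^{1,u,v}$-contribution is at most $C\delta\|l\|^2_{L^2(\lambda)}D$ by Cauchy--Schwarz, and hence $D\le 2AB\le 4B^2\le C\delta^2+C\delta D$, which closes for $\delta$ small. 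So your route buys a better rate at the price of the bootstrap machinery; the paper's pointwise bound is the more economical device, since $\delta^{3/2}$ already suffices for the lemma.

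One step of your sketch as written does need repair. After the Young splitting, the Lipschitz remainder produces the cross term $E[\int_t^{t+\delta}(1+|X^{u,v}_r|+|Y^{1,u,v}_r|+|Z^{u,v}_r|)^2|Y^{1,u,v}_r|^2dr\mid\mathcal F_t]$, and with only the crude bound $E[\sup_s|Y^{1,u,v}_s|^4\mid\mathcal F_t]\le C$ this term is $O(\delta)$, not $O(\delta^2)$: the bootstrap then closes only at $D\le C\delta$, giving $E[\int_t^{t+\delta}|Z^{1,u,v}_r|dr\mid\mathcal F_t]\le C\delta$, which is \emph{weaker} than the required $C\delta^{5/4}$. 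Hence the ``analogous $L^4$ bound on $\sup_s|Y^{1,u,v}_s|$'' that you mention in passing is not optional decoration but a necessary preliminary: you must first prove $E[\sup_s|Y^{1,u,v}_s|^4\mid\mathcal F_t]\le C\delta^4$ by applying Doob in $L^4$ to the same representation, which in turn requires the estimates (\ref{ee4.1}) with $p=8$ (available, since they hold for every $p\ge2$). Alternatively, drop the splitting altogether and close the estimate directly via $D\le 2A\,B$ as above, in which case no isolated cross term ever appears. Either repair stays within your framework, so this is a fixable omission rather than a wrong idea.
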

\begin{proof} From
$Y_s^{1,u,v}=Y_s^{u,v}-\varphi(s,X_s^{u,v})$ we have
$$\begin{array}{llll}Y_s^{1,u,v}&=&\int_s^{t+\delta}f(r,X_r^{u,v},Y_r^{u,v},Z_r^{u,v},\int_EK_r^{u,v}(e)l(e)\lambda(de),u_r,v_r)dr-\int_s^{t+\delta}Z_r^{u,v}dB_r\\
&&-\int_s^{t+\delta}
\int_EK_s^{u,v}(e)\tilde{\mu}(ds,de)+\varphi(t+\delta,X_{t+\delta}^{u,v})-\varphi(s,X_s^{u,v}),\
s\in [t,t+\delta]. \end{array}$$ Then,
$$\begin{array}{llll}Y_s^{1,u,v}&=&E[\int_s^{t+\delta}f(r,X_r^{u,v},Y_r^{u,v},Z_r^{u,v},\int_EK_r^{u,v}(e)l(e)\lambda(de),u_r,v_r)dr\mid \mathcal
{F}_s]\\
&&+E[\varphi(t+\delta,X_{t+\delta}^{u,v})-\varphi(s,X_s^{u,v})\mid
\mathcal {F}_s],\  \ s\in [t,t+\delta].\end{array}$$ Therefore, from
(\ref{ee4.1})-(i)
\begin{equation}\label{ee4.4}\begin{array}{llll}
|Y_s^{1,u,v}|&\leq&
CE[\int_s^{t+\delta}(1+|X_r^{u,v}|+|Y_r^{u,v}|+|Z_r^{u,v}|+
\int_E|K_r^{u,v}(e)||l(e)|\lambda(de))dr\mid \mathcal
{F}_s]\\
&&+C\delta+CE[|X_{t+\delta}^u-X_s^u|\mid \mathcal {F}_s]\\
&\leq &
C\delta^{\frac{1}{2}}(E[\int_s^{t+\delta}(1+|X_r^{u,v}|^2+|Y_r^{u,v}|^2+|Z_r^{u,v}|^2+
\int_E|K_r^{u,v}(e)|^2\lambda(de))dr\mid \mathcal
{F}_s])^{\frac{1}{2}}\\
&&+C\delta+C\delta^{\frac{1}{2}}(1+|X_s^{u,v}|)\\
&\leq &C\delta^{\frac{1}{2}}(1+|X_s^{u,v}|),\  \  \mbox{P-a.s.},\
s\in [t,t+\delta],
\end{array}\end{equation}
and in particular, $$|Y_t^{1,u,v}|\leq C(1+|x|),\  \mbox{P-a.s.}$$
From
$$\begin{array}{llll}
&&Z_s^{1,u,v}=Z_s^{u,v}-D\varphi(s,X_s^{u,v}).\sigma(s,X_s^{u,v},Y_s^{u,v},Z_s^{u,v},u_s,v_s),\\
&&K_s^{1,u,v}(e)=K_s^{u,v}(e)-\varphi(s,X_s^{u,v}+h(s,X_{s-}^{u,v},Y_{s-}^{u,v},Z_s^{u,v},u_s,v_s,e)+\varphi(s,X_s^{u,v})\end{array}$$
we get
\begin{equation}\label{ee4.5}\begin{array}{llll}&&|Z_s^{1,u,v}|\leq
C(1+|X_s^{u,v}|+|Y_s^{u,v}|+|Z_s^{u,v}|),\\
&&|K_s^{1,u,v}(e)|\leq
C(1+|X_s^{u,v}|+|Y_s^{u,v}|+|Z_s^{u,v}|+|K_s^{u,v}(e)|),\
\mbox{P-a.s.},\ s\in[t,t+\delta].\end{array}\end{equation} Moreover,
from $Y_s^{1,u,v}=Y_s^{u,v}-\varphi(s,X_s^{u,v})$ combined with
(\ref{ee4.4}), we get
\begin{equation}\label{ee4.5.1}|Y_s^{u,v}|\leq
C(1+|X_s^{u,v}|),\  \mbox{P-a.s.},\ s\in[t,t+\delta].\end{equation}
On the other hand,  from Theorem \ref{pro4.1}, we know
$Z_s^{u,v}=q(s,X_s^{u,v},Y_s^{1,u,v},Z_s^{1,u,v},u_s,v_s),$  where
the function $q$ is Lipschitz in $y,z$, and of linear growth.
Putting $F(s,x,y,z,k,u,v)=L(s,x,y,q(s,x,y,z,u,v),k,u,v)$,
(\ref{equ4.6}) can be rewritten as follows: $s\in
[t,t+\delta]$,
$$\begin{array}{llll}Y_s^{1,u,v}=\int_s^{t+\delta}F(r,X_r^{u,v},Y_r^{1,u,v},Z_r^{1,u,v},K_r^{1,u,v},u_r,v_r)dr-\int_s^{t+\delta}Z_r^{1,u,v}dB_r-\int_s^{t+\delta} \int_EK_s^{1,u,v}(e)\tilde{\mu}(ds,de).
\end{array}$$ Therefore, from (\ref{ee4.4}), (\ref{ee4.5}) as well
as (\ref{ee4.1})-(i)-(iii)
\begin{equation}\label{ee4.6.1}\begin{array}{llll}
&&|Y_t^{1,u,v}|^2+E[ \int_t^{t+\delta}|Z_r^{1,u,v}|^2dr+
\int_t^{t+\delta} \int_E|K_r^{1,u,v}(e)|^2\lambda(de)dr\mid \mathcal
{F}_t]\\
&=&2E[\int_t^{t+\delta}Y_r^{1,u,v}F(r,X_r^{u,v},Y_r^{1,u,v},Z_r^{1,u,v},K_r^{1,u,v},u_r,v_r)dr\mid
\mathcal
{F}_t]\\
&\leq&CE[\int_t^{t+\delta}|Y_r^{1,u,v}|(1+|X_r^{u,v}|^2+|Y_r^{1,u,v}|^2+|Z_r^{1,u,v}|^2+
\int_E|K_r^{u,v}(e)||l(e)|\lambda(de))dr\mid \mathcal
{F}_t]\\
&\leq&CE[\int_t^{t+\delta}|Y_r^{1,u,v}|(1+|X_r^{u,v}|^2+|Y_r^{1,u,v}|^2+|Z_r^{u,v}|^2+
\int_E|K_r^{u,v}(e)||l(e)|\lambda(de))dr\mid \mathcal
{F}_t]\\
&\leq&C\delta^{\frac{1}{2}}E[\int_t^{t+\delta}(1+|X_r^{u,v}|^2+|X_r^{u,v}|^3)dr\mid
\mathcal {F}_t]+C\delta^{\frac{1}{2}}E[
\int_t^{t+\delta}(1+|X_r^{u,v}|)|Z_r^{u,v}|^2dr\mid \mathcal
{F}_t]\\
&&+C\delta^{\frac{1}{2}}E[\int_t^{t+\delta}
\int_E|K_r^{u,v}(e)|^2\lambda(de)dr\mid \mathcal
{F}_t]\\
&\leq&C\delta^\frac{3}{2}.
\end{array}\end{equation}
Therefore, $$\begin{array}{llll}
&&E[\int_t^{t+\delta}(|Y_r^{1,u,v}|+|Z_r^{1,u,v}|+
\int_E|K_r^{u,v}(e)|l(e)\lambda(de))ds\mid \mathcal
{F}_t]\\
&\leq&C\delta^{\frac{1}{2}}E[\int_t^{t+\delta}(1+|X_r^{u,v}|)dr\mid
\mathcal {F}_t]+C\delta^{\frac{1}{2}}(E[
\int_t^{t+\delta}|Z_r^{1,u,v}|^2dr\mid \mathcal
{F}_t])^{\frac{1}{2}}\\
&&+C\delta^{\frac{1}{2}}(E[\int_t^{t+\delta}
\int_E|K_r^{1,u,v}(e)|^2\lambda(de)dr\mid \mathcal
{F}_t])^{\frac{1}{2}}\\
&\leq&C\delta^{\frac{5}{4}},\  \  \mbox{P-a.s.},\ 0\leq \delta \leq
\delta_1.
\end{array}$$
\end{proof}

\begin{remark}
{\label{re4.2}} From (\ref{ee4.1})-{\rm(iii)} we know
\[
E[(\int_{t}^{t+\delta}|Z_{s}^{u,v}|^{2}ds)^{2}\mid
\mathcal{F}_{t}]\leq C\delta^{2},\  \  \mbox{P-a.s.}
\]
Then, from (\ref{ee4.1})-{\rm(i)}, (\ref{ee4.4}), (\ref{ee4.5}) and
(\ref{ee4.5.1}),
\begin{equation}\label{ee4.6.2}
E[(\int_{t}^{t+\delta}|Z_{s}^{1,u,v}|^{2}ds)^{2}\mid
\mathcal{F}_{t}]\leq C\delta^{2},\  \  \mbox{P-a.s.}
\end{equation}
\end{remark}

\begin{remark}
{\label{re4.3}} As $(Y_{s}^{1,u,v},Z_{s}^{1,u,v})\in \mathcal{S}%
^{2}(t,t+\delta; \mathbb{R})\times \mathcal{M}^{2}(t,t+\delta; \mathbb{R}^{d}%
)$, we have from the algebraic equation in (\ref{equ4.61}) that also  $\hat{Z}^{u,v}%
\in \mathcal{H}^{2}(t,t+\delta; \mathbb{R}^{d}).$

Moreover, from Theorem \ref{pro4.1}, we know $\hat{Z}_{s}^{u,v}%
=q(s,x,Y_{s}^{1,u,v},Z_{s}^{1,u,v},u_{s},v_{s})$. Therefore, similar
to Remark \ref{re4.2}, we have
\[%
\begin{array}
[c]{ll}%
\mathrm{{(i)}}\  \
E[\int_{t}^{t+\delta}|\hat{Z}_{s}^{u,v}|^{2}ds\mid
\mathcal{F}_{t}]\leq C\delta,\  \  \mbox{P-a.s.} & \\
\mathrm{{(ii)}}\  \ E[(\int_{t}^{t+\delta}|\hat{Z}_{s}^{u,v}|^{2}ds)^{2}%
\mid \mathcal{F}_{t}]\leq C\delta^{2},\  \  \mbox{P-a.s.} &
\end{array}
\]
\end{remark}

\begin{lemma}
\label{l4.2} For every $u \in \mathcal{U}_{t,t+\delta},\ v \in \mathcal{V}%
_{t,t+\delta}$, we have
\[
|Y_{t}^{1,u,v}-Y_{t}^{2,u,v}|\leq C\delta^{{\frac{5}{4}}}%
,\  \  \mbox{P-a.s.},\  \ 0\leq \delta\leq\bar{\delta}_{0},
\]
where $C$ is independent of the control processes $u,v $ and of
$\delta>0$.
\end{lemma}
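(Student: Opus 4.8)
The plan is to exploit that both (\ref{equ4.6}) and (\ref{equ4.61}) are BSDEs on $[t,t+\delta]$ with the \emph{same} zero terminal value at $t+\delta$ (indeed $Y_{t+\delta}^{1,u,v}=Y_{t+\delta}^{u,v}-\varphi(t+\delta,X_{t+\delta}^{u,v})=0$, and $Y_{t+\delta}^{2,u,v}=0$), and that the generator of (\ref{equ4.61}) does not depend on $(Y^{2,u,v},Z^{2,u,v},K^{2,u,v})$. Taking $E[\,\cdot\mid\mathcal{F}_t]$, the stochastic integrals drop out, so both $Y_t^{1,u,v}$ and $Y_t^{2,u,v}$ become conditional expectations of time-integrals of their drivers. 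Using the Representation Theorem (Theorem \ref{pro4.1}) to write $Z_r^{u,v}=q(r,X_r^{u,v},Y_r^{1,u,v},Z_r^{1,u,v},u_r,v_r)$ and $\hat{Z}_r^{u,v}=q(r,x,Y_r^{1,u,v},Z_r^{1,u,v},u_r,v_r)$ (Remark \ref{re4.3}), I would reduce the claim to an $L^1$-bound on the integrand:
\[
Y_t^{1,u,v}-Y_t^{2,u,v}=E\Big[\int_t^{t+\delta}\big(L(r,X_r^{u,v},Y_r^{1,u,v},q_1,K_r^{1,u,v},u_r,v_r)-L(r,x,0,q_2,0,u_r,v_r)\big)dr\,\Big|\,\mathcal{F}_t\Big],
\]
where $q_1=q(r,X_r^{u,v},Y_r^{1,u,v},Z_r^{1,u,v},u_r,v_r)$ and $q_2=\hat{Z}_r^{u,v}=q(r,x,Y_r^{1,u,v},Z_r^{1,u,v},u_r,v_r)$. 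It then suffices to bound the conditional expectation of the integral of the absolute difference by $C\delta^{5/4}$.

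I would split the integrand through the intermediate term $L(r,x,Y_r^{1,u,v},\hat{Z}_r^{u,v},K_r^{1,u,v},u_r,v_r)$, i.e.\ into a part with the spatial slot frozen at $x$ and a part accounting for the change of spatial argument. For the frozen part, $L(r,x,Y_r^{1,u,v},\hat{Z}_r^{u,v},K_r^{1,u,v},\cdot)-L(r,x,0,\hat{Z}_r^{u,v},0,\cdot)$ differs only in the $y$- and $k$-slots; the Lipschitz bound for $L$ in $(y,z,k)$ together with the linear growth $|\hat{Z}_r^{u,v}|\le C(1+|x|+|Y_r^{1,u,v}|+|Z_r^{1,u,v}|)$ gives the majorant $C(1+|x|+|Z_r^{1,u,v}|)\big(|Y_r^{1,u,v}|+\int_E|K_r^{1,u,v}(e)|l(e)\lambda(de)\big)$. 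Inserting $|Y_r^{1,u,v}|\le C\delta^{1/2}(1+|X_r^{u,v}|)$ from (\ref{ee4.4}), the moment bound (\ref{ee4.1})-(i), and the $L^2$-estimates $E[\int_t^{t+\delta}|Z_r^{1,u,v}|^2dr\mid\mathcal{F}_t]\le C\delta^{3/2}$ and $E[\int_t^{t+\delta}\int_E|K_r^{1,u,v}(e)|^2\lambda(de)dr\mid\mathcal{F}_t]\le C\delta^{3/2}$ from (\ref{ee4.6.1}), a Cauchy--Schwarz argument produces a contribution of order $\delta^{5/4}$; the dominant term is $E[\int_t^{t+\delta}(1+|X_r^{u,v}|)(\int_E|K_r^{1,u,v}|^2\lambda(de))^{1/2}dr\mid\mathcal{F}_t]\le C\delta^{1/2}(1+|x|)\cdot\delta^{3/4}$, all other terms being $O(\delta^{3/2})$ or smaller.

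For the spatial part, $L(r,X_r^{u,v},Y_r^{1,u,v},q_1,K_r^{1,u,v},\cdot)-L(r,x,Y_r^{1,u,v},q_2,K_r^{1,u,v},\cdot)$, the argument changes from $X_r^{u,v}$ to $x$ in the first slot and simultaneously from $q_1$ to $q_2$. The key auxiliary estimate is the stability of the algebraic equation, $|q_1-q_2|\le C|X_r^{u,v}-x|$: subtracting the two defining relations for $q_1,q_2$ and adding/subtracting $D\varphi(r,x)\sigma(r,x,Y_r^{1,u,v}+\varphi(r,x),q_1,u_r,v_r)$, the Lipschitz regularity of $\varphi,D\varphi,\sigma$ in $x$ controls one part by $C|X_r^{u,v}-x|$, while the other part is a contraction in $q_1-q_2$ with factor $CL_\sigma<1$ thanks to $(\mathbf{H3.1})$ and Remark \ref{re3.5}. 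Using the local Lipschitz continuity of $L$ in $x$ and $z$ (with linear-growth weight, which follows from $\varphi\in C^3_{l,b}$ and the Lipschitz assumptions on $b,\sigma,h,f$), this difference is majorized by $C(1+|X_r^{u,v}|+|x|+|Z_r^{1,u,v}|)|X_r^{u,v}-x|$; since $E[\sup_{r\in[t,t+\delta]}|X_r^{u,v}-x|^2\mid\mathcal{F}_t]\le C\delta(1+|x|^2)$ by (\ref{ee4.1})-(ii), this contributes a term of order $\delta^{3/2}$, dominated by $\delta^{5/4}$.

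Summing the two contributions yields $|Y_t^{1,u,v}-Y_t^{2,u,v}|\le C\delta^{5/4}$ with $C$ independent of $u,v$ and $\delta$. I expect the main obstacle to be the spatial part: because the stated Lipschitz property of $L$ holds only in $(y,z,k)$ at a frozen $x$, one must separately establish the stability estimate $|q_1-q_2|\le C|X_r^{u,v}-x|$ for the algebraic equation (which genuinely uses the smallness of $L_\sigma$ from $(\mathbf{H3.1})$ and the bounds on $\varphi,D\varphi$) and verify that $L$ is locally Lipschitz in $x$; careful bookkeeping of the $L^2(\lambda)$-norms in the jump terms and tracking the sharp power of $\delta$ through the Cauchy--Schwarz steps is the remaining technical point.
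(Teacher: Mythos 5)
Your proposal is correct and takes essentially the same route as the paper's proof: both write $Y_t^{1,u,v}-Y_t^{2,u,v}$ as a conditional expectation of the difference of the two drivers, split that difference into a spatial part and a $(y,z,k)$ part, control $|Z_s^{u,v}-\hat{Z}_s^{u,v}|$ by $|X_s^{u,v}-x|$ times a linear-growth weight via the stability of the algebraic equation (the paper quotes Lemma 4.6 of \cite{LW} for precisely this), and then conclude with (\ref{ee4.1}), (\ref{ee4.4}), (\ref{ee4.6.1}) and Lemma \ref{l4.1}, the jump term $\int_E|K^{1,u,v}|l\,d\lambda$ being the one responsible for the exponent $\tfrac54$. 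One small imprecision: your intermediate claim $|q_1-q_2|\leq C|X_r^{u,v}-x|$ with an absolute constant is not correct as stated (the bound inherited from the contraction argument carries the weight $C(1+|X_r^{u,v}|+|Y_r^{1,u,v}|+|Z_r^{u,v}|)$, exactly as in the paper), but since the majorant you actually use for the spatial part already contains such a weight, the $\delta^{3/2}$ power count and hence the conclusion are unaffected.
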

\begin{proof} We set $g(s)=L(s, X_s^{u,v},0,Z_s^{u,v},K_s^{1,u,v},u_s,v_s)-L(s,
x,0,Z_s^{u,v},K_s^{1,u,v},u_s,v_s)$ and
$\rho_0(r)=(1+|x|^2+|Z_s^{u,v}|)(r+r^2),\ r\geq 0.$ Obviously,
$|g(s)|\leq C\rho_0(|X_s^{u,v}-x|),$ for $s\in [t,t+\delta],\
(t,x)\in [0,T)\times \mathbb{R},\ u\in \mathcal {U}_{t,t+\delta},\
v\in \mathcal {V}_{t,t+\delta}.$ Therefore, we have, from equations
(\ref{equ4.6}) and (\ref{equ4.61}),
\begin{equation}\begin{array}{llll}
&& |Y_t^{1,u,v}-Y_t^{2,u,v}|=|E[|Y_t^{1,u,v}-Y_t^{2,u,v}|\mid
\mathcal
{F}_t]| \\
&=&|E[\int_t^{t+\delta}(L(s,X_s^{u,v},Y_s^{1,u,v},Z_s^{u,v},K_s^{1,u,v},u_s,v_s)-L(s,x,0,\hat{Z}_s^{u,v},0,u_s,v_s))ds\mid
\mathcal
{F}_t]| \\
& \leq &CE[\int_t^{t+\delta}(\rho_0(|X_s^{u,v}-x|)+C(1+|X_s^{u,v}|+|Y_s^{1,u,v}|+|Z_s^{u,v}|)|Y_s^{1,u,v}|\\
&&+C(1+|x|+|Z_s^{u,v}|+|Z_s^{u,v}-\hat{Z}_s^{u,v}|)|Z_s^{u,v}-\hat{Z}_s^{u,v}|+\int_E|K_s^{1,u,v}(e)|l(e)\lambda(de))ds\mid
\mathcal
{F}_t]\\
& \leq
&C\delta^{\frac{5}{4}}+CE[\int_t^{t+\delta}|Z_s^{u,v}-\hat{Z}_s^{u,v}|ds\mid
\mathcal {F}_t]+CE[
\int_t^{t+\delta}|Z_s^{u,v}||Z_s^{u,v}-\hat{Z}_s^{u,v}|ds\mid
\mathcal
{F}_t]\\
&&+CE[\int_t^{t+\delta}|Z_s^{u,v}-\hat{Z}_s^{u,v}|^2ds\mid \mathcal
{F}_t]+CE[\int_t^{t+\delta} \int_E|K_s^{1,u,v}(e)|l(e)\lambda(de)ds\mid
\mathcal {F}_t].\end{array}\end{equation} Similar to Lemma 4.6 in
\cite{LW}, we have
$$|Z_s^{u,v}-\hat{Z}_s^{u,v}|\leq
C(1+|X_s^{u,v}|)|X_s^{u,v}-x|+C|X_s^{u,v}-x|(|Y_s^{1,u,v}|+|Z_s^{u,v}|),$$
and from (\ref{ee4.1}), we know
$$E[\int_t^{t+\delta}|Z_s^{u,v}-\hat{Z}_s^{u,v}|ds\mid \mathcal
{F}_t]\leq \delta^\frac{3}{2},\
E[\int_t^{t+\delta}|Z_s^{u,v}||Z_s^{u,v}-\hat{Z}_s^{u,v}|ds\mid
\mathcal {F}_t]\leq \delta^\frac{3}{2},\ E[\int_t^{t+\delta}|Z_s^{u,v}-\hat{Z}_s^{u,v}|^2ds\mid \mathcal
{F}_t]\leq \delta^\frac{3}{2}.$$
Then, Lemma \ref{l4.1} allows to complete the proof.\\
\end{proof}

We now consider the following equation
\begin{equation}
\label{equ4.7}\left \{
\begin{array}
[c]{llll}%
dY_{s}^{3,u,v} & = & -L(s,x,0,\psi(s,x,u_{s},v_{s}),0,u_{s},v_{s})ds+Z_{s}%
^{3,u,v}dB_{s}+\int_{E}K_{s}^{3,u,v}(e)\tilde{\mu}%
(ds,de),\ s\in[t,t+\delta], & \\
\psi(s,x,u,v) & = & D\varphi(s,x).\sigma(s,x,\varphi(s,x),\psi
(s,x,u,v),u,v),\ s\in[t,t+\delta], & \\
Y^{3,u,v}_{t+\delta} & = & 0. &
\end{array}
\right.
\end{equation}

\begin{lemma}
\label{l4.3} For every $u \in \mathcal{U}_{t,t+\delta},\ v \in \mathcal{V}%
_{t,t+\delta}$, we have
\[
|Y_{t}^{2,u,v}-Y_{t}^{3,u,v}|\leq C\delta^{{\frac{5}{4}}}, \
\mbox{P-a.s.},\  \ 0\leq \delta\leq\bar{\delta}_{0},
\]
where $C$ is independent of the control processes $u,v$ and of
$\delta>0$.
\end{lemma}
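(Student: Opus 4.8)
The plan is to note that both $Y^{2,u,v}$ and $Y^{3,u,v}$ solve BSDEs with zero terminal value whose generators are given adapted processes not involving the unknown triple $(Y,Z,K)$; hence each solution is simply the conditional expectation of the running generator. Explicitly,
\[
Y_t^{2,u,v}=E\Big[\int_t^{t+\delta}L(s,x,0,\hat{Z}_s^{u,v},0,u_s,v_s)\,ds\,\Big|\,\mathcal{F}_t\Big],\qquad Y_t^{3,u,v}=E\Big[\int_t^{t+\delta}L(s,x,0,\psi(s,x,u_s,v_s),0,u_s,v_s)\,ds\,\Big|\,\mathcal{F}_t\Big],
\]
so the problem reduces to estimating the conditional expectation of the time integral of the difference of the two generators, which differ only in their $z$-slot.

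First I would apply the Lipschitz estimate on $L$ recorded above, with the $y$- and $k$-slots equal to $0$, to obtain
\[
\big|L(s,x,0,\hat{Z}_s^{u,v},0,u_s,v_s)-L(s,x,0,\psi(s,x,u_s,v_s),0,u_s,v_s)\big|\le C\big(1+|x|+|\hat{Z}_s^{u,v}|+|\psi(s,x,u_s,v_s)|\big)\,|\hat{Z}_s^{u,v}-\psi(s,x,u_s,v_s)|.
\]
The decisive point is to control the increment $\hat{Z}_s^{u,v}-\psi(s,x,u_s,v_s)$. For this I use the Representation Theorem \ref{pro4.1}: by uniqueness $\hat{Z}_s^{u,v}=q(s,x,Y_s^{1,u,v},Z_s^{1,u,v},u_s,v_s)$ (already recorded in Remark \ref{re4.3}), whereas $\psi(s,x,u_s,v_s)=q(s,x,0,0,u_s,v_s)$, since $\psi$ solves the same algebraic equation with vanishing $y$- and $\xi$-data. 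As $q$ is Lipschitz in $(y,\xi)$ and of linear growth, this gives at once $|\hat{Z}_s^{u,v}-\psi(s,x,u_s,v_s)|\le C(|Y_s^{1,u,v}|+|Z_s^{1,u,v}|)$ and $|\hat{Z}_s^{u,v}|+|\psi(s,x,u_s,v_s)|\le C(1+|x|+|Y_s^{1,u,v}|+|Z_s^{1,u,v}|)$.

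Substituting, the integrand is dominated by $C(1+|x|+|Y_s^{1,u,v}|+|Z_s^{1,u,v}|)(|Y_s^{1,u,v}|+|Z_s^{1,u,v}|)$, which I would split into a linear part $C(1+|x|)(|Y_s^{1,u,v}|+|Z_s^{1,u,v}|)$ and a quadratic part bounded by $2C(|Y_s^{1,u,v}|^2+|Z_s^{1,u,v}|^2)$. For the linear part, Lemma \ref{l4.1} gives $E[\int_t^{t+\delta}(|Y_s^{1,u,v}|+|Z_s^{1,u,v}|)\,ds\mid\mathcal{F}_t]\le C\delta^{5/4}$. For the quadratic part, estimate (\ref{ee4.6.1}) yields $E[\int_t^{t+\delta}|Z_s^{1,u,v}|^2\,ds\mid\mathcal{F}_t]\le C\delta^{3/2}$, while (\ref{ee4.4}) combined with (\ref{ee4.1})-(i) gives $E[\int_t^{t+\delta}|Y_s^{1,u,v}|^2\,ds\mid\mathcal{F}_t]\le C\delta^2(1+|x|^2)$. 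Collecting the pieces, $|Y_t^{2,u,v}-Y_t^{3,u,v}|\le C\delta^{5/4}+C\delta^{3/2}\le C\delta^{5/4}$ for $0\le\delta\le\bar{\delta}_0\le 1$, with $C$ independent of the controls and of $\delta$.

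The only genuine obstacle is the identification of $\hat{Z}_s^{u,v}$ and $\psi(s,x,u_s,v_s)$ as the values of the single representation map $q$ at the data $(Y_s^{1,u,v},Z_s^{1,u,v})$ and $(0,0)$; once the Lipschitz continuity of $q$ converts the $z$-increment into $|Y_s^{1,u,v}|+|Z_s^{1,u,v}|$, the remaining bounds are precisely those already established for $(Y^{1,u,v},Z^{1,u,v})$ in Lemma \ref{l4.1}, and the estimate for the intermediate step $Y^{2,u,v}$ carries no additional difficulty beyond this substitution.
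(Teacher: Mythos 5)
Your proof is correct and takes essentially the same route as the paper: both write $Y_t^{2,u,v}-Y_t^{3,u,v}$ as the conditional expectation of the difference of the two generators, use the Lipschitz bound on $L$, identify $\hat{Z}_s^{u,v}=q(s,x,Y_s^{1,u,v},Z_s^{1,u,v},u_s,v_s)$ and $\psi(s,x,u_s,v_s)=q(s,x,0,0,u_s,v_s)$ through the Representation Theorem \ref{pro4.1} so that $|\hat{Z}_s^{u,v}-\psi(s,x,u_s,v_s)|\leq C(|Y_s^{1,u,v}|+|Z_s^{1,u,v}|)$, and then conclude via Lemma \ref{l4.1} and the a priori $L^2$-estimates. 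The only cosmetic difference is in the product term: the paper keeps $|\hat{Z}_s^{u,v}|$ and applies Cauchy--Schwarz together with Remark \ref{re4.3}, whereas you first replace $|\hat{Z}_s^{u,v}|+|\psi|$ by its linear-growth bound and estimate $(|Y_s^{1,u,v}|+|Z_s^{1,u,v}|)^2$ directly from (\ref{ee4.6.1}) and (\ref{ee4.4}); both give the claimed $C\delta^{5/4}$ bound.
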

\begin{proof} $$\begin{array}{llll}
|Y_t^{2,u,v}-Y_t^{3,u,v}|&=&|E[\int_t^{t+\delta}(L(s,x,0,\hat{Z}_s^{u,v},0,u_s,v_s)-L(s,x,0,\psi(s,x,u_s,v_s),0,u_s,v_s))ds\mid
\mathcal
{F}_t]| \\
& \leq &CE[
\int_t^{t+\delta}(1+|x|+|\hat{Z}_s^{u,v}|)|\hat{Z}_s^{u,v}-\psi(s,x,u_s,v_s)|ds\mid
\mathcal {F}_t].\end{array}$$ From Remark \ref{re4.3} we have
$\hat{Z}_s^{u,v}=q(s, x, Y_s^{1,u,v}, Z_s^{1,u,v},u_s,v_s)$, and
from Theorem \ref{pro4.1}
$\psi(s,x,u_s,v_s)=q(s,x,0,0,u_s,v_s)$. Hence, we obtain
$|\hat{Z}_s^{u,v}-\psi(s,x,u_s,v_s)|\leq
C(|Y_s^{1,u,v}|+|Z_s^{1,u,v}|).$ Moreover, from (\ref{ee4.6.1}),
(\ref{ee4.6.2}) and Remark \ref{re4.3},
$$\begin{array}{llll} &&E[
\int_t^{t+\delta}|\hat{Z}_s^{u,v}||\hat{Z}_s^{u,v}-\psi(s,x,u_s,v_s)|ds\mid
\mathcal
{F}_t]\leq CE[\int_t^{t+\delta}|\hat{Z}_s^{u,v}|(|Y_s^{1,u,v}|+|Z_s^{1,u,v}|)ds\mid
\mathcal
{F}_t]\\
&\leq& C(E[\int_t^{t+\delta}|\hat{Z}_s^{u,v}|^2ds\mid \mathcal
{F}_t])^{\frac{1}{2}}(E[
\int_t^{t+\delta}(|Y_s^{1,u,v}|+|Z_s^{1,u,v}|)^2ds\mid \mathcal
{F}_t])^{\frac{1}{2}}\\
&\leq&C\delta^{\frac{5}{4}},\  \  \mbox{P-a.s.}
\end{array}$$
From Lemma \ref{l4.1}, we have $|Y_t^{2,u,v}-Y_t^{3,u,v}|\leq
C\delta^{{\frac{5}{4}}}.$\\
\end{proof}

\begin{lemma}
\label{l4.4} Let $Y_{0}(\cdot)$ be the solution of the following
ordinary differential equation combined with an algebraic equation:
\begin{equation}
\label{equ4.8}\left \{
\begin{array}
[c]{llll}%
-dY_{0}(s) & = & L_{0}(s,x,0)ds,\ s\in[t,t+\delta], & \\
\psi(s,x,u,v) & = & D\varphi(s,x).\sigma(s,x,\varphi(s,x),\psi
(s,x,u,v),u,v),\ s\in[t,t+\delta], & \\
Y_{0}(t+\delta) & = & 0, &
\end{array}
\right.
\end{equation}
where the function $L_{0}$ is defined by
\[
L_{0}(s,x,0)=\mathop{\rm sup}\limits_{u\in U}\inf \limits_{v\in V}%
L(s,x,0,\psi(s,x,u,v),0,u,v),\ (s,x)\in[t,t+\delta]\times
\mathbb{R}.
\]
Then, \mbox{P-a.s.}, $
Y_{0}(t)=\mathop{\rm esssup}\limits_{u\in \mathcal{U}_{t,t+\delta}}%
\mathop{\rm essinf}\limits_{v\in
\mathcal{V}_{t,t+\delta}}Y_{t}^{3,u,v}. $
\end{lemma}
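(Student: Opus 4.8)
The plan is to reduce both sides of the claimed identity to explicit integral expressions and then match them by a measurable selection argument. First I observe that in equation (\ref{equ4.7}) the generator $-L(s,x,0,\psi(s,x,u_s,v_s),0,u_s,v_s)$ depends neither on $Y^{3,u,v}$, nor $Z^{3,u,v}$, nor $K^{3,u,v}$: once $u(\cdot),v(\cdot)$ are fixed it is simply a predictable process. Hence, taking conditional expectation in the BSDE (whose terminal value is $0$) gives
\[
Y_t^{3,u,v}=E\Big[\int_t^{t+\delta}L(s,x,0,\psi(s,x,u_s,v_s),0,u_s,v_s)\,ds\,\Big|\,\mathcal{F}_t\Big],\ \mbox{P-a.s.}
\]
On the other hand, solving the ODE (\ref{equ4.8}) backwards yields the deterministic quantity $Y_0(t)=\int_t^{t+\delta}L_0(s,x,0)\,ds=\int_t^{t+\delta}\sup_{u\in U}\inf_{v\in V}L(s,x,0,\psi(s,x,u,v),0,u,v)\,ds$. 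Writing $F(s,u,v):=L(s,x,0,\psi(s,x,u,v),0,u,v)$, the statement reduces to
\[
\int_t^{t+\delta}\sup_{u\in U}\inf_{v\in V}F(s,u,v)\,ds=\esssup_{u\in\mathcal{U}_{t,t+\delta}}\essinf_{v\in\mathcal{V}_{t,t+\delta}}E\Big[\int_t^{t+\delta}F(s,u_s,v_s)\,ds\,\Big|\,\mathcal{F}_t\Big].
\]

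Before proving this I record that $F$ is jointly continuous in $(u,v)$ and measurable in $s$: indeed $L$ is continuous in $(u,v)$ by our standing assumptions, and $\psi(s,x,u,v)=q(s,x,0,0,u,v)$ is continuous in $(s,u,v)$ by the Representation Theorem \ref{pro4.1}. Since $U$ and $V$ are compact, the inner infimum $\inf_{v\in V}F(s,u,v)$ is attained and continuous in $u$, and the map $s\mapsto\sup_{u\in U}\inf_{v\in V}F(s,u,v)$ is Borel measurable, so that $L_0(s,x,0)$ is well defined.

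I would then establish the two inequalities. For \emph{RHS $\le$ LHS}: fix $u(\cdot)\in\mathcal{U}_{t,t+\delta}$. A measurable selection theorem (Filippov/Benes type, using continuity of $F$ in $v$ and compactness of $V$) produces, for each $\varepsilon>0$, a process $v^{\varepsilon}(\cdot)\in\mathcal{V}_{t,t+\delta}$ with $F(s,u_s,v^{\varepsilon}_s)\le\inf_{v\in V}F(s,u_s,v)+\varepsilon$; since $\inf_{v\in V}F(s,u_s,v)\le\sup_{u\in U}\inf_{v\in V}F(s,u,v)=L_0(s,x,0)$, this gives $\essinf_{v}E[\int_t^{t+\delta}F(s,u_s,v_s)\,ds\mid\mathcal{F}_t]\le Y_0(t)+\varepsilon\delta$, and letting $\varepsilon\downarrow0$ and taking $\esssup$ over $u(\cdot)$ yields $\le$. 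For \emph{RHS $\ge$ LHS}: a measurable selection in $s$ yields, for each $\varepsilon>0$, a deterministic (hence admissible) control $u^{\varepsilon}(\cdot)$ with $\inf_{v\in V}F(s,u^{\varepsilon}_s,v)\ge L_0(s,x,0)-\varepsilon$; then for every $v(\cdot)$ one has $E[\int_t^{t+\delta}F(s,u^{\varepsilon}_s,v_s)\,ds\mid\mathcal{F}_t]\ge Y_0(t)-\varepsilon\delta$, so $\essinf_v E[\cdots]\ge Y_0(t)-\varepsilon\delta$ and hence the $\esssup\essinf\ge Y_0(t)-\varepsilon\delta$; letting $\varepsilon\downarrow0$ gives $\ge$. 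Combining the two inequalities completes the proof.

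The main obstacle, and the only nonroutine point, is the measurable selection: one must guarantee that the $\varepsilon$-optimal responses $v^{\varepsilon}$ and the $\varepsilon$-optimal $u^{\varepsilon}$ can be realized as genuinely admissible ($\mathcal{F}_r$-predictable) processes rather than merely pointwise optimizers. This is precisely where the continuity of $F$ in $(u,v)$ and the compactness of $U,V$ enter, allowing one to invoke a standard measurable-selection lemma so that the selected controls depend measurably on $(s,\omega)$ (through $u_s$ in the first case, and deterministically in $s$ in the second).
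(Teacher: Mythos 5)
Your proof is correct, but it takes a genuinely more elementary route than the paper's. You exploit the fact that the generators in (\ref{equ4.7}) and (\ref{equ4.8}) do not depend on the solution $(Y,Z,K)$, so that $Y_t^{3,u,v}$ collapses to the conditional expectation $E[\int_t^{t+\delta}L(s,x,0,\psi(s,x,u_s,v_s),0,u_s,v_s)ds\mid\mathcal{F}_t]$ and $Y_0(t)$ to an ordinary integral; the lemma then becomes a static interchange of $\sup/\inf$ with the integral, which you settle by two $\varepsilon$-inequalities resting on measurable selection (an $\varepsilon$-optimal predictable response $v^{\varepsilon}_s$ obtained by composing a selector with $u_s$, and an $\varepsilon$-optimal deterministic control $u^{\varepsilon}$). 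The paper instead follows Peng's BSDE scheme: it introduces the intermediate BSDE (\ref{equ4.9}) with generator $L_1(s,x,0,u_s)=\inf_{v\in V}L(s,x,0,\psi(s,x,u_s,v),0,u_s,v)$, proves $Y_t^{4,u}=\essinf_{v\in\mathcal{V}_{t,t+\delta}}Y_t^{3,u,v}$ by combining the comparison theorem (Lemma \ref{l2}) for the inequality ``$\le$'' with an \emph{exact} measurable selector $v^4(s,x,u)$ and the uniqueness of the BSDE solution for the equality, and then repeats the same two-step argument at the level of $L_0=\sup_{u\in U}L_1$ to identify $Y_0(t)$. Both arguments hinge on the selection step you correctly flag as the crux (the paper uses it in exact rather than $\varepsilon$-optimal form, which is legitimate here since $U,V$ are compact and $L,\psi$ are continuous in $(u,v)$). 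What your route buys is transparency: no comparison theorem, no intermediate value process $Y^{4,u}$. What the paper's route buys is robustness: the comparison/selection structure would survive if the generator depended on the solution variables $(y,z,k)$, whereas your reduction to conditional expectations is tied to the generator being solution-free; it also matches the stochastic backward semigroup machinery used in the rest of the paper.
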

 \begin{proof} First we introduce the function
$$L_1(s,x,0,u)=\inf \limits_{v\in V}L(s,x,0,\psi(s,x,u,v),0,u,v),\ (s,x,u)\in
[0,T]\times \mathbb{R}\times U$$ and consider the following
equation:
\begin{equation}\label{equ4.9}
\left \{
\begin{array}{llll}
-dY_s^{4,u} & = &
L_1(s,x,0,u_s)ds-Z_s^{4,u}dB_s-\int_EK_s^{4,u}(e)\tilde{\mu}(ds,de),\
\  \  s\in
[t,t+\delta],\\
\psi(s,x,u,v)&=&D\varphi(s,x).\sigma(s,x,\varphi(s,x),\psi(s,x,u,v),u,v),\\
Y^{4,u}_{t+\delta}& = & 0.
\end{array}
\right.\end{equation} From Lemma \ref{l1}, for every $u\in \mathcal
{U}_{t,t+\delta}$, there exists a unique solution
$(Y^{4,u},Z^{4,u},K^{4,u})$ to (\ref{equ4.9}). Moreover,
$$Y^{4,u}_t=\essinf_{v\in \mathcal
{V}_{t,t+\delta}}Y_t^{3,u,v}, \  \mbox{\mbox{P-a.s.}\ for every
}u\in \mathcal {U}_{t,t+\delta}.$$ In fact, from the definition of
$L_1$ and Lemma \ref{l2} (comparison theorem), we have
$$Y^{4,u}_t \leq Y_t^{3,u,v}, \  \mbox{\mbox{P-a.s.}\  \ for any }v\in \mathcal {V}_{t,t+\delta},\ \mbox{for every }u\in \mathcal
{U}_{t,t+\delta}.$$ On the other hand, there exists a measurable
function $v^4:[t,t+\delta]\times \mathbb{R}\times U\rightarrow V$
such that
$$L_1(s,x,0,u)=L(s,x,0,\psi(s,x,u,v^4(s,x,u)),0,u,v^4(s,x,u)),\  \mbox{for any
}s,x,u.$$ We then put $\widetilde{v}_s^4=v^4(s,x,u_s),\
s\in[t,t+\delta],$ and we observe that $\widetilde{v}^4\in \mathcal
{V}_{t,t+\delta}$ ($\widetilde{v}^4$ depends on $u\in\mathcal
{U}_{t,t+\delta}$) and
$$L_1(s,x,0,u_s)=L(s,x,0,\psi(s,x,u_s,\widetilde{v}_s^4),0,u_s,\widetilde{v}_s^4),\
s\in [t,t+\delta].$$ Consequently, from the uniqueness of the
solution of (\ref{equ4.9}) it follows that
$(Y^{4,u},Z^{4,u})=(Y^{3,u,\widetilde{v}^4},Z^{3,u,\widetilde{v}^4})$,
and in particular, $Y_t^{4,u}=Y_t^{3,u,\widetilde{v}^4},\
\mbox{P-a.s.}$\ This proves that $Y_t^{4,u}=\essinf \limits_{v\in
\mathcal {V}_{t,t+\delta}}Y_t^{3,u,v},\  \mbox{P-a.s.},$ for every
$u\in \mathcal {U}_{t,t+\delta}.$\\
Finally, since $L_0(s,x,0)=\sup \limits_{u\in {U}}L_1(s,x,0,u)$, by
a similar proof we show that
$$Y_0(t)=\esssup_{u\in \mathcal{U}_{t,t+\delta}}Y^{4,u}_t=\esssup_{u\in \mathcal{U}_{t,t+\delta}}\essinf_{v\in
\mathcal{V}_{t,t+\delta}}Y_t^{3,u,v}, \mbox{P-a.s.}$$
\end{proof}
We are now able to finish the proof of Theorem \ref{th2}.

\begin{proof} (1) First we will prove that $W$ is a viscosity supersolution. Obviously, $W(T,x)=\Phi(x),\ x\in \mathbb{R}.$  We suppose that $\varphi \in
C_{l,b}^3([0,T]\times \mathbb{R})$ satisfying the monotonicity
condition $(\mathbf{H2.3})$-{\rm(ii)}' if $\sigma$ depends on $z$  from $(\mathbf{H4.1})$ and $(t,x)\in[0,T)\times
\mathbb{R}$ are such that $W-\varphi$ attains its minimum at
$(t,x)$. Since $W$ is continuous and of at most linear growth, we
can replace the condition of a local minimum by that of a global one
in the definition of the viscosity supersolution. Moreover, without
loss of generality we may assume that $\varphi(t,x)=W(t,x).$ Due to
the DPP (Theorem \ref{th3.1}), we have $$
\varphi(t,x)=W(t,x)=\essinf_{\beta \in \mathcal
{B}_{t,t+\delta}}\esssup_{u\in \mathcal
{U}_{t,t+\delta}}G_{t,t+\delta}^{t,x;u,\beta(u)}[W(t+\delta,\widetilde{X}_{t+\delta}^{t,x;u,\beta(u)})],\
0\leq \delta \leq  \delta_0,$$ where
$\widetilde{X}^{t,x;u,\beta(u)}$ is defined by FBSDE (\ref{equ3.3}).
From $\varphi(s,y) \leq W(s,y), \ (s,y) \in [0,T) \times
\mathbb{R},$ and the monotonicity property of
$G_{t,t+\delta}^{t,x;u,\beta(u)}[\cdot]$ (see Theorem 3.3 in \cite{LiWei-Lp})
we obtain
\begin{equation} \label{ee4.2}\essinf_{\beta \in \mathcal
{B}_{t,t+\delta}}\esssup_{u\in \mathcal
{U}_{t,t+\delta}}{G_{t,t+\delta}^{t,x;u,\beta(u)}[\varphi(t+\delta,X_{t+\delta}^{u,\beta(u)})]-\varphi(t,x)}\leq
0.\end{equation} According to the definition of the backward
stochastic semigroup for fully coupled FBSDE with jumps, we have
$$G_{s,t+\delta}^{t,x;u,v}[\varphi(t+\delta,X_{t+\delta}^{u,v})]=Y_s^{u,v},\  \ s\in [t,t+\delta].$$
Moreover, $Y^{1,u,v}_s=Y^{u,v}_s-\varphi(s,X^{u,v}_s);$ therefore,
we have
$$\essinf_{\beta \in \mathcal{B}_{t,t+\delta}}\esssup_{u\in \mathcal{U}_{t,t+\delta}}Y_t^{1,u,\beta(u)}\leq0, \  \mbox{P-a.s.}$$
Thus, from the Lemmas \ref{l4.2} and \ref{l4.3} we get
$\essinf\limits_{\beta \in \mathcal{B}_{t,t+\delta}}\esssup\limits_{u\in \mathcal{U}_{t,t+\delta}}Y_t^{3,u,\beta(u)} \leq
C\delta^{\frac{5}{4}}, \  \mbox{P-a.s.}$ Consequently, since
$\essinf \limits_{v\in \mathcal {V}_{t,t+\delta}}Y_t^{3,u,v}\leq
Y_t^{3,u,\beta(u)},\  \beta \in \mathcal {B}_{t,t+\delta},$ we get
$$\esssup_{u\in \mathcal {U}_{t,t+\delta}}\essinf_{v\in \mathcal {V}_{t,t+\delta}}Y_t^{3,u,v}\leq
\essinf_{\beta \in \mathcal {B}_{t,t+\delta}}\esssup_{u\in \mathcal
{U}_{t,t+\delta}}Y_t^{3,u,\beta(u)}\leq C\delta^{\frac{5}{4}},\
\mbox{P-a.s.}$$ Thus, by Lemma \ref{l4.4}, $Y_0(t)\leq
C\delta^{\frac{5}{4}}, \ \mbox{P-a.s.},$ where $Y_0(\cdot)$ is the
unique solution of (\ref{equ4.8}). Consequently,
$$C\delta^\frac{1}{4}\geq \frac{1}{\delta}Y_0(t)=\frac{1}{\delta}\int_t^{t+\delta}L_0(s,x,0)ds,\  \delta>0.$$  Thanks to the continuity of $s\mapsto L_0(s,x,0)$
it follows that
$$\sup \limits_{u\in U}\inf \limits_{v\in V}L(t,x,0,\psi(t,x,u,v),0,u,v)=L_0(t,x,0)\leq0,$$
where
$\psi(t,x,u,v)=D\varphi(t,x).\sigma(t,x,\varphi(t,x),\psi(t,x,u,v),u,v)$.
From the definition of $L$ we see that $W$ is a viscosity
supersolution of (\ref{equ4.2}).

(2) Now we prove $W$ is a viscosity subsolution. For this we suppose
that $\varphi \in C_{l,b}^3([0,T]\times \mathbb{R})$ satisfying the
monotonicity condition $(\mathbf{H2.3})$-{\rm(ii)}' if $\sigma$ depends on $z$  from $(\mathbf{H4.1})$  and
$(t,x)\in[0,T)\times \mathbb{R}$ are such that $W-\varphi$ attains
its maximum at $(t,x)$. Without loss of generality we may also
suppose that $\varphi(t,x)=W(t,x).$ We must prove that $$\sup
\limits_{u\in \mathcal {U}}\inf \limits_{v\in
V}L(t,x,0,\psi(t,x,u,v),0,u,v)=L_0(t,x,0)\geq0,$$ where
$\psi(t,x,u,v)=D\varphi(t,x).\sigma(t,x,\varphi(t,x),\psi(t,x,u,v),u,v).$
Let us suppose that this is not true. Then there exists some
$\theta>0$ such that
\begin{equation}\label{ee5.1}
L_0(t,x,0)=\sup \limits_{u\in \mathcal {U}}\inf \limits_{v\in
V}L(t,x,0,\psi(t,x,u,v),0,u,v)\leq- \theta<0.\end{equation} and we
can find a measurable function $\gamma:U\rightarrow V$ such that
$$L(t,x,0,\psi(t,x,u,\gamma(u)),0,u,\gamma(u))\leq -\frac{3}{4}\theta,\  \mbox{for all }u\in
U.$$ Moreover, since
$L(\cdot,x,0,\psi(\cdot,x,\cdot,\cdot),0,\cdot,\cdot)$ is uniformly
continuous on $[0,T]\times U \times V,$ there exists some $T-t\geq
R>0$ such that \begin{equation}\label{ee5.2}
L(s,x,0,\psi(s,x,u,\gamma(u)),0,u,\gamma(u))\leq
-{\frac{1}{2}}\theta, \  \mbox{for all }u\in U \mbox{ and }|s-t|\leq
R.\end{equation} On the other hand, due to the DPP (see Theorem
\ref{th3.1}),
$$\varphi(t,x)=W(t,x)=\essinf_{\beta \in \mathcal {B}_{t,t+\delta}}\esssup_{u\in \mathcal
{U}_{t,t+\delta}}G_{t,{t+\delta}}^{t,x;u,\beta(u)}[W(t+\delta,\widetilde{X}_{t+\delta}^{t,x,u,\beta(u)})],\
0\leq \delta \leq \delta_0,$$  where
$\widetilde{X}^{t,x;u,\beta(u)}$ is defined by FBSDE (\ref{equ3.3}).
And from $W \leq \varphi$ and the monotonicity property of
$G_{t,{t+\delta}}^{t,x;u,\beta(u)}[\cdot]$ (see Theorem 3.3 in \cite{LiWei-Lp})
we obtain
$$\essinf\limits_{\beta \in \mathcal {B}_{t,t+\delta}}\esssup_{u\in \mathcal
{U}_{t,t+\delta}}G_{t,{t+\delta}}^{t,x;u,\beta(u)}[\varphi(t+\delta,{X}_{t+\delta}^{u,\beta(u)})]-\varphi(t,x)\geq0,
\  \mbox{P-a.s.},$$ Then, similar to (1), from the definition of
backward semigroup, we have
$$\essinf\limits_{\beta \in \mathcal {B}_{t,t+\delta}}\esssup\limits_{u\in
\mathcal{U}_{t,t+\delta}}Y_t^{1,u,\beta(u)}\geq 0, \
\mbox{P-a.s.},$$ and, in particular,
$\esssup\limits_{u\in \mathcal{U}_{t,t+\delta}}Y_t^{1,u,\gamma(u)}\geq 0,\  \mbox{P-a.s.}$
Here, by putting $\gamma_s(u)(\omega)=\gamma(u_s(\omega)),\
(s,\omega)\in [t,T]\times \Omega,$ we identify $\gamma$ as an
element of $\mathcal {B}_{t,t+\delta}.$ Given any $\varepsilon >0$
we can choose $u^\varepsilon \in \mathcal {U}_{t,t+\delta}$ such
that $Y_t^{1,u^\varepsilon,\gamma(u^\varepsilon)}\geq -\varepsilon
\delta.$ From the Lemmas \ref{l4.2} and \ref{l4.3} we have
\begin{equation}\label{ee5.4}
Y_t^{3,u^\varepsilon,\gamma(u^\varepsilon)}\geq
-C\delta^{\frac{5}{4}}-\varepsilon \delta,\
\mbox{P-a.s.}\end{equation} Moreover, from (\ref{equ4.7})
$$Y_t^{3,u^\varepsilon,\gamma(u^\varepsilon)}=E[\int_t^{t+\delta}L(s,x,0,\psi(s,x,u_s^\varepsilon,\gamma(u_s^\varepsilon)),0,u_s^\varepsilon,\gamma(u_s^\varepsilon))ds|\mathcal {F}_t],$$
and we get from (\ref{ee5.2})
\begin{equation}\label{ee5.3}
\begin{array}{llll}Y_t^{3,u^\varepsilon,\gamma(u^\varepsilon)}\leq E[
\int_t^{t+\delta}|L(s,x,0,\psi(s,x,u_s^\varepsilon,\gamma(u_s^\varepsilon)),0,u_s^\varepsilon,\gamma(u_s^\varepsilon))|ds |\mathcal {F}_t]\leq -{\frac{1}{2}}\theta \delta,\  \mbox{P-a.s.}
\end{array}
\end{equation}
From (\ref{ee5.4}) and (\ref{ee5.3}),
$-C\delta^\frac{1}{4}-\varepsilon \leq -{\frac{1}{2}}\theta,\
\mbox{P-a.s.}$ Letting $\delta \downarrow 0,$ and then $\varepsilon
\downarrow 0,$ we get that $\theta \leq 0, $ which yields a
contradiction. Therefore,
$$\sup \limits_{u\in U}\inf \limits_{v\in V}L(t,x,0,\psi(t,x,u,v),0,u,v)=L_0(t,x,0)\geq0,$$ and from
the definition of $L$ we see that $W$ is a viscosity supersolution
of (\ref{equ4.2}). Finally, from the above two steps, we derive that
$W$ is a viscosity solution of (\ref{equ4.2}).\\
\end{proof}

\section{Viscosity solution of Isaacs' equation: Uniqueness Theorem}

 In this section, we will state the uniqueness of the
viscosity solution of Isaacs' equation (\ref{equ4.2}), in which
$\sigma,\ h$ do not depend on $y,\ z,\ k$, i.e.
\begin{equation}
\label{equ5.1}\left \{
\begin{array}
[c]{ll} & \! \! \! \! \! \frac{\partial}{\partial t} W(t,x) +
H^{-}(t, x, W, DW,
D^{2}W)=0, \hskip 0.5cm (t,x)\in[0,T)\times{\mathbb{R}} ,\\
& \! \! \! \! \! W(T,x) =\Phi(x),\  \  \  \ x\in \mathbb{R},
\end{array}
\right.
\end{equation}
\begin{equation}
\left \{
\begin{array}
[c]{ll}
& \! \! \! \! \! \frac{\partial}{\partial t}U(t,x)+H^{+}(t,x,U,DU,D^{2}%
U)=0,\hskip0.5cm(t,x)\in \lbrack0,T)\times\mathbb{R},\\
& \! \! \! \! \!U(T,x)=\Phi(x),\  \  \  \ x\in \mathbb{R},
\end{array}
\right.  \label{equ5.11}%
\end{equation}
where
\[%
\begin{array}
[c]{lll}%
H^{-}(t,x,W,DW,D^{2}W) & = & \mathop{\rm sup}\limits_{u\in U}\inf
\limits_{v\in
V}H(t,x,W,DW,D^{2}W,u,v),\\
H^{+}(t,x,U,DU,D^{2}U) & = & \inf \limits_{v\in V}\mathop{\rm
sup}\limits_{u\in U}H(t,x,U,DU,D^{2}U,u,v)
\end{array}
\]
and
\[%
\begin{array}
[c]{lll}
&  & H(t,x,W,DW,D^{2}W)\\
&  & ={\frac{1}{2}}tr(\sigma
\sigma^{T}(t,x,u,v)D^{2}W(t,x))+DW(t,x).b(t,x,W
(t,x),DW(t,x).\sigma(t,x,u,v),u,v)\\
&  & \ \ \ \ \ +{ \int_{E}}[W(t,x+h(t,x,u,v,e))-W(t,x)-DW
(t,x).h(t,x,u,v,e)]\lambda(de)\\
&  & \ \ \ \ \ +f(t,x,W(t,x),DW(t,x).\sigma(t,x,u,v),{\int_{E}}%
[W(t,x+h(t,x,u,v,e))-W(t,x)]l(e)\lambda(de),u,v),
\end{array}
\]
where $t\in \lbrack0,T],\ x\in\mathbb{R}$. \\ Set
$$
\begin{array}
[c]{llll}%
&&\Theta=\{ \varphi \in C([0,T]\times \mathbb{R}):\exists \tilde{A}%
>0\mbox{ such that }\lim \limits_{|x|\rightarrow
\infty}\varphi(t,x)\exp
\{-\tilde{A}[\log((|x|^{2}+1)^{\frac{1}{2}})]^{2}%
\}=0,\\
&&\qquad\qquad\qquad\qquad\qquad\quad \mbox{ uniformly in }t\in \lbrack0,T]\}.
\end{array}
$$ We will prove the uniqueness for equation (\ref{equ5.1}) in
$\Theta$. The growth condition in $\Theta$ is weaker than the
polynomial growth but more restrictive than the exponential growth.
Barles, Buckdahn and Pardoux \cite{BBP}, Barles, Imbert
\cite{Barles-Imbert} introduced this growth condition (which is
optimal for the uniqueness and can not be weaken in general) to
prove the uniqueness of the viscosity solution of an
integral-partial differential equation associated with a decoupled
FBSDE with jumps but without controls. Next, by applying the method
developed in \cite{BBP} and \cite{Barles-Imbert}, we get the
uniqueness of the viscosity solution of (\ref{equ5.1}) in $\Theta$.
The proof for (\ref{equ5.11}) is similar.  On the other hand, since $\sigma$ does not depend on $z$, we don't need the assumption $(\mathbf{H4.1})$, that is, the test function $\varphi$ in Definitions \ref{def1-viscosity} and \ref{def2-viscosity} does not need to satisfy $(\mathbf{H2.3})$-(ii)' now.   First we present two
auxiliary lemmas.

\begin{lemma}
\label{le5.1}Let $w_{1}\in \Theta$ be a viscosity subsolution and $w_{2}%
\in \Theta$ be a viscosity supersolution of equation (\ref{equ5.1}).
Then the function $w:=w_{1}-w_{2}$ is a viscosity subsolution of the
equation
\begin{equation}
\label{equ5.2}\left \{
\begin{array}
[c]{llll} & \! \! \! \! \! \frac{\partial}{\partial t} w(t,x)
+\sup\limits_{u\in U,v\in V}\{{\frac{1}{2}}tr(\sigma
\sigma^{T}(t,x,u,v)D^{2}w)+Dw.b(t,x,w_1
(t,x),0,u,v)+B^{u,v}%
w(t,x)+\tilde{K}|w(t,x)|\\
& \! \! \! \! \!
+\tilde{K}|Dw(t,x).\sigma(t,x,u,v)|+\tilde{K}(C^{u,v}w(t,x))^{+}\}=0
&  &
\\
& \! \! \! \! \! w(T,x) =0,\  \  \  \ x\in \mathbb{R}, &  &
\end{array}
\right.
\end{equation}
where $\tilde{K}$ is a constant depending on the Lipschitz constants
of $b,\ \sigma,\ h,\ f$, which is uniformly in $(t,u,v)$.
\end{lemma}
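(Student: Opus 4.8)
The plan is to recognize this as the classical ``difference of a subsolution and a supersolution is a subsolution of the combined (Lipschitz--perturbed) equation'' statement, whose engine is the nonlocal Crandall--Ishii--Jensen maximum principle---the parabolic theorem on sums adapted to integro-differential operators, as developed by Barles, Buckdahn and Pardoux \cite{BBP} and Barles, Imbert \cite{Barles-Imbert}. I would fix a test function $\varphi\in C^{3}_{l,b}([0,T]\times\mathbb{R})$ and a point $(\bar t,\bar x)$ at which $w-\varphi$ attains a local maximum, and must derive the subsolution inequality of (\ref{equ5.2}) there. Using the continuity of $w_{1},w_{2}$ and their membership in $\Theta$, I would first reduce to a strict global maximum by adding a small localizing term built from the weight $\exp\{\tilde{A}[\log((|x|^{2}+1)^{1/2})]^{2}\}$ defining $\Theta$; this is exactly the device of \cite{Barles-Imbert}, and it guarantees that the penalized maximizers introduced below remain in a compact set and converge.

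Next I would double the space variable: for $\epsilon>0$ set
\[
\Phi_{\epsilon}(t,x,y)=w_{1}(t,x)-w_{2}(t,y)-\frac{|x-y|^{2}}{2\epsilon}-\varphi(t,x)-\ell(x,y),
\]
where $\ell$ is the localizing term, and let $(t_{\epsilon},x_{\epsilon},y_{\epsilon})$ be a maximizer. The standard penalization estimates yield $(t_{\epsilon},x_{\epsilon},y_{\epsilon})\to(\bar t,\bar x,\bar x)$ and $|x_{\epsilon}-y_{\epsilon}|^{2}/\epsilon\to0$ as $\epsilon\to0$, and applying the nonlocal maximum principle produces, for $w_{1}$ as a subsolution at $(t_{\epsilon},x_{\epsilon})$ and $w_{2}$ as a supersolution at $(t_{\epsilon},y_{\epsilon})$, second-order jets sharing the penalization momentum $p_{\epsilon}=(x_{\epsilon}-y_{\epsilon})/\epsilon$ together with matrices $X_{\epsilon},Y_{\epsilon}\in\mathcal{S}^{d}$ obeying the matrix inequality coming from the Hessian of the penalization. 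Writing the two viscosity inequalities (for the same Hamiltonian $H^{-}$, hence the same $\sup_{u}\inf_{v}$) and subtracting at a common pair $(u,v)$ selected through the usual $\sup/\inf$ argument, the half-trace diffusion terms combine, via the matrix inequality and the continuity of $\sigma$, into $\tfrac{1}{2}tr(\sigma\sigma^{T}(\bar t,\bar x,u,v)D^{2}\varphi)+o(1)$.

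The decisive use of the hypothesis that $\sigma,\ h$ do not depend on $(y,z,k)$ appears in the lower-order and nonlocal terms: because $\sigma(t,x,u,v)$ and $h(t,x,u,v,e)$ are solution-independent, the jump vectors entering $w_{1}(t,x+h)$ and $w_{2}(t,y+h)$ are structurally identical, so their differences assemble precisely into $B^{u,v}w$ and, through the monotonicity of $f$ in its nonlocal argument, into the positive part $(C^{u,v}w)^{+}$; likewise the Lipschitz dependence of $f$ on its $W$-, $z$- and $k$-slots---which enters $H$ with no gradient prefactor---produces exactly the perturbation terms $\tilde{K}|w|$, $\tilde{K}|D\varphi\cdot\sigma|$ and $\tilde{K}(C^{u,v}w)^{+}$, with $\tilde{K}$ depending only on the Lipschitz constants of $b,\sigma,h,f$. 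Taking the supremum over the compact set $U\times V$ before the limit, using the assumed continuity of the data in $(t,u,v)$ to keep the family equicontinuous, and letting $\epsilon\to0$ would then deliver the inequality of (\ref{equ5.2}), modulo the careful treatment of the nonlocal terms discussed below.

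The hard part will be twofold. First, the rigorous treatment of the integral operators: following \cite{BBP,Barles-Imbert}, the sub/supersolution inequalities must be written with the cutoff splitting of $B^{\delta,u,v}$ and $C^{\delta,u,v}$ into a near part ($|e|<\delta$), evaluated on the smooth test function where a second-order Taylor bound and $\int_{E}(1\wedge|e|^{2})\lambda(de)<\infty$ make it integrable, and a far part ($|e|\geq\delta$), evaluated on $w_{1},w_{2}$ directly; one must then carry out a careful double limit, sending $\epsilon\to0$ first and the cutoff $\delta\to0$ afterwards, controlling the far integrals through the $\Theta$-growth and the bound $|h(t,x,u,v,e)|\leq\rho(e)(1+|x|)$, and verifying that these nonlocal contributions collapse uniformly in $(u,v)$ into $B^{u,v}w$ and $(C^{u,v}w)^{+}$. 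Second, the first-order drift term $Dw\cdot b(t,x,W,DW\cdot\sigma,u,v)$ is genuinely gradient-dependent through the argument $DW\cdot\sigma$; keeping $b(t,x,w_{1}(t,x),0,u,v)$ as the reference drift and absorbing the remainder into $\tilde{K}|D\varphi\cdot\sigma|$ requires the penalization estimate $|x_{\epsilon}-y_{\epsilon}|^{2}/\epsilon\to0$ together with the exponential-type localization from $\Theta$, and it is precisely this interplay---rather than the zeroth-order terms, which are routine from $(\mathbf{H2.2})$---that constitutes the principal technical obstacle.
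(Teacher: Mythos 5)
Your skeleton is the paper's own: doubling of variables with $\psi_\varepsilon(t,x,y)=w_1(t,x)-w_2(t,y)-|x-y|^2/\varepsilon^2-\varphi(t,x)$, the Crandall--Ishii--Lions machinery (Proposition 3.7 of \cite{CRANDALL-ISHII-LIONS} for convergence of the penalized maximizers, Theorem 8.3 for the jets $X^\alpha,Y^\alpha$ and the matrix inequality), the $\delta$-cutoff splitting of the nonlocal operators as in \cite{BBP,Barles-Imbert,NIE}, the elementary inequality $\sup_u\inf_v A-\sup_u\inf_v B\le\sup_{u,v}(A-B)$, and the Lipschitz/monotonicity properties of $b,\sigma,h,f$ producing the terms $\tilde K|w|$, $\tilde K|D\varphi.\sigma|$, $\tilde K(C^{u,v}w)^{+}$ of (\ref{equ5.2}). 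However, two of your technical claims are wrong, and they sit exactly where this kind of proof can break. First, your order of limits is backwards. After subtracting the two viscosity inequalities, the near-region terms generated by the quadratic penalization, namely $\int_{E_\delta}|h(t_\varepsilon,x_\varepsilon,u,v,e)|^2\varepsilon^{-2}\lambda(de)$ and $\int_{E_\delta}|h(t_\varepsilon,y_\varepsilon,u,v,e)|^2\varepsilon^{-2}\lambda(de)$, enter with the \emph{same} sign (they add; they do not cancel). For fixed $\varepsilon$ they vanish as $\delta\to0$ because $\int_{E_\delta}\rho^2(e)\lambda(de)\to0$, but for fixed $\delta$ they blow up like $\varepsilon^{-2}$. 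The paper therefore sends $\delta\to0$ first, keeping $\varepsilon$ and $\alpha$ fixed, and only afterwards lets $\alpha\to0$, $\varepsilon\to0$; with your order ($\varepsilon\to0$ first, then $\delta\to0$) the upper bound diverges and no inequality survives the limit.

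Second, your mechanism for the far-region ($E^c_\delta$) terms is not the right one. The differences $[w_1(t_\varepsilon,x_\varepsilon+h(t_\varepsilon,x_\varepsilon,u,v,e))-w_1(t_\varepsilon,x_\varepsilon)]-[w_2(t_\varepsilon,y_\varepsilon+h(t_\varepsilon,y_\varepsilon,u,v,e))-w_2(t_\varepsilon,y_\varepsilon)]$ are not controlled by the $\Theta$-growth or by the linear growth of $h$; the decisive device in the paper is to evaluate the \emph{global maximum property of $\psi_\varepsilon$ at the jump-shifted points}, i.e.\ $\psi_\varepsilon(t_\varepsilon,x_\varepsilon+h(t_\varepsilon,x_\varepsilon,u,v,e),y_\varepsilon+h(t_\varepsilon,y_\varepsilon,u,v,e))\le\psi_\varepsilon(t_\varepsilon,x_\varepsilon,y_\varepsilon)$, which bounds that difference by $\varphi(t_\varepsilon,x_\varepsilon+h)-\varphi(t_\varepsilon,x_\varepsilon)$ plus the penalization remainders $\langle 2(x_\varepsilon-y_\varepsilon)/\varepsilon^2,\,h(t_\varepsilon,x_\varepsilon,u,v,e)-h(t_\varepsilon,y_\varepsilon,u,v,e)\rangle+|h(t_\varepsilon,x_\varepsilon,u,v,e)-h(t_\varepsilon,y_\varepsilon,u,v,e)|^2/\varepsilon^2$. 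Killing these remainders uses the Lipschitz continuity of $h$ in $x$ together with the momentum bound $|x_\varepsilon-y_\varepsilon|/\varepsilon^2\le K_{w_2}$, which the paper extracts from the Lipschitz continuity of $w_2$ in $x$ --- a hypothesis (stated in the comparison theorem that follows the lemma) that you never invoke. The same momentum bound, not the estimate $|x_\varepsilon-y_\varepsilon|^2/\varepsilon^2\to0$ alone, is what tames your drift term: the difference of the $b$-terms produces contributions of size $|x_\varepsilon-y_\varepsilon|^3/\varepsilon^4$, which are only seen to vanish after writing $|x_\varepsilon-y_\varepsilon|^3/\varepsilon^4=(|x_\varepsilon-y_\varepsilon|/\varepsilon^2)^2\,|x_\varepsilon-y_\varepsilon|\le K_{w_2}^2\,|x_\varepsilon-y_\varepsilon|$. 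Without repairing these two points your argument does not close.
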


\begin{proof} With the help of Lemma 7 in Nie \cite{NIE}, combined with Lemma
3.7 in \cite{BBP}, we can obtain the result.

Let $\varphi\in C_{l,b}^3([0,T]\times \mathbb{R})$, and let $(t_0,x_0)\in
(0,T)\times \mathbb{R}$ be a maximum point of $w-\varphi$ and
$w(t_0,x_0)=\varphi(t_0,x_0)$.  Without loss of generality assume
that $(t_0,x_0)$ is a strict global maximum point of $w-\varphi$,
otherwise, we can modify $\varphi$ outside a small neighborhood of $(t_0,x_0)$  if necessary. Also, the Lipschitz
property of $w_1$ and $w_2$ allows to assume that $D\varphi$ is
uniformly bounded: $|D\varphi|\leq K_{w_1,w_2}.$
 For a given $\varepsilon>0$, define $$\psi_\varepsilon(t,x,y)=w_1(t,x)-w_2(t,y)-\frac{|x-y|^2}{\varepsilon^2}-\varphi(t,x).$$

From Proposition 3.7 in \cite{CRANDALL-ISHII-LIONS}, we conclude
that there exists a sequence
$(t_\varepsilon,x_\varepsilon,y_\varepsilon)$ such that

(i) $(t_\varepsilon,x_\varepsilon,y_\varepsilon)$ is a global
maximum point of $\psi_\varepsilon$ in $([0,T]\times\bar{B}_R)^2$,
where $B_R$ is a ball with a large radius $R$;

(ii) $(t_\varepsilon,x_\varepsilon,y_\varepsilon)\rightarrow
(t_0,x_0,x_0)$, as $\varepsilon\rightarrow 0;$

(iii) $\frac{|x_\varepsilon-y_\varepsilon|^2}{\varepsilon^2}$ is
bounded and tends to $0$, when $\varepsilon\rightarrow0.$

Moreover, since $(t_0,x_0)$ is a strict global maximum point of
$w_1-w_2-\varphi$ and $\psi_\varepsilon(t_0,x_0,x_0)\leq
\psi_\varepsilon(t_\varepsilon,x_\varepsilon,y_\varepsilon),$ we
have
$$
\begin{array}{llll}
0&=&w_1(t_0,x_0)-w_2(t_0,x_0)-\varphi(t_0,x_0)=\psi_\varepsilon(t_0,x_0,x_0)\leq
\psi_\varepsilon(t_\varepsilon,x_\varepsilon,y_\varepsilon)\\
&=&w(t_\varepsilon,x_\varepsilon)-\varphi(t_\varepsilon,x_\varepsilon)+w_2(t_\varepsilon,x_\varepsilon)-w_2(t_\varepsilon,y_\varepsilon)-\frac{|x_\varepsilon-y_\varepsilon|^2}{\varepsilon^2}\\
&\leq&w_2(t_\varepsilon,x_\varepsilon)-w_2(t_\varepsilon,y_\varepsilon)-\frac{|x_\varepsilon-y_\varepsilon|^2}{\varepsilon^2},
\end{array}
$$
from which we know
$\frac{|x_\varepsilon-y_\varepsilon|}{\varepsilon^2}\leq|\frac{w_2(t_\varepsilon,x_\varepsilon)-w_2(t_\varepsilon,y_\varepsilon)}{x_\varepsilon-y_\varepsilon}|\leq
K_{w_2}.$

Furthermore, from Theorem 8.3 in \cite{CRANDALL-ISHII-LIONS}, for
any $\alpha>0$, there exists $(X^\alpha,Y^\alpha)\in\mathcal{S}^d\times \mathcal{S}^d,\ c^\alpha\in\mathbb{R}$ such that
$$
\begin{array}{llll}
&&(c^\alpha+\frac{\partial}{\partial t}\varphi(t_\varepsilon,x_\varepsilon),\frac{2(x_\varepsilon-y_\varepsilon)}{\varepsilon^2}+D\varphi(t_\varepsilon,x_\varepsilon),X^\alpha)\in\bar{\mathcal {P}}^{2,+}w_1(t_\varepsilon,x_\varepsilon),\\
&&(c^\alpha,\frac{2(x_\varepsilon-y_\varepsilon)}{\varepsilon^2},Y^\alpha)\in\bar{\mathcal {P}}^{2,-}w_2(t_\varepsilon,x_\varepsilon),\\
\end{array}
$$
and $$\left(\begin{array}{ccc} X^\alpha & 0 \\
            0 & -Y^\alpha  \end{array}\right) \leq A+\delta A^2,$$
where $A=\left(\begin{array}{ccc} D^2\varphi(t_\varepsilon,x_\varepsilon)+\frac{2}{\varepsilon^2} & -\frac{2}{\varepsilon^2} \\
            -\frac{2}{\varepsilon^2} & \frac{2}{\varepsilon^2}  \end{array}\right).$

Since $w_1$  and $w_2$ are sub- and supersolution of (\ref{equ5.1}), respectively,
from the definitions of the viscosity solution, we have, for the
sufficiently small $\delta$,
\begin{equation}\label{uniqueness-w1}
\begin{array}{llll}
&&c^\alpha+\frac{\partial\varphi}{\partial
t}(t_\varepsilon,x_\varepsilon)+\sup\limits_{u\in U}\inf\limits_{v\in V}\{\frac{1}{2}tr(\sigma\sigma^T(t_\varepsilon,x_\varepsilon,u,v)X^\alpha)\\
&&+\langle
b(t_\varepsilon,x_\varepsilon,w_1(t_\varepsilon,x_\varepsilon),(\frac{2(x_\varepsilon-y_\varepsilon)}{\varepsilon^2}+D\varphi(t_\varepsilon,x_\varepsilon))\sigma(t_\varepsilon,x_\varepsilon,u,v),u,v),\frac{2(x_\varepsilon-y_\varepsilon)}{\varepsilon^2}+D\varphi(t_\varepsilon,x_\varepsilon)
 \rangle\\
&&+\int_{E_\delta}\frac{|h(t_\varepsilon,x_\varepsilon,u,v,e)|^2}{\varepsilon^2}\lambda(de)+\int_{E_\delta}(\varphi(t_\varepsilon,x_\varepsilon+h(t_\varepsilon,x_\varepsilon,u,v,e))-\varphi(t_\varepsilon,x_\varepsilon)-D\varphi(t_\varepsilon,x_\varepsilon)h(t_\varepsilon,x_\varepsilon,u,v,e))\lambda(de)   \\
&&+\int_{E^c_\delta}(w_1(t_\varepsilon,x_\varepsilon+h(t_\varepsilon,x_\varepsilon,u,v,e))-w_1(t_\varepsilon,x_\varepsilon)-(\frac{2(x_\varepsilon-y_\varepsilon)}{\varepsilon^2}+D\varphi(t_\varepsilon,x_\varepsilon))h(t_\varepsilon,x_\varepsilon,u,v,e))\lambda(de)   \\
 &&+f(t_\varepsilon,x_\varepsilon,w_1(t_\varepsilon,x_\varepsilon),(\frac{2(x_\varepsilon-y_\varepsilon)}{\varepsilon^2}+D\varphi(t_\varepsilon,x_\varepsilon))\sigma(t_\varepsilon,x_\varepsilon,u,v),B_1^\delta,u,v)\}\geq0,
\end{array}
\end{equation}
where $$\begin{array}{llll}B_1^\delta&=&\int_{E_\delta}(\frac{2(x_\varepsilon-y_\varepsilon)}{\varepsilon^2}h(t_\varepsilon,x_\varepsilon,u,v,e)+\frac{|h(t_\varepsilon,x_\varepsilon,u,v,e)|^2}{\varepsilon^2})l(e)\lambda(de)\\
&&+\int_{E_\delta}(\varphi(t_\varepsilon,x_\varepsilon+h(t_\varepsilon,x_\varepsilon,u,v,e))-\varphi(t_\varepsilon,x_\varepsilon))l(e)\lambda(de)\\
 &&
 +\int_{E^c_\delta}(w_1(t_\varepsilon,x_\varepsilon+h(t_\varepsilon,x_\varepsilon,u,v,e))-w_1(t_\varepsilon,x_\varepsilon))l(e)\lambda(de),\end{array}$$
and
\begin{equation}\label{uniqueness-w2}
\begin{array}{llll}
&&c^\alpha+\sup\limits_{u\in U}\inf\limits_{v\in
V}\{\frac{1}{2}tr(\sigma\sigma^T(t_\varepsilon,y_\varepsilon,u,v)Y^\alpha)+\langle
b(t_\varepsilon,y_\varepsilon,w_2(t_\varepsilon,y_\varepsilon),\frac{2(x_\varepsilon-y_\varepsilon)}{\varepsilon^2}\sigma(t_\varepsilon,y_\varepsilon,u,v),u,v),\frac{2(x_\varepsilon-y_\varepsilon)}{\varepsilon^2}
 \rangle\\
 &&-\int_{E_\delta}\frac{|h(t_\varepsilon,y_\varepsilon,u,v,e)|^2}{\varepsilon^2}\lambda(de)+\int_{E^c_\delta}(w_2(t_\varepsilon,y_\varepsilon+h(t_\varepsilon,y_\varepsilon,u,v,e))-w_2(t_\varepsilon,y_\varepsilon)-\frac{2(x_\varepsilon-y_\varepsilon)}{\varepsilon^2}h(t_\varepsilon,y_\varepsilon,u,v,e))\lambda(de)   \\
 &&+f(t_\varepsilon,y_\varepsilon,w_2(t_\varepsilon,y_\varepsilon),\frac{2(x_\varepsilon-y_\varepsilon)}{\varepsilon^2}\sigma(t_\varepsilon,y_\varepsilon,u,v),B_2^\delta,u,v)\}\leq0,
\end{array}\end{equation}
where
$$\begin{array}{llll}B_2^\delta&=&\int_{E_\delta}(-\frac{2(x_\varepsilon-y_\varepsilon)}{\varepsilon^2}h(t_\varepsilon,y_\varepsilon,u,v,e)-\frac{|h(t_\varepsilon,y_\varepsilon,u,v,e)|^2}{\varepsilon^2})l(e)\lambda(de)\\
 &&
 +\int_{E^c_\delta}(w_2(t_\varepsilon,y_\varepsilon+h(t_\varepsilon,y_\varepsilon,u,v,e))-w_2(t_\varepsilon,y_\varepsilon))l(e)\lambda(de),\end{array}$$
Set
$$\begin{array}{llll}
&&I_{1,u,v}^{\varepsilon,\alpha}:=\frac{1}{2}tr(\sigma\sigma^T(t_\varepsilon,x_\varepsilon,u,v)X^\alpha)-\frac{1}{2}tr(\sigma\sigma^T(t_\varepsilon,y_\varepsilon,u,v)Y^\alpha),\\
&&I_{2,u,v}^{\varepsilon,\alpha}:=\langle
b(t_\varepsilon,x_\varepsilon,w_1(t_\varepsilon,x_\varepsilon),(\frac{2(x_\varepsilon-y_\varepsilon)}{\varepsilon^2}+D\varphi(t_\varepsilon,x_\varepsilon))\sigma(t_\varepsilon,x_\varepsilon,u,v),u,v),\frac{2(x_\varepsilon-y_\varepsilon)}{\varepsilon^2}
 +D\varphi(t_\varepsilon,x_\varepsilon)\rangle\\
 &&\qquad\quad-\langle
b(t_\varepsilon,y_\varepsilon,w_2(t_\varepsilon,y_\varepsilon),\frac{2(x_\varepsilon-y_\varepsilon)}{\varepsilon^2}\sigma(t_\varepsilon,y_\varepsilon,u,v),u,v),\frac{2(x_\varepsilon-y_\varepsilon)}{\varepsilon^2}\rangle,\\
&&I_{3,u,v}^{\varepsilon,\alpha,\delta}:=\\
&&\int_{E_\delta}\frac{|h(t_\varepsilon,x_\varepsilon,u,v,e)|^2}{\varepsilon^2}\lambda(de)+\int_{E_\delta}(\varphi(t_\varepsilon,x_\varepsilon+h(t_\varepsilon,x_\varepsilon,u,v,e))-\varphi(t_\varepsilon,x_\varepsilon)-D\varphi(t_\varepsilon,x_\varepsilon)h(t_\varepsilon,x_\varepsilon,u,v,e))\lambda(de)   \\
&&+\int_{E^c_\delta}(w_1(t_\varepsilon,x_\varepsilon+h(t_\varepsilon,x_\varepsilon,u,v,e))-w_1(t_\varepsilon,x_\varepsilon)-(\frac{2(x_\varepsilon-y_\varepsilon)}{\varepsilon^2}+D\varphi(t_\varepsilon,x_\varepsilon))h(t_\varepsilon,x_\varepsilon,u,v,e))\lambda(de)   \\
&&+\int_{E_\delta}\frac{|h(t_\varepsilon,y_\varepsilon,u,v,e)|^2}{\varepsilon^2}\lambda(de)-\int_{E^c_\delta}(w_2(t_\varepsilon,y_\varepsilon+h(t_\varepsilon,y_\varepsilon,u,v,e))-w_2(t_\varepsilon,y_\varepsilon)-\frac{2(x_\varepsilon-y_\varepsilon)}{\varepsilon^2}h(t_\varepsilon,y_\varepsilon,u,v,e))\lambda(de),\\
&&I_{4,u,v}^{\varepsilon,\alpha,\delta}:=f(t_\varepsilon,x_\varepsilon,w_1(t_\varepsilon,x_\varepsilon),(\frac{2(x_\varepsilon-y_\varepsilon)}{\varepsilon^2}+D\varphi(t_\varepsilon,x_\varepsilon))\sigma(t_\varepsilon,x_\varepsilon,u,v),B_1^\delta,u,v)\\
 &&\qquad\quad-f(t_\varepsilon,y_\varepsilon,w_2(t_\varepsilon,y_\varepsilon),\frac{2(x_\varepsilon-y_\varepsilon)}{\varepsilon^2}\sigma(t_\varepsilon,y_\varepsilon,u,v),B_2^\delta,u,v),
\end{array}
$$
Then, from (\ref{uniqueness-w1}) and (\ref{uniqueness-w2}), we know
\begin{equation}\label{uniquenessw1-w2}\frac{\partial}{\partial
t}\varphi(t_\varepsilon,x_\varepsilon)+\sup\limits_{u\in U,v\in
V}\{I_{1,u,v}^{\varepsilon,\alpha}+I_{2,u,v}^{\varepsilon,\alpha}+I_{3,u,v}^{\varepsilon,\alpha,\delta}+I_{4,u,v}^{\varepsilon,\alpha,\delta}\}\geq0.\end{equation}
 For any $u\in U,v\in
V$, we want to prove the following result:
$$\begin{array}{llll}
&&\lim\limits_{\varepsilon\rightarrow0}\lim\limits_{\alpha\rightarrow0}\lim\limits_{\delta\rightarrow0}\{I_{1,u,v}^{\varepsilon,\alpha}+I_{2,u,v}^{\varepsilon,\alpha}+I_{3,u,v}^{\varepsilon,\alpha,\delta}+I_{4,u,v}^{\varepsilon,\alpha,\delta}\}\\
&\leq&\frac{1}{2}tr(\sigma\sigma^T(t_0,x_0,u,v)D^2\varphi(t_0,x_0))+\langle
b(t_0,x_0,w_1(t_0,x_0),0,u,v),D\varphi(t_0,x_0)\rangle+B^{u,v}\varphi(t_0,x_0)\\
&&+\tilde{K}\{|\varphi(t_0,x_0)|+|D\varphi(t_0,x_0)|\cdot|\sigma(t_0,x_0,u,v)|+(C^{u,v}\varphi(t_0,x_0))^+\}.
\end{array}$$
Similar to the proof of Lemma 7 in  Nie \cite{NIE}, we obtain
$$\begin{array}{llll}
I_{1,u,v}^{\varepsilon,\alpha}=\frac{1}{2}tr(\sigma\sigma^T(t_\varepsilon,x_\varepsilon,u,v)X^\alpha)-\frac{1}{2}tr(\sigma\sigma^T(t_\varepsilon,y_\varepsilon,u,v)Y^\alpha)\leq
\frac{1}{2}tr(\sigma\sigma^T(t_\varepsilon,x_\varepsilon,u,v)D^2\varphi(t_\varepsilon,x_\varepsilon))+K_\sigma^2\frac{|x_\varepsilon-y_\varepsilon|^2}{\varepsilon^2}.
\end{array}$$
For $I_{2,u,v}^{\varepsilon,\alpha}$, we have
$$\begin{array}{llll}
I_{2,u,v}^{\varepsilon,\alpha}&=&\langle
b(t_\varepsilon,x_\varepsilon,w_1(t_\varepsilon,x_\varepsilon),(\frac{2(x_\varepsilon-y_\varepsilon)}{\varepsilon^2}+D\varphi(t_\varepsilon,x_\varepsilon))\sigma(t_\varepsilon,x_\varepsilon,u,v),u,v),\frac{2(x_\varepsilon-y_\varepsilon)}{\varepsilon^2}
 +D\varphi(t_\varepsilon,x_\varepsilon)\rangle\\
 &&-\langle
b(t_\varepsilon,y_\varepsilon,w_2(t_\varepsilon,y_\varepsilon),\frac{2(x_\varepsilon-y_\varepsilon)}{\varepsilon^2}\sigma(t_\varepsilon,y_\varepsilon,u,v),u,v),\frac{2(x_\varepsilon-y_\varepsilon)}{\varepsilon^2}\rangle\\
 &=&\langle
 b(t_\varepsilon,x_\varepsilon,w_1(t_\varepsilon,y_\varepsilon),0,u,v),D\varphi(t_\varepsilon,x_\varepsilon)\rangle+\langle \Delta b_1,
 D\varphi(t_\varepsilon,x_\varepsilon)\rangle+\langle \Delta b_2,
 \frac{2(x_\varepsilon-y_\varepsilon)}{\varepsilon^2}\rangle,
 \end{array}$$
where $$\begin{array}{llll}\Delta
b_1&=&b(t_\varepsilon,x_\varepsilon,w_1(t_\varepsilon,x_\varepsilon),(\frac{2(x_\varepsilon-y_\varepsilon)}{\varepsilon^2}+D\varphi(t_\varepsilon,x_\varepsilon))\sigma(t_\varepsilon,x_\varepsilon,u,v),u,v)-b(t_\varepsilon,x_\varepsilon,w_1(t_\varepsilon,x_\varepsilon),0,u,v)\\
&\leq&
K_b|\frac{2(x_\varepsilon-y_\varepsilon)}{\varepsilon^2}+D\varphi(t_\varepsilon,x_\varepsilon)|\cdot|\sigma(t_\varepsilon,x_\varepsilon,u,v)|,\\
\Delta
b_2&=&b(t_\varepsilon,x_\varepsilon,w_1(t_\varepsilon,x_\varepsilon),(\frac{2(x_\varepsilon-y_\varepsilon)}{\varepsilon^2}+D\varphi(t_\varepsilon,x_\varepsilon))\sigma(t_\varepsilon,x_\varepsilon,u,v),u,v)\\
&&-b(t_\varepsilon,y_\varepsilon,w_2(t_\varepsilon,y_\varepsilon),\frac{2(x_\varepsilon-y_\varepsilon)}{\varepsilon^2}\sigma(t_\varepsilon,y_\varepsilon,u,v),u,v)\\
&\leq&
K_b\{|x_\varepsilon-y_\varepsilon|+|w_1(t_\varepsilon,x_\varepsilon)-w_2(t_\varepsilon,y_\varepsilon)|+|D\varphi(t_\varepsilon,x_\varepsilon)|\cdot|\sigma(t_\varepsilon,x_\varepsilon,u,v)|\}+K_bK_\sigma\frac{2|x_\varepsilon-y_\varepsilon|^2}{\varepsilon^2},
 \end{array}$$
therefore,
$$\begin{array}{llll}I_{2,u,v}^{\varepsilon,\alpha}&\leq& \langle
 b(t_\varepsilon,x_\varepsilon,w_1(t_\varepsilon,x_\varepsilon),0,u,v),D\varphi(t_\varepsilon,x_\varepsilon)\rangle\\
 &&+K_b|\frac{2(x_\varepsilon-y_\varepsilon)}{\varepsilon^2}+D\varphi(t_\varepsilon,x_\varepsilon)|\cdot|\sigma(t_\varepsilon,x_\varepsilon,u,v)|
 \cdot|D\varphi(t_\varepsilon,x_\varepsilon)|+K_bK_\sigma\frac{4|x_\varepsilon-y_\varepsilon|^3}{\varepsilon^4}\\
 &&+ K_b\frac{2|x_\varepsilon-y_\varepsilon|}{\varepsilon^2}\{|x_\varepsilon-y_\varepsilon|+|w_1(t_\varepsilon,x_\varepsilon)-w_2(t_\varepsilon,y_\varepsilon)|+|D\varphi(t_\varepsilon,x_\varepsilon)|\cdot|\sigma(t_\varepsilon,x_\varepsilon,u,v)|\},
 \end{array}$$
Similar to the proof of Lemma 3.7 in \cite{BBP}, we estimate the
differences of the integral-differential terms. From the fact that
$(t_\varepsilon,x_\varepsilon,y_\varepsilon)$ is a global maximum
point of $\psi_\varepsilon$ in $\bar{B}_{\frac{R}{2}}$, we know
$$\psi_\varepsilon(t_\varepsilon,x_\varepsilon+h(t_\varepsilon,x_\varepsilon,u,v,e),y_\varepsilon+h(t_\varepsilon,y_\varepsilon,u,v,e))\leq
\psi_\varepsilon(t_\varepsilon,x_\varepsilon,y_\varepsilon),$$ hence,
$$\begin{array}{llll}
&&[w_1(t_\varepsilon,x_\varepsilon+h(t_\varepsilon,x_\varepsilon,u,v,e))-w_1(t_\varepsilon,x_\varepsilon)]-[w_2(t_\varepsilon,y_\varepsilon+h(t_\varepsilon,y_\varepsilon,u,v,e))-w_2(t_\varepsilon,y_\varepsilon)]\\
&&-\langle\frac{2(x_\varepsilon-y_\varepsilon)}{\varepsilon^2},h(t_\varepsilon,x_\varepsilon,u,v,e)-h(t_\varepsilon,y_\varepsilon,u,v,e)\rangle-\frac{1}{\varepsilon^2}|h(t_\varepsilon,x_\varepsilon,u,v,e)-h(t_\varepsilon,y_\varepsilon,u,v,e)|^2\\
&&\leq
\varphi(t_\varepsilon,x_\varepsilon+h(t_\varepsilon,x_\varepsilon,u,v,e))-\varphi(t_\varepsilon,x_\varepsilon),
 \end{array}$$
furthermore,
$$\begin{array}{llll}
&&\int_{E_\delta^c}(w_1(t_\varepsilon,x_\varepsilon+h(t_\varepsilon,x_\varepsilon,u,v,e))-w_1(t_\varepsilon,x_\varepsilon)-\langle
\frac{2(x_\varepsilon-y_\varepsilon)}{\varepsilon^2}+D\varphi(t_\varepsilon,x_\varepsilon),h(t_\varepsilon,x_\varepsilon,u,v,e)
\rangle)\lambda(de)\\
&&-\int_{E^c_\delta}(w_2(t_\varepsilon,y_\varepsilon+h(t_\varepsilon,y_\varepsilon,u,v,e))-w_2(t_\varepsilon,y_\varepsilon)-\langle
\frac{2(x_\varepsilon-y_\varepsilon)}{\varepsilon^2},h(t_\varepsilon,y_\varepsilon,u,v,e)
\rangle)\lambda(de)\\
&&\leq
\int_{E_\delta^c}(\varphi(t_\varepsilon,x_\varepsilon+h(t_\varepsilon,x_\varepsilon,u,v,e))-\varphi(t_\varepsilon,x_\varepsilon)-\langle
D\varphi(t_\varepsilon,x_\varepsilon),h(t_\varepsilon,x_\varepsilon,u,v,e)
\rangle)\lambda(de)\\
&&+
\int_{E_\delta^c}\frac{|h(t_\varepsilon,x_\varepsilon,u,v,e)-h(t_\varepsilon,y_\varepsilon,u,v,e)|^2}{\varepsilon^2}\lambda(de).
 \end{array}$$
From these estimates,
$$\begin{array}{llll}
I_{3,u,v}^{\varepsilon,\alpha,\delta}&\leq&\int_{E_\delta}\frac{|h(t_\varepsilon,x_\varepsilon,u,v,e)|^2}{\varepsilon^2}\lambda(de)+\int_{E_\delta}\frac{|h(t_\varepsilon,y_\varepsilon,u,v,e)|^2}{\varepsilon^2}\lambda(de)+\int_{E_\delta^c}\frac{|h(t_\varepsilon,x_\varepsilon,u,v,e)-h(t_\varepsilon,y_\varepsilon,u,v,e)|^2}{\varepsilon^2}\lambda(de)\\
&&+\int_{E}(\varphi(t_\varepsilon,x_\varepsilon+h(t_\varepsilon,x_\varepsilon,u,v,e))-\varphi(t_\varepsilon,x_\varepsilon)-D\varphi(t_\varepsilon,x_\varepsilon)h(t_\varepsilon,x_\varepsilon,u,v,e))\lambda(de),
 \end{array}$$
as well as
$$\begin{array}{llll}
B_1^\delta-B_2^\delta
&\leq& \int_{E_\delta}(\frac{2(x_\varepsilon-y_\varepsilon)}{\varepsilon^2}h(t_\varepsilon,x_\varepsilon,u,v,e)+\frac{|h(t_\varepsilon,x_\varepsilon,u,v,e)|^2}{\varepsilon^2})l(e)\lambda(de)\\
&&+\int_{E_\delta}(\frac{2(x_\varepsilon-y_\varepsilon)}{\varepsilon^2}h(t_\varepsilon,y_\varepsilon,u,v,e)+\frac{|h(t_\varepsilon,y_\varepsilon,u,v,e)|^2}{\varepsilon^2})l(e)\lambda(de)\\
&&+\int_{E}(\varphi(t_\varepsilon,x_\varepsilon+h(t_\varepsilon,x_\varepsilon,u,v,e))-\varphi(t_\varepsilon,x_\varepsilon))l(e)\lambda(de)\\
&&+\int_{E^c_\delta}\langle
\frac{2(x_\varepsilon-y_\varepsilon)}{\varepsilon^2},h(t_\varepsilon,x_\varepsilon,u,v,e)-h(t_\varepsilon,y_\varepsilon,u,v,e)\rangle l(e)\lambda(de)\\
&&+\int_{E^c_\delta}\frac{|h(t_\varepsilon,x_\varepsilon,u,v,e)-h(t_\varepsilon,y_\varepsilon,u,v,e)|^2}{\varepsilon^2}l(e)\lambda(de).
\end{array}$$
Therefore,
$$\begin{array}{llll}
I_{4,u,v}^{\varepsilon,\alpha,\delta}&\leq&|f(t_\varepsilon,x_\varepsilon,w_1(t_\varepsilon,x_\varepsilon),(\frac{2(x_\varepsilon-y_\varepsilon)}{\varepsilon^2}+D\varphi(t_\varepsilon,x_\varepsilon))\sigma(t_\varepsilon,x_\varepsilon,u,v),B_1^\delta,u,v)\\
 &&-f(t_\varepsilon,y_\varepsilon,w_2(t_\varepsilon,y_\varepsilon),\frac{2(x_\varepsilon-y_\varepsilon)}{\varepsilon^2}\sigma(t_\varepsilon,y_\varepsilon,u,v),B_2^\delta,u,v)|\\
&\leq&
K_f|x_\varepsilon-y_\varepsilon|+K_f|w_1(t_\varepsilon,x_\varepsilon)-w_2(t_\varepsilon,y_\varepsilon)|+K_f|D\varphi(t_\varepsilon,x_\varepsilon))|\cdot|\sigma(t_\varepsilon,x_\varepsilon,u,v)|+K_fK_\sigma\frac{2|x_\varepsilon-y_\varepsilon|^2}{\varepsilon^2}\\
&&+K_f(B_1^\delta-B_2^\delta)^+.
 \end{array}$$
Since $\frac{2(x_\varepsilon-y_\varepsilon)}{\varepsilon^2}\leq
K_{w_2}$, and $D\varphi(t_\varepsilon,x_\varepsilon)\leq
K_{w_1,w_2}$,  let $\delta\rightarrow0$ with keeping
$\varepsilon,\alpha$ fixed, we  get
$$\begin{array}{llll}
&&I_{1,u,v}^{\varepsilon,\alpha}+I_{2,u,v}^{\varepsilon,\alpha}+I_{3,u,v}^{\varepsilon,\alpha,\delta}+I_{4,u,v}^{\varepsilon,\alpha,\delta}\\
&\leq&
\frac{1}{2}tr(\sigma\sigma^T(t_\varepsilon,x_\varepsilon,u,v)D^2\varphi(t_\varepsilon,x_\varepsilon))+K_\sigma^2\frac{|x_\varepsilon-y_\varepsilon|^2}{\varepsilon^2}+
\langle
 b(t_\varepsilon,x_\varepsilon,w_1(t_\varepsilon,x_\varepsilon),0,u,v),D\varphi(t_\varepsilon,x_\varepsilon)\rangle\\
 &&+\tilde{K}\{|D\varphi(t_\varepsilon,x_\varepsilon))|\cdot|\sigma(t_\varepsilon,x_\varepsilon,u,v)|+|w_1(t_\varepsilon,x_\varepsilon)-w_2(t_\varepsilon,y_\varepsilon)|\}+\tilde{K}\{|x_\varepsilon-y_\varepsilon|+|x_\varepsilon-y_\varepsilon|^2\}\\
 &&+\int_{E}\frac{|h(t_\varepsilon,x_\varepsilon,u,v,e)-h(t_\varepsilon,y_\varepsilon,u,v,e)|^2}{\varepsilon^2}\lambda(de)\\
 &&+\int_{E}(\varphi(t_\varepsilon,x_\varepsilon+h(t_\varepsilon,x_\varepsilon,u,v,e))-\varphi(t_\varepsilon,x_\varepsilon)-D\varphi(t_\varepsilon,x_\varepsilon)h(t_\varepsilon,x_\varepsilon,u,v,e))\lambda(de)\\
 &&+K_f(\int_{E}(\varphi(t_\varepsilon,x_\varepsilon+h(t_\varepsilon,x_\varepsilon,u,v,e))-\varphi(t_\varepsilon,x_\varepsilon))l(e)\lambda(de)\\
&&+\int_{E}\langle
\frac{2(x_\varepsilon-y_\varepsilon)}{\varepsilon^2},h(t_\varepsilon,x_\varepsilon,u,v,e)-h(t_\varepsilon,y_\varepsilon,u,v,e)\rangle l(e)\lambda(de)\\
&&+\int_{E}\frac{|h(t_\varepsilon,x_\varepsilon,u,v,e)-h(t_\varepsilon,y_\varepsilon,u,v,e)|^2}{\varepsilon^2}l(e)\lambda(de))^+.
 \end{array}$$
Finally, we let $\alpha\rightarrow0,\ \varepsilon\rightarrow0$,
from (ii), (iii), we get
$$\begin{array}{llll}0&\leq&\lim\limits_{\varepsilon\rightarrow0}\lim\limits_{\alpha\rightarrow0}\lim\limits_{\delta\rightarrow0}\{\frac{\partial}{\partial
t}\varphi(t_\varepsilon,x_\varepsilon)+\sup\limits_{u\in U,v\in
V}\{I_{1,u,v}^{\varepsilon,\alpha}+I_{2,u,v}^{\varepsilon,\alpha}+I_{3,u,v}^{\varepsilon,\alpha,\delta}+I_{4,u,v}^{\varepsilon,\alpha,\delta}\}\}\\
&\leq&\frac{\partial}{\partial t}\varphi(t_0,x_0)+\sup\limits_{u\in
U,v\in
V}\{\frac{1}{2}tr(\sigma\sigma^T(t_0,x_0,u,v)D^2\varphi(t_0,x_0))+\langle
b(t_0,x_0,w_1(t_0,x_0),0,u,v),D\varphi(t_0,x_0)\rangle\\
&&+\int_{E}(\varphi(t_0,x_0+h(t_0,x_0,u,v,e))-\varphi(t_0,x_0)-D\varphi(t_0,x_0)h(t_0,x_0,u,v,e))\lambda(de)\\
&&+\tilde{K}(|w_1(t_0,x_0)-w_2(t_0,x_0)|+|D\varphi(t_0,x_0)|\cdot|\sigma(t_0,x_0,u,v)|\\
&&+(\int_{E}(\varphi(t_0,x_0+h(t_0,x_0,u,v,e))-\varphi(t_0,x_0))l(e)\lambda(de))^+)\}\\
&=&\frac{\partial}{\partial t}\varphi(t_0,x_0)+\sup\limits_{u\in
U,v\in
V}\{\frac{1}{2}tr(\sigma\sigma^T(t_0,x_0,u,v)D^2\varphi(t_0,x_0))+\langle
b(t_0,x_0,w_1(t_0,x_0),0,u,v),D\varphi(t_0,x_0)\rangle\\
&&+B^{u,v}\varphi(t_0,x_0)+\tilde{K}|\varphi(t_0,x_0)|+\tilde{K}|D\varphi(t_0,x_0)|\cdot|\sigma(t_0,x_0,u,v)|+\tilde{K}(C^{u,v}\varphi(t_0,x_0))^+\}.\\
\end{array}$$
Therefore, $w$ is a viscosity subsolution of (\ref{equ5.2}).

\end{proof}

Following \cite{BBP, BLH}, we have

\begin{lemma}
\label{lem6.2} For any $\tilde{A}>0$, there exists $C_{1}>0$ such
that the function $\chi(t,x)=\exp[(C_{1}(T-t)+\tilde{A})\psi(x)]$,
with $\psi (x)=[\log((|x|^{2}+1)^{\frac{1}{2}})+1]^{2},\ x\in
\mathbb{R},$ satisfies
\begin{equation}
\label{equ5.3}
\begin{array}
[c]{llll} & \! \! \! \! \! \frac{\partial}{\partial t} \chi(t,x)
+\sup\limits_{u\in U,v\in V}\{{\frac{1}{2}}tr(\sigma
\sigma^{T}(t,x,u,v)D^{2}\chi(t,x))+D\chi(t,x).b(t,x,w_1
(t,x),0,u,v)+B^{u,v}\chi(t,x)\\
& \! \! \! \! \!
+\tilde{K}|\chi(t,x)|+\tilde{K}|D\chi(t,x).\sigma(t,x,u,v)|+\tilde{K}(C^{u,v}\chi(t,x))^{+}\}<0,\
\mbox{in } [t_{1},T]\times \mathbb{R}, &  &
\end{array}
\end{equation}
where $t_{1}=T-\frac{\tilde{A}}{C_{1}}$.
\end{lemma}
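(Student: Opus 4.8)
The plan is to treat $\chi$ as a candidate strict supersolution and to show that, once $C_1$ is chosen large enough, the favourable term produced by $\partial_t\chi$ dominates every other term uniformly on the slab $[t_1,T]\times\mathbb{R}$. Writing $a(t)=C_1(T-t)+\tilde A$ and $L(x)=\log((|x|^2+1)^{1/2})+1$, so that $\psi=L^2$ and $\chi=\exp(a\psi)>0$, I would first record the explicit derivatives
\[
\partial_t\chi=-C_1\psi\,\chi,\qquad D\chi=a\,\psi'\chi,\qquad D^2\chi=a\big(\psi''+a(\psi')^2\big)\chi ,
\]
and divide the whole left-hand side of (\ref{equ5.3}) by $\chi>0$, reducing the claim to the strict negativity of a bracketed scalar expression. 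The decisive structural observation is that on $[t_1,T]=[T-\tilde A/C_1,\,T]$ one has $0\le T-t\le \tilde A/C_1$, hence $a(t)\le 2\tilde A$; thus $a$ is bounded by a constant \emph{independent} of $C_1$, even though the interval itself depends on $C_1$.

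Next I would collect the elementary properties of $\psi$: $\psi\ge1$, the derivative $\psi'$ is bounded, $\psi''$ is bounded and tends to $0$, and, crucially, $(x\psi'(x))^2\le C(1+\psi(x))$ together with $|x|^2|\psi''(x)|\le C(1+\sqrt{\psi(x)})$. Combining these with the linear growth of the data — $|\sigma(t,x,u,v)|+|b(t,x,w_1(t,x),0,u,v)|\le C(1+|x|)$, where $w_1$ is Lipschitz and hence of linear growth — and with $a\le 2\tilde A$, each local term is controlled by $C(1+\psi)\chi$ with $C$ depending only on $\tilde A$, on $\tilde K$, and on the Lipschitz and growth constants of the coefficients, but not on $C_1$. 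For example $\tfrac12\sigma^2 D^2\chi\le C(1+x^2)\,a\big(\psi''+a(\psi')^2\big)\chi\le C(1+\psi)\chi$, while $D\chi\cdot b+\tilde K|D\chi\cdot\sigma|\le C\,a(1+|x|)|\psi'|\chi\le C(1+\sqrt{\psi})\chi$, and $\tilde K|\chi|\le C\chi$.

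The main obstacle is the nonlocal part $B^{u,v}\chi+\tilde K\,(C^{u,v}\chi)^{+}$, for which I would follow the argument of Barles, Buckdahn and Pardoux \cite{BBP}. Using the integral form of Taylor's theorem, $\chi(x+h)-\chi(x)-D\chi(x)h=\int_0^1(1-\theta)D^2\chi(x+\theta h)h^2\,d\theta$, and splitting $E$ into small and large jumps, the issue is to bound $\psi(x+\theta h)-\psi(x)$ and $D^2\chi(x+\theta h)$ in terms of $\psi(x)$ and $\chi(x)$. This is exactly where the log-squared profile of $\psi$ (equivalently, the growth class $\Theta$) is used and is optimal: one shows that $L(x+\theta h)-L(x)$ is controlled by $\rho(e)$, so that $\psi(x+\theta h)-\psi(x)$ is controlled by $\rho(e)\big(1+\sqrt{\psi(x)}\big)$ and the factor $\exp\big(a(\psi(x+\theta h)-\psi(x))\big)$ remains integrable against $\rho^2(e)\lambda(de)$, thanks to $\int_E\rho^2(e)\lambda(de)<\infty$. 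The target estimate is $B^{u,v}\chi+\tilde K\,(C^{u,v}\chi)^{+}\le C(1+\psi)\chi$, again with $C$ independent of $C_1$.

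Assembling all bounds gives, after division by $\chi$, a bracketed expression no larger than $-C_1\psi+C(1+\psi)$ with $C$ depending only on $\tilde A$ and the data. Since $\psi\ge1$, one has $-C_1\psi+C(1+\psi)\le-(C_1-2C)$, so choosing $C_1>2C$ makes the left-hand side of (\ref{equ5.3}) strictly negative on all of $[t_1,T]\times\mathbb{R}$, with $t_1=T-\tilde A/C_1$; note there is no circularity, since the constant $C$ is valid on $[T-\tilde A/C_1,T]$ for \emph{every} $C_1$ before $C_1$ is fixed. I expect the nonlocal estimate to be the delicate step, precisely because the jump coefficient carries the growth $|h|\le\rho(e)(1+|x|)$; the balance that makes it close is the combination of $\int_E\rho^2\,d\lambda<\infty$ in the small/large-jump splitting with the boundedness $a\le 2\tilde A$ afforded by the restriction to $[t_1,T]$.
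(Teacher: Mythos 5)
Your proposal follows essentially the same route as the paper's proof: the same explicit derivative bounds on $\psi$ and $\chi$, the same key observation that $C_{1}(T-t)+\tilde{A}\le 2\tilde{A}$ on $[t_{1},T]$ (so every constant is independent of $C_{1}$), Taylor-type control of the nonlocal terms in the spirit of Barles--Buckdahn--Pardoux, and the final absorption $-C_{1}\psi+C(1+\psi)<0$ using $\psi\ge 1$ and $C_{1}$ large. The paper compresses your nonlocal step into two asserted pointwise estimates obtained ``by a long but straightforward calculus'' (namely $\chi(t,x+h)-\chi(t,x)-D\chi(t,x).h\le C\chi(t,x)\frac{\psi(x)}{|x|^{2}+1}|h|^{2}$ and its first-order analogue), which is exactly what your sketch of that step would produce, so the two arguments coincide in substance and in level of detail.
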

\begin{proof}
By direct calculus we deduce
$$|D\psi(x)|\leq\frac{2[\psi(x)]^\frac{1}{2}}{(|x|^2+1)^\frac{1}{2}},\ \ |D^2\psi(x)|\leq\frac{C(1+[\psi(x)]^\frac{1}{2})}{|x|^2+1},\ x\in\mathbb{R}.$$
Therefore, if $t\in[t_1,T],$
$$
\begin{array}{llll}
|D\chi(t,x)|\leq (C_1(T-t)+\tilde{A})\chi(t,x)|D\psi(x)| \leq
C\chi(t,x)\frac{[\psi(x)]^\frac{1}{2}}{(|x|^2+1)^\frac{1}{2}},
\end{array}$$
and $$|D^2\chi(t,x)|\leq C\chi(t,x)\frac{\psi(x)}{|x|^2+1}.$$ Notice
that the above estimates do not depend on $C_1$ because of the
definition of $t_1$. Then, from $\gamma$ is bounded and  $\psi$ is
Lipschitz continuous in $\mathbb{R}$, by a long but straight-forward
calculus, we get
$$\chi(t,x+h(t,x,u,v,e))-\chi(t,x)-D\chi(t,x).h(t,x,u,v,e)\leq C\chi(t,x)\frac{\psi(x)}{|x|^2+1}|h(t,x,u,v,e)|^2,$$
and $$\chi(t,x+h(t,x,u,v,e))-\chi(t,x)\leq
C\chi(t,x)\frac{[\psi(x)]^\frac{1}{2}}{(|x|^2+1)^\frac{1}{2}}|h(t,x,u,v,e)|.$$
Therefore, we have
$$
\begin{array}{llll}
&&\frac{\partial}{\partial t} \chi(t,x) +\sup\limits_{u\in U,v\in
V}\{{\frac{1}{2}}tr(\sigma
\sigma^{T}(t,x,u,v)D^{2}\chi(t,x))+D\chi.b(t,x,w_1
(t,x),0,u,v)+B^{u,v}\chi(t,x)\\
&& +\tilde{K}|\chi(t,x)|+\tilde{K}|D\chi(t,x).\sigma(t,x,u,v)|+\tilde{K}(C^{u,v}\chi(t,x))^{+}\}\\
&&\leq-\chi(t,x)\{C_1\psi(x)-C\psi(x)-C[\psi(x)]^\frac{1}{2}-C\frac{\psi(x)}{|x|^2+1}-\tilde{K}-C\tilde{K}[\psi(x)]^\frac{1}{2}-C\tilde{K}\frac{[\psi(x)]^\frac{1}{2}}{(|x|^2+1)^\frac{1}{2}}\}\\
&&\leq -\chi(t,x)\{C_1-[C+\tilde{K}]\}\psi(x)<0,\ \mbox{if }
C_1>C+\tilde{K}\ \mbox{large enough}.
\end{array}$$
\end{proof}

\begin{theorem}
Let $w_{1}$ (resp., $w_{2}$) $\in \Theta$ be a viscosity subsolution
(resp. supersolution) of equation (\ref{equ5.1}). Then, if $w_1$
(resp., $w_{2}$) is Lipschitz in $x$, uniformly in $t$, we have
\begin{equation}
\label{ee:comparsion}w_{1}(t,x)\leq w_{2}(t,x),\  \mbox{for all }
(t,x)\in[0,T]\times \mathbb{R}.
\end{equation}
\end{theorem}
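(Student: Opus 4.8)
The plan is to run the classical comparison argument for integro-differential Isaacs equations of Barles, Buckdahn and Pardoux \cite{BBP} and Barles, Imbert \cite{Barles-Imbert}, now organized around the two auxiliary results already in place. Since both $w_1$ and $w_2$ belong to $\Theta$, I would first fix a single $\tilde A>0$ for which the defining limit of $\Theta$ holds for $w_1$ and $w_2$ simultaneously, and then invoke Lemma \ref{lem6.2} to produce the constant $C_1>0$ and the smooth function $\chi(t,x)=\exp[(C_1(T-t)+\tilde A)\psi(x)]$, which is a \emph{strict} supersolution of the linear equation (\ref{equ5.2}) on $[t_1,T]\times\mathbb{R}$, $t_1=T-\tilde A/C_1$, with the quantitative margin exhibited in the proof of (\ref{equ5.3}). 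By Lemma \ref{le5.1}, $w:=w_1-w_2$ is a viscosity subsolution of (\ref{equ5.2}) with $w(T,\cdot)\le 0$. A key structural remark I would use is that the operator in (\ref{equ5.2}) is positively homogeneous of degree one in its arguments (the $\sup_{u,v}$, the moduli $|\cdot|$ and the positive part $(\cdot)^+$ all scale linearly), so $\eta\chi$ is again a strict supersolution of (\ref{equ5.2}) for every $\eta>0$.

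The core step is to prove $\sup_{[t_1,T]\times\mathbb{R}}(w-\eta\chi)\le 0$ for every $\eta>0$. The $+1$ shift and the factor $C_1(T-t)>0$ in the exponent of $\chi$ make $\chi$ grow, as $|x|\to\infty$, strictly faster than the bound $\exp\{\tilde A[\log((|x|^2+1)^{1/2})]^2\}$ allowed for elements of $\Theta$; hence $w-\eta\chi\to-\infty$ uniformly in $t$, and its supremum is attained at some $(\bar t,\bar x)$. Arguing by contradiction, I would assume $M:=\max(w-\eta\chi)>0$. Since $w(T,\cdot)\le 0$ and $\chi\ge 0$, the maximum cannot occur at $t=T$, so $\bar t<T$; I would then take $\eta\chi$ as a test function at $(\bar t,\bar x)$ in the subsolution inequality for $w$, splitting the nonlocal operators $B^{u,v}$ and $C^{u,v}$ into a singular near-diagonal part, controlled by the $C^2$ regularity of $\eta\chi$, and a far part, controlled by the global maximum property $w(\bar t,\bar x+\cdot)-w(\bar t,\bar x)\le \eta\chi(\bar t,\bar x+\cdot)-\eta\chi(\bar t,\bar x)$, exactly as in the proof of Lemma \ref{le5.1} and in Lemma 3.7 of \cite{BBP}. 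Passing these estimates through $\sup_{u,v}$ and confronting the resulting subsolution inequality ($\ge 0$) with the strict supersolution inequality ($<0$) of Lemma \ref{lem6.2} should yield a contradiction, giving $M\le 0$.

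Letting $\eta\downarrow 0$ then gives $w\le 0$, i.e.\ $w_1\le w_2$, on $[t_1,T]\times\mathbb{R}$. Because the interval length $T-t_1=\tilde A/C_1$ depends only on $\tilde A$ and $C_1$, and not on the terminal data, I would iterate backward: with $w_1(t_1,\cdot)\le w_2(t_1,\cdot)$ now playing the role of the terminal inequality, the translated function $\exp[(C_1(t_1-t)+\tilde A)\psi(x)]$ satisfies (\ref{equ5.3}) on $[t_2,t_1]\times\mathbb{R}$, $t_2=t_1-\tilde A/C_1$, by the identical computation, and the same argument gives $w_1\le w_2$ on $[t_2,t_1]$. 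After finitely many steps of equal length the subintervals exhaust $[0,T]$, which proves (\ref{ee:comparsion}).

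The hard part will be the maximum-point estimate of the second paragraph: in the integro-differential setting one must carefully control the jump operators $B^{u,v}$, $C^{u,v}$ both near the diagonal and at infinity, and in particular tame the zeroth-order term $\tilde K|w|$, which carries no favourable sign when $w(\bar t,\bar x)>0$; this is precisely where the \emph{quantitative} strictness margin of Lemma \ref{lem6.2}, together with the coercivity supplied by the $\Theta$-growth, must be exploited, and is the reason the matrix inequality from Theorem 8.3 of \cite{CRANDALL-ISHII-LIONS} is invoked in Lemma \ref{le5.1}. The remaining bookkeeping --- checking the growth comparison that renders $w-\eta\chi$ coercive and verifying that the time-step $\tilde A/C_1$ can be kept fixed throughout the iteration --- is routine once this estimate is secured.
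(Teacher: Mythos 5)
Your overall architecture is exactly the paper's: Lemma \ref{le5.1} to turn $w:=w_1-w_2$ into a viscosity subsolution of (\ref{equ5.2}), Lemma \ref{lem6.2} to produce the strict supersolution $\chi$ on the slab $[t_1,T]$, a maximum-point contradiction, and backward iteration in steps of length $\tilde{A}/C_1$; your side remarks (positive $1$-homogeneity of the operator in (\ref{equ5.2}), coercivity of $w-\eta\chi$ from the $\Theta$-growth, exclusion of a maximum at $t=T$) are all correct. But there is a genuine gap at the step you yourself call ``the hard part'', and the device you propose to close it cannot work. At a maximum point $(\bar t,\bar x)$ of $w-\eta\chi$ with $M:=\max(w-\eta\chi)>0$, the admissible test function is $\eta\chi+M$ (it must touch $w$ from above), and the zeroth-order term in the subsolution inequality is evaluated at $w$ itself: it equals $\tilde{K}|w(\bar t,\bar x)|=\tilde{K}\bigl(\eta\chi(\bar t,\bar x)+M\bigr)$, not $\tilde{K}\eta\chi(\bar t,\bar x)$. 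Since the space derivatives and the nonlocal terms of the test function coincide with those of $\eta\chi$, confronting the subsolution inequality with the strict supersolution property of $\eta\chi$ yields only
\[
0\;\le\;-\,\eta\,(C_1-C-\tilde{K})\,\chi(\bar t,\bar x)\,\psi(\bar x)\;+\;\tilde{K}M_\eta ,
\]
so the contradiction requires the strictness margin to dominate $\tilde{K}M_\eta$. Precisely because of the homogeneity you invoke, that margin scales linearly in $\eta$, while $M_\eta$ does not: if $\sup w>0$ is attained (or nearly attained) at a finite point, then $M_\eta\to\sup w>0$ as $\eta\downarrow 0$ while the margin at the (bounded) maximizers tends to $0$. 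For small $\eta$ no contradiction arises; neither the quantitative margin of Lemma \ref{lem6.2} nor the $\Theta$-coercivity can absorb a zeroth-order term of the wrong sign --- this needs a Gronwall-type device, not a size comparison.

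The paper supplies exactly this device, and it is the one ingredient missing from your sketch: the maximum is weighted in time, $M:=\max_{[t_1,T]\times\mathbb{R}}(w-\alpha\chi)(t,x)\,e^{-\tilde{K}(T-t)}$, and the test function at the maximizer $(t_0,x_0)$ is $\varphi(t,x)=\alpha\chi(t,x)+(w-\alpha\chi)(t_0,x_0)e^{-\tilde{K}(t-t_0)}$ rather than $\alpha\chi$ plus a constant. Its time derivative contributes $-\tilde{K}(w-\alpha\chi)(t_0,x_0)$ at $t_0$, which exactly cancels the excess $\tilde{K}\bigl[\varphi(t_0,x_0)-\alpha\chi(t_0,x_0)\bigr]=\tilde{K}(w-\alpha\chi)(t_0,x_0)$ in the zeroth-order term (here one also uses $w(t_0,x_0)>0$ to replace $\tilde{K}|\varphi(t_0,x_0)|$ by $\tilde{K}\varphi(t_0,x_0)$), while all space derivatives and nonlocal terms of $\varphi$ are those of $\alpha\chi$. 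The subsolution inequality then reduces cleanly to $0\le\alpha\{\partial_t\chi+\sup_{u,v}\{\cdots\}\}(t_0,x_0)<0$ by Lemma \ref{lem6.2}, with no competition between $M$ and the margin. With this correction, your iteration over $[t_2,t_1]$, $[t_3,t_2],\dots$ goes through as you describe (the paper additionally treats bounded $w_1,w_2$ by citing Theorem 4.1 of \cite{Barles-Imbert}, a step your line of argument would not need).
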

\begin{proof} First we consider the case when $w_1$ and $w_2$ are bounded. Set $u:=w_1-w_2$. Theorem 4.1 in \cite{Barles-Imbert} proves a comparison
principle for bounded sub- and supersolutions of
Hamilton-Jacobi-Bellman equations with nonlocal term of type
(\ref{equ5.2}).  From Lemma \ref{le5.1}, we know  that $u$ is a
viscosity subsolution of equation (\ref{equ5.2}). On the other hand,
clearly, $\tilde{u}=0$ is a viscosity solution, hence it is also a
viscosity supersolution of equation (\ref{equ5.2}). Thus, Theorem
4.1 in \cite{Barles-Imbert} implies that $w_1-w_2=w\leq
\tilde{w}=0$, i.e., $w_1\leq w_2$ on $[0,T]\times \mathbb{R}$.
Finally, if $w_1,w_2$ are viscosity solutions of (\ref{equ5.2}),
they are both viscosity sub- and supersolution;  from the just
proved comparison result we get $w_1=w_2$.

However, under our standard assumptions, the lower value function $W$ defined by (\ref{ee2}) is not necessarily
bounded, so we still need to prove the case $w_1,w_2\in \Theta$. Set
$w:=w_1-w_2$. Then, for some $\tilde{A}>0$,
$$\lim_{|x|\rightarrow \infty}w(t,x)\exp \{-\tilde{A}[\log((|x|^2+1)^{\frac{1}{2}})]^2\}=0,$$
uniformly with respect to $t\in [0,T]$. Accordingly, for any
$\alpha>0$, $w(t,x)-\alpha \chi(t,x)$ is bounded from above in
$[t_1,T]\times \mathbb{R}$, and that
$$M:=\max \limits_{[t_1,T]\times \mathbb{R}}(w-\alpha \chi)(t,x)e^{-\tilde{K}(T-t)}$$
is achieved at some point $(t_0,x_0)\in [t_1,T]\times \mathbb{R}$
(depending on $\alpha$).

Now we consider the following two cases.

(i) We assume that: $w(t_0,x_0)\leq 0$, for any $\alpha>0$. Then,
$M\leq 0$ and $w_1(t,x)-w_2(t,x)\leq \alpha \chi(t,x)$ in
$[t_1,T]\times \mathbb{R}$. Consequently, letting $\alpha
\rightarrow 0$, we get
$$w_1(t,x)\leq w_2(t,x),\  \mbox{for all } (t,x)\in[t_1,T]\times
\mathbb{R}.$$

(ii) Suppose that there exists some $\alpha>0$ such that
$w(t_0,x_0)>0$. Notice that $w(t,x)-\alpha \chi(t,x)\leq
(w(t_0,x_0)-\alpha \chi(t_0,x_0))e^{-\tilde{K}(t-t_0)}$ in
$[t_1,T]\times \mathbb{R}$. Then, setting
$$\varphi(t,x)=\alpha \chi(t,x)+(w-\alpha \chi)(t_0,x_0)e^{-\tilde{K}(t-t_0)}$$
we get $w-\varphi \leq 0=(w-\varphi)(t_0,x_0)$ in $[t_1,T]\times
\mathbb{R}$. Due to Lemma \ref{le5.1}, $w$ is a viscosity
subsolution of (\ref{equ5.2}), we have
$$
\begin{array}{llll}
&&\frac{\partial}{\partial t}\varphi(t_0,x_0)+\sup\limits_{u\in
U,v\in
V}\{\frac{1}{2}tr(\sigma\sigma^T(t_0,x_0,u,v)D^2\varphi(t_0,x_0))+\langle
b(t_0,x_0,w_1(t_0,x_0),0,u,v),D\varphi(t_0,x_0)\rangle\\
&&+B^{u,v}\varphi(t_0,x_0)+\tilde{K}|\varphi(t_0,x_0)|+\tilde{K}|D\varphi(t_0,x_0)|\cdot|\sigma(t_0,x_0,u,v)|+\tilde{K}(C^{u,v}\varphi(t_0,x_0))^+\}
\geq 0.
\end{array}
$$
Moreover, due to our assumption that $w(t_0,x_0)>0$ and since
$w(t_0,x_0)=\varphi(t_0,x_0)$ we can replace
$\tilde{K}|\varphi(t_0,x_0)|$ by $\tilde{K}\varphi(t_0,x_0)$ in the
above formula. Then, from the definition of $\varphi$ and Lemma
\ref{lem6.2}, $$
\begin{array}{llll}
&&0\leq \alpha \{ \frac{\partial}{\partial
t}\chi(t_0,x_0)+\sup\limits_{u\in U,v\in
V}\{\frac{1}{2}tr(\sigma\sigma^T(t_0,x_0,u,v)D^2\chi(t_0,x_0))+\langle
b(t_0,x_0,w_1(t_0,x_0),0,u,v),D\chi(t_0,x_0)\rangle\\
&&+B^{u,v}\chi(t_0,x_0)+\tilde{K}|\chi(t_0,x_0)|+\tilde{K}|D\chi(t_0,x_0).\sigma(t_0,x_0,u,v)|+\tilde{K}(C^{u,v}\chi(t_0,x_0))^+
\}< 0.
\end{array}
$$
which causes a contradiction. Finally, by applying successively the
same argument on the interval $[t_2,t_1]$ with
$t_2=(t_1-\frac{\tilde{A}}{C_1})^+$, and then, if $t_2>0$ on
$[t_3,t_2]$ with $t_3=(t_2-\frac{\tilde{A}}{C_1})^+$, etc. We get
$$w_1(t,x)\leq w_2(t,x),\ (t,x)\in [0,T]\times \mathbb{R}.$$ Then,
the proof is complete.
\end{proof}

\begin{remark}
We have shown that the lower value function $W(t,x)$ is of at most
linear growth which belongs to $\Theta$, and so $W(t,x)$ is the
unique viscosity solution in $\Theta$ of equation(\ref{equ5.1}).
Similarly, we know the upper value function $U(t,x)$ is the unique
viscosity solution in $\Theta$ of the corresponding Isaacs' equation
(\ref{equ5.11}).
\end{remark}

\begin{remark}
Under the Isaacs' condition, that is, for all $(t,x)\in[0,T]\times
\mathbb{R}$,
\[
H^{-}(t,x,W(t,x),DW(t,x),D^{2}W(t,x))=H^{+}(t,x,W(t,x),DW
(t,x),D^{2}W(t,x)),
\]
the equation (\ref{equ5.1}) and (\ref{equ5.11}) coincide. From the
uniqueness in $\Theta$ of viscosity solution, the lower value
function $W(t,x)$ equals to the upper value function $U(t,x)$ which
means the associated stochastic differential game has a value.
\end{remark}

\section{Appendix: Proof of Theorem \ref{th3.1} (DPP)}

\begin{proof} For convenience, we set $$W_\delta(t,x)=\essinf_{\beta \in \mathcal {B}_{t,t+\delta}}\esssup_{u\in
\mathcal
{U}_{t,t+\delta}}G_{t,{t+\delta}}^{t,x;u,\beta(u)}[W(t+\delta,\widetilde{X}_{t+\delta}^{t,x;u,\beta(u)})].$$
We want to prove $W_\delta(t,x)$ and $W(t,x)$ coincide. For this we
only need to prove the following three lemmas. \end{proof}

\begin{lemma}
$W_{\delta}(t,x)$ is deterministic.
\end{lemma}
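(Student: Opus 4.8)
The plan is to follow exactly the strategy used for $W(t,x)$ in Proposition \ref{pro1}: prove the invariance of $W_\delta(t,x)$ under the measure-preserving transformations of Buckdahn, Li and Hu, and then invoke Lemma \ref{l6}. The decisive new ingredient is that $W(t+\delta,\cdot)$ has \emph{already} been shown to be a deterministic function in Proposition \ref{pro1}; this is what will let the terminal datum of the backward semigroup transform cleanly.

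Concretely, I would first prove the analogue of Lemma \ref{l5}: for any invertible $\mathcal{F}$-$\mathcal{F}$ measurable transformation $\tau$ satisfying (i)--(iii) of Lemma \ref{l5}, one has $W_\delta(t,x)\circ\tau = W_\delta(t,x)$, P-a.s. The core step is to show, for fixed $u\in\mathcal{U}_{t,t+\delta}$, $v\in\mathcal{V}_{t,t+\delta}$, that
\[
G_{t,t+\delta}^{t,x;u,v}[W(t+\delta,\widetilde{X}_{t+\delta}^{t,x;u,v})]\circ\tau
= G_{t,t+\delta}^{t,x;u(\tau),v(\tau)}[W(t+\delta,\widetilde{X}_{t+\delta}^{t,x;u(\tau),v(\tau)})],\quad\text{P-a.s.}
\]
Applying $\tau$ to FBSDE (\ref{equ3.3}) and using that $\tau$ preserves the increments of $B$ and $\mu$ on $[t,t+\delta]$ together with the uniqueness of the solution, the solution processes satisfy $\widetilde{X}_s^{t,x;u,v}(\tau)=\widetilde{X}_s^{t,x;u(\tau),v(\tau)}$, and likewise for $\widetilde{Y},\widetilde{Z},\widetilde{K}$, exactly as in step (1) of Lemma \ref{l5}. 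The only point requiring care is the terminal condition: since $W(t+\delta,\cdot)$ is a \emph{deterministic} Borel function, $W(t+\delta,\widetilde{X}_{t+\delta}^{t,x;u,v})\circ\tau = W(t+\delta,\widetilde{X}_{t+\delta}^{t,x;u,v}\circ\tau)=W(t+\delta,\widetilde{X}_{t+\delta}^{t,x;u(\tau),v(\tau)})$, so the transformed system is precisely the one driven by $u(\tau),v(\tau)$.

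With this in hand the argument copies steps (2)--(5) of Lemma \ref{l5}: for $\beta\in\mathcal{B}_{t,t+\delta}$ set $\hat\beta(u):=\beta(u(\tau^{-1}))(\tau)$ and check, using $\tau(\mathcal{F}_s)=\mathcal{F}_s$ (Remark \ref{4.4}), that $\hat\beta\in\mathcal{B}_{t,t+\delta}$; then commute $\tau$ through the $\esssup_{u}$ and the $\essinf_{\beta}$ by the defining property of the essential supremum/infimum over a family of random variables; and finally use the bijections $\{u(\tau):u\in\mathcal{U}_{t,t+\delta}\}=\mathcal{U}_{t,t+\delta}$ and $\{\hat\beta:\beta\in\mathcal{B}_{t,t+\delta}\}=\mathcal{B}_{t,t+\delta}$ to conclude $W_\delta(t,x)\circ\tau=W_\delta(t,x)$, P-a.s.

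To finish, I would specialize $\tau$ to the explicit family $\tau_l$, $l\geq1$, constructed after Lemma \ref{l5}, which satisfies (i)--(iii); this gives $W_\delta(t,x)(\tau_l)=W_\delta(t,x)$, P-a.s., for every $l$. Since for fixed $x$ the linear growth of $W(t+\delta,\cdot)$ (Lemma \ref{l7}) together with the a priori estimates for (\ref{equ3.3}) bound $W_\delta(t,x)$ by $C(1+|x|)$, it is a bounded $\mathcal{F}_t$-measurable random variable, so Lemma \ref{l6} applies and yields $W_\delta(t,x)=\,$const, P-a.s., i.e.\ $W_\delta(t,x)$ is deterministic. The main obstacle, and the only place needing reasoning beyond Proposition \ref{pro1}, is the transformation identity for the backward semigroup displayed above: one must verify that applying $\tau$ to the coupled system (\ref{equ3.3}) reproduces the same system with shifted controls, and that the random terminal value $W(t+\delta,\widetilde{X}_{t+\delta})$ transforms correctly precisely because $W(t+\delta,\cdot)$ is already deterministic — without Proposition \ref{pro1} this step would fail.
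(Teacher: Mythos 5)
Your proposal is correct and follows exactly the route the paper intends: the paper omits the proof, stating only that it is ``similar to the proof of Proposition \ref{pro1}'', and your argument is precisely that adaptation --- repeating Lemma \ref{l5} for the semigroup $G_{t,t+\delta}^{t,x;u,v}[\cdot]$ defined via (\ref{equ3.3}), then applying the transformations $\tau_l$ and Lemma \ref{l6}. You also correctly isolate the one genuinely new point that makes the ``similar'' proof go through, namely that the terminal datum $W(t+\delta,\widetilde{X}_{t+\delta}^{t,x;u,v})$ commutes with $\tau$ because $W(t+\delta,\cdot)$ is already known to be a deterministic function by Proposition \ref{pro1}.
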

\noindent The proof of this lemma is similar to the proof of
Proposition \ref{pro1}, so we omit it here.

\begin{lemma}
$W_{\delta}(t,x) \leq W(t,x).$
\end{lemma}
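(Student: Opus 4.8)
The plan is to reduce the desired inequality to an estimate valid for each fixed nonanticipative strategy of Player~II on the whole interval, and then to pass to the essential infimum. Fix $\beta\in\mathcal{B}_{t,T}$ and $\varepsilon>0$. First I would restrict $\beta$ to $[t,t+\delta]$: for $u_1\in\mathcal{U}_{t,t+\delta}$ extend $u_1$ arbitrarily to some $u\in\mathcal{U}_{t,T}$ and set $\beta_1(u_1):=\beta(u)|_{[t,t+\delta]}$; the nonanticipativity of $\beta$ makes this independent of the extension and gives $\beta_1\in\mathcal{B}_{t,t+\delta}$. Since $W_{\delta}(t,x)$ is an essential infimum over $\mathcal{B}_{t,t+\delta}$,
\[
W_{\delta}(t,x)\leq \esssup_{u_1\in\mathcal{U}_{t,t+\delta}} G_{t,t+\delta}^{t,x;u_1,\beta_1(u_1)}[W(t+\delta,\widetilde{X}_{t+\delta}^{t,x;u_1,\beta_1(u_1)})].
\]
Thus it suffices to bound the right-hand side by $\esssup_{u\in\mathcal{U}_{t,T}}J(t,x;u,\beta(u))=:I(t,x,\beta)$ up to an error tending to $0$ with $\varepsilon$.

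The core step is to replace the terminal cost $W(t+\delta,\cdot)$ by a genuine continuation payoff of the full game. Because $W(t+\delta,\cdot)$ is deterministic (Proposition \ref{pro1}) and Lipschitz in $x$ (Lemma \ref{l7}), I would fix a Borel partition $\mathbb{R}=\bigcup_i O_i$ into sets of diameter at most $\varepsilon$, with points $x_i\in O_i$. For a fixed $u_1$, the tail of $\beta$ (with $u_1$ frozen on $[t,t+\delta]$) induces $\beta'_{u_1}\in\mathcal{B}_{t+\delta,T}$ via $\beta'_{u_1}(u'):=\beta(u_1\oplus u')|_{[t+\delta,T]}$, and $W(t+\delta,x_i)\leq\esssup_{u'\in\mathcal{U}_{t+\delta,T}}J(t+\delta,x_i;u',\beta'_{u_1}(u'))$. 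Hence I can choose $u'^{\,i}$ with $J(t+\delta,x_i;u'^{\,i},\beta'_{u_1}(u'^{\,i}))\geq W(t+\delta,x_i)-\varepsilon$ and paste them into a single $u^{\varepsilon}\in\mathcal{U}_{t+\delta,T}$ by $u^{\varepsilon}:=u'^{\,i}$ on $\{\widetilde{X}_{t+\delta}\in O_i\}$. Using the Lipschitz continuity of $W$ together with the Lipschitz dependence of $J$ on its initial datum (estimate (\ref{ee1})), this yields, $P$-a.s.,
\[
W(t+\delta,\widetilde{X}_{t+\delta})\leq J(t+\delta,\widetilde{X}_{t+\delta};u^{\varepsilon},\beta'_{u_1}(u^{\varepsilon}))+C\varepsilon.
\]
Setting $u:=u_1\oplus u^{\varepsilon}\in\mathcal{U}_{t,T}$, the operators agree on $[t,t+\delta]$, so by the monotonicity and the Lipschitz stability of the backward semigroup (Theorem 3.3 in \cite{LiWei-Lp}) and the semigroup identity $J(t,x;u,\beta(u))=G_{t,t+\delta}^{t,x;u,\beta(u)}[J(t+\delta,X_{t+\delta}^{t,x;u,\beta(u)};u,\beta(u))]$ I would obtain
\[
G_{t,t+\delta}^{t,x;u_1,\beta_1(u_1)}[W(t+\delta,\widetilde{X}_{t+\delta})]\leq J(t,x;u,\beta(u))+C'\varepsilon\leq I(t,x,\beta)+C'\varepsilon .
\]

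Finally, since this bound holds for every $u_1\in\mathcal{U}_{t,t+\delta}$, taking the essential supremum over $u_1$ and combining with the first display gives $W_{\delta}(t,x)\leq I(t,x,\beta)+C'\varepsilon$; as $\beta\in\mathcal{B}_{t,T}$ was arbitrary, passing to the essential infimum over $\beta$ yields $W_{\delta}(t,x)\leq W(t,x)+C'\varepsilon$, and letting $\varepsilon\downarrow0$ completes the proof. The hard part will be the measurable pasting of the near-optimal continuation controls $u'^{\,i}$ across the random endpoint $\widetilde{X}_{t+\delta}$: one must verify that the glued control $u^{\varepsilon}$ and the induced strategies remain admissible and nonanticipative, and that the diameter-$\varepsilon$ partition argument is legitimate, which is exactly where the deterministic character of $W(t+\delta,\cdot)$, its Lipschitz continuity, and the Lipschitz stability of $J$ and of $G$ in the initial datum are indispensable.
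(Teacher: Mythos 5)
Your overall architecture is close to the paper's: restrict $\beta$ to $[t,t+\delta]$, interpret $W(t+\delta,\cdot)$ as a continuation value driven by the tail of $\beta$, paste near-optimal continuation controls, then use the comparison/stability of the backward semigroup $G$ together with the semigroup identity to bound everything by $J(t,x;u_1\oplus u^{\varepsilon},\beta(u_1\oplus u^{\varepsilon}))+C\varepsilon\leq I(t,x,\beta)+C\varepsilon$, and finish with $\esssup_{u_1}$, $\essinf_{\beta}$, $\varepsilon\downarrow 0$. Two of your rearrangements are legitimate and even tidy: you avoid the paper's first $\Omega$-partition (which fixes a single near-optimal $u_1^{\varepsilon}\in\mathcal{U}_{t,t+\delta}$ before constructing the continuation strategy) by proving a bound uniform in $u_1$ and taking the essential supremum only at the end; and you replace the paper's direct treatment of the random endpoint $\widetilde{X}_{t+\delta}$ by a diameter-$\varepsilon$ spatial partition plus the Lipschitz estimates --- a device the paper itself uses, but in the proof of the reverse inequality $W\leq W_{\delta}$.

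The genuine gap is the sentence ``Hence I can choose $u'^{\,i}$ with $J(t+\delta,x_i;u'^{\,i},\beta'_{u_1}(u'^{\,i}))\geq W(t+\delta,x_i)-\varepsilon$.'' For fixed $x_i$ and fixed $u'$, the payoff $J(t+\delta,x_i;u',\beta'_{u_1}(u'))$ is an $\mathcal{F}_{t+\delta}$-measurable random variable, and the inequality $W(t+\delta,x_i)\leq\esssup_{u'}J(t+\delta,x_i;u',\beta'_{u_1}(u'))$ only says that the essential supremum of the whole family dominates the constant; it does not produce a single control that is $\varepsilon$-optimal $P$-a.s. --- for each fixed $u'$ the event $\{J(u')<W(t+\delta,x_i)-\varepsilon\}$ may have positive probability, with these events covering $\Omega$ as $u'$ varies. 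This is precisely where the main measure-theoretic work of the paper's proof lies: one takes a countable family $\{u_j^2\}$ whose pointwise supremum realizes the essential supremum, builds the $(\Omega,\mathcal{F}_{t+\delta})$-partition $\{\Delta_j\}$ of sets on which $u_j^2$ is $\varepsilon$-optimal, pastes $u_2^{\varepsilon}=\sum_{j\geq 1}\mathbf{1}_{\Delta_j}u_j^2$, and then uses the nonanticipativity of the strategy together with the uniqueness of the solution of the fully coupled FBSDE to show that $J$ splits across the partition, $J(\cdot\,;u_2^{\varepsilon},\beta_2(u_2^{\varepsilon}))=\sum_{j\geq 1}\mathbf{1}_{\Delta_j}J(\cdot\,;u_j^2,\beta_2(u_j^2))$, whence the pasted control is $\varepsilon$-optimal a.s. You do flag ``measurable pasting'' as the hard part, but you locate it across the spatial cells $\{\widetilde{X}_{t+\delta}\in O_i\}$, which is the easy, $\mathcal{F}_{t+\delta}$-measurable gluing; the indispensable pasting is within each cell, over $\Omega$, to convert an essential supremum into a single admissible $\varepsilon$-optimal control. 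Once this partition argument is inserted for each $x_i$ (and the resulting doubly indexed pasting is checked to remain admissible and compatible with the nonanticipativity of $\beta$), your proof goes through.
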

\begin{proof}  Let $\beta \in \mathcal {B}_{t,T}$ be
arbitrarily fixed. Then, given any $u_2(\cdot)\in \mathcal
{U}_{t+\delta,T}$, we define as follows the restriction $\beta_1$ of
$\beta$ to $\mathcal {U}_{t,t+\delta}:$
$$\beta_1(u_1):=\beta(u_1\oplus u_2)|_{[t,t+\delta]},\  \  \
u_1(\cdot)\in \mathcal {U}_{t,t+\delta}, $$ where $u_1\oplus
u_2:=u_1\textbf{1}_{[t,t+\delta]}+u_2\textbf{1}_{(t+\delta,T]},$
extends $u_1(\cdot)$ to an element of $\mathcal {U}_{t,T}$. It is
easy to check that $\beta_1\in \mathcal {B}_{t,t+\delta}.$ Moreover,
from the nonanticipativity property of $\beta$ we deduce that
$\beta_1$ is independent of the special choice of $u_2(\cdot)\in
\mathcal {U}_{t+\delta,T}$. Consequently, from the definition of
$W_\delta(t,x),$
$$W_\delta(t,x)\leq \esssup_{u_1\in \mathcal
{U}_{t,t+\delta}}G_{t,{t+\delta}}^{t,x;u_1,\beta_1(u_1)}[W(t+\delta,\widetilde{X}_{t+\delta}^{t,x;u_1,\beta_1(u_1)})],\
\mbox{P-a.s.}$$ We use the notation
$I_\delta(t,x;u,v):=G_{t,{t+\delta}}^{t,x;u,v}[W(t+\delta,\widetilde{X}_{t+\delta}^{t,x;u,v})]$
and notice that there exists a sequence $\{u_i^1,i\geq 1\} \subset
\mathcal {U}_{t,t+\delta},$ such that
$$I_\delta(t,x,\beta_1):=\esssup_{u_1\in \mathcal
{U}_{t,t+\delta}}I_\delta(t,x;u_1,\beta_1(u_1))=\sup \limits _{i\geq
1}I_\delta(t,x;u_i^1,\beta_1(u_i^1)), \  \mbox{P-a.s.}$$
For any $\varepsilon>0,$ we put
$\widetilde{\Gamma}_i:=\{I_\delta(t,x,\beta_1)\leq
I_\delta(t,x;u_i^1,\beta_1(u_i^1))+\varepsilon \} \in \mathcal
{F}_t, \ i\geq 1.$ Then $\Gamma_1:=\widetilde{\Gamma}_1,\
\Gamma_i:=\widetilde{\Gamma}_i\setminus(\bigcup\limits_{l=1}^{i-1}\widetilde{\Gamma}_l)
\in \mathcal {F}_t,\ i\geq 2,$ form an $(\Omega,\mathcal
{F}_t)$-partition, and $u_1^\varepsilon:=\sum \limits_{i\geq
1}\textbf{1}_{\Gamma_i}u_i^1$ belongs obviously to $\mathcal
{U}_{t,t+\delta}.$ Moreover, from the nonanticipativity of $\beta_1$
we have $\beta_1(u_1^\varepsilon)=\sum \limits_{i\geq
1}\textbf{1}_{\Gamma_i}\beta_1(u_i^1)$, and from the uniqueness of
the solution of the fully coupled FBSDE with jumps, we deduce that
$I_\delta(t,x;u_1^\varepsilon,\beta_1(u_1^\varepsilon))=\sum\limits_{i\geq
1}\textbf{1}_{\Gamma_i}I_\delta(t,x;u_i^1,\beta_1(u_i^1)), \
\mbox{P-a.s.}$ Hence,
\begin{equation}\label{ee7.1}\begin{array}{lll} W_\delta(t,x)\leq I_\delta(t,x;\beta_1)\leq \sum \limits_{i\geq
1}\textbf{1}_{\Gamma_i}I_\delta(t,x;u_i^1,\beta_1(u_i^1))+\varepsilon=I_\delta(t,x;u_1^\varepsilon,\beta_1(u_1^\varepsilon))+\varepsilon \\
=G_{t,{t+\delta}}^{t,x;u_1^\varepsilon,\beta_1(u_1^\varepsilon)}[W(t+\delta,\widetilde{X}_{t+\delta}^{t,x;u_1^\varepsilon,\beta_1(u_1^\varepsilon)})]+\varepsilon.\end{array}\end{equation}
On the other hand, using the fact that $\beta_1(\cdot):=\beta(\cdot
\oplus u_2)\in \mathcal {B}_{t,t+\delta}$ does not depend on
$u_2(\cdot)\in \mathcal {U}_{t+\delta,T},$ we can define
$\beta_2(u_2):=\beta(u_1^\varepsilon \oplus u_2)|_{[t+\delta,T]},$
for all $u_2(\cdot)\in \mathcal {U}_{t+\delta,T}.$ Therefore, from
the definition of $W(t+\delta,y)$ we have, for any $y\in
\mathbb{R},$
$$W(t+\delta,y)\leq \esssup_{u_2\in \mathcal
{U}_{t+\delta,T}}J(t+\delta,y;u_2,\beta_2(u_2)),\  \mbox{P-a.s.}$$
Finally, because there exists a constant $C\in \mathbb{R}$ such that
\begin{equation}\label{ee7.2}\begin{array}{llll}{\rm(i)}&& |W(t+\delta,y)-W(t+\delta,y')|\leq C|y-y'| \  \mbox{for any }
y,y'\in \mathbb{R};\\
{\rm(ii)}&&
|J(t+\delta,y;u_2,\beta_2(u_2))-J(t+\delta,y';u_2,\beta_2(u_2))|\leq
C|y-y'|, \  \mbox{P-a.s.} \mbox{ for any } u_2\in  \mathcal
{U}_{t+\delta,T},\end{array}\end{equation} we can show by
approximating
$\widetilde{X}_{t+\delta}^{t,x;u_1^\varepsilon,\beta_1(u_1^\varepsilon)}$
that
$$W(t+\delta,\widetilde{X}_{t+\delta}^{t,x;u_1^\varepsilon,\beta_1(u_1^\varepsilon)})\leq \esssup_{u_2\in \mathcal
{U}_{t+\delta,T}}J(t+\delta,\widetilde{X}_{t+\delta}^{t,x;u_1^\varepsilon,\beta_1(u_1^\varepsilon)};u_2,\beta_2(u_2)),\
\mbox{P-a.s.}$$ To estimate the right side of the latter inequality
we note that there exists some sequence $\{u_j^2,j\geq1\} \subset
\mathcal {U}_{t+\delta,T}$ such that
$$\esssup_{u_2\in \mathcal
{U}_{t+\delta,T}}J(t+\delta,\widetilde{X}_{t+\delta}^{t,x;u_1^\varepsilon,\beta_1(u_1^\varepsilon)};u_2,\beta_2(u_2))=\sup
\limits
_{j\geq1}J(t+\delta,\widetilde{X}_{t+\delta}^{t,x;u_1^\varepsilon,\beta_1(u_1^\varepsilon)};u_j^2,\beta_2(u_j^2)),
\  \mbox{P-a.s.}$$ Then, putting $\widetilde{\Delta}_j:=\{ \esssup
\limits_{u_2\in \mathcal
{U}_{t+\delta,T}}J(t+\delta,\widetilde{X}_{t+\delta}^{t,x;u_1^\varepsilon,\beta_1(u_1^\varepsilon)};u_2,\beta_2(u_2))\leq
J(t+\delta,\widetilde{X}_{t+\delta}^{t,x;u_1^\varepsilon,\beta_1(u_1^\varepsilon)};u_j^2,\beta_2(u_j^2))+\varepsilon
\} \in \mathcal {F}_{t+\delta},\ j\geq1;$ we have with
$\Delta_1:=\widetilde{\Delta}_1,\
\Delta_j:=\widetilde{\Delta}_j\setminus(\bigcup\limits_{l=1}^{j-1}\widetilde{\Delta}_l)\in
\mathcal {F}_{t+\delta},\ j\geq2,$ an $(\Omega,\mathcal
{F}_{t+\delta})$-partition and $u_2^\varepsilon:=\sum
\limits_{j\geq1}\textbf{1}_{\Delta_j}u_j^2\in \mathcal
{U}_{t+\delta,T}.$ From the nonanticipativity of $\beta_2$ we have
$\beta_2(u_2^\varepsilon)=\sum
\limits_{j\geq1}\textbf{1}_{\Delta_j}\beta_2(u_j^2),$ and from the
definition of $\beta_1, \  \beta_2 $, we know that
$\beta(u_1^\varepsilon \oplus
u_2^\varepsilon)=\beta_1(u_1^\varepsilon)\oplus
\beta_2(u_2^\varepsilon).$ Thus, from the uniqueness of the solution
of fully coupled FBSDE with jumps, we get
$$\begin{array}{llll}
J(t+\delta,\widetilde{X}_{t+\delta}^{t,x;u_1^\varepsilon,\beta_1(u_1^\varepsilon)};u_2^\varepsilon,\beta_2(u_2^\varepsilon))&=&Y_{t+\delta}^{t+\delta,\widetilde{X}_{t+\delta}^{{t,x;u_1^\varepsilon,\beta_1(u_1^\varepsilon)}};u_2^\varepsilon,\beta_2(u_2^\varepsilon)}=\sum \limits_{j\geq1}\textbf{1}_{\Delta_j}Y_{t+\delta}^{t+\delta,\widetilde{X}_{t+\delta}^{t,x;u_1^\varepsilon,\beta_1(u_1^\varepsilon)};u_j^2,\beta_2(u_j^2)}\\
&=&\sum
\limits_{j\geq1}\textbf{1}_{\Delta_j}J(t+\delta,\widetilde{X}_{t+\delta}^{t,x;u_1^\varepsilon,\beta_1(u_1^\varepsilon)};u_j^2,\beta_2(u_j^2)),\
\  \mbox{P-a.s.}
\end{array}$$
Consequently, \begin{equation}\label{ee7.3}
\begin{array}{llll}
W(t+\delta,\widetilde{X}_{t+\delta}^{t,x;u_1^\varepsilon,\beta_1(u_1^\varepsilon)})&
\leq& \esssup \limits_{u_2\in \mathcal
{U}_{t+\delta,T}}J(t+\delta,\widetilde{X}_{t+\delta}^{t,x;u_1^\varepsilon,\beta_1(u_1^\varepsilon)};u_2,\beta_2(u_2)) \\
&\leq
&\sum \limits_{j\geq1}\textbf{1}_{\Delta_j}J(t+\delta,\widetilde{X}_{t+\delta}^{t,x;u_1^\varepsilon,\beta_1(u_1^\varepsilon)};u_1^\varepsilon \oplus u_j^2,\beta(u_1^\varepsilon \oplus u_j^2))+\varepsilon  \\
&=&J(t+\delta,\widetilde{X}_{t+\delta}^{t,x;u_1^\varepsilon,\beta_1(u_1^\varepsilon)};u_1^\varepsilon \oplus u_2^\varepsilon,\beta(u_1^\varepsilon \oplus u_2^\varepsilon))+\varepsilon  \\
&=&J(t+\delta,\widetilde{X}_{t+\delta}^{t,x;u_1^\varepsilon,\beta_1(u_1^\varepsilon)};u^\varepsilon,\beta(u^\varepsilon))
+\varepsilon,\  \  \mbox{P-a.s.},
\end{array}\end{equation}
where $u^\varepsilon:=u_1^\varepsilon \oplus u_2^\varepsilon \in
\mathcal {U}_{t,T}.$ From (\ref{ee7.1}), (\ref{ee7.3}) and the
comparison theorem for fully coupled FBSDE with jumps (Theorem
3.2 in \cite{LiWei-Lp}), we have for $0<\delta<\delta_0$ sufficiently small
\begin{equation}\label{ee7.4}
\begin{array}{llll}
W_\delta(t,x)& \leq&
G_{t,t+\delta}^{t,x;u_1^\varepsilon,\beta_1(u_1^\varepsilon)}[J(t+\delta,\widetilde{X}_{t+\delta}^{t,x;u_1^\varepsilon,\beta_1(u_1^\varepsilon)};u^\varepsilon,\beta(u^\varepsilon))
+\varepsilon]+\varepsilon \\
&\leq
&G_{t,t+\delta}^{t,x;u_1^\varepsilon,\beta_1(u_1^\varepsilon)}[J(t+\delta,\widetilde{X}_{t+\delta}^{t,x;u_1^\varepsilon,\beta_1(u_1^\varepsilon)};u^\varepsilon,\beta(u^\varepsilon))
] +(C+1)\varepsilon \\
&=&G_{t,t+\delta}^{t,x;u^\varepsilon,\beta(u^\varepsilon)}[J(t+\delta,\widetilde{X}_{t+\delta}^{t,x;u_1^\varepsilon,\beta_1(u_1^\varepsilon)};u^\varepsilon,\beta(u^\varepsilon))
] +(C+1)\varepsilon \\
&=&J(t,x;u^\varepsilon,\beta(u^\varepsilon)+(C+1)\varepsilon =Y_{t}^{t,x;u^\varepsilon,\beta(u^\varepsilon)} +(C+1)\varepsilon\\
&\leq &\esssup \limits_{u\in \mathcal
{U}_{t,T}}Y_t^{t,x;u,\beta(u)}+(C+1)\varepsilon \  \  \mbox{P-a.s.,}
\end{array}\end{equation}
where we have used also Remark 3.5 in \cite{LiWei-Lp}. Indeed, from the
definition of our stochastic backward semigroup we have
$$G_{t,t+\delta}^{t,x;u_1^\varepsilon,\beta_1(u_1^\varepsilon)}[J(t+\delta,\widetilde{X}_{t+\delta}^{t,x;u_1^\varepsilon,\beta_1(u_1^\varepsilon)};u^\varepsilon,\beta(u^\varepsilon))
+\varepsilon]=\widehat{Y}_s^{t,x;u_1^\varepsilon,\beta_1(u_1^\varepsilon)},\
s\in[t,t+\delta],$$ where
$(\widehat{\Pi}_s^{t,x;u_1^\varepsilon,\beta_1(u_1^\varepsilon)},\widehat{K}_s^{t,x;u_1^\varepsilon,\beta_1(u_1^\varepsilon)}):=(\widehat{X}_s^{t,x;u_1^\varepsilon,\beta_1(u_1^\varepsilon)},\widehat{Y}_s^{t,x;u_1^\varepsilon,\beta_1(u_1^\varepsilon)},\widehat{Z}_s^{t,x;u_1^\varepsilon,\beta_1(u_1^\varepsilon)},\widehat{K}_s^{t,x;u_1^\varepsilon,\beta_1(u_1^\varepsilon)})$
is the solution of the following fully coupled FBSDEs with jumps:
\begin{equation}\label{ee7.5} \left \{
\begin{array}{llll}
d\widehat{X}_s^{t,x;u_1^\varepsilon,\beta_1(u_1^\varepsilon)} & = & b(s,\widehat{\Pi}_s^{t,x;u_1^\varepsilon,\beta_1(u_1^\varepsilon)},u^\varepsilon_1(s),\beta_1(u_1^\varepsilon)(s))ds +\sigma(s,\widehat{\Pi}_s^{t,x;u_1^\varepsilon,\beta_1(u_1^\varepsilon)},u^\varepsilon_1(s),\beta_1(u_1^\varepsilon)(s)) dB_s \\
&&
+h(s,\widehat{\Pi}_{s-}^{t,x;u_1^\varepsilon,\beta_1(u_1^\varepsilon)},u^\varepsilon_1(s),\beta_1(u_1^\varepsilon)(s))\tilde{\mu}(ds,de), \  \  \  \  \ s\in[t,t+\delta],\\
d\widehat{Y}_s^{t,x;u_1^\varepsilon,\beta_1(u_1^\varepsilon)} & = & -f(s,\widehat{\Pi}_s^{t,x;u_1^\varepsilon,\beta_1(u_1^\varepsilon)},\int_E\widehat{K}_s^{t,x;u_1^\varepsilon,\beta_1(u_1^\varepsilon)}l(e)\lambda(de),u^\varepsilon_1(s),\beta_1(u_1^\varepsilon)(s))ds+\widehat{Z}_s^{t,x;u_1^\varepsilon,\beta_1(u_1^\varepsilon)}dB_s\\
&&+ \int_E\widehat{K}_s^{t,x;u_1^\varepsilon,\beta_1(u_1^\varepsilon)}\tilde{\mu}(ds,de), \\
\widehat{X}_t^{t,x;u_1^\varepsilon,\beta_1(u_1^\varepsilon)}& = & x,\\
\widehat{Y}_T^{t,x;u_1^\varepsilon,\beta_1(u_1^\varepsilon)} & = &
J(t+\delta,\widetilde{X}_{t+\delta}^{t,x;u_1^\varepsilon,\beta_1(u_1^\varepsilon)};u^\varepsilon,\beta(u^\varepsilon))
+\varepsilon.
\end{array}
\right.
\end{equation}
Since $\beta \in \mathcal {B}_{t,T}$ has been arbitrarily chosen we
have (\ref{ee7.5}) for all $\beta \in \mathcal {B}_{t,T}.$
Therefore,
\begin{equation} W_\delta(t,x) \leq \essinf_{\beta \in \mathcal
{B}_{t,T}}\esssup_{u\in \mathcal
{U}_{t,T}}Y_t^{t,x;u,\beta(u)}+(C+1)\varepsilon=W(t,x)+(C+1)\varepsilon.
\end{equation} Finally, letting $\varepsilon \downarrow 0,$ we get
$W_\delta(t,x)\leq W(t,x).$\\ \end{proof}

\begin{lemma}
$W(t,x)\leq W_{\delta}(t,x).$
\end{lemma}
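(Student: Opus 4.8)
The plan is to establish the reverse inequality $W(t,x)\le W_\delta(t,x)$ by the dual of the construction used in the previous lemma: for each $\varepsilon>0$ I will build a single strategy $\beta\in\mathcal{B}_{t,T}$ for Player~II, assembled from an $\varepsilon$-optimal strategy on $[t,t+\delta]$ and a measurably pasted family of $\varepsilon$-optimal strategies on $[t+\delta,T]$, and show that $\esssup_{u\in\mathcal{U}_{t,T}}J(t,x;u,\beta(u))\le W_\delta(t,x)+C\varepsilon$. Since $W(t,x)\le\esssup_{u\in\mathcal{U}_{t,T}}J(t,x;u,\beta(u))$ by the definition of the lower value function, letting $\varepsilon\downarrow0$ then yields the claim.

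First, because $W_\delta(t,x)$ is deterministic (preceding lemma) and is an essential infimum over $\mathcal{B}_{t,t+\delta}$, I fix $\beta_1^\varepsilon\in\mathcal{B}_{t,t+\delta}$ such that
$$\esssup_{u_1\in\mathcal{U}_{t,t+\delta}}G_{t,t+\delta}^{t,x;u_1,\beta_1^\varepsilon(u_1)}[W(t+\delta,\widetilde{X}_{t+\delta}^{t,x;u_1,\beta_1^\varepsilon(u_1)})]\le W_\delta(t,x)+\varepsilon.$$
For the second interval I choose a Borel partition $\{O_i\}_{i\ge1}$ of $\mathbb{R}$ with $\mathrm{diam}(O_i)\le\varepsilon$ and points $y_i\in O_i$; since each $W(t+\delta,y_i)$ is deterministic and an essential infimum, I select $\beta_2^i\in\mathcal{B}_{t+\delta,T}$ with $\esssup_{u_2\in\mathcal{U}_{t+\delta,T}}J(t+\delta,y_i;u_2,\beta_2^i(u_2))\le W(t+\delta,y_i)+\varepsilon$. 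For $u\in\mathcal{U}_{t,T}$ I write $u_1=u|_{[t,t+\delta]}$, $u_2=u|_{[t+\delta,T]}$, let $\widetilde{X}_{t+\delta}^{t,x;u_1,\beta_1^\varepsilon(u_1)}$ be the intermediate state generated by FBSDE (\ref{equ3.3}) (which depends only on $u_1$), and define
$$\beta(u):=\beta_1^\varepsilon(u_1)\oplus\sum_{i\ge1}\mathbf{1}_{\{\widetilde{X}_{t+\delta}^{t,x;u_1,\beta_1^\varepsilon(u_1)}\in O_i\}}\,\beta_2^i(u_2).$$
Using the nonanticipativity of $\beta_1^\varepsilon$ and of each $\beta_2^i$, the $\mathcal{F}_{t+\delta}$-measurability of the events $\{\widetilde{X}_{t+\delta}^{t,x;u_1,\beta_1^\varepsilon(u_1)}\in O_i\}$, and the uniqueness of the FBSDE solution, I verify that $\beta\in\mathcal{B}_{t,T}$.

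The estimate then proceeds as in (\ref{ee7.1})--(\ref{ee7.4}), but in the reverse direction. Since $\beta(u)$ coincides with $\beta_1^\varepsilon(u_1)$ on $[t,t+\delta]$, the backward semigroup and the flow property give $J(t,x;u,\beta(u))=G_{t,t+\delta}^{t,x;u_1,\beta_1^\varepsilon(u_1)}[J(t+\delta,\widetilde{X}_{t+\delta}^{t,x;u_1,\beta_1^\varepsilon(u_1)};u,\beta(u))]$. On the event $\{\widetilde{X}_{t+\delta}^{t,x;u_1,\beta_1^\varepsilon(u_1)}\in O_i\}$ one has $\beta(u)|_{[t+\delta,T]}=\beta_2^i(u_2)$, so by uniqueness the terminal splits as $\sum_i\mathbf{1}_{\{\widetilde{X}_{t+\delta}^{t,x;u_1,\beta_1^\varepsilon(u_1)}\in O_i\}}J(t+\delta,\widetilde{X}_{t+\delta}^{t,x;u_1,\beta_1^\varepsilon(u_1)};u_2,\beta_2^i(u_2))$. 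Combining the Lipschitz estimate (\ref{ee7.2})(ii) for $J$ in its initial datum, the choice of the $\beta_2^i$, the Lipschitz estimate (\ref{ee7.2})(i) for $W$, and $\mathrm{diam}(O_i)\le\varepsilon$, I obtain, P-a.s., that $J(t+\delta,\widetilde{X}_{t+\delta}^{t,x;u_1,\beta_1^\varepsilon(u_1)};u,\beta(u))\le W(t+\delta,\widetilde{X}_{t+\delta}^{t,x;u_1,\beta_1^\varepsilon(u_1)})+C\varepsilon$. Applying $G_{t,t+\delta}^{t,x;u_1,\beta_1^\varepsilon(u_1)}[\cdot]$ and invoking the comparison theorem for fully coupled FBSDE with jumps (Theorem 3.2 in \cite{LiWei-Lp}) together with the constant-shift bound (Remark 3.5 in \cite{LiWei-Lp}) to extract the additive $C\varepsilon$, I get $J(t,x;u,\beta(u))\le G_{t,t+\delta}^{t,x;u_1,\beta_1^\varepsilon(u_1)}[W(t+\delta,\widetilde{X}_{t+\delta}^{t,x;u_1,\beta_1^\varepsilon(u_1)})]+C'\varepsilon$. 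Taking the essential supremum over $u\in\mathcal{U}_{t,T}$, whose right-hand side depends on $u$ only through $u_1$, and using the choice of $\beta_1^\varepsilon$, I conclude $W(t,x)\le\esssup_{u}J(t,x;u,\beta(u))\le W_\delta(t,x)+C''\varepsilon$; letting $\varepsilon\downarrow0$ finishes the proof.

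I expect the main obstacle to be the measurable pasting of the countable family $\{\beta_2^i\}_{i\ge1}$ together with $\beta_1^\varepsilon$ into a single genuinely nonanticipative strategy $\beta\in\mathcal{B}_{t,T}$, and the accompanying discretization of the random intermediate state: one must confirm both that $\beta$ respects the stopping-time condition of Definition~3.2 and that the replacement of $\widetilde{X}_{t+\delta}^{t,x;u_1,\beta_1^\varepsilon(u_1)}$ by the grid points $y_i$ is controlled uniformly in $i$. This is precisely where the Lipschitz continuity of $W$ and of $J$ in $x$ (Lemma \ref{l7} and (\ref{ee7.2})) and the stability and uniqueness of the fully coupled FBSDE with jumps on the small interval from \cite{LiWei-Lp} enter decisively.
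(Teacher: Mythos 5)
Your proposal is correct in substance and follows essentially the same route as the paper's own proof: an $\varepsilon$-optimal strategy $\beta_1^\varepsilon$ on $[t,t+\delta]$, a Borel partition $\{O_i\}$ of $\mathbb{R}$ with $\mathrm{diam}(O_i)\le\varepsilon$ and grid points $y_i$ used to discretize the intermediate state, continuation strategies chosen at the $y_i$ and pasted according to the cell containing $\widetilde{X}_{t+\delta}^{t,x;u_1,\beta_1^\varepsilon(u_1)}$, verification that the assembled $\beta^\varepsilon$ is nonanticipative, and a chain of comparison-theorem and Lipschitz estimates (\ref{ee7.2}) yielding $\esssup_{u}J(t,x;u,\beta^\varepsilon(u))\le W_\delta(t,x)+C\varepsilon$. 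This is exactly the paper's construction and its estimates (\ref{ee7.7})--(\ref{ee7.12}), merely written with the inequalities run in the opposite direction. (One presentational caution you share with the paper: since the FBSDE is fully coupled, the forward process entering the semigroup depends on the terminal function, so the flow identity holds with the forward process $X^{t,x;u,\beta^\varepsilon(u)}$ of the full equation rather than with $\widetilde{X}$; the passage between the two is the explicit extra comparison/Lipschitz step in the paper's (\ref{ee7.11}).)

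One step, however, is asserted with an insufficient justification. The existence of a single $\beta_1^\varepsilon\in\mathcal{B}_{t,t+\delta}$ satisfying $\esssup_{u_1}G_{t,t+\delta}^{t,x;u_1,\beta_1^\varepsilon(u_1)}[W(t+\delta,\widetilde{X}_{t+\delta}^{t,x;u_1,\beta_1^\varepsilon(u_1)})]\le W_\delta(t,x)+\varepsilon$, P-a.s., does \emph{not} follow from the mere facts that $W_\delta(t,x)$ is deterministic and is defined as an essential infimum: an essential infimum of a family of random variables, even when it is a constant, need not be approximated within $\varepsilon$ almost surely by any single member of the family (consider $\mathbf{1}_{A_n}$ for independent events $A_n$ with $P(A_n)=\frac{1}{2}$, whose essential infimum is $0$ while no member is a.s.\ smaller than $1$ on a full-measure set). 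What makes the selection legitimate here is the stability of the strategy sets under countable pasting along $\mathcal{F}_t$-partitions, combined with the uniqueness of the FBSDE solution: one takes a countable family $\{\beta_i^1\}$ attaining the essential infimum, disjointifies the events on which each $\beta_i^1$ is $\varepsilon$-optimal into a partition $\{\Lambda_i\}\subset\mathcal{F}_t$, and sets $\beta_1^\varepsilon=\sum_i\mathbf{1}_{\Lambda_i}\beta_i^1$; this is precisely the paper's argument around (\ref{ee7.6})--(\ref{ee7.7}), and the same device is needed to produce your $\beta_2^i$ at the grid points (the paper's (\ref{ee7.8})). Since this is the very measurable-pasting technique you invoke for assembling $\beta$, the omission is fixable and does not change the approach, but as written the opening step of your proof is unsupported.
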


\begin{proof} We continue to use the notations introduced above. From the
definition of $W_\delta(t,x)$ we have \begin{equation}\label{ee7.6}
\begin{array}{llll} W_\delta(t,x)&=& \essinf \limits_{\beta_1\in \mathcal
{B}_{t,t+\delta}}\esssup \limits_{u_1\in \mathcal
{U}_{t,t+\delta}}G_{t,t+\delta}^{t,x;u_1,\beta_1(u_1)}[W(t+\delta,\widetilde{X}_{t+\delta}^{t,x;u_1,\beta_1(u_1)})]=\essinf \limits_{\beta_1\in \mathcal
{B}_{t,t+\delta}}I_\delta(t,x,\beta_1),\end{array}\end{equation} and
for some sequence $\{ \beta_i^1,i\geq1\} \subset \mathcal
{B}_{t,t+\delta}$,
$W_\delta(t,x)=\inf\limits_{i\geq1}I_\delta(t,x,\beta_i^1), \
\mbox{P-a.s.}$

 For any $\varepsilon>0,$ we put
$\widetilde{\Pi}_i:=\{I_\delta(t,x,\beta_i^1)-\varepsilon \leq
W_\delta(t,x)\} \in \mathcal {F}_t,\ i\geq1,\
\Lambda_1:=\widetilde{\Pi}_1$ and
$\Lambda_i:=\widetilde{\Pi}_i\backslash(\bigcup\limits_{l=1}^{i-1}\widetilde{\Pi}_l)\in
\mathcal {F}_t,\ i\geq 2.$ Then, $\{ \Lambda_i,\ i\geq 1\}$ is an
$(\Omega,\mathcal {F}_t)$-partition,
$\beta_1^\varepsilon:=\sum\limits_{i\geq
1}\textbf{1}_{\Lambda_i}\beta_i^1$ belongs to $\mathcal
{B}_{t,t+\delta}$, and from the uniqueness of the solution of our
fully coupled FBSDE with jumps, we conclude that
$I_\delta(t,x,u_1,\beta_1^\varepsilon(u_1))=\sum\limits_{i\geq
1}\textbf{1}_{\Lambda_i}I_\delta(t,x,u_1,\beta_i^1(u_1)),\
\mbox{P-a.s.},$ for all $u_1(\cdot) \in \mathcal {U}_{t,t+\delta}.$
Moreover,
\begin{equation}\label{ee7.7} \begin{array}{llll}W_\delta(t,x)&\geq  & \sum \limits_{i\geq
1}\textbf{1}_{\Lambda_i}I_\delta(t,x,\beta_i^1)-\varepsilon \geq  \sum \limits_{i\geq
1}\textbf{1}_{\Lambda_i}I_\delta(t,x,u_1,\beta_i^1(u_1))-\varepsilon
=I_\delta(t,x,u_1,\beta_1^\varepsilon(u_1))-\varepsilon \\
&=&G_{t,t+\delta}^{t,x,u_1,\beta_1^\varepsilon(u_1)}[W(t+\delta,\widetilde{X}_{t+\delta}^{t,x;u_1,\beta_1^\varepsilon(u_1)})]-\varepsilon,\
\mbox{P-a.s.},\  \mbox{for all }u_1\in \mathcal
{U}_{t,t+\delta}.\end{array}
\end{equation}
On the other hand, from the definition of $W(t+\delta,y)$, with the
same technique as before, we deduce that, for any $y\in \mathbb{R}$,
there exists $\beta_y^\varepsilon \in \mathcal {B}_{t+\delta,T}$
such that \begin{equation}\label{ee7.8} W(t+\delta,y)\geq
\esssup_{u_2\in \mathcal
{U}_{t+\delta,T}}J(t+\delta,y;u_2,\beta_y^\varepsilon(u_2))-\varepsilon,\
\mbox{P-a.s.}\end{equation} Let $\{O_i\}_{i\geq1}\subset \mathcal
{B}(\mathbb{R})$ be a decomposition of $\mathbb{R}$ such that $\sum
\limits_{i\geq1}O_i=\mathbb{R}$ and diam$(O_i)\leq \varepsilon,\
i\geq1.$ Let $y_i $ be an arbitrarily fixed element of $O_i,\
i\geq1.$ Defining
$[\widetilde{X}_{t+\delta}^{t,x;u_1,\beta_1^\varepsilon(u_1)}]:=\sum
\limits_{i\geq1}y_i\textbf{1}_{\{
\widetilde{X}_{t+\delta}^{t,x;u_1,\beta_1^\varepsilon(u_1)}\in
O_i\}},$ we have \begin{equation}\label{ee7.9}
|\widetilde{X}_{t+\delta}^{t,x;u_1,\beta_1^\varepsilon(u_1)}-[\widetilde{X}_{t+\delta}^{t,x;u_1,\beta_1^\varepsilon(u_1)}]|
\leq \varepsilon,\  \mbox{everywhere on }\Omega,\  \mbox{for all
}u_1\in \mathcal {U}_{t,t+\delta}.\end{equation} Moreover, for each
$y_i$, there exists some $\beta_{y_i}^\varepsilon \in \mathcal
{B}_{t+\delta,T}$ such that (\ref{ee7.8}) holds, and, clearly,\\
$\beta_{u_1}^\varepsilon:=\sum\limits_{i\geq1}\textbf{1}_{\{
\widetilde{X}_{t+\delta}^{t,x;u_1,\beta_1^\varepsilon(u_1)}\in
O_i\}}\beta_{y_i}^\varepsilon \in \mathcal {B}_{t+\delta,T}.$

 Now
we can define the new strategy
$\beta^\varepsilon(u):=\beta_1^\varepsilon(u_1)\oplus
\beta_1^\varepsilon(u_2), \ u\in \mathcal {U}_{t,T},$ where
$u_1=u|_{[t,t+\delta]},\ u_2=u|_{(t+\delta,T]}$ (restriction of $u$
to $[t,t+\delta]\times \Omega$ and $(t+\delta,T]\times \Omega,$
resp.). Obviously, $\beta^\varepsilon$ maps $\mathcal {U}_{t,T}$
into $\mathcal {V}_{t,T}.$ Moreover, $\beta^\varepsilon$ is
nonanticipating: Indeed, let $S$ $:\Omega \rightarrow[t,T]$ be an
$\mathbb{F}-$stopping time and $u,\ u'\  \in \mathcal {U}_{t,T}$ be
such that $u\equiv u'$ on $[[ t,S]]$. Decomposing $u,\ u'$ into
$u_1,\ u'_1\in \mathcal {U}_{t,t+\delta},\ u_2,\ u'_2\in \mathcal
{U}_{t+\delta,T}$ such that $u=u_1\oplus u'_1$ and $u=u_2\oplus
u'_2,$ we have $u_1\equiv u'_1$ on $[[t,S\wedge(t+\delta)]]$, and
hence, we get $\beta_1^\varepsilon(u_1)=\beta_1^\varepsilon(u'_1)$
on $[[t,S\wedge(t+\delta)]]$ (recall that $\beta_1^\varepsilon$ is
nonanticipating). On the other hand, $u_2\equiv u'_2$ on
$[[t+\delta, S\vee
(t+\delta)]](\subset(t+\delta,T]\times\{S>t+\delta\})$, and on
$\{S>t+\delta\}$ we have
$X_{t+\delta}^{t,x;u_1,\beta_1^\varepsilon(u_1)}=X_{t+\delta}^{t,x;u'_1,\beta_{1}^\varepsilon(u'_1)}.$
Consequently, from our definition,
$\beta_{u_1}^\varepsilon=\beta_{u'_1}^\varepsilon$ on $\{S>t+\delta
\}$ and
$\beta_{u_1}^\varepsilon(u_2)=\beta_{u'_1}^\varepsilon(u'_2)$ on
$]]t+\delta,S\vee(t+\delta)]]$. This yields
$\beta^\varepsilon(u)=\beta_1^\varepsilon(u_1)\oplus
\beta_{u_1}^\varepsilon(u_2)=\beta_1^\varepsilon(u'_1)\oplus
\beta_{u'_1}^\varepsilon(u'_2)$ on $[[t,S]]$, from which it follows
that $\beta^\varepsilon \in \mathcal {B}_{t,T}.$ Let now $u\in
\mathcal {U}_{t,T}$ be arbitrarily chosen and decomposed into
$u_1=u|_{(t,t+\delta]}\in \mathcal {U}_{t,t+\delta},\ and\
u_2=u|_{[t+\delta,T]}\in \mathcal {U}_{t+\delta,T}.$ Then from
(\ref{ee7.7}), (\ref{ee7.2})-(i), (\ref{ee7.9}) and the comparison
theorem, we obtain
\begin{equation}\label{ee7.10}\begin{array}{llll} W_\delta(t,x)
&\geq&
G_{t,t+\delta}^{t,x;u_1,\beta_1^\varepsilon(u_1)}[W(t+\delta,\widetilde{X}_{t+\delta}^{t,x;u_1,\beta_1^\varepsilon(u_1)})]-\varepsilon \\
& \geq & G_{t,t+\delta}^{t,x;u_1,\beta_1^\varepsilon(u_1)}[W(t+\delta,[\widetilde{X}_{t+\delta}^{t,x;u_1,\beta_1^\varepsilon(u_1)}])-C\varepsilon]-\varepsilon \\
& \geq & G_{t,t+\delta}^{t,x;u_1,\beta_1^\varepsilon(u_1)}[W(t+\delta,[\widetilde{X}_{t+\delta}^{t,x;u_1,\beta_1^\varepsilon(u_1)}])]-C\varepsilon \\
& = & G_{t,t+\delta}^{t,x;u_1,\beta_1^\varepsilon(u_1)}[\sum
\limits_{i\geq 1}\textbf{1}_{\{
\widetilde{X}_{t+\delta}^{t,x;u_1,\beta_1^\varepsilon(u_1)}\in
O_i\}}W(t+\delta,y_i)]-C\varepsilon,\  \  \mbox{P-a.s.}\end{array}
\end{equation} Furthermore, from (\ref{ee7.10}), (\ref{ee7.2})-(ii), (\ref{ee7.8}) and the
comparison theorem (Theorem 3.3 in \cite{LiWei-Lp}), we have
\begin{equation}\label{ee7.11}\begin{array}{llll} W_\delta(t,x)
&\geq&
G_{t,t+\delta}^{t,x;u_1,\beta_1^\varepsilon(u_1)}[\sum\limits_{i\geq
1}\textbf{1}_{\{
\widetilde{X}_{t+\delta}^{t,x;u_1,\beta_1^\varepsilon(u_1)}\in
O_i\}}J(t+\delta,y_i;u_2,\beta_{y_i}^\varepsilon(u_2))-\varepsilon]-C\varepsilon \\
& \geq &
G_{t,t+\delta}^{t,x;u_1,\beta_1^\varepsilon(u_1)}[\sum\limits_{i\geq
1}\textbf{1}_{\{
\widetilde{X}_{t+\delta}^{t,x;u_1,\beta_1^\varepsilon(u_1)}\in
O_i\}}J(t+\delta,y_i;u_2,\beta_{y_i}^\varepsilon(u_2))]-C\varepsilon \\
& = & G_{t,t+\delta}^{t,x;u_1,\beta_1^\varepsilon(u_1)}[J(t+\delta,[\widetilde{X}_{t+\delta}^{t,x;u_1,\beta_1^\varepsilon(u_1)}];u_2,\beta_{u_1}^\varepsilon(u_2))]-C\varepsilon \\
& \geq &
G_{t,t+\delta}^{t,x;u_1,\beta_1^\varepsilon(u_1)}[J(t+\delta,\widetilde{X}_{t+\delta}^{t,x;u_1,\beta_1^\varepsilon(u_1)};u_2,\beta_{u_1}^\varepsilon(u_2))-C\varepsilon]-C\varepsilon \\
& \geq &
G_{t,t+\delta}^{t,x;u_1,\beta_1^\varepsilon(u_1)}[J(t+\delta,X_{t+\delta}^{t,x;u_1,\beta_1^\varepsilon(u_1)};u_2,\beta_{u_1}^\varepsilon(u_2))]-C\varepsilon \\
& = &G_{t,t+\delta}^{t,x;u,\beta^\varepsilon(u)}[Y_{t+\delta}^{t,x;u,\beta^\varepsilon(u)}]-C\varepsilon = Y_t^{t,x;u,\beta^\varepsilon(u)}-C\varepsilon,\  \
\mbox{\mbox{P-a.s.},\  \ for any }u\in \mathcal {U}_{t,T}.
\end{array} \end{equation}
Consequently, \begin{equation}\label{ee7.12}\begin{array}{llll}
W_\delta(t,x) &\geq&
\esssup \limits_{u\in \mathcal {U}_{t,T}}J(t,x;u,\beta^\varepsilon(u))-C\varepsilon \geq \essinf \limits_{\beta \in \mathcal {B}_{t,T}}\esssup \limits_{u\in \mathcal {U}_{t,T}}J(t,x;u,\beta(u))-C\varepsilon \\
&=&W(t,x)-C\varepsilon,\  \  \mbox{P-a.s.} \end{array}
\end{equation} Finally, letting $\varepsilon \downarrow 0$ we get
$W_\delta(t,x)\geq W(t,x).$\\
\end{proof}

\begin{remark}
\label{re7.1} {\rm(a)} {\rm(i)} For every $\beta \in \mathcal{B}%
_{t,t+\delta}$, there exists some
$u^{\varepsilon}(\cdot)\in{\mathcal{U}}_{t, t+\delta}$ such that
\begin{equation}
\label{ee7.13}W(t,x)(=W_{\delta}(t, x))\leq G^{t,x; u^{\varepsilon}%
,\beta(u^{\varepsilon})}_{t,t+\delta} [W(t+\delta, X^{t,x;
u^{\varepsilon
},\beta(u^{\varepsilon})}_{t+\delta})]+\varepsilon,\quad \mbox{{\it
P}-a.s.}
\end{equation}
{\rm(ii)} There exists some $\beta^{\varepsilon}(\cdot) \in{\mathcal{B}%
}_{t, t+\delta}$ such that, for all $u(\cdot)\in{\mathcal{U}}_{t,
t+\delta}$
\begin{equation}
\label{ee7.14}W(t,x)(=W_{\delta}(t, x))\geq G^{t,x; u^{\varepsilon}%
}_{t,t+\delta} [W(t+\delta, X^{t,x;
u,\beta^{\varepsilon}(u)}_{t+\delta })]-\varepsilon,\  \mbox{{\it
P}-a.s.}
\end{equation}
{\rm(b)} From Proposition \ref{pro1}, we know that the lower value
function $W$ is deterministic. So, by choosing $\delta=T-t$ and
taking expectation on both sides of (\ref{ee7.13}), (\ref{ee7.14}),
we get $W(t,x)=\inf \limits_{\beta
\in \mathcal{B}_{t,T}}\mathop{\rm sup}\limits_{u\in \mathcal{U}_{t,T}%
}E[J(t,x;u,\beta(u))].$
\end{remark}

\end{document}